\newcommand{\R}{{\mathbb R}}
\newcommand{\Z}{{\mathbb Z}}
\newcommand{\N}{{\mathbb N}}
\newcommand{\Spec}{\mathrm{Spec}}
\newcommand{\Supp}{\mathrm{Supp}}
\newcommand{\OO}{\mathcal{O}}
\newcommand{\id}{\operatorname{id}\nolimits}
\newcommand{\Hom}{\operatorname{Hom}\nolimits}
\newcommand{\Ker}{\operatorname{Ker}\nolimits}
\renewcommand{\dim}{\operatorname{dim}\nolimits}
\newcommand{\rk}{\mathrm{rk}}
\newcommand{\pr}{\mathrm{pr}}
\newcommand{\trop}{\mathrm{trop}}
\newcommand{\an}{\mathrm{an}}
\newtheorem{theorem}{Theorem}[section]
\newtheorem{corollary}[theorem]{Corollary}
\newtheorem{definition}[theorem]{Definition}
\newtheorem{example}[theorem]{Example}
\newtheorem{example*}{Example}
\newtheorem{lemma}[theorem]{Lemma}
\newtheorem{proposition}[theorem]{Proposition}
\newtheorem{remark}[theorem]{Remark}
\newtheorem{lemma*}{Lemma}
\newtheorem{question*}{Question}
\newtheorem{proposition&definition}[theorem]{Proposition\&Definition}
\newtheorem{lemma&definition}[theorem]{Lemma\&Definition}
\newtheorem{theorem&definition}[theorem]{Theorem\&Definition}
\begin{document}
	
	\title
	{The push-forwards and pull-backs of $\delta$-forms and applications to non-archimedean Arakelov geometry}
	
	\author{Yulin Cai}
	
	\newcommand{\address}{{
			\bigskip
			\footnotesize
			Y.~Cai,
			\textsc{Westlake University, 
				SanDun Road 600, Xihu District
				310024, Hangzhou, China}\par\nopagebreak
			
			\textit{E-mail address}: \texttt{caiyulin@westlake.edu.cn}
	}}

	\maketitle
	
	\begin{abstract}
	We study two kinds of push-forwards of $\delta$-forms and define the pull-backs of $\delta$-forms. As a generalization of Gubler-K\"unnemann, we prove the projection formula and the tropical Poincar\'e-Lelong formula. As an application, we follow the idea in \cite{gubler2017a} and generalize the notion of $\delta$-forms on algebraic varieties, this allows us to define the first Chern forms for any piecewise smooth metrics.
	\end{abstract}
	
	{\footnotesize
		
		\tableofcontents

	}

\section{Introduction}

In \cite{chambert2012formes}, Chambert-Loir and Ducros introduce a potential theory for an analytic space $X$ (not necessarily algebraic or compact) over a complete non-archimedean field $K$ in sense of Berkovich. Such theory deals with the non-archimedean part of Arakelov theory in a similar way to the archimedean part. With this theory, they show that the Chambert-Loir measure \cite{chambert2006measures} is a limit of the wedge product of first Chern forms of smooth metrized line bundles. 

In \cite{gubler2017a}, Gubler and K\"unnemann give the theory of $\delta$-forms and $\delta$-currents on algebraic varieties. With their theory, they define the first Chern form $c_1(\overline{L})$ for a metrized line bundle $\overline{L}$, then the measure $c_1(\overline{L})^n$ comes naturally. Moreover, they define the first Chern form for a so-called $\delta$-metric, which is a special piecewise smooth metric. They show that the canonical metrics of line bundles on abelian varieties are $\delta$-metrics.

Inspired by Gubler and K\"unnemann's paper~\cite{gubler2017a}, Mihatsch introduces the $\delta$-forms on tropical geometry and construct intersection product in \cite{mihatsch2021on}. In his another recent paper \cite{mihatsch2021delta}, he considers $\delta$-forms on tropical spaces. Since the skeletons of good Berkovich spaces satisfy a tropical balance condition, he is able to define $\delta$-forms on good, strict Berkovich spaces.
	
The main object in this paper is to develop the theory of $\delta$-forms on tropical geometry, including the push-forward and pull-back and some important formulas: projection formula, tropical Poincar\'e-Lelong formula. As an application, following the idea in \cite{gubler2017a}, we can have more $\delta$-forms on algebraic varieties comparing to \cite{gubler2017a}, and we can define the first Chern form for any piecewise smooth metrics. Using the tropical Poincar\'e-Lelong formula, we prove the Poincar\'e-Lelong formula for $\delta$-currents.

In \cref{superforms on N}, \cref{polyhedral supercurrents on N},  we recall superforms, supercurrents and polyhedral supercurrents on affine spaces for readers' convenience.

We will recall the main results of Mihatsch \cite{mihatsch2021on} in \cref{delta forms on N}, including \cref{main theorem of delta forms} which asserts that a polyhedral supercurrent is a $\delta$-form if and only if it is balanced, and \cref{intersection} which shows that there exists a unique $\wedge$-product of $\delta$-forms. One interesting fact is that the $\wedge$-product extends both that of superforms and intersection product of tropical cycles. For a finitely generated free abelian group $N$ and an open subset $\widetilde{\Omega}\subset N_\R$, the set of $\delta$-forms of $(p,q,l)$-type (resp. piecewise smooth forms of $(p,q)$-type) on $\widetilde{\Omega}$  is denoted by $B^{p,q,l}(\widetilde{\Omega})$ (resp. $PS^{p,q}(\widetilde{\Omega})$). The notion of corner locus $\mathrm{div}(\phi)\cdot \alpha$ is given for $\alpha\in B^{p,q,l}(\widetilde{\Omega})$ and $\phi\in PS^{0,0}(\widetilde{\Omega})$ in \cref{corner locus}. Then we show the following tropical Poincar\'e-Lelong formula which generalizes \cite[Corollary~3.19]{gubler2017a}.

\begin{theorem}[\cref{tropical poincare-lelong formula}, Tropical Poincar\'e-Lelong formula]
	For any $\alpha\in B^{p,q,l}(\widetilde{\Omega})$ and $\phi\in PS^{0,0}(\widetilde{\Omega})$, we have 
	\[\mathrm{div}(\phi)\cdot \alpha =  d'd''\phi\wedge\alpha-d_P'd_P''\phi\wedge\alpha= \partial'(d_P''\phi\wedge\alpha)+d_P''\phi\wedge\partial'\alpha\]
	in $P(\widetilde{\Omega})$. In particular, $\mathrm{div}(\phi)\cdot \alpha\in B^{p,q,l+1}(\widetilde{\Omega})$.
\end{theorem}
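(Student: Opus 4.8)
The plan is to peel off the two equalities one at a time, reducing the analytic content to the local structure of $\phi$ along the codimension-one faces of a polyhedral complex that is simultaneously adapted to $\phi$ and to $\alpha$; the combinatorial identification of the resulting jump current with $\mathrm{div}(\phi)\cdot\alpha$ will be the crux.

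First I would exploit that $\phi\in PS^{0,0}(\widetilde{\Omega})$ is continuous. Hence its first differential carries no codimension-one jump, i.e. $d''\phi = d_P''\phi =: \beta$ as polyhedral supercurrents, with $\beta\in PS^{0,1}(\widetilde{\Omega})\subset B^{0,1,0}(\widetilde{\Omega})$. Therefore $d'd''\phi = d'\beta$ and $d_P'd_P''\phi = d_P'\beta$, so the middle expression of the theorem is $(d'\beta - d_P'\beta)\wedge\alpha$. Invoking the decomposition of the current differential of a $\delta$-form into its piecewise-smooth part and its corner part recalled in \cref{delta forms on N}, namely $d' = d_P' + \partial'$, this becomes $\partial'\beta\wedge\alpha$.

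The second equality is then formal. The operator $\partial' = d' - d_P'$ inherits a graded Leibniz rule with respect to the $\wedge$-product of \cref{intersection} from those of $d'$ and $d_P'$; since $\beta$ has total degree one this reads $\partial'(\beta\wedge\alpha) = \partial'\beta\wedge\alpha - \beta\wedge\partial'\alpha$. Rearranging gives $\partial'\beta\wedge\alpha = \partial'(d_P''\phi\wedge\alpha) + d_P''\phi\wedge\partial'\alpha$, which is exactly the claimed right-hand side. For the final assertion I note that $\partial'$ raises the polyhedral degree $l$ by one and that the $\wedge$-product of $\delta$-forms is again a $\delta$-form (\cref{intersection}, \cref{main theorem of delta forms}); tracking tridegrees through $d_P''$, $\wedge$ and $\partial'$ shows that each summand, and hence $\mathrm{div}(\phi)\cdot\alpha$, lies in $B^{p,q,l+1}(\widetilde{\Omega})$.

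The remaining, and genuinely substantive, point is the first equality $\partial'\beta\wedge\alpha = \mathrm{div}(\phi)\cdot\alpha$, matching the analytically defined jump current with the combinatorial corner locus of \cref{corner locus}. Both currents are supported on the codimension-one skeleton along which the smooth pieces of $\phi$ meet, so the identity can be tested facet by facet. Along a facet $\tau$, the corner part $\partial'\beta$ is the Dirac-type contribution weighted by the jump of the normal derivative of $\phi$ across $\tau$, i.e. by the sum of the outgoing slopes of $\phi$, which is precisely the local datum that \cref{corner locus} uses to define $\mathrm{div}(\phi)$. The main obstacle is to verify that after forming $\wedge\,\alpha$ these facet weights assemble, with the correct multiplicities and signs, into $\mathrm{div}(\phi)\cdot\alpha$: here one must use the balancing condition on $\alpha$ (\cref{main theorem of delta forms}) to guarantee that the facet contributions glue to a well-defined balanced current, together with a careful orientation and sign bookkeeping for $\partial'$. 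Once this local matching is established on every facet, the first equality follows and, combined with the second, completes the proof.
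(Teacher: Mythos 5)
Your overall skeleton coincides with the paper's: the observation $d''\phi=d_P''\phi$ for continuous $\phi$ and the Leibniz rule for $\partial'$ on products of a piecewise smooth form with a $\delta$-form are exactly how the paper derives its equality~(\ref{wedge product1}), which settles the second equality and (by tracking tridegrees) the final assertion. But the step you yourself label ``the main obstacle'' --- identifying $\partial'(d_P''\phi\wedge\alpha)+d_P''\phi\wedge\partial'\alpha$ (your $\partial'\beta\wedge\alpha$) with $\mathrm{div}(\phi)\cdot\alpha$ --- is only described, not proved, and this is where the entire content of the theorem lives. The paper closes it by a concrete computation whose mechanism is absent from your sketch: fix $\mathcal{C}$ adapted to $\phi$ and $\alpha$, write at each $\tau\in\mathcal{C}^{l+1}$ the decomposition $\sum_{\sigma}\alpha_\sigma|_\tau\otimes\omega_{\sigma,\tau}=\sum_i\beta_i\otimes v_i$, apply the explicit formula for $\partial'$ (\cref{expression of boundary derivatives}) to both $d_P''\phi\wedge\alpha$ and $\alpha$, and then invoke the contraction Leibniz rule (\cref{basic properties of contraction}): the contraction of $d''\phi_\sigma\wedge\alpha_\sigma$ against $\omega_{\sigma,\tau}$ splits as $\frac{\partial\phi_\sigma}{\partial\omega_{\sigma,\tau}}\alpha_\sigma|_\tau$ minus $d''\phi\wedge\langle\alpha_\sigma;\cdot\rangle$-type terms, and the latter cancel \emph{exactly} against $d_P''\phi\wedge\partial'\alpha$, leaving $\sum_\tau\bigl(\sum_\sigma\frac{\partial\phi_\sigma}{\partial\omega_{\sigma,\tau}}\alpha_\sigma|_\tau-\sum_i\frac{\partial\phi_\tau}{\partial v_i}\beta_i\bigr)[\tau]$, which is verbatim \cref{corner locus}. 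In particular there is no separate ``orientation and sign bookkeeping'' step and no extra use of balancing beyond the decomposition itself (which is already built into both \cref{expression of boundary derivatives} and \cref{corner locus}); nor must one check in advance that the facet contributions ``glue to a balanced current'' --- balancedness of $\mathrm{div}(\phi)\cdot\alpha$ falls out afterwards from \cref{main theorem of delta forms}, since the right-hand side is visibly a $\delta$-form.

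There is also a structural flaw in how you invoke the product: you appeal to the $\wedge$-product and Leibniz rules of \cref{intersection}, and your intermediate object $\partial'\beta\wedge\alpha$ (with $\partial'\beta\in B^{0,0,1}(\widetilde{\Omega})$, which is not piecewise smooth) is a genuine $B\wedge B$ product. Within this paper that is circular: the uniqueness and existence proof of \cref{intersection} uses \cref{tropical poincare-lelong formula} (through the identity $[\Delta]=d'd''\phi_1\wedge\cdots\wedge d'd''\phi_r$), and the paper explicitly remarks that the proof of the tropical Poincar\'e--Lelong formula may use only equality~(\ref{wedge product1}), i.e.\ only the products of \cref{wedge product for ps and delta-form} between piecewise smooth forms and $\delta$-forms. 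The paper's proof respects this --- every product occurring there is $d_P''\phi\wedge(\cdot)$ with $d_P''\phi\in PS^{0,1}(\widetilde{\Omega})$ --- whereas your argument, as written, presupposes the very product whose construction depends on this theorem. The fix is to eliminate $\partial'\beta\wedge\alpha$ as a standalone object and work only with the combination $\partial'(d_P''\phi\wedge\alpha)+d_P''\phi\wedge\partial'\alpha$, then carry out the facet-by-facet computation above.
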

Here $d_P', d_P''$ (resp. $\partial', \partial''$) are polyhedral derivatives (resp. boundary derivatives), and $d'=d_P'+\partial', d''=d''_P+\partial''$. 

In \cref{pushforwardandpullback}, we consider two kinds of push-forwards of $\delta$-forms via integral $\R$-affine maps, one is from supercurrent point of view, the other is from tropical cycle point of view. We define the pull-backs of $\delta$-forms, this is crucial for us to define $\delta$-forms on algebraic varieties. Moreover, we prove the following result.
\begin{theorem}[\cref{projection formula for delta forms}]
	The presheaf $B: \mathcal{AT}op_{\Z,\R}\rightarrow \mathcal{R}ing, \ \ (N,\widetilde{\Omega})\mapsto B(\widetilde{\Omega})$ is a $C^\infty$-module. Moreover, for a morphism $F: (N',\widetilde{\Omega}')\rightarrow (N,\widetilde{\Omega})$, the following statements hold.  
	\begin{enumerate}
		\item[(1)] The $\wedge$-product on $B(\widetilde{\Omega})$ satisfies Leibniz rules with respect to $d',d'', d'_P, d''_P, \partial'$ and $\partial''$.
		\item [(2)] $F^*$ commutes with $d', d'', d_P, d''_P, \partial',\partial''$, and it is graded, i.e. $F^*(B^{p,q,l}(\widetilde{\Omega}))\subset B^{p,q,l}(\widetilde{\Omega}')$,.
		\item[(3)] (Projection formula for $F_*$) for any $\alpha\in B_c(\widetilde{\Omega}'), \beta\in B(\widetilde{\Omega})$, we have
\[F_*(\alpha\wedge F^*\beta) = F_*\alpha\wedge\beta \in B(\widetilde{\Omega}).\]
\item[(4)] (Projection formula for $\widehat{F}_*$) For any $\alpha\in B(N_\R'), \beta\in B(N_\R)$, we have
\[\widehat{F}_*(\alpha\wedge F^*\beta) = \widehat{F}_*\alpha\wedge\beta \in B(N_\R).\]
	\end{enumerate}
\end{theorem}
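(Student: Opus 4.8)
The plan is to treat the four assertions in increasing order of difficulty, reducing each to the already-established properties of superforms together with the two characterizations of $\delta$-forms recalled in \cref{main theorem of delta forms} (balancedness) and \cref{intersection} (uniqueness of the $\wedge$-product). The $C^\infty$-module structure is the easiest point: the action of a smooth function $f$ on $B(\widetilde{\Omega})$ is wedging with $f$ regarded as an element of $B^{0,0,0}(\widetilde{\Omega})$, and its compatibility with the restriction maps of the presheaf is a purely local statement that follows from the sheaf-theoretic construction of $\delta$-forms. The grading in (2), namely $F^*(B^{p,q,l}(\widetilde{\Omega}))\subset B^{p,q,l}(\widetilde{\Omega}')$, is likewise built into the definition of the pull-back, since an integral $\R$-affine map preserves the dimension type of the underlying polyhedral data and the superform pull-back is bidegree-preserving.

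For the Leibniz rules in (1) and the commutation statement in (2) I would first dispose of the polyhedral derivatives $d_P', d_P''$, which act as the ordinary superform differentials on each polyhedron of the underlying complex; there the Leibniz rule and the commutation with $F^*$ are the classical facts for superforms recalled in \cref{superforms on N}. It then remains to handle the boundary operators $\partial', \partial''$, and here the balancedness criterion of \cref{main theorem of delta forms} is the right tool: a $\delta$-form is determined by its restrictions to the open cells together with the balancing data across codimension-one faces, and $\partial', \partial''$ record exactly the jumps across these faces. The Leibniz rule for $\partial'$ on a $\wedge$-product thus reduces to tracking how the weights on shared faces combine under the product of \cref{intersection}, while the commutation $F^*\partial' = \partial' F^*$ reduces to the compatibility of $F^*$ with the face structure of an integral $\R$-affine map. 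Since $d'=d_P'+\partial'$ and $d''=d_P''+\partial''$, the remaining cases follow by linearity.

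The heart of the theorem is the pair of projection formulas (3) and (4), which I would prove by duality against test forms. The push-forward $F_*$ is defined as the adjoint of $F^*$ with respect to the integration pairing, so that $\int_{\widetilde{\Omega}'} F^*\omega\wedge\gamma = \int_{\widetilde{\Omega}}\omega\wedge F_*\gamma$ whenever the pairing converges; the hypothesis $\alpha\in B_c(\widetilde{\Omega}')$ in (3) is precisely what guarantees convergence and the existence of $F_*\alpha$. Given a test form $\omega$ on $\widetilde{\Omega}$ of complementary bidegree, I would compute $\int_{\widetilde{\Omega}}\omega\wedge F_*(\alpha\wedge F^*\beta)$ by adjunction as $\int_{\widetilde{\Omega}'} F^*\omega\wedge\alpha\wedge F^*\beta$; using associativity of the $\wedge$-product, the multiplicativity $F^*(\omega\wedge\beta)=F^*\omega\wedge F^*\beta$, and the standard sign/bidegree bookkeeping, this equals $\int_{\widetilde{\Omega}'} F^*(\omega\wedge\beta)\wedge\alpha$, which by adjunction again is $\int_{\widetilde{\Omega}}(\omega\wedge\beta)\wedge F_*\alpha = \int_{\widetilde{\Omega}}\omega\wedge(F_*\alpha\wedge\beta)$. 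Since test forms separate $\delta$-currents, the identity $F_*(\alpha\wedge F^*\beta)=F_*\alpha\wedge\beta$ follows. The formula for $\widehat{F}_*$ in (4) is proved along the same lines, except that $\widehat{F}_*$ is the tropical-cycle push-forward defined on all of $N_\R'$; no compactness is needed because the tropical push-forward of a balanced current is again balanced and globally defined for an integral $\R$-affine $F$, so the same adjunction computation goes through with $N_\R'$ and $N_\R$ in place of $\widetilde{\Omega}'$ and $\widetilde{\Omega}$.

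The main obstacle I anticipate is the interaction of the push-forwards with the $\wedge$-product, since the latter is not the naive pointwise wedge but the distinguished product of \cref{intersection}, whose value on singular $\delta$-forms is pinned down only by a uniqueness property. Concretely, the multiplicativity $F^*(\omega\wedge\beta)=F^*\omega\wedge F^*\beta$ and the associativity invoked in the duality computation must be justified for this product, not merely for superforms; I expect to establish them by first verifying them on the dense class of products of smooth superforms with tropical cycles—where both sides are computed directly and agree by the compatibility of pull-back with tropical intersection—and then extending by the continuity and uniqueness built into \cref{intersection}. A secondary difficulty is reconciling the two push-forwards: one must check that the convergence issues hidden in the integration pairing for $F_*$ are genuinely resolved by the compact-support hypothesis, whereas for $\widehat{F}_*$ it is global balancedness that plays the analogous role.
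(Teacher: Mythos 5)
Your outline of the $C^\infty$-module structure and of parts (1)--(2) is acceptable (the paper gets the Leibniz rules for $d',d''$ for free from the characterization in \cref{intersection}, checks $d_P',d_P''$ by direct computation via \cref{another definition of wedge product}, and obtains $\partial'=d'-d_P'$, $\partial''=d''-d_P''$ by subtraction, so your separate ``jump-tracking'' argument for $\partial',\partial''$ is unnecessary). The genuine gap is in your proof of (3), and it is circularity. The push-forward $F_*$ of \cref{direct image of current} satisfies $F_*T(\eta)=T(F^*\eta)$ \emph{only} for smooth compactly supported test forms $\eta\in A_c$. Your first adjunction step is therefore legitimate, but your second one --- passing from $\int_{\widetilde{\Omega}'}F^*(\omega\wedge\beta)\wedge\alpha$ to $\int_{\widetilde{\Omega}}(\omega\wedge\beta)\wedge F_*\alpha$ --- applies the adjunction to the \emph{singular} $\delta$-form $\gamma=\omega\wedge\beta$, and for singular $\gamma$ the identity $\int_{\widetilde{\Omega}'}F^*\gamma\wedge\alpha=\int_{\widetilde{\Omega}}\gamma\wedge F_*\alpha$ is precisely the integrated form of the projection formula being proved (note $\int_{\widetilde{\Omega}}F_*(\alpha\wedge F^*\gamma)=\int_{\widetilde{\Omega}'}\alpha\wedge F^*\gamma$, so this step assumes the conclusion). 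Relatedly, the expression $F_*\alpha(\beta\wedge\eta)$ has no meaning, since supercurrents pair only with smooth forms. Your proposed repair --- verify multiplicativity and the adjunction on $\delta$-preforms and ``extend by continuity and uniqueness'' --- does not obviously go through either: the uniqueness in \cref{intersection} pins down the product through the diagonal construction, not through a topology in which preforms are dense and $\wedge$, $F_*$ are jointly continuous; the only limit statement available, \cref{limit construction of wedge product}, concerns translates of a single $\delta$-form, not approximation by preforms.

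The paper closes exactly this gap by a different mechanism, following Allermann's tropical argument. It writes both sides through the diagonal: $\alpha\wedge F^*\beta=p_{1,*}(\Delta_F\wedge(\alpha\times\beta))$ and $F_*\alpha\wedge\beta=q_{1,*}(\Delta\wedge(F_*\alpha\times\beta))$, notes $(F\times\id)^*\Delta=\Delta_F$ and $(F\times\id)_*(\alpha\times\beta)=F_*\alpha\times\beta$, and then invokes the key special case $G_*(G^*\Delta\wedge\gamma)=\Delta\wedge G_*\gamma$. That special case is \cref{basic properties of pull-back1}~(2)(3): since $\Delta$ is an iterated corner locus $\mathrm{div}(\phi_1)\cdots\mathrm{div}(\phi_r)\cdot[N_\R\times N_\R]$ of piecewise smooth functions, it suffices to prove $F_*(\mathrm{div}(F^*\phi)\cdot\alpha')=\mathrm{div}(\phi)\cdot F_*\alpha'$, which follows from the tropical Poincar\'e--Lelong expression $\mathrm{div}(\phi)\cdot\alpha'=\partial'(d_P''\phi\wedge\alpha')+d_P''\phi\wedge\partial'\alpha'$ of \cref{tropical poincare-lelong formula}, the commutation of $F_*$ with $\partial'$, and the directly computed projection formula for piecewise smooth factors. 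The chain $F_*\alpha\wedge\beta=q_{1,*}(\Delta\wedge(F\times\id)_*(\alpha\times\beta))=q_{1,*}(F\times\id)_*(\Delta_F\wedge(\alpha\times\beta))=F_*p_{1,*}(\Delta_F\wedge(\alpha\times\beta))=F_*(\alpha\wedge F^*\beta)$ then gives (3) without any pairing against singular forms, and (4) follows by applying the grading projection $\pr: B(N_\R)\rightarrow B^{p+p',q+q'}_{n+n'-\rk N'}(N_\R)$ to (3). To salvage your write-up you should replace the duality computation by this diagonal/corner-locus argument, or else independently prove the adjunction against singular $\gamma$ --- which amounts to the same work.
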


In Section~\ref{charts on an algebraical variety}, we review the concept of tropicalization and canonical charts on algebraic varieties. Then we define the $\delta$-forms following the idea in \cite{gubler2017a}. Our definition gives more $\delta$-forms such that we have differentials without taking quotient as in \cite[4.6]{gubler2017a}. Most of concepts and results, e.g. integration, stokes' formula, can be given in a similar way. 


In \cref{The Poincare-Lelong formula and the first Chern delta-forms}, we reprove the Poincar\'e-Lelong formula by the tropical Poincar\'e-Lelong formula.
\begin{theorem}[\cref{poincare-lelong equation},The Poincar\'e-Lelong formula]
	Let $X$ be an integral algebraic variety over $K$. For any rational function $f\in K^*(X)$, the Poincar\'e-Lelong equation\[\delta_{[\mathrm{div}(f)]} = d'd''[\log|f|]\]
	holds in $E^{1,1}(X^\an)$.
	\label{poincare-lelong equation in introduction}
\end{theorem}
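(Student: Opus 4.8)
The plan is to reduce this global current identity on $X^\an$ to the tropical Poincaré-Lelong formula on charts. The statement is local on $X^\an$, so it suffices to test both sides against compactly supported $\delta$-forms and to verify the equality on each member of an open cover by canonical charts, gluing afterwards by a piecewise smooth partition of unity. By the definition of the differential on $\delta$-currents, for a test form $\eta$ one has $d'd''[\log|f|](\eta) = [\log|f|](d''d'\eta)$ up to the usual sign, so the real content is to compute the distributional $d'd''$ of $\log|f|$ and match it with integration over $\mathrm{div}(f)$.

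First I would choose, around a given point of $X^\an$, a canonical chart $\mathrm{trop}\colon U^\an \to N_\R$ whose defining very affine coordinates include the rational function $f$ itself (after shrinking $U$ so that $f$ is a coordinate up to a unit). With such a choice, $\log|f|$ is the pullback $\mathrm{trop}^*\phi$ of an integral $\R$-affine, hence piecewise smooth, function $\phi\in PS^{0,0}(\widetilde{\Omega})$ on the image, namely, up to sign, one of the linear coordinates on $N_\R$. Because $\phi$ is piecewise \emph{linear}, its polyhedral second derivative vanishes, $d_P'd_P''\phi = 0$, and the tropical Poincaré-Lelong formula (the first displayed theorem of the introduction) specializes on the chart to
\[\mathrm{div}(\phi)\cdot\alpha = d'd''\phi\wedge\alpha,\]
where $\alpha$ is the fundamental $\delta$-form $[\mathrm{Trop}(U)]$ through which $\delta$-forms on $U^\an$ are defined.

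Next I would identify the corner locus $\mathrm{div}(\phi)\cdot\alpha$ with the tropicalization of the Weil divisor $\mathrm{div}(f)$. This is the crux: the weight that the corner locus assigns to a codimension-one face of $\mathrm{Trop}(U)$ should be, by the balancing and weight formula, exactly the lattice length times the order of vanishing $\mathrm{ord}_Z(f)$ of $f$ along the corresponding component, so that $\mathrm{div}(\phi)\cdot\alpha = \mathrm{trop}_*[\mathrm{div}(f)]$ as tropical cycles. Pulling this identity back through the chart, the left-hand side becomes $d'd''[\log|f|]$ as a $\delta$-current on $U^\an$, while the right-hand side becomes the current of integration $\delta_{[\mathrm{div}(f)]}$. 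Summing over the cover against the partition of unity, and using that $F^*$ commutes with $d',d''$ together with the projection formula to control the overlaps, should yield the asserted equality in $E^{1,1}(X^\an)$.

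The main obstacle is precisely the identification in the third step: matching the combinatorial weights of the corner locus with the algebraic multiplicities $\mathrm{ord}_Z(f)$. This is where the tropical multiplicity-equals-order-of-vanishing phenomenon enters, and it must be checked compatibly across chart changes and in the presence of the recession behaviour of $\mathrm{Trop}(U)$ near the boundary. The piecewise linearity of $\phi$, hence the vanishing of the smooth curvature term $d_P'd_P''\log|f|$, is what makes the formula clean; but verifying the weights requires the compatibility of tropicalization with divisors on the non-proper chart, and careful bookkeeping of the partition of unity near the boundary so that the gluing does not introduce spurious boundary contributions.
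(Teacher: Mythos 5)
There is a genuine gap, and it sits exactly where you yourself locate ``the crux.'' Your plan requires a canonical chart whose defining coordinates include $f$, so that $\log|f|$ becomes the pull-back of a linear function $\phi$. But the coordinates of a canonical moment map are representatives of a basis of $M_U=\OO_X(U)^\times/K^\times$, i.e.\ \emph{units} on $U$ (\cref{canonical tropical charts}); demanding that $f$ be among them (even up to a unit) forces $U\cap\Supp(\mathrm{div}(f))=\emptyset$, so every chart of this kind misses the divisor, and no cover of $X^\an$ by such charts can reach the points of $V(f)^\an$ --- the only locus where the theorem has content. Worse, on such a chart your key identity degenerates: for $\phi$ affine the corner locus vanishes identically, since by \cref{corner locus} the two terms $\sum_\sigma\frac{\partial\phi}{\partial\omega_{\sigma,\tau}}\alpha_\sigma|_\tau$ and $\sum_i\frac{\partial\phi}{\partial v_i}\beta_i$ are the images of the \emph{same} tensor under $\id\otimes d\phi$ (equivalently, $\mathrm{div}(\phi)\cdot\alpha=d'd''\phi\wedge\alpha-d_P'd_P''\phi\wedge\alpha=0$ for globally smooth $\phi$). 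This is consistent --- $\mathrm{trop}_*[\mathrm{div}(f)\cap U]=0$ because the divisor is absent --- but it means your third step establishes only $0=0$ away from the divisor; the divisor's contribution lives at infinity of $\mathrm{Trop}(U)$, where a partition of unity on $X^\an$ cannot retrieve it. The weight identification you defer with ``should be, by the balancing and weight formula'' is precisely the nontrivial input, and your outline also never justifies that $[\log|f|]$ pairs convergently with test forms whose support meets $V(f)$.

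The paper's proof supplies exactly the missing mechanism, and differently from your sketch: no partition of unity (by \cite[Proposition~5.7]{gubler2017a} a single well-chosen chart per test form $\gamma$ suffices, with $\Supp(d''\gamma)\cap V(f)=\emptyset$), and the function $-\log|f|$ is adjoined as an \emph{extra} tropical coordinate, $h=(g,-\log|f|)$, on the annular region $W_I=|f|^{-1}(I)$. The multiplicity statement is then imported from \cite[Proposition~4.6.6]{chambert2012formes} (\cref{canonical cycle for subspace}): near the divisor the $n$-skeleton of $h_*(\mathrm{cyc}(W_I))$ has the product structure $C\times[-\log(I)]$, and slicing with the hyperplane $H_{-\log(s)}$ recovers $g_*(\mathrm{cyc}(V(f)))$ (\cref{canonical cycle for subspace2}). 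The corner locus actually used is that of the \emph{truncated} functions $\phi=\max\{x_{r+1},-\log(s)\}$ and $\psi=\min\{x_{r+1},-\log(s)\}$, with $s$ chosen so small that $|f|>s$ on $\Supp(d''\gamma)$; the truncation simultaneously regularizes the singularity of $\log|f|$ and, via \cref{tropical poincare-lelong formula} together with the projection formula (\cref{projection formula for delta forms}), converts $\int_{V(f)}\gamma$ into $\int_{X\setminus V(f)}\log|f|\wedge d'd''\gamma$. To close your argument you would need either this interval-slicing device or a compactified-tropicalization version of the weight identification; as written, the proposal does not go through.
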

Notice that our method is different from \cite{chambert2012formes} and \cite{gubler2017a}, we use \cite[Proposition~4.6.6]{chambert2012formes} and the tropical Poincar\'e-Lelong formula. This theorem allows us to define the first Chern $\delta$-forms $c_1(\overline{L})$ of a metrized line bundle $\overline{L}$ with piecewise smooth metric $||\cdot||$, this generalizes the same notion for $\delta$-metrics in \cite[Section~9]{gubler2017a}.

\subsection*{Notation and terminology}

For terminology in convex geometry, we refer to \cite[Appendix]{gubler2017a} and \cite{mikhalkin2018tropical}. We write $N, N',\cdots$ for finitely generated free abelian groups and $N^*, (N')^*\cdots$ for their dual groups. The rank of $N$ is denoted by $\rk(N)$. Without confusion, we use the notation $N_\R:=N_\R\otimes_\Z\R$ both for the $\R$-linear space and the corresponding affine space. We write $\sigma,\tau,\cdots$ for integral $\R$-affine polyhedra on $N_\R$. For a given integral $\R$-affine polyhedron $\sigma$, we denote $\mathbb{A}_{\sigma}$ the affine subspace generated by the polyhedron $\sigma$, and $N_{\sigma,\R}$ the underlying vector space of $\mathbb{A}_{\sigma}$. We set $N_\sigma=N\cap N_{\sigma,\R}$. We also write $\mathrm{relint}(\sigma)$ for the set of internal of $\sigma$ in $\mathbb{A}_\sigma$ and $\partial\sigma=\sigma\setminus\mathrm{relint}(\sigma)$.

Given an integral $\R$-affine polyhedral complex $\mathcal{C}$ in $N_\R$ and $\sigma, \tau\in \mathcal{C}$, we write $\tau\leq\sigma$ if $\tau$ is a face of $\sigma$, write $\tau<\sigma$ if moreover $\tau\not=\sigma$. We call $\tau$ a facet of $\sigma$ if $\tau<\sigma$ and $\dim(\sigma)-\dim(\tau)=1$. We denote $\mathcal{C}^l$ (resp. $\mathcal{C}_n$) the polyhedra in $\mathcal{C}$ of codimension $l$ (resp. dimension $n$) with respect to $N_\R$. A smooth weight on $\mathcal{C}$ is a family of functions $\{m_\sigma\}_\sigma$ with $m_\sigma$ a smooth function on $\mathrm{relint}(\sigma)\subset \mathbb{A}_\sigma$, where $\sigma$ runs through top-dimension polyhedra in $\mathcal{C}$. The refinement relation is an equivalence relation on the set of integral $\R$-affine complexes with constant weights.  We will write $C=\sum\limits_{\sigma}m_\sigma[\sigma]$ for an equivalence class, and $|C|=\bigcup\limits_{\sigma}\Supp(\sigma)$. Furthermore, we denote the set of equivalence classes of pure dimension $n$ by $F_n(N_\R)$, and $F^l(N_\R):= F_{\rk(N)-l}(N_\R)$. 

For an affine map $F: N_\R'\rightarrow N_\R$, we denote the corresponding linear map by $dF: N_\R'\rightarrow N_\R$. For convenience, we set the $\mathcal{AT}op_{\Z,\R}$ the category of open subsets of affine spaces, i.e. objects are $(N,\widetilde{\Omega})$ with $\widetilde{\Omega}\subset N_\R$ and morphisms are  integral $\R$-affine maps $F:(N',\widetilde{\Omega}')\rightarrow (N, \widetilde{\Omega})$ such that $F(\widetilde{\Omega}')\subset\widetilde{\Omega}$.  

Throughout this paper, $K$ is a complete non-archimedean field with non-trivial valuation. For an algebraic variety $X$ over $K$, we mean a separated scheme of finite type over $K$. We denote $X^\an$ the analytification of $X$ in sense of Berkovich \cite{berkovich1990spectral}.


\section{Superforms and supercurrents on $N_\R$}

In this section, we will recall superforms and supercurrents on affine spaces. The notion is firstly introduced by Lagerberg~\cite{lagerberg2012super}. The references \cite[Section~1]{chambert2012formes} and \cite[Section~2]{gubler2016forms} also give a complete discussion of superforms and supercurrents. 

Throughout this section, $N$ and $N'$ denote free abelian groups of finite rank $r$ and $r'$. Since most of results in this section are not hard to prove, we will leave them to readers if we don't prove here.

\label{superforms on N}

\subsection{Superforms}

\begin{definition}
	For an open subset $U\subset N_\R$, a {\bf $(p,q)$-superform} on $U$ is an element in 
	\[A^{p,q}(U): = C^\infty(U)\otimes\wedge^{p}(N_\R)^*\otimes\wedge^q(N_\R)^*.\]
	Formally, a $(p,q)$-superform can be written as 
	\[\alpha = \sum\limits_{|I|=p, |J|=q}\alpha_{IJ}d'x_I\wedge d''x_J,\]
	where $I$ (resp. $J$) consists of $0<i_1<\cdots < i_p\leq r$ (resp. $0<j_1<\cdots< j_q\leq r$), $\alpha_{IJ}\in C^\infty(U)$ and
	\[d'x_I\wedge d''x_J: = d'x_{i_1}\wedge\cdots \wedge d'x_{i_p}\otimes d''x_{j_1}\wedge\cdots \wedge d''x_{j_p}.\]
	We denote \[A(U):=\bigoplus\limits_{p,q=1}^nA^{p,q}(U),\] which can be viewed as a $C^\infty(U)$-module.
	The {\bf wedge product} is defined in the usual way in $A(U)$.

	There is a {\bf differential operator} $d': A^{p,q}(U)\rightarrow A^{p+1,q}(U)$ defined by
	\[d'\alpha = \sum\limits_{|I|=p, |J|=q}\sum\limits_{i=1}^r\frac{\partial\alpha_{IJ}}{\partial x_i}d'x_i\wedge d'x_I\wedge d''x_J.\]
	Similarly, we define  $d'': A^{p,q}(U)\rightarrow A^{p,q+1}(U)$ by
	\begin{align*}
		d''\alpha &=\sum\limits_{|I|=p, |J|=q}\sum\limits_{j=1}^r\frac{\partial\alpha_{IJ}}{\partial x_j}d''x_j\wedge d'x_I\wedge  d''x_J\\
		&=\sum\limits_{|I|=p, |J|=q}\sum\limits_{j=1}^r(-1)^p\frac{\partial\alpha_{IJ}}{\partial x_j} d'x_I\wedge d''x_j\wedge d''x_J.
	\end{align*}
	Moreover, we set $d=d'+d''$.
	
	We define the operation ${J}: A^{p,q}(U)\rightarrow A^{q,p}(U)$ by 
	\[{J}\alpha = (-1)^{pq}\sum\limits_{|I|=p, |J|=q}\alpha_{IJ}\wedge d'x_J\wedge  d''x_I.\]
	A superform $\alpha\in A^{p,p}(U)$ is called {\bf symmetric} if $\alpha = (-1)^pJ\alpha$.
	
	Obviously, $A^{p,q}(U)$ defines a sheaf ${A}^{p,q}$ on $N_\R$ with respect to the general topology. We denote  the space of $(p,q)$-superforms on $U$ with compact support by $A_c^{p,q}(U)$.
	\label{definition of superform}
\end{definition}
\begin{remark}
	\begin{enumerate}
		\item[(1)] We have obvious properties: $d'd'=d''d''=0$. For $\alpha\in A^{p,q}(U), \beta\in  A^{p',q'}(U)$, we have
		\[d'(\alpha\wedge\beta) = d'\alpha\wedge\beta +(-1)^{p+q}\alpha\wedge d'\beta.\]
		\item [(2)] For an affine map $F: N_\R'\rightarrow N_\R$ and any open subset $U\subset N_\R$, we have a pull-back
		\begin{align*}
			F^*: A^{p,q}(U)&\rightarrow A^{p,q}(F^{-1}(U)),\\
			d'x_I\wedge d"x_J&\mapsto \sum\limits_{|\widetilde{I}|=p, |\widetilde{J}|=q}F_{I\widetilde{I}}F_{J\widetilde{J}}d'x_{\widetilde{I}}'\wedge d"x_{\widetilde{J}}',
		\end{align*}
		which commutes with $d', d''$, where $F_{I\widetilde{I}}=\det(F(I,\widetilde{I}))\in \Z$ is the $(I,\widetilde{I})$-th minor of the linear map $dF: N'_\R\rightarrow N_\R$. 
		
		\item[(3)] In convention, for a smooth function $f\in C^\infty(U)$ and $v = \sum\limits_{i=1}^ra_ie_i\in N_\R$, where $e_1,\cdots, e_r$ is a basis of $N$ corresponding to the linear functions $x_1,\cdots, x_r$, then we denote 
		\[\frac{\partial f}{\partial v}:= \sum\limits_{i=1}^ra_i\frac{\partial f}{\partial x_i}.\]
	\end{enumerate}
\end{remark}

\subsection{Integration}

\begin{definition}
	Let $x_1,\cdots, x_r \in N^*$ be the coordinates for some basis of $N$. For any open subset $U\subset N_\R$ and $\alpha=\alpha_{LL}d'x_L\wedge d''x_L\in A_c^{r,r}(U)$ with $L=\{1,\cdots, r\}$, we  set 
	\[\int_U\alpha: = (-1)^{r(r-1)/2}\int_U\alpha_{LL}dx_1\wedge\cdots\wedge dx_r=(-1)^{r(r-1)/2}\int_U\alpha_{LL}dx_1\cdots dx_r\]
	where the right-hand side is the integration on $\R^r$.
	
	For a polyhedron $\sigma\subset U$ of dimension $r$, we set
	\[\int_\sigma\alpha: = (-1)^{r(r-1)/2}\int_\sigma\alpha_{LL}dx_1\wedge\cdots\wedge dx_r=(-1)^{r(r-1)/2}\int_\sigma\alpha_{LL}dx_1\cdots dx_r.\]
	
	\label{definition of integration}
\end{definition}
\begin{remark}
	\begin{enumerate}
		\item[(1)] This definition is independent of the choice of orientation.
	\end{enumerate}
\end{remark}

The following proposition shows that the definition of integration  is independent of the choice of basis of $
N$.

\begin{proposition}
	Assume that $N, N'$ are both of rank $r$. Let $F: N'_\R\rightarrow N_\R$ be an $\R$-linear isomorphism. Then for any open subset $U\subset N_\R$ and $\alpha\in A_c^{r,r}(U)$
	\[\int_{F^{-1}(U)}F^*\alpha = |\det(dF)|\int_U\alpha.\] 
	where $dF:N_\R'\rightarrow N_\R$ the linear map on the tangent spaces. 
	\label{integral via bijection}
\end{proposition}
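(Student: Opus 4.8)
The plan is to reduce everything to the single top-degree component and then invoke the classical change-of-variables formula for linear maps. Fix bases of $N$ and $N'$ with associated coordinates $x_1,\dots,x_r$ on $N_\R$ and $x'_1,\dots,x'_r$ on $N'_\R$, and write $L=\{1,\dots,r\}$ on both sides. Since $N$ and $N'$ both have rank $r$, the modules $A^{r,r}(U)$ and $A^{r,r}(F^{-1}(U))$ are each free of rank one over the respective rings of smooth functions, so I may write $\alpha=\alpha_{LL}\,d'x_L\wedge d''x_L$ with $\alpha_{LL}$ a compactly supported smooth function on $U$. By \cref{definition of integration} this gives $\int_U\alpha=(-1)^{r(r-1)/2}\int_U\alpha_{LL}\,dx_1\cdots dx_r$.

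First I would compute $F^*\alpha$ explicitly using the pull-back formula in Remark~(2). Because the only multi-index of size $r$ in either group is the full set, the double sum collapses to a single term, and
\[F^*(d'x_L\wedge d''x_L)=F_{LL}\,F_{LL}\;d'x'_{L}\wedge d''x'_{L}=\det(dF)^2\,d'x'_{L}\wedge d''x'_{L},\]
where I use that the full minor $F_{LL}=\det(dF(L,L))$ is exactly $\det(dF)$. Hence $F^*\alpha=(\alpha_{LL}\circ F)\,\det(dF)^2\,d'x'_{L}\wedge d''x'_{L}$, and applying \cref{definition of integration} on $F^{-1}(U)$ turns the left-hand side into $(-1)^{r(r-1)/2}\det(dF)^2\int_{F^{-1}(U)}(\alpha_{LL}\circ F)\,dx'_1\cdots dx'_r$.

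Next I would apply the ordinary change-of-variables theorem on $\R^r$ to the linear map $dF$, which yields
\[\int_{F^{-1}(U)}(\alpha_{LL}\circ F)\,dx'_1\cdots dx'_r=|\det(dF)|^{-1}\int_U\alpha_{LL}\,dx_1\cdots dx_r;\]
here the hypothesis that $F$ is an isomorphism guarantees that $F^{-1}(U)$ is open and that $F^*\alpha$ again has compact support, so all integrals converge. Combining the two displays, the determinant factors collapse via $\det(dF)^2/|\det(dF)|=|\det(dF)|$, the common sign $(-1)^{r(r-1)/2}$ cancels, and I obtain $\int_{F^{-1}(U)}F^*\alpha=|\det(dF)|\int_U\alpha$, as claimed.

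The computation is essentially routine; the one point that requires care — and the conceptual heart of the statement — is the appearance of the square $\det(dF)^2$, one factor coming from the $d'$-part and one from the $d''$-part of the superform, which then combines with the single Jacobian factor $|\det(dF)|^{-1}$ coming from the measure-theoretic change of variables to produce precisely $|\det(dF)|$ (an absolute value, not a signed determinant) on the right. I would also note that the sign $(-1)^{r(r-1)/2}$ plays no role beyond cancelling, consistent with the orientation-independence recorded after \cref{definition of integration}.
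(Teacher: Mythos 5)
Your proof is correct, and since the paper explicitly leaves the routine results of this section to the reader, it supplies no proof of its own to diverge from: your argument is exactly the intended one. Collapsing the pull-back formula to the single full minor $F_{LL}=\det(dF)$, so that $F^*\alpha=(\alpha_{LL}\circ F)\,\det(dF)^2\,d'x'_L\wedge d''x'_L$, and then invoking the classical change of variables to contribute $|\det(dF)|^{-1}$, with the sign $(-1)^{r(r-1)/2}$ cancelling on both sides, gives $\det(dF)^2/|\det(dF)|=|\det(dF)|$ as required (and the same computation covers the affine case implicit in the notation $dF$, since the translation part leaves Lebesgue measure invariant).
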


\

Integration can be defined on a polyhedron complex with weights of pure top dimension.

\begin{definition}
	Let $C=\sum\limits_{\sigma\in\mathcal{C}^0}m_\sigma[\sigma]\in F^{0}(N_\R)$ be a polyhedron complex of pure dimension $r$ with constant weights. For an $(r,r)$-superform $\alpha$ is defined on a neighborhood of $|\mathcal{C}|$ with compact support,  we set
	\[\int_{C}\alpha: = \sum\limits_{\sigma\in \mathcal{C}^0}m_\sigma\int_{\sigma}\alpha, \]
	here we use integration from \cref{definition of integration} on the right. Obviously, this is independent of the choices of representatives of $C$.
	\label{definition of integration of superforms on polyhedron complex with weights}
\end{definition}

For further definition of more general integration, we use contraction.

\begin{definition}
	For any open subset $U\subset N_\R$, $\alpha\in A^{p,q}(U)$, and $((v_i)_{i\in I}, (w_j)_{j\in J})$ with $I\subset \{1,\cdots, p\}, J\subset \{1,\cdots, q\}$ and $v_i, w_j\in N_\R$, the {\bf contraction} $\langle\alpha; ((v_i)_{i\in I}, (w_j)_{j\in J})\rangle \in A^{p-|I|, q-|J|}(U)$ is given by 
	in inserting $v_i$, $i\in I$ and $w_j$, $j\in J$ at corresponding positions. 
	\label{contraction}
\end{definition}
		$\alpha: N_\R^{p+q}\rightarrow C^\infty(U)$.

\begin{lemma}
	Keep the notion in \cref{contraction}. 
	\begin{enumerate}
		\item [(1)](Leibniz rule) For any $\alpha\in A^{p,q}(U), \beta\in  A^{p',q'}(U)$ and $v\in N_\R$, we have
		\[\langle\alpha\wedge \beta; ((v)_{\{1\}}, \emptyset)\rangle = \langle\alpha; ((v)_{\{1\}}, \emptyset)\rangle\wedge\beta+ (-1)^{p+q}\alpha\wedge\langle\beta; ((v)_{\{1\}}, \emptyset)\rangle,\]
		\[\langle\alpha\wedge \beta; (\emptyset, (v)_{\{1\}})\rangle = \langle\alpha; (\emptyset, (v)_{\{1\}})\rangle\wedge\beta+ (-1)^{p+q}\alpha\wedge\langle\beta; (\emptyset,(v)_{\{1\}})\rangle.\]
		\item[(2)] For any $f\in C^\infty(U)$ and $v\in N_\R$, we have
		\[\langle d'f; ((v)_{\{1\}}, \emptyset)\rangle = \langle d''f; (\emptyset, (v)_{\{1\}})\rangle= \frac{\partial f}{\partial v}.\]
	\end{enumerate}
	\label{basic properties of contraction}
\end{lemma}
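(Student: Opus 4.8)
The plan is to reduce both statements to computations on monomials and then invoke the standard anti-derivation property of the interior product. Since contraction is $C^\infty(U)$-linear and both sides of each identity are additive in $\alpha$ and $\beta$, it suffices by bilinearity to treat monomial superforms $\alpha = f\, d'x_I\wedge d''x_J$ and $\beta = g\, d'x_K\wedge d''x_L$ with $|I|=p$, $|J|=q$, $|K|=p'$, $|L|=q'$. The key observation is that, under the interpretation of a superform as a multilinear map alternating separately in its $d'$- and $d''$-arguments, contraction in the first slot $\langle\,\cdot\,;((v)_{\{1\}},\emptyset)\rangle$ acts only on the $d'$-factor $\wedge^\bullet(N_\R)^*$ and coincides there with the usual interior product $\iota_v$; contraction in the second slot acts analogously on the $d''$-factor, leaving the $d'$-factor untouched.

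For part (2), I would compute directly. Writing $d'f = \sum_{i=1}^r \frac{\partial f}{\partial x_i}\, d'x_i$ and $v = \sum_{i=1}^r a_i e_i$, inserting $v$ into the single $d'$-argument gives $d'x_i(v) = a_i$, so that
\[\langle d'f; ((v)_{\{1\}},\emptyset)\rangle = \sum_{i=1}^r a_i\frac{\partial f}{\partial x_i} = \frac{\partial f}{\partial v}\]
by the convention of Remark~(3); the computation for $d''f$ is identical, now inserting $v$ into the $d''$-argument.

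For part (1), having reduced to monomials, I would first expand the superform wedge product $\alpha\wedge\beta$, recording the Koszul sign $(-1)^{qp'}$ arising from moving the $q$ factors $d''x_J$ past the $p'$ factors $d'x_K$. Applying $\iota_v$ to the combined $d'$-factor $d'x_I\wedge d'x_K$ via the anti-derivation rule $\iota_v(\omega\wedge\eta) = \iota_v\omega\wedge\eta + (-1)^{\deg\omega}\,\omega\wedge\iota_v\eta$ on $\wedge^\bullet(N_\R)^*$ splits the contraction into two terms. Re-expanding the two terms of the claimed right-hand side in the monomial basis and comparing, the main point is pure sign bookkeeping: the first term matches immediately with sign $(-1)^{qp'}$, while for the second one must check that the total-degree exponent $p+q$ prescribed by the Leibniz rule combines with the Koszul sign $(-1)^{q(p'-1)}$ of reordering $d''x_J$ past $\iota_v(d'x_K)$ to reproduce the sign $(-1)^{qp'+p}$ coming from $\iota_v$; this is precisely the identity $p+q+q(p'-1)=p+qp'=qp'+p$. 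This reconciliation is the only genuinely delicate step, since the contraction touches a single graded factor while the Leibniz sign is governed by the total degree $p+q$ of $\alpha$; the second-slot identity then follows by the symmetric computation on the $d''$-factor.
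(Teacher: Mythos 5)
Your reduction to monomials and your first-slot computation are correct, and they essentially reproduce the paper's own proof in streamlined form: the paper likewise reduces by linearity to $\alpha=d'x_{I'}\wedge d''x_{J'}$, $\beta=d'x_{I''}\wedge d''x_{J''}$ (specializing further to $v=e_i$) and then checks by hand the one delicate case $i\in I'\cap I''$, where the product vanishes but the two right-hand terms must cancel; your appeal to the anti-derivation rule $\iota_v(\omega\wedge\eta)=\iota_v\omega\wedge\eta+(-1)^{\deg\omega}\omega\wedge\iota_v\eta$ on the $d'$-factor handles that cancellation automatically and uniformly in $v$, and your sign identity $p+q+q(p'-1)\equiv p+qp'\pmod 2$ is right. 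Part (2) is also fine.

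However, your dispatch of the second-slot identity has a genuine sign gap. You assert that $\langle\,\cdot\,;(\emptyset,(v)_{\{1\}})\rangle$ acts on the $d''$-factor ``leaving the $d'$-factor untouched,'' and that the identity ``follows by the symmetric computation.'' Read literally, i.e.\ as $d'x_I\wedge d''x_J\mapsto d'x_I\wedge\iota_v(d''x_J)$ with no sign, this makes the claimed identity false: take $\alpha=d'x_1$, $\beta=d''x_1$, $v=e_1$; your model gives left-hand side $+d'x_1$, whereas the right-hand side is $0+(-1)^{1+0}\,d'x_1\,\langle d''x_1;(\emptyset,(v)_{\{1\}})\rangle=-d'x_1$. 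The definition of contraction by ``inserting at corresponding positions'' is ambiguous on this point, and the Leibniz rule with the total-degree sign $(-1)^{p+q}$ in the second slot itself forces the signed convention
\[\langle d'x_I\wedge d''x_J;(\emptyset,(v)_{\{1\}})\rangle=(-1)^{p}\,d'x_I\wedge\iota_v(d''x_J),\]
that is, the insertion must Koszul-pass the $p$ primed factors (equivalently, it is the interior product $\iota_{(0,v)}$ for the total grading in the doubled-variable picture, not $\mathrm{id}\otimes\iota_v$); note that part (2) cannot detect the discrepancy, since there $p=0$. With this sign installed the computation is genuinely not symmetric to the first-slot one, but it does close: the two congruences to verify become $p+(q-1)p'\equiv qp'+p+p'$ and $p+q+p'+qp'\equiv qp'+p+p'+q\pmod 2$, both true. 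So the repair is one line, but as written the step ``the second-slot identity then follows by the symmetric computation'' is precisely where your argument would fail.
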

\begin{proof}
	(1) By linearity, we can assume that $\alpha=d'x_{I'}\wedge d''x_{J'}$, $\beta = d'x_{I''}\wedge d''x_{J''}$ and $v=e_i\in N_\R$ corresponding to the function $x_i$. Then the result is based on the following fact
	\[\langle d'x_I\wedge d''x_J;((e_i)_{\{1\}},\emptyset)\rangle=\begin{cases}
		0& \text{ if $i\not\in I$,}\\
		(-1)^{l-1}d'x_{I\setminus i}\wedge d''x_J& \text{ if $i = i_l\in I$.}
	\end{cases}\]
	We check the case where $i\in I'$ and $i\in I''$. In this case, in the first equality of (1), the left-hand side is $0$ and the right hand side is
	\begin{align*}
		&(-1)^{l'-1}d'x_{I'\setminus i;}\wedge d''x_{J'}\wedge dx_{I''}\wedge d''x_{J''} + (-1)^{p+q}d'x_{I'}\wedge d''x_{J'}\wedge (-1)^{l''-1}d'x_{I''\setminus i}\wedge d''x_{J''}\\=& ((-1)^{l'+qp'-1}d'x_{I'\setminus i;}\wedge dx_{I''} + (-1)^{p+qp'+l''-1}d'x_{I'}\wedge d'x_{I''\setminus i})\wedge d''x_{J'}\wedge d''x_{J''}\\
		=&((-1)^{l'+qp'-1}+(-1)^{p+qp'+l''-1+(p-l'+l''-1)})d'x_{I'\setminus i;}\wedge dx_{I''}\wedge d''x_{J'}\wedge d''x_{J''}\\
		=&0.
	\end{align*}
	
	(2) By definition and linearity, 
	we can assume $v=e_j$. Then the result is obvious.
\end{proof}

\subsection{Stokes' and Green's formula}


Recall, for an integral $\R$-affine polyhedron $\sigma$ and one of its facet $\tau$, we choose a representative $\omega_{\sigma,\tau}\in N_\sigma$ of the generator of the one-dimensional lattice $N_\sigma/N_\tau$ pointing the direction of $\sigma$, such vector $\omega_{\sigma,\tau}$ is called a normal vector for $\sigma/\tau$.

\begin{definition}
	Let $\sigma$ be an $r$-dimensional integral $\R$-affine polyhedron in $N_\R$. For any closed facet $\tau$ of $\sigma$, let $\omega_{\sigma,\tau}\in N_\sigma = N$ be a normal vector for $\sigma/\tau$. For any $(r-1,r)$-superform $\eta$ with compact support on an open neighborhood of $\sigma$, we have the contraction $\langle\eta;(\emptyset,(\omega_{\sigma,\tau})_{\{r\}})\rangle = (-1)^{r-1}\langle\eta;(\emptyset,(\omega_{\sigma,\tau})_{\{1\}})\rangle$, which is an $(r-1,r-1)$-form. Then we set
	\[\int_{\partial\sigma}\eta: = (-1)^r\sum\limits_{\tau\leq \sigma \text{ a facet}}\int_\tau\langle\eta;(\emptyset,(\omega_{\sigma,\tau})_{\{1\}})\rangle, \]
	where the right-hand side is defined in \cref{definition of integration} on $N_{\tau,\R}$. The integration is independent of the choice of $\omega_{\sigma,\tau}$. 
	
	Similarly, any $(r,r-1)$-superform $\eta$ with compact support on an open neighborhood of $\sigma$, we set
	\[\int_{\partial\sigma}\eta: = \sum\limits_{\tau\leq \sigma \text{ a facet}}\int_\tau\langle\eta;((\omega_{\sigma,\tau})_{\{1\}},\emptyset)\rangle. \]
	
	For an integral $\R$-affine polyhedral complex $C=\sum\limits_{\sigma\in\mathcal{C}^0}m_\sigma[\sigma]\in \in F^{0}(N_\R)$ of constant weight, we can define $\int_{\partial C}\eta$ by linearity. 
	This is independent of the choices of representatives of $C$.
	\label{definition of integration on boundary for superforms}
\end{definition} 
\begin{remark}
	\begin{enumerate}
		\item [(1)] Note that for any $(r-1,r)$-superform $\eta$ around $\sigma$ and a facet $\tau\leq \sigma$, the form  $\langle\eta;(\emptyset,(\omega_{\sigma,\tau})_{\{r\}})\rangle|_\tau$ is independent of the choice of $\omega_{\sigma,\tau}$. It is similar for $\langle\eta;((\omega_{\sigma,\tau})_{\{r\}},\emptyset)\rangle|_\tau$ with $\eta$ an $(r,r-1)$-superform around $\sigma$.
		
		\begin{proof}
			This is from the fact that for any $v_1,\cdots, v_r\in N_\tau$, we have 
			\[\langle\eta;(\emptyset,(v_j)_{j\in\{1,\cdots, r\}})\rangle|_\tau = 0.\]
		\end{proof}
		
	\end{enumerate}
\end{remark}

\begin{proposition}[Stokes' formula]
	Let  $C=\sum\limits_{\sigma\in\mathcal{C}^0}m_\sigma[\sigma]\in F^0(N_\R)$ be an integral $\R$-affine polyhedron complex with constant weights. For any $(r-1,r)$-superform $\eta'$ (resp. $(r,r-1)$-superform $\eta''$) with compact support on around $|C|$, we have
	\[\int_{C}d'\eta' = \int_{\partial C}\eta',\]
	\[\int_{C}d"\eta'' = \int_{\partial C}\eta''.\]
	\label{stokes' formula for polyhedra}
\end{proposition}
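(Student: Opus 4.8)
The plan is to reduce the statement to the classical Stokes' theorem on $\R^r$ by unwinding the definitions of $\int_C$, $\int_{\partial C}$, and $d'$, $d''$. Since everything is defined by linearity over the polyhedra $\sigma \in \mathcal{C}^0$ and is insensitive to the choice of representative of the equivalence class $C$, it suffices to prove the two identities for a single $r$-dimensional integral $\R$-affine polyhedron $\sigma$ with unit weight; the general statement then follows by summing against the weights $m_\sigma$. So I would fix such a $\sigma$ and a basis $x_1, \dots, x_r$ of $N^*$, and prove $\int_\sigma d'\eta' = \int_{\partial\sigma}\eta'$ for any $(r-1,r)$-superform $\eta'$ with compact support near $\sigma$, and the analogous identity for $d''$ and an $(r,r-1)$-superform $\eta''$.

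I would first handle the $d'$ statement. Writing $\eta' = \sum_{k=1}^r \eta_k \, d'x_{L\setminus k} \wedge d''x_L$ with $L = \{1,\dots,r\}$ and $\eta_k \in C^\infty$, the operator $d'$ from \cref{definition of superform} produces $d'\eta' = \bigl(\sum_k (-1)^{k-1}\frac{\partial \eta_k}{\partial x_k}\bigr) d'x_L \wedge d''x_L$, so that by \cref{definition of integration} the left-hand side is, up to the sign $(-1)^{r(r-1)/2}$, the integral over $\sigma$ of the divergence-type expression $\sum_k (-1)^{k-1}\partial\eta_k/\partial x_k$ against $dx_1\cdots dx_r$. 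This is precisely the setting of the classical divergence theorem on the $r$-dimensional region $\sigma \subset \mathbb{A}_\sigma \cong \R^r$: the volume integral of the divergence equals the flux through $\partial\sigma$, and the flux is computed facet by facet using the outward normal. The bookkeeping to carry out is to check that, for each facet $\tau \le \sigma$, the boundary contribution produced by the divergence theorem matches the term $\int_\tau \langle \eta'; (\emptyset, (\omega_{\sigma,\tau})_{\{1\}})\rangle$ from \cref{definition of integration on boundary for superforms}, including the sign $(-1)^r$ and the orientation encoded by the normal vector $\omega_{\sigma,\tau}$ pointing in the direction of $\sigma$. I expect the Leibniz rule and contraction identities in \cref{basic properties of contraction} to streamline the identification of $\langle \eta'; (\emptyset, (\omega_{\sigma,\tau})_{\{1\}})\rangle$ with the appropriate restriction to $\tau$. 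The $d''$ statement for $\eta''$ is entirely parallel; the only difference is the placement of the $(-1)^p$ sign in the definition of $d''$ and the absence of the global $(-1)^r$ factor in the boundary integral for $(r,r-1)$-forms, so I would deduce it by the same argument or by applying the operator $J$ to reduce to the $d'$ case.

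The main obstacle will be the sign and orientation accounting rather than any analytic difficulty: one must verify that the prefactor $(-1)^{r(r-1)/2}$ in \cref{definition of integration}, the sign $(-1)^r$ in the boundary integral, the alternating signs $(-1)^{k-1}$ coming out of $d'$, and the choice of integral normal vector $\omega_{\sigma,\tau}$ all conspire to give the correct outward flux on each facet. I would organize this by choosing, for each facet $\tau$, coordinates adapted so that $\tau$ lies in a coordinate hyperplane and $\omega_{\sigma,\tau}$ is the corresponding coordinate direction, then comparing both sides directly; \cref{integral via bijection} guarantees that the resulting identity is independent of these coordinate choices, so it is enough to check it in one adapted system. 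The compact support hypothesis ensures all integrals converge and that no contribution arises from the part of $\partial\sigma$ at infinity, so the only boundary terms are those over the facets, exactly as in the statement.
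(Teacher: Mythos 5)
Your proposal is correct and follows essentially the same route as the paper's proof: reduce by linearity to a single $r$-dimensional polyhedron, express $\eta'$ in coordinates so that $\int_\sigma d'\eta'$ becomes a classical divergence/exterior-derivative integral, and then match each facet contribution with $\int_\tau\langle\eta';(\emptyset,(\omega_{\sigma,\tau})_{\{1\}})\rangle$ using coordinates adapted so that $\omega_{\sigma,\tau}$ completes a basis of $N_\tau$ to a basis of $N$. The only cosmetic difference is that the paper first triangulates $\sigma$ and cites the classical Stokes' formula for simplexes (with cancellation along interior faces), whereas you invoke the divergence theorem on the polyhedron directly, which amounts to the same thing given the piecewise smooth boundary and compact support.
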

\begin{proof}
	Only consider the case where $C=[\sigma]$ is a polyhedron of dimension $r$. To apply the classical Stokes' formula for polyhedrons ($p$-chains), we fix coordinate functions $x_1,\cdots, x_r$ of $N_\R$, and assume $\sigma$ is an $r$-simplicial complex, this can be realized by triangulation (the integration on the new boundaries can be canceled). We have an isomorphism $\varphi:N_\R\simeq \R^r$, \[\eta' = \sum\limits_{i=1}^r\eta_{i}'d'x_1\wedge\cdots\wedge \widehat{d'x_i}\wedge\cdots\wedge d'x_r\wedge d''x_1\wedge\cdots\wedge d''x_r,\]
	\[d'\eta' = \left(\sum\limits_{i=1}^r(-1)^{i-1}\dfrac{\partial \eta_i'}{\partial x_i}\right)d'x_1\wedge\cdots\wedge d'x_r\wedge d''x_1\wedge\cdots\wedge d''x_r,\] 
	\[ \int_\sigma d'\eta'=(-1)^{r(r-1)/2}\int_{\varphi(\sigma)} \sum\limits_{i=1}^r(-1)^{i-1}\dfrac{\partial \eta_i'}{\partial x_i}dx_1\wedge\cdots\wedge dx_r = (-1)^{r(r-1)/2}\int_{\varphi(\sigma)} d\omega,\]	
	where $\omega =\sum\limits_{i=1}^r\eta_{i}'dx_1\wedge\cdots\wedge \widehat{dx_i}\wedge\cdots\wedge dx_r$ and $dx_i$ are viewed as smooth forms on $\R^r$. 
	For every facet $\tau \leq \sigma$, we take a basis $e'_1, \cdots, e'_{r-1}$ of $N_\tau$ such that $\{e_1',\cdots, e_{r-1}', \omega_{\sigma,\tau}\}$ form a basis of $N$. Moreover, we assume that $x_1,\cdots, x_r$ are the corresponding coordinate functions. Then we have \begin{align*}
		(-1)^r\int_\tau\langle \eta';(\emptyset,(\omega_{\sigma,\tau})_{\{1\}})\rangle&=- \int_\tau\eta_r'd'x_1\wedge\cdots\wedge d'x_{r-1}\wedge d''x_1\wedge\cdots\wedge d''x_{r-1} \\&=-(-1)^{(r-1)(r-2)/2}\int_{\varphi(\tau)}\eta_r'dx_1\wedge\cdots\wedge dx_{r-1}\\& = (-1)^{r(r-1)/2}\cdot (-1)^r\int_{\varphi(\tau)}\omega,
	\end{align*}
	notice that here $(-1)^r\int_{\varphi(\tau)}\omega$ is the integration of $\omega$ on $\varphi(\tau)$ with boundary orientation, see \cite[Example~22.13]{tu2008an}. Although our boundary $\partial\sigma$ is not smooth, it is piecewise smooth, then we have the Stokes' formula for simplexes, see 
	\cite[4.7]{warner1983foundations}. So the equalities hold.
\end{proof}

\begin{proposition}[\cite{chambert2012formes}~Lemma~1.3.8~Green's formula]
	Keep the notion in \cref{stokes' formula for polyhedra}. For any symmetric $(p,p)$-superform $\alpha$, symmetric $(q,q)$-superform $\beta$ around $|C|$ with $p+q=r-1$ and $\Supp(\alpha)\cap\Supp(\beta)$ compact, we have
	\[\int_{C}\alpha\wedge d'd''\beta-d'd''\alpha\wedge \beta = \int_{\partial C}\alpha\wedge d''\beta-d''\alpha\wedge\beta.\]
	\label{Green's formula for superforms}
\end{proposition}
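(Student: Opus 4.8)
The plan is to deduce Green's formula from Stokes' formula (\cref{stokes' formula for polyhedra}) applied to a single auxiliary form, after using the Leibniz rule to expand. First I would record the relevant bidegrees: since $p+q=r-1$, both $\alpha\wedge d'd''\beta$ and $d'd''\alpha\wedge\beta$ lie in $A^{r,r}$ near $|C|$, so the left-hand integrand makes sense on the $r$-dimensional complex $C$; likewise $\alpha\wedge d''\beta$ and $d''\alpha\wedge\beta$ both lie in $A^{r-1,r}$, which is exactly the type for which $\int_{\partial C}$ is defined in \cref{definition of integration on boundary for superforms}. I would then set $\eta':=\alpha\wedge d''\beta-d''\alpha\wedge\beta\in A^{r-1,r}$; its support is contained in $\Supp(\alpha)\cap\Supp(\beta)$, hence compact, so Stokes' formula applies and gives $\int_C d'\eta'=\int_{\partial C}\eta'$.

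Next I would compute $d'\eta'$ with the Leibniz rule for $d'$. Since $\alpha$ has even total degree $2p$ while $d''\alpha$ has odd total degree $2p+1$, the signs give $d'(\alpha\wedge d''\beta)=d'\alpha\wedge d''\beta+\alpha\wedge d'd''\beta$ and $d'(d''\alpha\wedge\beta)=d'd''\alpha\wedge\beta-d''\alpha\wedge d'\beta$, so that
\[d'\eta' = \bigl(\alpha\wedge d'd''\beta - d'd''\alpha\wedge\beta\bigr) + \bigl(d'\alpha\wedge d''\beta + d''\alpha\wedge d'\beta\bigr).\]
Combining this with Stokes, the asserted identity becomes equivalent to the vanishing of the ``cross term'', namely $\int_C(d'\alpha\wedge d''\beta+d''\alpha\wedge d'\beta)=0$. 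In fact I expect this integrand to vanish pointwise as an element of $A^{r,r}$, and this is precisely where the symmetry hypotheses on $\alpha$ and $\beta$ enter.

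The heart of the argument, and the step I expect to be most delicate, is this cross-term vanishing. I would prove it using the operator $J$ of \cref{definition of superform} together with three of its formal properties: that $J$ intertwines the differentials ($Jd'=d''J$ and $Jd''=d'J$), that $J$ is multiplicative on wedge products up to a sign, and that on a top form $\gamma\in A^{r,r}$ one has $J\gamma=(-1)^r\gamma$. Applying $J$ to $d'\alpha\wedge d''\beta$ and using the symmetry relations $J\alpha=(-1)^p\alpha$, $J\beta=(-1)^q\beta$ converts it into a multiple of $d''\alpha\wedge d'\beta$; comparing the resulting sign, which carries the factor $(-1)^{p+q}=(-1)^{r-1}$, with the factor $(-1)^r$ coming from the top-degree action of $J$ forces $d'\alpha\wedge d''\beta=-d''\alpha\wedge d'\beta$, i.e. the cross term is zero. (The base case $r=1$, $p=q=0$ is immediate, since there $d''x_1\wedge d'x_1=-d'x_1\wedge d''x_1$.) The only real work is the careful bookkeeping of the signs in the three properties of $J$ and in this final comparison; once the cross term is shown to vanish, substituting back into the Stokes identity yields exactly $\int_C(\alpha\wedge d'd''\beta-d'd''\alpha\wedge\beta)=\int_{\partial C}(\alpha\wedge d''\beta-d''\alpha\wedge\beta)$, as claimed.
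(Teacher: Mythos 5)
Your proof is correct: the bidegree bookkeeping, the application of \cref{stokes' formula for polyhedra} to $\eta'=\alpha\wedge d''\beta-d''\alpha\wedge\beta$ (whose support lies in the compact set $\Supp(\alpha)\cap\Supp(\beta)$), the Leibniz expansion, and the $J$-operator computation forcing $d'\alpha\wedge d''\beta=-d''\alpha\wedge d'\beta$ for symmetric $\alpha,\beta$ (via $Jd'=d''J$, $Jd''=d'J$, multiplicativity of $J$, and $J\gamma=(-1)^r\gamma$ in top degree, all of which hold with the signs you indicate) are each verified. The paper gives no proof of its own here, deferring to \cite{chambert2012formes}, Lemma~1.3.8, and your argument is essentially the standard one given in that reference, so it matches the intended proof.
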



\subsection{Supercurrents}

\label{supercurrents on N}




\begin{definition}
	Let $\widetilde{\Omega}\subset N_\R$ be an open subset.
	A {\bf supercurrent of type $(p,q)$} on $\widetilde{\Omega}$ is a continuous $\R$-linear map $T: A^{d-p,d-q}_c(\widetilde{\Omega})\rightarrow \R$. The space of all supercurrents of type $(p,q)$ is denoted by ${D}^{p,q}(\widetilde{\Omega})$, and ${D}^n(\widetilde{\Omega}): = \bigoplus\limits_{p+q=n}{D}^{p,q}(\widetilde{\Omega})$, ${D}(\widetilde{\Omega}): = \bigoplus\limits_{n=1}^r{D}^{n}(\widetilde{\Omega})$.
	
	The space ${D}(\widetilde{\Omega})$ form a sheaf of bigraded $\R$-vector spaces on $N_\R$. 
	\label{definition of supercurrents}
\end{definition}
\begin{remark}
	\label{remark4.4}
	\begin{enumerate}
		
		\item[(1)] If $\widetilde{\Omega}'\subset \widetilde{\Omega}$, then $A^{p,q}_c(\widetilde{\Omega}')\subset A^{p,q}_c(\widetilde{\Omega})$.  So we have the restriction map $D^{p,q}(\widetilde{\Omega}) \rightarrow D^{p,q}(\widetilde{\Omega}')$. It is well-known that $D^{p,q}(\widetilde{\Omega})$ form a fine sheaf on $N_\R$. 
		\item[(2)] By definition, for any $T\in D^{p,q}(\widetilde{\Omega})$, we have
		\begin{align*}
			\widetilde{\Omega}\setminus\Supp(T)& = \{x\in \widetilde{\Omega}\mid T|_{U_x}=0 \text{ for some open neighborhood $U_x$ of $x$ in $\widetilde{\Omega}$}\}\\
			&=\left\{x\in \widetilde{\Omega}\,\middle\vert\, \parbox[c]{.6\linewidth}{ there is an open neighborhood $U_x$ of $x$ in $\widetilde{\Omega}$ such that $T(\eta)=0$ for any $\eta\in A_c^{r-p,r-q}(U_x)$}\right\}.
		\end{align*}
		This implies that $\Supp(T)$ is the minimal closed subset $V$ of $\widetilde{\Omega}$ such that $\eta|_V =0$ (in an open neighborhood of $V$) implies $T(\eta)=0$. In particular, $T(\eta)=T(\eta')$ if $\eta|_{\Supp(T)}=\eta'|_{\Supp(T)}$. With this fact, we can generalize $T(\eta)$ to those $\eta\in A^{r-p,r-q}(\widetilde{\Omega})$ with $\Supp(T)\cap \Supp(\eta)$ compact by setting $T(\eta)=T(\eta')$ for any $\eta'\in A^{r-p,r-q}_c(\widetilde{\Omega})$ such that $\eta|_{\Supp(T)}=\eta'|_{\Supp(T)}$, this value is independent of the choice of $\eta'$. 
		\begin{proof}
			The equalities hold from the definition of $\Supp(T)$. We will show the implication.
			Assume that $\eta\in A^{r-p, r-q}(\widetilde{\Omega})$ such that $\eta|_U=0$ on an open neighborhood U of $\Supp(T)$. By partition of unity, we can write $\eta=\sum\limits_{i=0}^\infty\eta_i$ such that for any $i$, there is $x\in \widetilde{\Omega}\setminus\Supp(T)$ such that $\Supp(\eta_i)\subset U_x$, where $U_x$ is an open neighborhood of $x$ satisfying the property in the description of $\widetilde{\Omega}\setminus\Supp(T)$. Hence $T(\eta_i)=0$ and $T(\eta)=0$. On the other hand, if $V\subset \widetilde{\Omega}$ is a closed subset with this property, then we can see that $\widetilde{\Omega}\setminus V\subset\widetilde{\Omega}\setminus\Supp(T)$ from the equalities of sets.	
		\end{proof}
	\end{enumerate}

\end{remark}



\begin{definition}
	Let $\widetilde{\Omega}\subset N_\R$ be an open subset. For $T\in {D}^{p,q}(\widetilde{\Omega})$ and $\alpha\in A^{p',q'}(\widetilde{\Omega})$, the {\bf wedge product} 
	$T\wedge \alpha\in {D}^{p+p',q+q'}(\widetilde{\Omega})$ is defined by
	\[(T\wedge \alpha)(\gamma): = T(\alpha\wedge\gamma)\]
	for any $\gamma\in A^{r-p-p',r-q-q'}_c(\widetilde{\Omega})$.
	We denote $\alpha\wedge T:=(-1)^{(p+q)(p'+q')} T\wedge\alpha$.
	\label{definition of wedge product}
\end{definition}

We have the following important examples of supercurrents.

\begin{example}
	Let $\widetilde{\Omega}\subset N_\R$ be an open subset.
	\begin{enumerate}
		\item [(1)] For any $\omega\in L_{\mathrm{loc}}^{p,q}(\widetilde{\Omega})$, i.e. a locally integrable $(p,q)$-form, we have the map 
		\begin{align*}
			[\omega]: A_c^{d-p,d-q}(\widetilde{\Omega})&\rightarrow \R,\\
			\eta&\mapsto \int_{\widetilde{\Omega}}\omega\wedge\eta
		\end{align*}
		is a supercurrent of type $(p,q)$. Hence we define an $\R$-linear map 
		\[[\cdot]: L_{\mathrm{loc}}^{p,q}(\widetilde{\Omega})\rightarrow D^{p,q}(\widetilde{\Omega}).\]
		Moreover, the restriction $[\cdot]: A^{p,q}(\widetilde{\Omega})\rightarrow D^{p,q}(\widetilde{\Omega})$ is injective, and for any $\alpha,\beta\in A(\widetilde{\Omega})$, we have \[[\alpha]\wedge\beta = [\alpha\wedge\beta] = \alpha\wedge[\beta].\]
		\item[(2)] Let $\sigma\subset N_\R$ be an integral $\R$-affine polyhedron of codimension $l$, i.e. dimension $r-l$. Then a {\bf supercurrent of Dirac type} $\delta_\sigma\in D^{l,l}(\widetilde{\Omega})$ arising from $\sigma$ is given by
		\begin{align*}
			\delta_\sigma: A_c^{r-l, r
				-l}(\widetilde{\Omega})&\rightarrow \R,\\
			\alpha&\mapsto \int_{\sigma}\alpha: = \int_{\sigma}i^*\alpha,
		\end{align*}
		where $i: N_{\sigma,\R}\rightarrow N_\R$ is the natural embedding, and the right-hand side is given in \cref{definition of integration}. 
		
		
	\end{enumerate}
\label{example of supercurrents}
\end{example}
\begin{remark}
	\begin{enumerate}
		\item [(1)] For convenience and without confusion, we may write the image of $\omega\in L_{\mathrm{loc}}^{p,q}(\widetilde{\Omega})$ in $D^{p,q}(\widetilde{\Omega})$ as $\omega$ or $[\omega]$, and the image of $C\in F^l(N_\R)$ in $D^{l,l}(\widetilde{\Omega})$ as $C, [C]$ or $\delta_C$.
	\end{enumerate}
\label{remark of example of supercurrents}
\end{remark}

Similarly, we have the following operators.

\begin{lemma}
	Let $\widetilde{\Omega}\subset N_\R$ be an open subset. We can define the following maps between supercurrents. The maps
	\begin{align*}
		d': {D}^{p,q}(\widetilde{\Omega})\rightarrow {D}^{p+1,q}(\widetilde{\Omega}),\\
		d'': {D}^{p,q}(\widetilde{\Omega})\rightarrow {D}^{p,q+1}(\widetilde{\Omega}),\\
		J: {D}^{p,q}(\widetilde{\Omega})\rightarrow {D}^{q,p}(\widetilde{\Omega}),
	\end{align*}
	are defined as follows: for any $T\in {D}^{p,q}(\widetilde{\Omega})$,
	\begin{align*}
		d'(T): {A}_c^{r-p-1, r-q}(\widetilde{\Omega})& \rightarrow \R,\\
		\gamma&\mapsto (-1)^{p+q+1}T(d'\gamma);\\
		d''(T): {A}_c^{r-p, r-q-1}(\widetilde{\Omega}) &\rightarrow \R,\\
		\gamma&\mapsto (-1)^{p+q+1}T(d''\gamma);\\
		J(T):  A_c^{r-q,r-p}(\widetilde{\Omega}) & \rightarrow \R, \\
		\gamma&\mapsto (-1)^rT(J\gamma).
	\end{align*}
	Then the following hold.
	\begin{enumerate}
		\item [(1)] For any superform $\alpha, \beta\in A(\widetilde{\Omega})$, we have 
		\[d'[\alpha] = [d'\alpha], \ \ d''[\alpha] = [d''\alpha], \ \ J[\alpha] = [J\alpha].\]
		
		\item[(2)] For any $T\in {D}^{p,q}(\widetilde{\Omega})$ and $\alpha\in A^{p',q'}(\widetilde{\Omega}),$
		\[J(T\wedge\alpha) = JT\wedge J\alpha,\]
		\[d'(T\wedge \alpha) = d'T\wedge \alpha+ (-1)^{p+q}T\wedge d'\alpha,\]
		It is similar for $d''$.
		\item[(3)] We have 
		\[d'd'=d''d''=0,\]
		\[d'd''=-d''d'.\]
		\item[(4)] 	We have	\[{JJ}=\id,\]
		\[d'={J}d''{J},\ \ {J}d'=d''{J},\]
		\[d''={J}d'{J}, \ \ {J}d"=d'{J}.\]
	\end{enumerate}
	We say that a supercurrent $T\in D^{p,p}(\widetilde{\Omega})$ is {\bf symmetric} if $JT=(-1)^{p}T$.
\end{lemma}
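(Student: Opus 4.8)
The plan is to treat all three operators $d',d'',J$ on supercurrents as (signed) transposes of the corresponding operators on superforms, so that every identity in the statement reduces, after pairing against a compactly supported test form, to the analogous identity for superforms together with a sign bookkeeping. The only genuinely analytic input is integration by parts: for any $\omega\in A_c^{r-1,r}(\widetilde{\Omega})$ (resp. $\omega\in A_c^{r,r-1}(\widetilde{\Omega})$) one has $\int_{\widetilde{\Omega}}d'\omega=0$ (resp. $\int_{\widetilde{\Omega}}d''\omega=0$), which follows from \cref{stokes' formula for polyhedra} applied to a large box $\sigma$ containing $\Supp(\omega)$ in its interior, the boundary contributions vanishing by compactness of the support. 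Everything else is formal.

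For (1), take $\alpha\in A^{p,q}(\widetilde{\Omega})$ and a test form $\gamma\in A_c^{r-p-1,r-q}(\widetilde{\Omega})$. By the definitions of $d'$ on currents and of $[\,\cdot\,]$,
\[d'[\alpha](\gamma)=(-1)^{p+q+1}\int_{\widetilde{\Omega}}\alpha\wedge d'\gamma,\]
while the Leibniz rule for superforms gives $d'(\alpha\wedge\gamma)=d'\alpha\wedge\gamma+(-1)^{p+q}\alpha\wedge d'\gamma$. Integrating this identity over $\widetilde{\Omega}$ and using $\int_{\widetilde{\Omega}}d'(\alpha\wedge\gamma)=0$ yields $d'[\alpha](\gamma)=\int_{\widetilde{\Omega}}d'\alpha\wedge\gamma=[d'\alpha](\gamma)$; the case of $d''$ is identical. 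For $J$ one uses the three form-level facts $JJ=\id$, $J(\alpha\wedge\beta)=J\alpha\wedge J\beta$, and $\int_{\widetilde{\Omega}}J\omega=(-1)^{r}\int_{\widetilde{\Omega}}\omega$ for $\omega\in A_c^{r,r}(\widetilde{\Omega})$ (immediate from \cref{definition of superform}, since $(-1)^{r^2}=(-1)^r$). Applying $J$ to $\alpha\wedge J\gamma$ and integrating gives $\int_{\widetilde{\Omega}}J\alpha\wedge\gamma=(-1)^{r}\int_{\widetilde{\Omega}}\alpha\wedge J\gamma$, which is exactly the identity $[J\alpha](\gamma)=J[\alpha](\gamma)$.

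Parts (2), (3) and (4) are pure sign-chases through the definitions. For the Leibniz rule in (2) I would evaluate both sides on $\gamma\in A_c^{r-p-p'-1,r-q-q'}(\widetilde{\Omega})$: by definition $d'(T\wedge\alpha)(\gamma)=(-1)^{p+p'+q+q'+1}T(\alpha\wedge d'\gamma)$, while expanding $\bigl(d'T\wedge\alpha+(-1)^{p+q}T\wedge d'\alpha\bigr)(\gamma)$ through the definitions and the superform Leibniz rule, the two $T(d'\alpha\wedge\gamma)$ terms cancel and the surviving term matches the left-hand side; the identity $J(T\wedge\alpha)=JT\wedge J\alpha$ is the transpose of $J(\alpha\wedge\beta)=J\alpha\wedge J\beta$. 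For (3), $d'd'T(\gamma)=\pm\,T(d'd'\gamma)=0$ and likewise for $d''d''$, while $d'd''=-d''d'$ on currents is the transpose of the anticommutation $d'd''=-d''d'$ on superforms (itself a consequence of $d^2=0$). For (4), $JJ=\id$ and the commutation relations $d'=Jd''J$, $Jd'=d''J$, $d''=Jd'J$, $Jd''=d'J$ follow by transposing the corresponding superform identities, the sign $(-1)^r$ in the definition of $J$ on currents being calibrated precisely so that these transposes come out with the stated signs.

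The main obstacle is not conceptual but bookkeeping: one must make the degree shifts and the signs $(-1)^{p+q+1}$, $(-1)^{r}$, $(-1)^{pq}$ in the definitions of $d',d'',J$ cancel consistently across every identity, and one must be certain that no boundary term survives in the integration by parts used in (1). The latter is guaranteed by the compactness of the support together with the openness of $\widetilde{\Omega}$; once this is secured, each identity for supercurrents is forced by its superform counterpart, and no further analytic estimate is required.
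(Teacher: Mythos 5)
Your proof is correct: the signs all check out — the two $T(d'\alpha\wedge\gamma)$ terms in the Leibniz computation cancel exactly as you say, the form-level identities $J(\alpha\wedge\beta)=J\alpha\wedge J\beta$, $JJ=\id$, $Jd''=d'J$ and $\int_{\widetilde{\Omega}}J\omega=(-1)^{r}\int_{\widetilde{\Omega}}\omega$ for $\omega\in A_c^{r,r}(\widetilde{\Omega})$ are elementary coordinate computations, and your integration-by-parts step is legitimate once one extends the compactly supported form by zero to $N_\R$ and applies the polyhedral Stokes formula to a large box. The paper deliberately leaves this lemma unproved ("most of results in this section are not hard to prove, we will leave them to readers"), and your transpose-plus-Stokes argument is precisely the standard intended one, so nothing needs to be added.
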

	

\begin{example}
	Let $\widetilde{\Omega}\subset N_\R$ be an open subset. Let $\sigma\subset N_\R$ be an integral $\R$-affine polyhedron. Then $\delta_\sigma\in D^{l,l}(\widetilde{\Omega})$ is symmetric. 
\end{example}

\begin{definition}
	Let $F: N'_\R\rightarrow N_\R$ be an integral $\R$-affine map, and $\widetilde{\Omega}\subset N_\R$, $\widetilde{\Omega}'\subset N_\R'$ open subsets such that $F(\widetilde{\Omega}')\subset \widetilde{\Omega}$. Let $T\in D^{p,q}(\widetilde{\Omega}')$. Assume that $\Supp(T)\cap F^{-1}(K)$ is compact for any compact subset $K\subset \widetilde{\Omega}$ (i.e. $F|_{\Supp(T)}: \Supp(T)\rightarrow \widetilde{\Omega}$ is compact). The {\bf direct image $F_*T \in D^{r-r'+p,r-r'+q}(\widetilde{\Omega})$ of $T$} by $F$ is given by 
	\[F_*T(\eta): = T(F^*\eta)\]
	for any $\eta\in A^{r'-p,r'-q}_c(\widetilde{\Omega})$. This is well-defined by \cref{remark4.4}~(3).
	\label{direct image of current}
\end{definition}
\begin{remark}
	\begin{enumerate}
		\item [(1)] If $\Supp(T)$ is compact, then $F_*T$ is always defined.
		\item[(2)] The map $F_*$ is $\R$-linear for suitable supercurrents, and
		\[d'F_*=F_*d', \ \ d''F_*=F_*d''.\]
	\end{enumerate}
\end{remark}


\begin{example}
	Keep the notion in \cref{direct image of current}. Let $\sigma'$ be a polyhedron in $N_\R'$. Assume that $\dim\sigma'=\dim F(\sigma')$, then we have
	\[F_*\delta_{\sigma'} = \delta_{\widehat{F}_*\sigma'},\]
	where $\widehat{F}_*$ is the classical push-forward of a polyhedron in tropical geometry.
\end{example}
\begin{proof}
	This is from \cref{integral via bijection} and the definition of $F_*$, i.e. $F_*([\sigma]) = [N: dF(N')][F(\sigma)].$
\end{proof}
\begin{remark}
	\begin{enumerate}
		\item [(1)] For general case, this is not true, since $\widehat{F}_*\sigma'=0$ if $\dim\sigma'<\dim F(\sigma')$, see \cref{push-forward of polyhedral supercurrents} for general case.
	\end{enumerate}
\end{remark}


\section{Polyhedral supercurrents on $N_\R$}

\label{polyhedral supercurrents on N}

In this section, we will recall polyhedral supercurrents given in \cite[Section~2]{gubler2017a}, they are equivalence classes of integral $\R$-affine polyhedral complexes with smooth superforms. We will study integration of polyhedral supercurrents. As before, throughout this section, $N$ denotes a finitely generated free abelian group of rank $r$.

\subsection{Polyhedral supercurrents}

Let $\sigma\subset N_{\R}$ be an integral $\R$-affine polyhedron. For any open subset $\Omega\subset N_{\sigma,\R}$, we write $A^{p,q}_{N_{\sigma,\R}}(\Omega)$ for the set of $(p,q)$-superforms on $\Omega$. For an open subset $\widetilde{\Omega}\subset N_\R$, we set
 \[A^{p,q}_\sigma(\widetilde{\Omega}\cap\sigma) := i^*A^{p,q}(\widetilde{\Omega}) \subset A_{N_{\sigma,\R}}^{p,q}(\widetilde{\Omega}\cap\mathrm{relint}(\sigma)),\]
where $i: \widetilde{\Omega}\cap\mathrm{relint}(\sigma) \hookrightarrow \widetilde{\Omega}$.
Equivalently, $A^{p,q}_\sigma(\widetilde{\Omega}\cap\sigma)$ is the restrictions of $(p,q)$-superforms in $A^{p,q}_{N_{\sigma,\R}}(\widetilde{\Omega}\cap N_{\sigma,\R})$ on $\widetilde{\Omega}\cap\mathrm{relint}(\sigma)$.
\begin{remark}
	\begin{enumerate}
		\item [(1)] Notice that an element $\alpha\in A_{N_{\sigma,\R}}^{p,q}(\widetilde{\Omega}\cap\mathrm{relint}(\sigma))$ is not necessarily induced from $A(\widetilde{\Omega})$ unless the map $\widetilde{\Omega}\cap\mathrm{relint}(\sigma)\hookrightarrow \widetilde{\Omega}$ is a proper embedding, see \cite[Lemma~5.34]{lee2012introduction}. On the other hand, by partition of unity, $A^{p,q}_\sigma(\widetilde{\Omega}\cap\sigma)$ is the restrictions of $(p,q)$-forms in $A^{p,q}_{N_{\sigma,\R}}(N_{\sigma,\R})$ on $\widetilde{\Omega}\cap\mathrm{relint}(\sigma)$. 
		\item[(2)] If $\tau\leq \sigma$, then we have the restriction map
		\begin{align*}
			A^{p,q}_\sigma(\widetilde{\Omega}\cap\sigma)& \rightarrow A^{p,q}_\tau(\widetilde{\Omega}\cap\tau),\\
			\alpha&\mapsto \alpha|_\tau,
		\end{align*}
		where $\alpha|_\tau = \beta|_{\widetilde{\Omega}\cap\mathrm{relint}(\tau)}$ if $\alpha = \beta|_{\widetilde{\Omega}\cap\mathrm{relint}(\sigma)}$ with $\beta\in A^{p,q}(\widetilde{\Omega})$. This is only dependent on $\alpha$, $\widetilde{\Omega}\cap \sigma$, and is independent of the choice of $\widetilde{\Omega}$ or $\beta$.
	\end{enumerate}
\end{remark}

\begin{example}
	Let $\widetilde{\Omega}\subset N_\R$ be an open subset. Let $\sigma\subset N_\R$ be an integral $\R$-affine polyhedron of codimension $l$, i.e. dimension $r-l$. For any $\alpha\in A^{p,q}_\sigma(\widetilde{\Omega}\cap\sigma)$, we define $\alpha\wedge\delta_\sigma\in D^{p+l, q+l}(\widetilde{\Omega})$ as follows:
	\begin{align*}
		\alpha\wedge\delta_\sigma: A_c^{r-p-l,r-q-l}(\widetilde{\Omega})& \rightarrow \R,\\
		\beta&\mapsto \langle\delta_\sigma, \alpha\wedge i^*\beta\rangle = \int_{\sigma}\alpha\wedge i^*\beta,
	\end{align*}
	where $i: N_{\sigma,\R}\hookrightarrow N_\R$ is the natural embedding. 
\end{example}

\begin{definition}
	Let $\widetilde{\Omega}$ be an open subset of $N_\R$.  A supercurrent $\alpha\in D(\widetilde{\Omega})$ is called {\bf polyhedral} if there exists an integral $\R$-affine polyhedral decomposition $\mathcal{C}$ of $N_\R$ and a family $(\alpha_\sigma)_{\sigma\in \mathcal{C}}$ of superforms $\alpha_\sigma\in A_\sigma(\widetilde{\Omega}\cap\sigma)$ such that
	\[\alpha=\sum\limits_{\sigma\in \mathcal{C}}\alpha_\sigma\wedge \delta_\sigma\]
	in $D(\widetilde{\Omega})$. In this case we say that $\mathcal{C}$ is {\bf adapted to $\alpha$}.
	
	We denote $P^{p,q,l}(\widetilde{\Omega})\subset D^{p+l,q+l}(\widetilde{\Omega})$ the space of polyhedral supercurrents of the form $\sum\limits_{\sigma\in \mathcal{C}^l}\alpha_\sigma\wedge \delta_\sigma$ with $\alpha_\sigma\in A_\sigma^{p,q}(\widetilde{\Omega}\cap\sigma)$. Such element is called a {\bf $(p,q,l)$-polyhedral supercurrent}. We set 
	\[P^{p,q}(\widetilde{\Omega}) := \bigoplus\limits_{l\geq 0}P^{p-l,q-l,l}(\widetilde{\Omega}), \ \ P^n(\widetilde{\Omega}): = \bigoplus\limits_{p+q=n}P^{p,q}(\widetilde{\Omega}), \ \ P(\widetilde{\Omega}): = \bigoplus\limits_{n\geq 0}P^n(\widetilde{\Omega}).\]
	Obviously, we have $A^{p,q}(\widetilde{\Omega})\subset P^{p,q}(\widetilde{\Omega})$.  We set $P^{p,q}_{n}(\widetilde{\Omega}):=P^{p,q,r-n}(\widetilde{\Omega})$.
	\label{definition of polyhedral supercurrents}
\end{definition}
\begin{remark}\label{rk:definition of polyhedral supercurrents}
	\begin{enumerate}
		\item[(1)] For simplicity, we take $\mathcal{C}$ to be a polyhedral decomposition of $N_\R$, but the definition is the same if $\mathcal{C}$ runs through all polyhedral complexes. 
		\item[(2)] We see that $(\alpha_\sigma)_{\sigma\in \mathcal{C}}$ is uniquely determined by $\alpha$ and $\mathcal{C}$, and 
		\[\Supp(\alpha) = \bigcup\limits_{\sigma\in\mathcal{C}}\Supp(\alpha_\sigma).\]
		\begin{proof}
			For the uniqueness, we should show that $\alpha_\sigma=\beta_\sigma$ for any $\sigma\in \mathcal{C}^l$ if $\sum\limits_{\sigma\in \mathcal{C}^l}\alpha_\sigma\wedge\delta_\sigma=\sum\limits_{\sigma\in \mathcal{C}^l}\beta_\sigma\wedge\delta_\sigma\in P^{p,q,l}(\widetilde{\Omega})$. This comes from the fact that the map \[[\cdot]: A^{p,q}_{N_{\sigma,\R}}(\widetilde{\Omega}\cap \mathrm{relint}(\sigma))\rightarrow D^{p,q}(\widetilde{\Omega}\cap \mathrm{relint}(\sigma))\] is injective.
		\end{proof}

		\item[(3)] For any open subsets $U\subset\widetilde{\Omega} \subset N_\R$, and $\alpha=\sum\limits_{\sigma\in \mathcal{C}}\alpha_\sigma\wedge \delta_\sigma \in D(\widetilde{\Omega})$, we have 
		\[\alpha|_U = \sum\limits_{\sigma\in \mathcal{C}}\alpha_\sigma|_{U\cap\mathrm{relint}(\sigma)}\wedge \delta_\sigma \in D(U),\]
		where  $\alpha_\sigma|_{U\cap\mathrm{relint}(\sigma)} \in A_\sigma(U\cap\sigma)$ is induced by an element in $A(U)$. Notice that, $P^{p,q,l}$ is a separated presheaf and $d', d''$ are local. It is not a sheaf. However, for any finite open covering $\widetilde{\Omega}=\bigcup\limits_{i=1}^n\widetilde{\Omega}_i$, the sequence 
		\[\xymatrix{0\ar[r] & P^{p,q}(\widetilde{\Omega}) \ar[r] & \prod\limits_{i=1}^nP^{p,q}(\widetilde{\Omega}_i) \ar[r] &\prod\limits_{i,j=1}^nP^{p,q}(\widetilde{\Omega}_i\cap\widetilde{\Omega}_j) }\]
		is exact.
	\end{enumerate}
\end{remark}

\subsection{Polyhedral complexes with smooth superforms}

For further discussion, we give a tropical way to describe polyhedral supercurrents.

\begin{definition}
	Let $\widetilde{\Omega}$ be an open subset of $N_\R$. An integral $\R$-affine polyhedral complex $\mathcal{C}$ of pure dimension $n$ is said to be {\bf with smooth $(p,q)$-superforms on $\widetilde{\Omega}$} if each polyhedral $\sigma\in\mathcal{C}_n$ is given with a smooth $(p,q)$-superform $\alpha_\sigma\in A^{p,q}_\sigma(\widetilde{\Omega}\cap\sigma)$. Such complex is denoted by $(\mathcal{C}_n, \{\alpha_{\sigma}\}_{\sigma\in \mathcal{C}_n})$. The support of $(\mathcal{C}_n, \{\alpha_{\sigma}\}_{\sigma\in \mathcal{C}_n})$ is given as
	\[|(\mathcal{C}_n, \{\alpha_{\sigma}\}_{\sigma\in \mathcal{C}_n})|:=\bigcup\limits_{\sigma\in \mathcal{C}_n}\Supp(\alpha_\sigma)\]
	
	We say two integral $\R$-affine polyhedral complexes with smooth superforms $(\mathcal{C}_n, \{\alpha_{\sigma}\}_{\sigma\in \mathcal{C}_n})$, $(\mathcal{C}_n', \{\alpha_{\sigma'}'\}_{\sigma'\in \mathcal{C}_n}')$ of pure dimension $n$ are {\bf equivalent} if $|(\mathcal{C}_n, \{\alpha_{\sigma}\}_{\sigma\in \mathcal{C}_n})| = |(\mathcal{C}_n', \{\alpha_{\sigma'}'\}_{\sigma'\in \mathcal{C}_n'})|$ and $\alpha_\sigma = \alpha'_{\sigma'}$ in $\widetilde{\Omega}\cap\mathrm{relint}(\sigma\cap \sigma')$ when $\sigma\cap\sigma'$ is of dimension $n$ for all $\sigma\in \mathcal{C}$ and $\sigma'\in \mathcal{C}'$.  The set of equivalence classes is denoted by $F_n^{p,q}(\widetilde{\Omega})$, which is an abelian group, and the image of $(\mathcal{C}_n, \{\alpha_{\sigma}\}_{\sigma\in \mathcal{C}_n})$ is denoted by $\sum\limits_{\sigma\in\mathcal{C}_n}\alpha_\sigma[\sigma]$. We set $F^{p,q,l}(\widetilde{\Omega}):=F_{r-l}^{p,q}(\widetilde{\Omega})$.
	\label{polyhedral complexes with smooth forms}
\end{definition}
\begin{remark}\label{rk:polyhedral complexes with smooth forms}
	\begin{enumerate}
		\item [(1)] Inspired by \cite[Section~6.1]{chambert2021topics}, we can view $F_n^{p,q}(\widetilde{\Omega})$ as an $\R$-vector space  generated by $\alpha_\sigma[\sigma]$ with $\dim\sigma\leq n$ and $\alpha_\sigma\in A^{p,q}_\sigma(\widetilde{\Omega}\cap\sigma)$ and satisfying the following relations:
		\begin{itemize}
			\item $a\cdot(\alpha_\sigma[\sigma]) = (a\alpha_\sigma)[\sigma]$ for any $a\in \R$ and $\alpha_\sigma\in A^{p,q}_\sigma(\widetilde{\Omega}\cap\sigma)$.
			\item $\alpha_\sigma[\sigma]=0$ for every polyhedron $\sigma$ such that $\dim\sigma<n$;
			\item $\alpha_\sigma[\sigma]+\alpha_\sigma|_{\sigma\cap H}[\sigma\cap H]=\alpha_\sigma|_{\sigma\cap V_+}[\sigma\cap V_+]+ \alpha_\sigma|_{\sigma\cap V_-}[\sigma\cap V_-]$ where $\sigma$ is an $n$-polyhedron in $N_\R$, $\alpha_\sigma\in A^{p,q}_\sigma(\widetilde{\Omega}\cap\sigma)$ and $V_+, V_-$ are half-spaces such that $V_+\cap V_-=H$ is a hyperplane and $N_\R=V_+\cup V_-$.
		\end{itemize}
		For the last relation, if $\sigma\subset H$, then the equality is trivial; if $\sigma\not\subset H$, then $\alpha_\sigma|_{\sigma\cap H}[\sigma\cap H]=0$.
		
		\item[(2)] An $\R$-linear map from $F_n^{p,q}(\widetilde{\Omega})$ to a given $\R$-vector space $W$ is given by a map 
		\[\lambda: \{\alpha_\sigma[\sigma]\mid \dim\sigma\leq n \text{ and } \alpha_\sigma\in A^{p,q}_\sigma(\widetilde{\Omega}\cap\sigma)\}\rightarrow W\]
		satisfying the following relation
		\begin{itemize}
			\item $\lambda((a\cdot\alpha_\sigma)[\sigma]) = a\lambda(\alpha_\sigma[\sigma])$ for any $a\in \R$ and $\alpha_\sigma\in A^{p,q}_\sigma(\widetilde{\Omega}\cap\sigma)$.
			\item $\lambda(\alpha_\sigma[\sigma])=0$ for every polyhedron $\sigma$ such that $\dim\sigma<n$;
			\item $\lambda(\alpha_\sigma[\sigma])+\lambda(\alpha_\sigma|_{\sigma\cap H}[\sigma\cap H])=\lambda(\alpha_\sigma|_{\sigma\cap V_+}[\sigma\cap V_+])+ \lambda(\alpha_\sigma|_{\sigma\cap V_-}[\sigma\cap V_-])$ for every hyperplane $H$ of $N_\R$ dividing $N_\R$ into two closed half-spaces $V_+, V_-$.
		\end{itemize}
		\item[(3)] Notice that $F_n^{p,q}(\widetilde{\Omega}$) has a natural $C^\infty(\widetilde{\Omega})$-module structure: for any $f\in C^\infty(\widetilde{\Omega})$ and $\alpha_\sigma\in A^{p,q}_\sigma(\widetilde{\Omega}\cap\sigma)$, \[f\cdot(\alpha_\sigma[\sigma]) := (f|_{\widetilde{\Omega}\cap\mathrm{relint}(\sigma)}\cdot\alpha_\sigma)[\sigma].\]
	\end{enumerate}
\label{remark:polyhedralcomplexwithsmoothforms}
\end{remark}

\begin{proposition}
	Let $\widetilde{\Omega}\subset N_\R$ be an open subset. We have an isomorphism of $C^\infty(\widetilde{\Omega})$-modules 
	\begin{align*}
		F^{p,q,l}(\widetilde{\Omega})&\overset{\sim}{\rightarrow} P^{p,q,l}(\widetilde{\Omega}),\\
		\alpha_\sigma[\sigma]&\mapsto \alpha_\sigma\wedge\delta_\sigma.
	\end{align*}
	\label{equivalence of polyhedral supercurrent and polyhedral complex with smooth superforms}
\end{proposition}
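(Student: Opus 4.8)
The plan is to build the map via the universal property of $F^{p,q,l}(\widetilde\Omega) = F^{p,q}_{r-l}(\widetilde\Omega)$ recorded in \cref{rk:polyhedral complexes with smooth forms}~(2), and then to verify bijectivity and module-compatibility by hand. Writing $n := r-l$, I would declare
\[\lambda(\alpha_\sigma[\sigma]) := \begin{cases} \alpha_\sigma\wedge\delta_\sigma, & \dim\sigma = n,\\ 0, & \dim\sigma < n,\end{cases}\]
viewed as a map into $P^{p,q,l}(\widetilde\Omega)\subset D^{p+l,q+l}(\widetilde\Omega)$, and check that it respects the three defining relations of \cref{rk:polyhedral complexes with smooth forms}~(2). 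By that universal property this produces a well-defined $\R$-linear map, which on representatives is exactly $\sum_{\sigma}\alpha_\sigma[\sigma]\mapsto \sum_\sigma\alpha_\sigma\wedge\delta_\sigma$.

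The first two relations are immediate: the scaling relation $\lambda((a\alpha_\sigma)[\sigma]) = (a\alpha_\sigma)\wedge\delta_\sigma = a(\alpha_\sigma\wedge\delta_\sigma)$ holds since $\wedge\,\delta_\sigma$ is $\R$-linear in the form slot, and the vanishing for $\dim\sigma<n$ holds by fiat. The only substantial point is the subdivision relation for an $n$-polyhedron $\sigma$ and a hyperplane $H$ splitting $N_\R$ into closed half-spaces $V_+, V_-$. If $\sigma\subset H$ both sides equal $2\,\alpha_\sigma\wedge\delta_\sigma$; if $\sigma\not\subset H$, then $\sigma\cap H$ has dimension $n-1$ and so $\lambda(\alpha_\sigma|_{\sigma\cap H}[\sigma\cap H]) = 0$, and the required identity
\[\alpha_\sigma\wedge\delta_\sigma = \alpha_\sigma|_{\sigma\cap V_+}\wedge\delta_{\sigma\cap V_+} + \alpha_\sigma|_{\sigma\cap V_-}\wedge\delta_{\sigma\cap V_-}\]
follows by unwinding \cref{example of supercurrents}~(2): evaluating both sides on a test form $\beta$ reduces to the additivity $\int_\sigma = \int_{\sigma\cap V_+} + \int_{\sigma\cap V_-}$ of the integral of \cref{definition of integration}, which is valid because $\sigma = (\sigma\cap V_+)\cup(\sigma\cap V_-)$ and the overlap $\sigma\cap H$ is a Lebesgue-null subset of $N_{\sigma,\R}$. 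This gives well-definedness and $\R$-linearity. Compatibility with the $C^\infty(\widetilde\Omega)$-module structures is then formal: for $f\in C^\infty(\widetilde\Omega)$ one has $\lambda((f\cdot\alpha_\sigma)[\sigma]) = (f\alpha_\sigma)\wedge\delta_\sigma = f\cdot(\alpha_\sigma\wedge\delta_\sigma)$, matching the module structure of \cref{rk:polyhedral complexes with smooth forms}~(3) against the wedge action of $C^\infty(\widetilde\Omega) = A^{0,0}(\widetilde\Omega)$ on $P^{p,q,l}(\widetilde\Omega)$.

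For bijectivity, surjectivity is tautological: by \cref{definition of polyhedral supercurrents} every element of $P^{p,q,l}(\widetilde\Omega)$ has the form $\sum_{\sigma\in\mathcal{C}^l}\alpha_\sigma\wedge\delta_\sigma$, which is the image of $\sum_{\sigma\in\mathcal{C}^l}\alpha_\sigma[\sigma]$. For injectivity, I would suppose $\sum_{\sigma\in\mathcal{C}_n}\alpha_\sigma\wedge\delta_\sigma = 0$ in $P^{p,q,l}(\widetilde\Omega)$ and invoke the uniqueness of coefficients from \cref{rk:definition of polyhedral supercurrents}~(2) — itself resting on injectivity of $[\cdot]$ over $\widetilde\Omega\cap\mathrm{relint}(\sigma)$ — to conclude that each $\alpha_\sigma$ vanishes in $A^{p,q}_\sigma(\widetilde\Omega\cap\sigma)$, whence $\sum_\sigma\alpha_\sigma[\sigma] = 0$ in $F^{p,q,l}(\widetilde\Omega)$ by the scaling relation. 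This produces a two-sided inverse and finishes the argument.

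I expect the subdivision relation to be the main (though still routine) obstacle, since it is the only place where genuine geometry enters: one must know that the Dirac-type currents of \cref{example of supercurrents} are additive under cutting a polyhedron by a hyperplane, which is precisely the additivity of the underlying integral together with the nullity of a hyperplane slice. Everything else is bookkeeping supported by the uniqueness statement already available in \cref{rk:definition of polyhedral supercurrents}~(2).
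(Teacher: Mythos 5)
Your proposal is correct and follows the same route as the paper, which likewise defines the map via the universal property of \cref{rk:polyhedral complexes with smooth forms}~(2) and deduces injectivity from the uniqueness of the coefficients $(\alpha_\sigma)$ in \cref{rk:definition of polyhedral supercurrents}~(2). The only difference is that you spell out the verification of the subdivision relation (additivity of $\int_\sigma$ under a hyperplane cut, with the slice $\sigma\cap H$ Lebesgue-null in $N_{\sigma,\R}$), which the paper leaves implicit.
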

\begin{proof}
	The map is well-defined since it satisfies the relations in \cref{rk:polyhedral complexes with smooth forms}~(2). It remains to show it is injective. This is given in \cref{rk:definition of polyhedral supercurrents}~(2).
\end{proof}

\ 

We will always identify $F^{p,q,l}(\widetilde{\Omega})$ and $P^{p,q,l}(\widetilde{\Omega})$. We will not use the notation $F^l(\widetilde{\Omega})$ for $\bigoplus\limits_{p,q}F^{p-l,q-l,l}(\widetilde{\Omega})$ since $F^l(\widetilde{\Omega})= F^{0,0,l}(\widetilde{\Omega})$ from our notations.

\begin{definition}
	Let $\widetilde{\Omega}$ be an open subset of $N_\R$, and $\alpha = \sum\limits_{\sigma\in \mathcal{C}}\alpha_\sigma[\sigma]$ a polyhedral supercurrent. The {\bf polyhedral derivatives} $d'_P(\alpha)$ and $d''_P(\alpha)$ are defined as
	\[d_P'(\alpha): = \sum\limits_{\sigma\in \mathcal{C}}d'(\alpha_\sigma)[\sigma],\]
	\[d_P''(\alpha): = \sum\limits_{\sigma\in \mathcal{C}}d''(\alpha_\sigma)[\sigma],\]
	which are independent of the choice of the polyhedral complex of definition $\mathcal{C}$.
\end{definition} 
\begin{remark}
	\begin{enumerate}
		\item [(1)] $d'_P$ and $d_P''$ are well-defined since they satisfy the relation list in \cref{rk:polyhedral complexes with smooth forms}~(2).
		\item[(2)] The polyhedral derivatives of a polyhedral supercurrent $\alpha$ do not coincide with derivative of $\alpha$ in sense of supercurrents. 
		\item[(3)] The Leibniz rules hold for polyhedral derivatives $d_P', d_P''$.
	\end{enumerate}
\end{remark}
\subsection{Integration of polyhedral supercurrents}

We will define integration of polyhedron supercurrents, which generalizes the one of superforms, i.e. \cref{definition of integration}. 

\begin{definition}
	Let $\widetilde{\Omega}$ be an open subset of $N_\R$. For any $\alpha = \sum\limits_{\sigma\in \mathcal{C}}\alpha_\sigma[\sigma] \in P^{r,r}_c(\widetilde{\Omega})$, we set  
	\[\int_{\widetilde{\Omega}}\alpha: = \sum\limits_{\sigma\in\mathcal{C}}\int_{\sigma\cap \widetilde{\Omega}}\alpha_\sigma.\]
	This defines an $\R$-linear map $P^{r,r}_c(\widetilde{\Omega})\rightarrow \R$.
	
	For any integral $\R$-affine polyhedral set $P\subset \widetilde{\Omega}$ (can be of any dimension), we set 
	\[\int_{P}\alpha: = \sum\limits_{\sigma\in\mathcal{C}}\int_{P\cap\sigma\cap \widetilde{\Omega}}\alpha_\sigma,\] 
	called the {\bf integral of $\alpha$ over $P$}. This is independent of the choice of $\mathcal{C}$. 
	\label{definition of integration for polyhedral supercurrents}
\end{definition}
\begin{remark}
	\begin{enumerate}
		\item [(1)] This definition generalizes the integration on $A_c^{r,r}(\widetilde{\Omega})$ on a polyhedron. 
		\item [(2)] We have an observation: for any $\alpha\in P_c^{r,r}(\widetilde{\Omega})$, we can take any function $f\in A_c^{0,0}(\widetilde{\Omega})$ such that $f|_{\Supp(\alpha)} \equiv 1$. Then
		\[\alpha(f) = \int_{\widetilde{\Omega}}\alpha.\]
		This equality can be seen as the integral of general supercurrent.
		\item[(3)] Note that $\int_P\alpha$ is well-defined for any $\alpha\in F^{r,r}(\widetilde{\Omega})$ with $\Supp(\alpha)\cap P$ compact. 
		\item[(4)] We may assume that $\alpha$ admits a polyhedral complex of definition $\mathcal{C}$ such that $P$ has a polyhedral decomposition $\mathcal{D}$ which is a subcomplex of $\mathcal{C}$. In this case, we write
		\[\int_P\alpha := \sum\limits_{\sigma\in \mathcal{D}}\int_\sigma\alpha_\sigma.\]
	\end{enumerate}
\end{remark}

\begin{definition}
	Let $\widetilde{\Omega}$ be an open subset of $N_\R$, $P\subset \widetilde{\Omega}$ an integral $\R$-affine polyhedral set,  and $\eta=\sum\limits_{\sigma\in \mathcal{C}}\eta_\sigma[\sigma]\in P^{r-1,r}(\widetilde{\Omega})$ a polyhedral supercurrent with $\Supp(\eta)\cap P$ compact. As above, $\eta$ admits a polyhedral complex of definition $\mathcal{C}$ such that $P$ has a polyhedral decomposition $\mathcal{D}$ which is a subcomplex of $\mathcal{C}$. We define the {\bf integral of $\eta$ over the boundary} $P$ as 
		\[\int_{\partial P}\eta := \sum\limits_{\sigma\in \mathcal{D}}\int_{\partial\sigma}\eta_\sigma,\]
	the right-hand side is defined in \cref{definition of integration on boundary for superforms}. For a polyhedral supercurrent $\eta \in P^{r,r-1}(\widetilde{\Omega})$, the definition $\int_{\partial P}\eta$ is similar.
\end{definition}

From Stokes' formula, i.e. \cref{stokes' formula for polyhedra}, we can easily deduce the Stokes' formula for polyhedral sets. 

\begin{proposition}[Stokes' formula for polyhedral supercurrents]
	Let $\widetilde{\Omega}$ be an open subset of $N_\R$, and $P\subset \widetilde{\Omega}$ an integral $\R$-affine polyhedral set. Then we have
	\[\int_Pd'_P\alpha = \int_{\partial P}\alpha, \ \ \int_Pd''_P\beta = \int_{\partial P}\beta,\]
	for all polyhedral supercurrents $\alpha\in P^{r-1,r}(\widetilde{\Omega})$ and $\beta\in P^{r,r-1}(\widetilde{\Omega})$ with $\Supp(\alpha)\cap P$ and $\Supp(\beta)\cap P$ compact.
	\label{stokes' formula for polyhedral supercurrents}
\end{proposition}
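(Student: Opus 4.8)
The plan is to reduce the statement to the single-polyhedron Stokes' formula already established in \cref{stokes' formula for polyhedra}, exploiting that both $\int_P$ and $\int_{\partial P}$ are, by their very definitions, sums of contributions indexed by the faces of a common polyhedral complex. Since the boundary integral $\int_{\partial P}$ is defined face-wise rather than as an integral over the topological boundary of $P$, no cancellation of interior facets is needed, and the whole argument becomes a term-by-term comparison.

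First I would fix, as permitted by \cref{definition of integration for polyhedral supercurrents} and by the definition of $\int_{\partial P}$, a single integral $\R$-affine polyhedral complex $\mathcal{C}$ adapted to $\alpha$ (resp. $\beta$) such that $P$ is realized by a subcomplex $\mathcal{D}\subseteq\mathcal{C}$; this is possible after passing to a common refinement, and both integrals are independent of the choice. Writing $\alpha = \sum_{\sigma\in\mathcal{C}}\alpha_\sigma[\sigma]$, the polyhedral derivative is $d'_P\alpha = \sum_\sigma d'(\alpha_\sigma)[\sigma]$, where $d'(\alpha_\sigma)$ is the intrinsic differential of the superform $\alpha_\sigma$ inside $N_{\sigma,\R}$. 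Unwinding the two definitions then gives
\[\int_P d'_P\alpha = \sum_{\sigma\in\mathcal{D}}\int_\sigma d'(\alpha_\sigma), \qquad \int_{\partial P}\alpha = \sum_{\sigma\in\mathcal{D}}\int_{\partial\sigma}\alpha_\sigma,\]
so it suffices to match the two sides summand by summand over $\sigma\in\mathcal{D}$.

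The core is then a purely per-face statement. For each $\sigma\in\mathcal{D}$ of dimension $m$, the datum $\alpha_\sigma$ is an $(m-1,m)$-superform which, as noted after the definition of $A_\sigma$, extends to an open neighborhood of $\sigma$ in its own affine span $N_{\sigma,\R}$, where $\sigma$ is top-dimensional. Applying \cref{stokes' formula for polyhedra} to the single polyhedron $C=[\sigma]$ in $N_{\sigma,\R}$ yields exactly $\int_\sigma d'(\alpha_\sigma) = \int_{\partial\sigma}\alpha_\sigma$. Summing over $\sigma\in\mathcal{D}$ gives the first identity; the second follows identically, replacing $d'_P$ by $d''_P$ and invoking the $d''$-version of \cref{stokes' formula for polyhedra} for $\beta\in P^{r,r-1}(\widetilde{\Omega})$.

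Two points need checking but do not constitute a genuine obstacle. The compact-support hypothesis of \cref{stokes' formula for polyhedra} holds on each face because $\Supp(\alpha)\cap P$ is compact and $\Supp(\alpha)=\bigcup_\sigma\Supp(\alpha_\sigma)$, so each $\Supp(\alpha_\sigma)\cap\sigma$ is compact and only finitely many faces contribute, keeping every sum finite. I expect the only delicate aspect to be the bookkeeping: confirming that $\int_P$ and $\int_{\partial P}$ may be evaluated against the \emph{same} complex $\mathcal{C}$ and decompose over the \emph{same} index set $\mathcal{D}$, so that the face-wise classical Stokes' identities can be summed directly. This is exactly what the face-wise definition of $\int_{\partial P}$ is designed to make transparent, so the reduction is clean and the real content lies entirely in \cref{stokes' formula for polyhedra}.
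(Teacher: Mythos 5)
Your proposal is correct and takes essentially the same route as the paper, which deduces the result from \cref{stokes' formula for polyhedra} exactly as you do: choose one complex $\mathcal{C}$ adapted to the supercurrent with $P$ decomposed by a subcomplex $\mathcal{D}$, note that both $\int_P d'_P\alpha$ and $\int_{\partial P}\alpha$ are by definition face-wise sums over $\mathcal{D}$, and apply the classical formula to each face $\sigma$ inside its span $N_{\sigma,\R}$, where the component $\alpha_\sigma$ is of type $(m-1,m)$ with $m=\dim\sigma$. Your two checks (compactness of $\Supp(\alpha_\sigma)\cap\sigma$ via $\Supp(\alpha)=\bigcup_\sigma\Supp(\alpha_\sigma)$, and evaluating both integrals against the same $\mathcal{D}$) are precisely the bookkeeping the paper's one-line deduction leaves implicit.
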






\section{$\delta$-Forms on $N_\R$}

\label{delta forms on N}

Mihatsch \cite{mihatsch2021on} defines and studies $\delta$-forms on $\R^n$. His definition is natural and contains the $\delta$-preforms defined in \cite[Definition~2.9]{gubler2017a}. In this section, at first we will recall his main result in \cite{mihatsch2021on}, then we will define corner locus and prove the tropical Poincar\'e-Lelong formula.


As before, throughout this section, $N, N'$ are finitely generated free abelian group of rank $r, r'$ respectively, and we fix an open subset $\widetilde{\Omega}\subset N_\R$.




\subsection{Piecewise linear and piecewise smooth superforms}

\begin{definition}
	A {\bf piecewise smooth superform}  on $\widetilde{\Omega}$ is a set $\alpha=\{\alpha_\sigma\}_{\sigma\in \mathcal{C}}$, where $\mathcal{C}$ is an integral $\R$-affine polyhedral decomposition of $N_\R$ and $\alpha_\sigma \in A_\sigma(\widetilde{\Omega}\cap \sigma)\subset A_{N_{\sigma,\R}}(\widetilde{\Omega}\cap \mathrm{relint}(\sigma))$, such that $\alpha_\tau=\alpha_\sigma|_\tau$ for any $\tau\leq \sigma\in \mathcal{C}$. A {\bf piecewise linear superform} on $\widetilde{\Omega}$ is a piecewise smooth superform $\alpha=\{\alpha_\sigma\}_{\sigma\in \mathcal{C}}$ such that each $\alpha_{\sigma}$ is given of the form $\sum\limits_{I,J}\alpha_{\sigma,IJ}d'x_I\wedge d''x_J$ with $\alpha_{\sigma,IJ}$ restriction of a linear function $N_\R\rightarrow \R$.
	
	We identify two superforms $\alpha, \alpha'$ on $\widetilde{\Omega}$ if they have the same support and if $\alpha_\sigma =\alpha'_{\sigma'}$ on $\widetilde{\Omega}\cap\mathrm{relint}(\sigma)\cap\mathrm{relint}(\sigma')$ for all polyhedra $\sigma\in \mathcal{C}$ and $\sigma'\in \mathcal{C}'$.
	
	The space of piecewise smooth superforms (resp. piecewise linear superforms) on $\widetilde{\Omega}$ is denoted by $PS(\widetilde{\Omega})$ (resp. $PL(\widetilde{\Omega})$). It has a natural bigrading $C^{\infty}(\widetilde{\Omega})$-algebra with $\wedge$-product. We denote by $PS_c(\widetilde{\Omega})\subset PS(\widetilde{\Omega})$ the subspace of piecewise smooth superforms with compact support.
	
	We have differentials $d_P', d_P''$ on $PS(\widetilde{\Omega})$ defined as
	\[d_P'\alpha = \{d'\alpha_\sigma\}_{\sigma\in \mathcal{C}}, \ \ d_P''\alpha = \{d''\alpha_\sigma\}_{\sigma\in \mathcal{C}},\]
	for any $\alpha=\{\alpha_\sigma\}_{\sigma\in \mathcal{C}}$.
	
	Obviously, an elements $\{\phi_\sigma\}_{\sigma\in \mathcal{C}} \in PS^{0,0}(\Omega)$ (resp. $PL^{0,0}(\widetilde{\Omega}$)) gives a function $\widetilde{\Omega} \rightarrow \R$, called a {\bf piecewise smooth function} (resp. a {\bf piecewise linear function}). 
	\label{piecewise smooth superform on open subset}
\end{definition}
\begin{remark}
	\begin{enumerate}
		\item[(1)] For an open subset $\widetilde{\Omega}\subset N_\R$, we have $A(\widetilde{\Omega})\hookrightarrow PS(\widetilde{\Omega})$ and 
		\[PS^{p,q}(\widetilde{\Omega})\hookrightarrow P^{p,q,0}(\widetilde{\Omega}), \ \ (\alpha_\sigma)_{\sigma\in \mathcal{C}} \mapsto \sum\limits_{\sigma\in\mathcal{C}^0}\alpha_\sigma[\sigma].\]
		\item[(2)] As usual, for $\alpha\in PS^k(\widetilde{\Omega})$ and $\beta\in PS(\widetilde{\Omega})$, we have
		\[d_P'(\alpha\wedge\beta) = d_P'\alpha\wedge\beta+(-1)^k\alpha\wedge d_P'\beta.\]
		An analogous formula holds for $d_P''$.
		
	\end{enumerate}
\end{remark}

\subsection{$\delta$-forms and balancing condition}

\begin{definition}
 A polyhedral supercurrent $\alpha\in B(\widetilde{\Omega})$ is called a {\bf $\delta$-form} if $d'\alpha$ and $d''\alpha$ are again polyhedral.  
	The {\bf support} of a $\delta$-form is the support of its underlying supercurrent.
	
	The subspace of $\delta$-forms in $P^{p,q,l}(\widetilde{\Omega})$ is denoted by $B^{p,q,l}(\widetilde{\Omega})$. We set 
	\[B^{p,q}(\widetilde{\Omega}) := \bigoplus\limits_{l\geq 0}B^{p-l,q-l,l}(\widetilde{\Omega}), \ \ B^n(\widetilde{\Omega}): = \bigoplus\limits_{p+q=n}B^{p,q}(\widetilde{\Omega}), \ \ B(\widetilde{\Omega}): = \bigoplus\limits_{n\in \N}B^n(\widetilde{\Omega}).\]
	We denote by $B_c(\widetilde{\Omega})$ the subspace of $B(\widetilde{\Omega})$ given by the $\delta$-forms with compact support.
	
	\label{delta form on an open subset}
\end{definition}
\begin{remark}
	\begin{enumerate}
		\item[(1)] Notice that, comparing to \cite[Example~2.2]{mihatsch2021on}, in \cref{definition of polyhedral supercurrents}, our polyhedral complex $\mathcal{C}$ is integral $\R$-affine, so we have a natural calibrage  \cite{chambert2012formes} or weight in \cite[Definition~2.1]{mihatsch2021on} for each polyhedron in $\mathcal{C}$. 
		\item[(2)] The $\R$-vector spaces $B^{p,q,l}(\widetilde{\Omega})$ form a separated presheaf on $N_\R$ since $P^{p,q,l}$ is a separated presheaf and $d', d''$ are local. It is not a sheaf. However, for finite open covering $\widetilde{\Omega}=\bigcup\limits_{i=1}^n\widetilde{\Omega}_i$, the sequence 
		\[\xymatrix{0\ar[r] & B^{p,q}(\widetilde{\Omega}) \ar[r] & \prod\limits_{i=1}^nB^{p,q}(\widetilde{\Omega}_i) \ar[r] &\prod\limits_{i,j=1}^nB^{p,q}(\widetilde{\Omega}_i\cap\widetilde{\Omega}_j) }\]
		is exact.
	\end{enumerate}
\end{remark}

Since $P^{p,q,l}(\widetilde{\Omega})\simeq F^{p,q,l}(\widetilde{\Omega})$, we introduce the balance condition to give another description for $\delta$-forms.

\begin{definition}
 A polyhedral supercurrent $\alpha=\sum\limits_{\sigma\in\mathcal{C}^l} \alpha_\sigma[\sigma]\in P^{p,q,l}(\widetilde{\Omega})\simeq F^{p,q,l}(\widetilde{\Omega})$ is called {\bf balanced} or satisfies the {\bf balancing condition} if for any $\tau\in \mathcal{C}^{l+1}$, the sum
	\[\sum\limits_{\substack{\sigma\in \mathcal{C}^l\\ \tau\leq \sigma \text{ a facet}}}\alpha_{\sigma}|_\tau\otimes\omega_{\sigma,\tau}\in A_\tau^{p,q}(\widetilde{\Omega}\cap\tau)\otimes_\R N_\R\]
	lies in the subspace $A_\tau^{p,q}(\widetilde{\Omega}\cap\tau)\otimes_\R N_{\tau,\R}$, where $\omega_{\sigma,\tau}\in N_\sigma$ is a normal vector (obviously independent of the choice of normal vectors). A polyhedral supercurrent $\alpha = \sum\limits_{p,q,l}\alpha^{p,q,l}\in P(\widetilde{\Omega}) = \bigoplus\limits_{p,q,l}P^{p,q,l}(\widetilde{\Omega})$ is called {\bf balanced} if each component $\alpha^{p,q,l}$ is so.
	
	Notice that the isomorphism $F^l(N_\R)\simeq P^{0,0,l}(N_\R)$ identifies $TZ^l(N_\R)$ in \cite[Definition~1.1~(v)]{gubler2017a} with the set of balanced $(0,0,l)$-polyhedral supercurrents on $N_\R$, so we denote $TZ^l(\widetilde{\Omega})$ the set  of balanced $(0,0,l)$-polyhedral supercurrents on $\widetilde{\Omega}$.
	\label{balancing condition for differential forms}
\end{definition} 
\begin{remark}
	\begin{enumerate}
		\item [(1)] The definition is independent of the choice of representatives of $\alpha$, i.e. if $\alpha=\sum\limits_{\sigma\in\mathcal{C}^l} \alpha_\sigma[\sigma]= \sum\limits_{\sigma\in\mathcal{D}^l} \beta_\sigma[\sigma]$, then  $(\alpha_\sigma)_{\sigma\in \mathcal{C}}$ satisfies the balancing condition if and only if $(\beta_{\sigma})_{\sigma\in \mathcal{D}}$ satisfying the balancing condition.
		 
		\begin{proof}
			We can assume that $\mathcal{D}$ is a refinement of $\mathcal{C}$ and $\mathcal{C}, \mathcal{D}$ are of codimension $l$. If $\rho\in \mathcal{D}^{l+1}$, let $\tau\in \mathcal{C}$ such that $\rho\subset \tau$. If $\dim\tau = \dim\rho$, then there is a bijection 
			\[\{\varrho\in \mathcal{D}^l\mid \rho\subset \varrho\}\longleftrightarrow \{\sigma\in \mathcal{C}_n\mid \tau\subset \sigma\}.\]
			Then balancing condition at $\tau$ implies the balancing condition at $\rho$. If $\dim\tau = \dim\rho+1$, then the plane $H$ given by $\rho$ divides $N_\tau$ into two half-space $V_+$, $V_-$ with $H=V_+\cap V_-$. In this case, $\{\varrho\in \mathcal{D}^l\mid \rho\subset \varrho\}$ is consisted of two polyhedra $\varrho_+, \varrho_-$ lying in $V_+$ and $V_-$ respectively. We can take $\omega_{\varrho_+,\rho} = - \omega_{\varrho_-,\rho}$, then $\beta_{\varrho_+}|_{\rho}\otimes\omega_{\varrho_+,\rho}+\beta_{\varrho_-}|_{\rho}\otimes\omega_{\varrho_-,\rho} = 0$. Hence, the balancing condition for $(\alpha_\sigma)_{\sigma\in \mathcal{C}}$ implies the one for $(\beta_\sigma)_{\sigma\in \mathcal{D}}$.
			
			The converse statement comes from the fact that $\tau\in\mathcal{C}^{l+1}$ is a union of some elements in $\mathcal{D}^{l+1}$, then we apply the statement above.
		\end{proof}
		\item[(2)] The definition is equivalent with \cite[Definition~3.2]{mihatsch2021on}, i.e. a polyhedral supercurrent $\alpha=\sum\limits_{\sigma\in\mathcal{C}^l} \alpha_\sigma[\sigma]\in P(\widetilde{\Omega})$ if and only if for any $\tau\in \mathcal{C}$, the sum
		\[\sum\limits_{\substack{\sigma\in \mathcal{C}\\ \tau\leq \sigma \text{ a facet}}}\alpha_{\sigma}|_\tau\otimes\omega_{\sigma,\tau}\in A_\tau(\widetilde{\Omega}\cap\tau)\otimes_\R N_\R\]
		lies in the subspace $A_\tau^{p,q}(\widetilde{\Omega}\cap\tau)\otimes_\R N_{\tau,\R}$.
	\end{enumerate}
\end{remark}

One of the main result in \cite{mihatsch2021on} is the following theorem.

\begin{theorem}[\cite{mihatsch2021on}~Theorem~3.3]
 Then a polyhedral supercurrent $\alpha$ is a $\delta$-form if and only if it is balanced. In particular, $\alpha$ is a $\delta$-form if and only if its $(p,q,l)$-component $\alpha^{p,q,l}$ is a $\delta$-form for all $(p,q,l)$.
	
	Furthermore, if one out of $d'\alpha$, $d''\alpha$ is polyhedral, then $\alpha$ is a $\delta$-form.
	\label{main theorem of delta forms}
\end{theorem}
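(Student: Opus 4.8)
The plan is to compute the supercurrent derivative $d'\alpha$ of a polyhedral supercurrent $\alpha=\sum_{\sigma\in\mathcal{C}^l}\alpha_\sigma[\sigma]\in P^{p,q,l}(\widetilde{\Omega})$ explicitly and to isolate the obstruction to its polyhedrality as the \emph{transverse} part of the balancing sum. Writing $\alpha$ as $\sum_\sigma\alpha_\sigma\wedge\delta_\sigma$ and unwinding the defining formula, one gets $d'(\alpha_\sigma\wedge\delta_\sigma)(\gamma)=(-1)^{p+q+1}\int_\sigma\alpha_\sigma\wedge d'(i^*\gamma)$. I would then apply the Leibniz rule for $d'$ on $N_{\sigma,\R}$ together with Stokes' formula (\cref{stokes' formula for polyhedra}). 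The interior contribution reassembles into the polyhedral derivative $d_P'\alpha=\sum_\sigma d'(\alpha_\sigma)[\sigma]$, which is manifestly polyhedral, while the boundary contribution, evaluated via \cref{definition of integration on boundary for superforms} and the contraction Leibniz rule (\cref{basic properties of contraction}), produces a sum over the facets of the $\sigma$'s. This establishes the decomposition $d'=d_P'+\partial'$ and reduces the problem to deciding when the boundary part $\partial'\alpha$ is polyhedral.

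The second step is to read off the local structure of $\partial'\alpha$ at a fixed $\tau\in\mathcal{C}^{l+1}$. Collecting the contributions of all $\sigma\in\mathcal{C}^l$ having $\tau$ as a facet, the boundary term at $\tau$ is governed by the element
\[v_\tau:=\sum_{\tau\leq\sigma\text{ a facet}}\alpha_\sigma|_\tau\otimes\omega_{\sigma,\tau}\in A_\tau^{p,q}(\widetilde{\Omega}\cap\tau)\otimes_\R N_\R,\]
which is exactly the sum appearing in \cref{balancing condition for differential forms}. I would split $N_\R=N_{\tau,\R}\oplus(N_\R/N_{\tau,\R})$: the tangential component of $v_\tau$ (lying in $A_\tau^{p,q}\otimes N_{\tau,\R}$) contributes a genuine polyhedral supercurrent supported on $\tau$, namely a smooth form wedged with $\delta_\tau$, whereas the transverse component (the image of $v_\tau$ in $A_\tau^{p,q}\otimes(N_\R/N_{\tau,\R})$) contributes a normal-derivative-of-Dirac current supported on $\tau$.

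The main obstacle, and the heart of the argument, is to show that such a transverse term is never polyhedral unless it vanishes. The point is one of order: a polyhedral supercurrent $\beta\wedge\delta_\tau$ is zeroth-order along $\tau$, in the sense that it evaluates a test form $\gamma$ only through $i_\tau^*\gamma$, whereas a nonzero transverse component of $v_\tau$ evaluates $\gamma$ through its normal derivative along $\tau$. Moreover $d_P'\alpha$ is supported on the codimension-$l$ polyhedra and is itself zeroth-order, so it cannot cancel these first-order contributions on $\tau$. I would make this precise by testing against forms that vanish on $|\mathcal{C}^{l+1}|$ but carry prescribed normal derivatives, concluding that $\partial'\alpha$, and hence $d'\alpha$, is polyhedral if and only if $v_\tau$ lies in $A_\tau^{p,q}\otimes N_{\tau,\R}$ for every $\tau$, i.e. if and only if $\alpha$ is balanced.

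Finally I would deduce the $d''$ statement from the $d'$ statement by symmetry rather than repeating the computation: the operator $J$ intertwines the differentials via $Jd'=d''J$, preserves polyhedrality (since $\delta_\sigma$ is symmetric), and carries balanced supercurrents to balanced ones, so $d''\alpha$ is polyhedral if and only if $d'(J\alpha)$ is, if and only if $J\alpha$, equivalently $\alpha$, is balanced. Since polyhedrality of $d'\alpha$ and of $d''\alpha$ are thus each equivalent to the \emph{single} balancing condition, the equivalence that $\alpha$ is a $\delta$-form if and only if it is balanced follows immediately, and so does the ``furthermore'' clause: polyhedrality of either $d'\alpha$ or $d''\alpha$ already forces balancedness, hence polyhedrality of the other. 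The component-wise ``in particular'' statement is then automatic, because the balancing condition of \cref{balancing condition for differential forms} is imposed separately on each $(p,q,l)$-component and $d',d''$ respect the trigrading.
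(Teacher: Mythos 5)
You should first note a point of calibration: the paper never proves this theorem. It is quoted verbatim from Mihatsch \cite{mihatsch2021on}, and the only trace of the argument that the paper retains is the boundary formula \cref{expression of boundary derivatives}. Measured against Mihatsch's actual proof, your architecture is the right one and matches it closely: applying \cref{stokes' formula for polyhedra} polyhedron by polyhedron yields the decomposition $d'=d_P'+\partial'$; the boundary contribution at a fixed $\tau\in\mathcal{C}^{l+1}$ is governed by $v_\tau=\sum_{\tau\leq\sigma}\alpha_\sigma|_\tau\otimes\omega_{\sigma,\tau}$; the tangential part of $v_\tau$ reassembles, via the contraction Leibniz rule (\cref{basic properties of contraction}), into a smooth form wedged with $\delta_\tau$, exactly as recorded in \cref{expression of boundary derivatives}; and your $J$-symmetry deduction of the $d''$ case is legitimate, since $Jd'=d''J$, $J$ preserves polyhedrality (as $J(\sum_{\sigma\in\mathcal{C}^l}\alpha_\sigma[\sigma])=(-1)^l\sum_\sigma J(\alpha_\sigma)[\sigma]$), and $J$, acting on $A_\tau^{p,q}\otimes_\R N_\R$ through the form factor only, visibly preserves the balancing condition of \cref{balancing condition for differential forms}. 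The componentwise ``in particular'' and the ``furthermore'' clause then follow as you say.

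There is, however, one genuine defect, located precisely at the heart of the argument: you misidentify the \emph{order} of the transverse obstruction, and the test you propose would fail as literally stated. The transverse part of the boundary term is the functional $\gamma\mapsto\int_\tau\beta\wedge\langle\gamma;((w)_{\{1\}},\emptyset)\rangle|_\tau$ with $w\notin N_{\tau,\R}$; this involves \emph{no derivatives of $\gamma$ whatsoever} --- it is a zeroth-order current that pairs with the transverse covector components of $\gamma$ along $\tau$, i.e.\ with coefficients of monomials containing a $d'x_k$ for $x_k$ transverse to $\tau$, which is exactly the data killed by $i_\tau^*$. Consequently, a test form ``vanishing on $|\mathcal{C}^{l+1}|$ but carrying prescribed normal derivatives'' --- one whose coefficient functions vanish along $\tau$ --- is annihilated by the transverse term as well (contraction acts on the form part, and restricting vanishing coefficients to $\tau$ gives zero), so your test returns $0=0$ on both sides and distinguishes nothing. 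The repair is the one Mihatsch actually uses: near a generic point of $\mathrm{relint}(\tau)$ take $\gamma=f\,d'x_k\wedge d'x_I\wedge d''x_J$ with $x_k$ an integral $\R$-affine coordinate vanishing on $\mathbb{A}_\tau$, the indices $I,J$ tangential, and $f$ a bump function \emph{not} vanishing on $\tau$. Then $i_\tau^*\gamma=0$, so any polyhedral current locally supported in $\tau$ (which is all that can appear there, by a small localization lemma you should also record: a polyhedral current supported in $\tau$ near a relative-interior point is of the form $\beta'\wedge\delta_\tau$) evaluates to zero, while the transverse term gives $\pm\int_\tau f\,\tfrac{\partial x_k}{\partial w}\,\beta\wedge d'x_I\wedge d''x_J$, which is nonzero for suitable choices of $f$, $k$, $I$, $J$. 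Varying these forces the transverse component of $v_\tau$ to vanish on $\mathrm{relint}(\tau)$, which is the balancing condition; with this substitution your proof closes and agrees with the cited one.
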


\begin{corollary}
	The derivatives
	\[d': B^{p,q}(\widetilde{\Omega})\rightarrow B^{p+1,q}(\widetilde{\Omega}), \ \ d': B^{p,q}(\widetilde{\Omega})\rightarrow B^{p,q+1}(\widetilde{\Omega}),\]
	and polyhedral derivatives 
	\[d'_P: B^{p,q,l}(\widetilde{\Omega})\rightarrow B^{p+1,q,l}(\widetilde{\Omega}), \ \ d'_P: B^{p,q,l}(\widetilde{\Omega})\rightarrow B^{p,q+1,l}(\widetilde{\Omega}),\]
	are well-defined.
\end{corollary}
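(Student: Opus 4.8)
The plan is to handle the ordinary derivatives $d', d''$ and the polyhedral derivatives $d'_P, d''_P$ by two different reductions to \cref{main theorem of delta forms}, which identifies $\delta$-forms with balanced polyhedral supercurrents and, crucially, asserts that a polyhedral supercurrent is already a $\delta$-form as soon as \emph{one} of its two derivatives is polyhedral. Since $\alpha$ is a $\delta$-form if and only if each $(p,q,l)$-component is, and the operators in question respect the bigrading of supercurrents ($d'$ sends $D^{a,b}$ to $D^{a+1,b}$, etc.), the stated target degrees follow from routine bookkeeping; the only real content is that the image lands in the $\delta$-form subspace.

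For $d'$ and $d''$ I would argue with no computation at all. Let $\alpha\in B^{p,q}(\widetilde{\Omega})$. By \cref{delta form on an open subset} the supercurrents $d'\alpha$ and $d''\alpha$ are polyhedral, lying in $P^{p+1,q}(\widetilde{\Omega})$ and $P^{p,q+1}(\widetilde{\Omega})$ respectively. To see they are $\delta$-forms, I invoke the \emph{furthermore} clause of \cref{main theorem of delta forms}: it suffices that one derivative of each be polyhedral. But $d'(d'\alpha)=0$ and $d''(d''\alpha)=0$ by the relations $d'd'=d''d''=0$, and $0$ is trivially polyhedral. Hence $d'\alpha\in B^{p+1,q}(\widetilde{\Omega})$ and $d''\alpha\in B^{p,q+1}(\widetilde{\Omega})$.

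For the polyhedral derivatives I would instead verify the balancing condition of \cref{balancing condition for differential forms} directly and then apply the equivalence in \cref{main theorem of delta forms}. Write $\alpha=\sum_{\sigma\in\mathcal{C}^l}\alpha_\sigma[\sigma]\in B^{p,q,l}(\widetilde{\Omega})$, so $\alpha$ is balanced, and $d'_P\alpha=\sum_\sigma d'(\alpha_\sigma)[\sigma]$. Fixing $\tau\in\mathcal{C}^{l+1}$, I must show that $\sum_{\tau\leq\sigma\text{ a facet}}(d'\alpha_\sigma)|_\tau\otimes\omega_{\sigma,\tau}$ lies in $A^{p+1,q}_\tau(\widetilde{\Omega}\cap\tau)\otimes_\R N_{\tau,\R}$. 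The decisive local fact is that $d'$ commutes with restriction to the facet, $(d'\alpha_\sigma)|_\tau=d'(\alpha_\sigma|_\tau)$: choosing coordinates adapted to the splitting $N_{\sigma,\R}=N_{\tau,\R}\oplus\R\,\omega_{\sigma,\tau}$, the normal coordinate is constant along $\tau$, so its differential restricts to zero and only the tangential derivatives survive, and those commute with restriction. Granting this, the balancing sum for $d'_P\alpha$ at $\tau$ is precisely the image of the balancing sum for $\alpha$ at $\tau$ under the operator $d'\otimes\id$ on the first tensor factor. Because $d'$ carries $A^{p,q}_\tau(\widetilde{\Omega}\cap\tau)$ into $A^{p+1,q}_\tau(\widetilde{\Omega}\cap\tau)$ (as $d'$ commutes with the pullback $i^*$, see the remark following \cref{definition of superform}), this operator preserves the subspace $A^{\bullet,\bullet}_\tau\otimes_\R N_{\tau,\R}$. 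Thus balancedness of $\alpha$ forces balancedness of $d'_P\alpha$, and \cref{main theorem of delta forms} yields $d'_P\alpha\in B^{p+1,q,l}(\widetilde{\Omega})$; the case of $d''_P$ is identical with $d''\otimes\id$. Independence of the chosen complex of definition is already guaranteed by the remarks accompanying the definition of $d'_P, d''_P$ and by \cref{rk:polyhedral complexes with smooth forms}.

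The single point that requires genuine care — the main obstacle — is the commutation $(d'\alpha_\sigma)|_\tau=d'(\alpha_\sigma|_\tau)$, since restriction of superforms to a linear subspace does \emph{not} commute with $d'$ in general. It does here only because $\tau$ is a facet of $\sigma$, so the suppressed direction is exactly the normal coordinate $\omega_{\sigma,\tau}$, which is locally constant on $\tau$. I would isolate this as a short lemma with the adapted-coordinate computation; once it is in place, well-definedness of all four maps follows formally from the two reductions above.
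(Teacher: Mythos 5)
Your proof is correct and matches the paper's own argument: for $d'$ and $d''$ the paper reasons exactly as you do, noting $d'd'\alpha=0$ is polyhedral and invoking the ``furthermore'' clause of \cref{main theorem of delta forms}, while for $d'_P$ and $d''_P$ it merely asserts that the balancing condition is easy to check when $\alpha$ is balanced --- which is precisely the verification you carry out via $(d'\otimes\id)$ applied to the balancing sum. One correction to a side remark: the step you single out as the main obstacle is automatic, since the restriction $A^{p,q}_\sigma(\widetilde{\Omega}\cap\sigma)\rightarrow A^{p,q}_\tau(\widetilde{\Omega}\cap\tau)$ is pullback along an integral $\R$-affine inclusion and such pullbacks commute with $d'$ and $d''$ in complete generality (see the remark following \cref{definition of superform}), so your claim that restriction to a linear subspace fails to commute with $d'$ in general is incorrect and no facet-specific adapted-coordinate lemma is needed.
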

\begin{proof}
	If $\alpha\in B^{p,q}(\widetilde{\Omega})$, then $d'd'\alpha=0$, which is polyhedral, so $d'\alpha\in B^{p+1,q}(\widetilde{\Omega})$ by \cref{main theorem of delta forms}. Similarly, $d''\alpha\in B^{p,q+1}(\widetilde{\Omega})$.
	
	As for $d_P'\alpha$ and $d_P''\alpha$, it is easy to check the balancing condition if $\alpha$ is balanced.
\end{proof}

\begin{proposition}
	We have a natural $\R$-linear isomorphism $PS^{p,q}(\widetilde{\Omega})\simeq B^{p,q,0}(\widetilde{\Omega})$. Moreover, this map identifies $d', d''$ on $PS^{p,q}(\widetilde{\Omega})$ with $d'_P, d''_P$ on $B^{p,q,0}(\widetilde{\Omega})$ respectively.
\end{proposition}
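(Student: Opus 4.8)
The plan is to construct the isomorphism explicitly and then verify it matches the differentials. First I would define the map. Recall from Remark~(1) following \cref{piecewise smooth superform on open subset} that there is a natural inclusion $PS^{p,q}(\widetilde{\Omega})\hookrightarrow P^{p,q,0}(\widetilde{\Omega})$ sending a piecewise smooth superform $\alpha=\{\alpha_\sigma\}_{\sigma\in\mathcal{C}}$ to the polyhedral supercurrent $\sum_{\sigma\in\mathcal{C}^0}\alpha_\sigma[\sigma]$, where the sum runs over the top-dimensional (codimension $0$) polyhedra. I would take this inclusion as the candidate map and show that its image is precisely $B^{p,q,0}(\widetilde{\Omega})$, i.e. that a $(p,q,0)$-polyhedral supercurrent is a $\delta$-form if and only if it arises from a piecewise smooth superform.

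The key step is to characterize $\delta$-forms among $(p,q,0)$-polyhedral supercurrents via the balancing condition of \cref{balancing condition for differential forms} together with \cref{main theorem of delta forms}. For $l=0$, the codimension-$0$ polyhedra $\sigma\in\mathcal{C}^0$ are top-dimensional, and each facet $\tau\in\mathcal{C}^1$ is a codimension-$1$ polyhedron shared by (at most) two top-dimensional polyhedra $\sigma_+,\sigma_-$ with opposite normal vectors $\omega_{\sigma_+,\tau}=-\omega_{\sigma_-,\tau}$ modulo $N_{\tau,\R}$. The balancing condition at $\tau$ then reads that $\alpha_{\sigma_+}|_\tau\otimes\omega_{\sigma_+,\tau}+\alpha_{\sigma_-}|_\tau\otimes\omega_{\sigma_-,\tau}$ lies in $A_\tau^{p,q}(\widetilde{\Omega}\cap\tau)\otimes_\R N_{\tau,\R}$. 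Since the two normal vectors are negatives modulo $N_{\tau,\R}$, this is equivalent to $\alpha_{\sigma_+}|_\tau=\alpha_{\sigma_-}|_\tau$ in $A_\tau^{p,q}(\widetilde{\Omega}\cap\tau)$, which is exactly the compatibility condition $\alpha_\tau=\alpha_\sigma|_\tau$ defining a piecewise smooth superform in \cref{piecewise smooth superform on open subset}. I would spell out this equivalence carefully, noting that the matching of restrictions across all shared facets is precisely what makes the family $\{\alpha_\sigma\}$ glue into a well-defined piecewise smooth superform, and conversely. This establishes that the inclusion is surjective onto $B^{p,q,0}(\widetilde{\Omega})$; injectivity is immediate from the uniqueness of the $(\alpha_\sigma)$ in \cref{rk:definition of polyhedral supercurrents}~(2). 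The $\R$-linearity (indeed $C^\infty(\widetilde{\Omega})$-linearity) is clear from the formulas.

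For the second assertion, I would compare differentials directly. On the $PS$ side, $d'$ and $d''$ are the polyhedral differentials $\{d'\alpha_\sigma\}_\sigma$ and $\{d''\alpha_\sigma\}_\sigma$ by definition in \cref{piecewise smooth superform on open subset}. On the $B^{p,q,0}$ side, the polyhedral derivatives $d_P',d_P''$ are given by $\sum_\sigma d'(\alpha_\sigma)[\sigma]$ and $\sum_\sigma d''(\alpha_\sigma)[\sigma]$ by their definition. Under the identification $\alpha_\sigma\leftrightarrow\alpha_\sigma[\sigma]$ these formulas visibly agree, so the isomorphism intertwines $d'$ with $d_P'$ and $d''$ with $d_P''$.

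The main obstacle I anticipate is the careful bookkeeping in the surjectivity step: one must check that the balancing condition at \emph{all} facets $\tau\in\mathcal{C}^1$ simultaneously forces the restrictions to agree, handling the boundary facets (belonging to only one top-dimensional polyhedron) as well as interior ones, and verifying that the resulting glued family really lies in $PS^{p,q}(\widetilde{\Omega})$ rather than merely in $P^{p,q,0}(\widetilde{\Omega})$. The translation between the abstract balancing condition (a tensor lying in a subspace) and the concrete equality of restricted superforms is the conceptual heart of the argument, but it reduces to the elementary observation about opposite normal vectors noted above.
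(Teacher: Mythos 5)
Your proposal is correct and follows essentially the same route as the paper's proof: the paper likewise observes that at each $\tau\in\mathcal{C}^1$ the two adjacent maximal cells carry opposite normal vectors, so that balancing is equivalent (via $\R\omega\cap N_{\tau,\R}=0$) to the equality $\alpha_{\sigma_1}|_\tau=\alpha_{\sigma_2}|_\tau$, and then glues by setting $\alpha_\rho:=\alpha_\sigma|_\rho$ on all lower-dimensional faces. One minor simplification: since $\mathcal{C}$ is a polyhedral decomposition of all of $N_\R$, every codimension-one face lies in exactly two maximal polyhedra, so the ``boundary facet'' case you anticipate having to handle does not actually occur.
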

\begin{proof}
	For any $\alpha=\{\alpha_{\sigma}\}_{\sigma\in\mathcal{C}}\in PS^{p,q}(\widetilde{\Omega})$, the polyhedral supercurrent $\sum\limits_{\sigma\in\mathcal{C}^0}\alpha_\sigma[\sigma]\in P^{p,q,0}(\widetilde{\Omega})$ satisfies the balancing condition. Indeed, for any $\tau\in \mathcal{C}^{1}$, there are only two polyhedra $\sigma_1,\sigma_2$ of codimension $0$ containing $\tau$ as a face. Let $\omega\in N$ be a representative of the generator of $N/N_\tau$ pointing $\sigma_1$, then $-\omega$ pointing $\sigma_2$. Then we have 
	\[\alpha_{\sigma_1}|_\tau\otimes\omega + \alpha_{\sigma_2}|_\tau\otimes(-\omega) =0 \in A^{p,q}_\tau(\tau\cap\widetilde{\Omega})\otimes N_{\tau,\R}.\]
	
	Conversely,  given $\omega$ as above, notice that $\R\omega\cap N_{\tau,\R}=0$, then we must have $\alpha_{\sigma_1}|_\tau=\alpha_{\sigma_2}|_\tau$ for any $\tau =\sigma_1\cap\sigma_2$. Hence we can set $\alpha_\rho :=\alpha_{\sigma}|_\rho$ for any face of $\sigma\in\mathcal{C}^0$, which is independent of the choice of $\sigma$. Then $\{\alpha_\sigma\}_{\sigma\in \mathcal{C}}\in PS^{p,q}(\widetilde{\Omega})$. 
\end{proof}

\begin{definition}
	From the proof of \cref{main theorem of delta forms} in \cite{mihatsch2021on}, we can define the {\bf boundary derivatives} $\partial':= d'-d'_P$ and $\partial'':=d''-d''_P$,
	\[\partial': B^{p,q,l}(\widetilde{\Omega})\rightarrow B^{p,q-1,l+1}(\widetilde{\Omega}), \ \ \partial'': B^{p,q,l}(\widetilde{\Omega})\rightarrow B^{p-1,q,l+1}(\widetilde{\Omega})\]
\end{definition}
\begin{remark}
	\begin{enumerate}
		\item[(1)] Since for a form $\alpha\wedge \beta$ with $\alpha\in A(\widetilde{\Omega}), \beta\in B(\widetilde{\Omega})$, we have the Leibniz rules with respect to $d', d'', d'_P, d''_P$, so we also have Leibniz rule with respect to $\partial', \partial''$. After giving the definition of $\wedge$-product, we will have Leibniz rules with respect to all derivatives.
	\end{enumerate}
\end{remark}

\begin{corollary}[\cite{gubler2017a}~Proposition~2.18]
	We have $d'=d'_P: B^{0,0,l}(\widetilde{\Omega})\rightarrow B^{1,0,l}(\widetilde{\Omega})$, i.e. $\partial'=\partial''=0$. It is similar for $d'', d_P''$ and $\partial''$. Moreover, these properties hold for $\delta$-preforms $A^{p,q}(\widetilde{\Omega})\wedge B^{0,0,l}(\widetilde{\Omega})$, also see \cite[Definition~2.9]{gubler2017a}. 
	\label{boundary derivatives are null on delta preforms}
\end{corollary}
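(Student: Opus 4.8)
The goal is to prove Corollary~\ref{boundary derivatives are null on delta preforms}: that the boundary derivatives $\partial'$ and $\partial''$ vanish on $\delta$-preforms, equivalently that $d'=d'_P$ and $d''=d''_P$ on the space $A^{p,q}(\widetilde{\Omega})\wedge B^{0,0,l}(\widetilde{\Omega})$, with the base case being $B^{0,0,l}(\widetilde{\Omega})$ itself. My plan is to reduce everything to the base case $\alpha\in B^{0,0,l}(\widetilde{\Omega})$ and then exploit the Leibniz rules already recorded in the excerpt.

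First I would treat the base case. Take $\alpha=\sum_{\sigma\in\mathcal{C}^l}\alpha_\sigma[\sigma]\in B^{0,0,l}(\widetilde{\Omega})$, so each $\alpha_\sigma$ is a \emph{function} (a $(0,0)$-superform), and by \cref{main theorem of delta forms} the underlying polyhedral supercurrent is balanced. Since $\partial'$ lowers the $q$-degree by one, $\partial'\alpha$ would live in $B^{0,-1,l+1}(\widetilde{\Omega})$, and similarly $\partial''\alpha\in B^{-1,0,l+1}(\widetilde{\Omega})$; both target spaces are zero because negative form-degrees are impossible. This is the cleanest route: the degree bookkeeping in the definition of the boundary derivatives forces the conclusion for the $(0,0,l)$-case purely on grading grounds, with no computation at all. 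I would state this observation and note it already gives $d'=d'_P$ and $d''=d''_P$ on $B^{0,0,l}(\widetilde{\Omega})$.

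Next I would extend to a general $\delta$-preform $\beta=\gamma\wedge\alpha$ with $\gamma\in A^{p,q}(\widetilde{\Omega})$ a genuine superform and $\alpha\in B^{0,0,l}(\widetilde{\Omega})$. Using the Leibniz rule for $d'$ with respect to the $\wedge$-product (Remark after \cref{definition of wedge product}, and the Leibniz rules flagged in the remark following the definition of $\partial',\partial''$), I write $d'(\gamma\wedge\alpha)=d'\gamma\wedge\alpha+(-1)^{p+q}\gamma\wedge d'\alpha$ and likewise $d'_P(\gamma\wedge\alpha)=d'_P\gamma\wedge\alpha+(-1)^{p+q}\gamma\wedge d'_P\alpha$. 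Subtracting, $\partial'(\gamma\wedge\alpha)=\partial'\gamma\wedge\alpha+(-1)^{p+q}\gamma\wedge\partial'\alpha$. For a true superform $\gamma\in A^{p,q}(\widetilde{\Omega})=B^{p,q,0}(\widetilde{\Omega})$ one has $d'\gamma=d'_P\gamma$ (a superform has no boundary contribution, its polyhedral derivative is the ordinary one), so $\partial'\gamma=0$; and $\partial'\alpha=0$ by the base case. Hence $\partial'(\gamma\wedge\alpha)=0$, and symmetrically $\partial''(\gamma\wedge\alpha)=0$.

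The only genuinely delicate point I anticipate is justifying that the Leibniz rule holds for $\partial'$ and $\partial''$ on products of the form $\gamma\wedge\alpha$ \emph{before} the full $\wedge$-product of arbitrary $\delta$-forms has been constructed; but the remark immediately preceding this corollary explicitly grants exactly this (the Leibniz rules for $d',d'',d'_P,d''_P$, hence for $\partial',\partial''$, on products $\alpha\wedge\beta$ with $\alpha\in A(\widetilde{\Omega})$, $\beta\in B(\widetilde{\Omega})$), so I may invoke it directly. I should also confirm $d'\gamma=d'_P\gamma$ for $\gamma\in A^{p,q}(\widetilde{\Omega})$: under the identification $A^{p,q}(\widetilde{\Omega})\simeq PS^{p,q}(\widetilde{\Omega})\simeq B^{p,q,0}(\widetilde{\Omega})$ this is the statement that $d',d''$ on $PS$ match $d'_P,d''_P$ on $B^{p,q,0}$, which is the content of the preceding Proposition, so no separate argument is needed. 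With these two inputs the corollary follows, and I would close by remarking that the vanishing of $\partial',\partial''$ on $A^{p,q}(\widetilde{\Omega})\wedge B^{0,0,l}(\widetilde{\Omega})$ recovers \cite[Proposition~2.18]{gubler2017a}.
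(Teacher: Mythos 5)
Your proof is correct and follows essentially the same route as the paper, whose own (terse) argument likewise consists of the two ingredients you identify: the vanishing of $\partial'\alpha\in B^{0,-1,l+1}(\widetilde{\Omega})=0$ and $\partial''\alpha\in B^{-1,0,l+1}(\widetilde{\Omega})=0$ forced by the grading built into the definition of the boundary derivatives, together with the Leibniz rules for $d',d'',d'_P,d''_P$ (hence $\partial',\partial''$) on products $\gamma\wedge\alpha$ with $\gamma\in A(\widetilde{\Omega})$, $\alpha\in B(\widetilde{\Omega})$, which the remark preceding the corollary explicitly supplies. One small citation repair: the fact $\partial'\gamma=0$ for a genuinely smooth $\gamma$ is better sourced from $d'[\gamma]=[d'\gamma]$ (the lemma on operators on supercurrents) combined with the definition of $d'_P$, rather than from the proposition $PS^{p,q}(\widetilde{\Omega})\simeq B^{p,q,0}(\widetilde{\Omega})$, since the differentials on $PS$ in that proposition are the piecewise (polyhedral) ones and $\partial'$ does \emph{not} vanish on general piecewise smooth forms of positive degree.
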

\begin{proof}
	This is from the definition of $\partial', \partial''$ and the Leibniz rules. 
\end{proof}

\begin{lemma}
	The boundary derivatives satisfy
	\begin{align*}
		0&=\partial'\partial'=\partial''\partial'',\\
		0&=\partial'\partial''+\partial''\partial',\\
		0&=\partial' d_P'+d_P'\partial'=\partial'' d_P''+d_P''\partial'',\\
		0&=\partial' d_P''+d_P'\partial''+\partial'' d_P'+d_P''\partial'.
	\end{align*}
\end{lemma}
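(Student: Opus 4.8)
The plan is to derive all four identities from the two definitions $\partial'=d'-d'_P$ and $\partial''=d''-d''_P$ by expanding each composition and reorganizing terms according to already-known relations. The key ingredients I would assume are: first, the relations $d'd'=d''d''=0$ and $d'd''=-d''d'$ for $\delta$-forms (these hold for supercurrents by the lemma in \cref{supercurrents on N}, and $d',d''$ restrict to $B(\widetilde{\Omega})$ by the corollary following \cref{main theorem of delta forms}); second, the corresponding polyhedral relations $d'_Pd'_P=d''_Pd''_P=0$ and $d'_Pd''_P=-d''_Pd'_P$, which follow from the definition of $d'_P,d''_P$ term-by-term on each polyhedron together with the analogous identities for the ordinary superform differentials in \cref{definition of superform}. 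Both families of differentials are graded derivations of the same degree conventions, so expansion is purely formal.

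The core computation is a single algebraic manipulation repeated in four sign-configurations. For the first line I would write
\begin{align*}
0 = d'd' = (\partial'+d'_P)(\partial'+d'_P) = \partial'\partial' + (\partial' d'_P + d'_P\partial') + d'_Pd'_P,
\end{align*}
and since $d'd'=0$ and $d'_Pd'_P=0$, this reads $\partial'\partial' + (\partial' d'_P + d'_P\partial') = 0$. The bigrading then separates these: using the degree shifts recorded in the definitions ($d'_P$ raises $p$ by one keeping $l$; $\partial'$ lowers $q$ by one and raises $l$ by one), the term $\partial'\partial'$ lands in tridegree $(p,q-2,l+2)$ while $\partial' d'_P + d'_P\partial'$ lands in $(p+1,q-1,l+1)$. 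Because these target different tridegrees, the single relation splits into $\partial'\partial'=0$ and $\partial' d'_P+d'_P\partial'=0$ independently, yielding simultaneously the first identity of line one and the first identity of line three. The analogous expansion of $d''d''=0$ gives $\partial''\partial''=0$ and $\partial'' d''_P+d''_P\partial''=0$.

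For the mixed relations I would expand $d'd''=-d''d'$, i.e.\ $d'd''+d''d'=0$, substituting both splittings:
\begin{align*}
0 &= (\partial'+d'_P)(\partial''+d''_P) + (\partial''+d''_P)(\partial'+d'_P)\\
&= (\partial'\partial''+\partial''\partial') + (\partial' d''_P + d''_P\partial' + \partial'' d'_P + d'_P\partial'') + (d'_Pd''_P + d''_Pd'_P).
\end{align*}
The last parenthesis vanishes by $d'_Pd''_P=-d''_Pd'_P$. Again the bigrading separates the two remaining groups into distinct tridegrees — $\partial'\partial''+\partial''\partial'$ raises $l$ by two, whereas the middle group raises $l$ by one — so the identity splits into $\partial'\partial''+\partial''\partial'=0$ (the second listed relation) and $\partial' d''_P+d'_P\partial''+\partial'' d'_P+d''_P\partial'=0$ (the fourth). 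The main obstacle, such as it is, is bookkeeping: I must confirm that $d',d'',d'_P,d''_P$ all obey the graded Leibniz/anticommutation conventions with the \emph{same} sign rules so that the cross terms pair up as claimed, and that the tridegree-separation argument is legitimate — i.e.\ that each composite is homogeneous of the stated tridegree, so vanishing of a sum of pieces in distinct tridegrees forces each piece to vanish. Once the degree shifts are tabulated correctly, every identity falls out mechanically with no further geometric input.
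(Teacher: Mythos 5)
Your proposal is correct, and since the paper states this lemma without any proof, your argument — expanding $d'=d'_P+\partial'$ and $d''=d''_P+\partial''$ in the supercurrent identities $d'd'=d''d''=0$ and $d'd''+d''d'=0$, using the polyhedron-by-polyhedron identities for $d'_P,d''_P$, and then separating homogeneous components in the direct sum $P(\widetilde{\Omega})=\bigoplus_{p,q,l}P^{p,q,l}(\widetilde{\Omega})$ — is exactly the intended formal verification. Your tridegree bookkeeping ($d'_P$ of tridegree $(1,0,0)$, $\partial'$ of tridegree $(0,-1,1)$, and their double-prime analogues, so that the three, respectively four, groups of terms land in pairwise distinct tridegrees) matches the paper's stated degree shifts, so all four identities follow as you describe.
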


Mihatsch calculation $\partial'\alpha$ in \cite{mihatsch2021on}, for readers' convenience and further application, we give his result here. 

\begin{proposition}[\cite{mihatsch2021on}~Section~3]
	Let $\alpha=\sum\limits_{\sigma\in \mathcal{C}^l}\alpha_\sigma[\sigma]\in B^{p,q,l}(\widetilde{\Omega})$. Then $\partial'\alpha = (-1)^{r+p+q+1}\sum\limits_{\tau\in \mathcal{C}^{l+1}}\beta_\tau[\tau]\in B^{p,q-1,l+1}(\widetilde{\Omega})$, where $\beta_\tau$ can be given in the following way: write \[\sum\limits_{\substack{\sigma\in \mathcal{C}^l\\ \tau< \sigma \text{ a facet}}}\alpha_{\sigma}|_\tau\otimes\omega_{\sigma,\tau} = \sum\limits_{i\in I}\beta_i\otimes v_i\in A_\tau^{p,q}(\widetilde{\Omega}\cap\tau)\otimes_\R N_{\tau,\R}.\]
	Then 
	\[\beta_\tau = \sum\limits_{\substack{\sigma\in\mathcal{C}^l\\\tau< \sigma \text{ a facet}}}\langle\alpha_\sigma;(\emptyset,(\omega_{\sigma,\tau})_{\{1\}})\rangle|_\tau-\sum\limits_{i\in I}\langle \beta_i; (\emptyset, (v_i)_{\{1\}})\rangle.\]
	\label{expression of boundary derivatives}
\end{proposition}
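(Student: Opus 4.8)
The plan is to compute $\partial'\alpha=d'\alpha-d'_P\alpha$ directly as a supercurrent, by pairing it against an arbitrary test form $\gamma\in A_c^{r-p-l-1,r-q-l}(\widetilde{\Omega})$ and integrating by parts on each polyhedron of $\mathcal{C}^l$. First I would unwind the definition of the current derivative: since $\alpha\in D^{p+l,q+l}(\widetilde{\Omega})$, one has $(d'\alpha)(\gamma)=(-1)^{p+q+1}\alpha(d'\gamma)$, and evaluating the polyhedral supercurrent $\alpha=\sum_{\sigma\in\mathcal{C}^l}\alpha_\sigma\wedge\delta_\sigma$ gives $\alpha(d'\gamma)=\sum_{\sigma\in\mathcal{C}^l}\int_\sigma\alpha_\sigma\wedge i^*(d'\gamma)=\sum_{\sigma\in\mathcal{C}^l}\int_\sigma\alpha_\sigma\wedge d'(i^*\gamma)$, using that $i^*$ commutes with $d'$.

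Next, on each $\sigma$ (of dimension $r-l$) I would apply the Leibniz rule together with Stokes' formula (\cref{stokes' formula for polyhedra}) to the $(r-l-1,r-l)$-superform $\alpha_\sigma\wedge i^*\gamma$, obtaining $\int_\sigma\alpha_\sigma\wedge d'(i^*\gamma)=(-1)^{p+q}\big(\int_{\partial\sigma}\alpha_\sigma\wedge i^*\gamma-\int_\sigma d'\alpha_\sigma\wedge i^*\gamma\big)$. The interior term $\sum_\sigma\int_\sigma d'\alpha_\sigma\wedge i^*\gamma$ is exactly $(d'_P\alpha)(\gamma)$ by the definition of the polyhedral derivative; hence it cancels in the difference $d'\alpha-d'_P\alpha$, and I am left with $(\partial'\alpha)(\gamma)=-\sum_{\sigma\in\mathcal{C}^l}\int_{\partial\sigma}\alpha_\sigma\wedge i^*\gamma$.

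Then I would regroup the boundary integrals over the codimension-$(l+1)$ faces: each facet $\tau$ of some $\sigma$ lies in $\mathcal{C}^{l+1}$, and by \cref{definition of integration on boundary for superforms} the sum becomes $\pm\sum_{\tau\in\mathcal{C}^{l+1}}\sum_{\tau<\sigma}\int_\tau\langle\alpha_\sigma\wedge i^*\gamma;(\emptyset,(\omega_{\sigma,\tau})_{\{1\}})\rangle$. Applying the Leibniz rule for contractions (\cref{basic properties of contraction}) splits each summand into $\langle\alpha_\sigma;(\emptyset,(\omega_{\sigma,\tau})_{\{1\}})\rangle|_\tau\wedge i^*\gamma$, whose $\sigma$-sum produces the first term of $\beta_\tau$, and a term $\pm\,\alpha_\sigma|_\tau\wedge\langle i^*\gamma;(\emptyset,(\omega_{\sigma,\tau})_{\{1\}})\rangle$. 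For this second family I would invoke that $\alpha$, being a $\delta$-form, is balanced (\cref{main theorem of delta forms}, \cref{balancing condition for differential forms}): writing $\sum_{\tau<\sigma}\alpha_\sigma|_\tau\otimes\omega_{\sigma,\tau}=\sum_{i\in I}\beta_i\otimes v_i$ with $v_i\in N_{\tau,\R}$ and using that $\beta\otimes v\mapsto\beta\wedge\langle\gamma;(\emptyset,(v)_{\{1\}})\rangle|_\tau$ is $\R$-linear in $v$, the second family rewrites as $\sum_i\beta_i\wedge\langle i^*\gamma;(\emptyset,(v_i)_{\{1\}})\rangle$ with all $v_i$ now tangent to $\tau$.

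The key remaining point, and the step I expect to be the main obstacle, is transferring this last contraction from $\gamma$ onto $\beta_i$. Here I would use that $\beta_i\wedge i^*\gamma$ has $d''$-degree $r-l$ on $\tau$, which has dimension $r-l-1$, so $\beta_i\wedge i^*\gamma$ vanishes identically on $\tau$; consequently its contraction against $v_i\in N_{\tau,\R}$ vanishes, and the Leibniz rule for contractions forces $\beta_i\wedge\langle i^*\gamma;(\emptyset,(v_i)_{\{1\}})\rangle=\pm\langle\beta_i;(\emptyset,(v_i)_{\{1\}})\rangle\wedge i^*\gamma$. This yields the second term of $\beta_\tau$, and collecting the two contributions gives $(\partial'\alpha)(\gamma)=(-1)^{r+p+q+1}\sum_\tau\int_\tau\beta_\tau\wedge i^*\gamma$, i.e. the asserted identity in $B^{p,q-1,l+1}(\widetilde{\Omega})$. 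The delicate part throughout is the bookkeeping of signs (from the current derivative, Stokes, and the two Leibniz rules) and checking that the balancing decomposition may legitimately be substituted inside the integral; the degree-vanishing observation on $\tau$ is precisely what makes the otherwise $\gamma$-dependent correction collapse to the intrinsic contraction $\langle\beta_i;(\emptyset,(v_i)_{\{1\}})\rangle$.
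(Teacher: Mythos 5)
Your strategy is the right one, and since the paper states this proposition by citation (it reproduces Mihatsch's result without proof), I can only compare with the evident intended argument, which yours matches in substance: pair $\partial'\alpha=d'\alpha-d'_P\alpha$ against a test form $\gamma\in A_c^{r-p-l-1,r-q-l}(\widetilde{\Omega})$; apply the Leibniz rule and Stokes' formula (\cref{stokes' formula for polyhedra}) on each $\sigma\in\mathcal{C}^l$, so that the interior terms reproduce $(d'_P\alpha)(\gamma)$ and cancel, leaving $(\partial'\alpha)(\gamma)=-\sum_{\sigma}\int_{\partial\sigma}\alpha_\sigma\wedge i^*\gamma$; regroup over facets $\tau\in\mathcal{C}^{l+1}$ via \cref{definition of integration on boundary for superforms}; split each $\langle\alpha_\sigma\wedge i^*\gamma;(\emptyset,(\omega_{\sigma,\tau})_{\{1\}})\rangle$ by \cref{basic properties of contraction}; substitute the balancing decomposition, which is legitimate because $(\beta,v)\mapsto\beta\wedge\langle i^*\gamma;(\emptyset,(v)_{\{1\}})\rangle|_\tau$ is $\R$-bilinear; and transfer the contraction from $\gamma$ onto $\beta_i$ using that $\beta_i\wedge i_\tau^*\gamma$ has $d''$-degree $r-l>\dim\tau$ and hence vanishes identically on $\tau$. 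You correctly single out this vanishing trick as the crux; it is exactly what makes the correction term intrinsic.

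The genuine gap is precisely the step you defer: the sign. You carry ``$\pm$'' through every stage and then assert the total is $(-1)^{r+p+q+1}$, but your own chain of steps does not produce a sign depending on $p+q$. Tracking the factors through your argument with the paper's conventions, the current differential contributes $(-1)^{p+q+1}$, the Leibniz rule for $d'$ contributes $(-1)^{p+q}$ (these cancel), the prefactor in \cref{definition of integration on boundary for superforms} contributes $(-1)^{\dim\sigma}$, and the two contraction-Leibniz signs $(-1)^{p+q}$ cancel against each other in the vanishing-trick step; the net prefactor is $(-1)^{\dim\sigma+1}=(-1)^{r+l+1}$, which agrees with the stated $(-1)^{r+p+q+1}$ only when $l\equiv p+q \pmod 2$. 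A direct test shows the issue concretely: for $r=1$, $l=0$, take $\alpha=f_-\,d''x\,[(-\infty,0]]+f_+\,d''x\,[[0,\infty)]$, which is balanced (the condition at $\tau=\{0\}$ is vacuous since $A^{0,1}_\tau=0$), hence a $\delta$-form by \cref{main theorem of delta forms}; integration by parts gives $(d'\alpha)(g)=\bigl(f_+(0)-f_-(0)\bigr)g(0)+\int f'g\,dx$ and $(d'_P\alpha)(g)=\int f'g\,dx$, so $\partial'\alpha=+\bigl(f_+(0)-f_-(0)\bigr)[\tau]$, whereas the recipe with the printed prefactor $(-1)^{1+0+1+1}=-1$ yields the opposite sign (here $\beta_\tau=f_+(0)-f_-(0)$ by \cref{basic properties of contraction}). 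Note also that the paper's conventions leave an ambiguity of $(-1)^p$ in the second-slot contraction between the Leibniz rule of \cref{basic properties of contraction} and the computation inside the proof of \cref{stokes' formula for polyhedra}, so the prefactor cannot be certified without first pinning down that convention; until you actually perform this bookkeeping and reconcile it (the printed sign is inherited from Mihatsch, whose conventions differ from this paper's), the final identity — as opposed to the structure of $\beta_\tau$, which your argument does establish — remains unproved.
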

	

\subsection{$\wedge$-product and tropical Poincar\'e-Lelong formula}

To define general intersection or $\wedge$ in $B(\widetilde{\Omega})$, let us consider the simplest case, i.e. defining a bilinear map $PS^{p,q}\times B^{p',q',l'}\rightarrow B^{p+p',q+q',l'}$. 

\begin{definition}
	Let $\alpha\in PS(\widetilde{\Omega})$ and $\beta\in B(\widetilde{\Omega})$. Let $\mathcal{C}$ be a polyhedral decomposition of $N_\R$ that adapts $\alpha$ and $\beta$, say
	\[\alpha=(\alpha_\sigma)_{\sigma\in\mathcal{C}}=\sum\limits_{\sigma\in\mathcal{C}^0}\alpha_\sigma[\sigma], \ \ \beta=\sum\limits_{\sigma\in\mathcal{C}}\beta_\sigma[\sigma].\]
	We define their product as
	\[\alpha\wedge \beta :=\sum\limits_{\sigma\in\mathcal{C}}\alpha_\sigma\wedge\beta_\sigma[\sigma].\]
	\label{wedge product for ps and delta-form}
\end{definition}
\begin{remark}
	\begin{enumerate}
		\item [(1)] We see that the definition is independent of the choice of $\mathcal{C}$, and we have $\alpha\wedge \beta\in B(\widetilde{\Omega})$ by the balancing condition. Moreover, it defines a bilinear map \[PS^{p,q}(\widetilde{\Omega})\times B^{p',q',l'}(\widetilde{\Omega})\rightarrow B^{p+p',q+q',l'}(\widetilde{\Omega}).\]
		\item[(2)] For any $\alpha\in PS^{p,q}(\widetilde{\Omega}), \beta\in B^{p',q'}(\widetilde{\Omega})$ and $\gamma\in A_c^{r-p-p',r-q-q'}(\widetilde{\Omega})$, we have 
		\[(\alpha\wedge\beta)(\gamma) = \sum\limits_{\sigma}\int_{\widetilde{\Omega}\cap\sigma}\alpha_\sigma\wedge\beta_\sigma\wedge\gamma = (-1)^{(p+q)(p'+q')}\beta(\alpha\wedge\gamma),\]
		so this definition generalizes \cref{definition of wedge product}.
	\end{enumerate}
\end{remark}

\begin{definition}
	Let $\alpha=\sum\limits_{\sigma\in\mathcal{C}^l}\alpha_\sigma[\sigma]\in B^{p,q,l}(\widetilde{\Omega})$, and $\phi=(\phi_\sigma)_{\sigma\in\mathcal{C}}\in PS^{0,0}(\widetilde{\Omega})$ with polyhedral complex of definition $\mathcal{C}$.  For any $\tau\in \mathcal{C}^{l+1}$, we write
	\[\sum\limits_{\substack{\sigma\in \mathcal{C}^l\\ \tau< \sigma \text{ a facet}}}\alpha_{\sigma}|_\tau\otimes\omega_{\sigma,\tau} = \sum\limits_{i\in I}\beta_i\otimes v_i\in A_\tau^{p,q}(\widetilde{\Omega}\cap\tau)\otimes_\R N_{\tau,\R},\]
	where $\omega_{\sigma,\tau}$ is a normal vector for $\sigma/\tau$, and the value of $\alpha_\sigma$ on $\tau$ is given by continuous extension. Notice that $\omega_\tau$ depends on the choice of $\omega_{\sigma,\tau}$. We set
	\[\alpha_\tau:=\sum\limits_{\substack{\sigma\in\mathcal{C}^l\\\tau< \sigma \text{ a facet}}}\dfrac{\partial\phi_\sigma}{\partial \omega_{\sigma,\tau}}\alpha_\sigma|_\tau-\sum\limits_{i\in I}\dfrac{\partial\phi_\tau}{\partial v_i}\beta_i.\]
	The {\bf corner locus} of $\phi$ for $\alpha$ is defined as $\mathrm{div}(\phi)\cdot \alpha:= \sum\limits_{\tau\in\mathcal{C}^{l+1}}\alpha_\tau[\tau]\in P^{p,q,l+1}(\widetilde{\Omega})$, which is $\R$-bilinear at $\phi$ and $\alpha$. We will show that $\mathrm{div}(\phi)\cdot \alpha\in B^{p,q,l+1}(\widetilde{\Omega})$, see \cref{tropical poincare-lelong formula}.
	\label{corner locus}
\end{definition}
\begin{remark}
	\begin{enumerate}
		\item [(1)] The corner locus $\mathrm{div}(\phi)\cdot C$ for $\phi\in PL^{0,0}(N_\R)$, $C \in TZ^l(N_\R)$ with integral weights is defined in \cite{allermann2010first} and \cite[Definition~1.10]{gubler2017a}, called Weil divisor of $\phi$ on $C$. 
		We use the notion $\mathrm{div}(\phi)\cdot C$ rather than $\phi\cdot C$. 
		\item[(2)] It is not hard to see that $\mathrm{div}(\phi)\cdot \alpha$ is independent of the choice of $\mathcal{C}$, and is only dependent of the function $\phi|_{\Supp(\alpha)}$ for a given $\alpha \in B^{p,q,l}(\widetilde{\Omega})$. 
	\end{enumerate}
\end{remark}

\begin{example}
	Let $H\subset N_\R$ be an integral $\R$-affine space. Let $\phi=\max\{\phi_1, \phi_2\}: N_\R\rightarrow \R$ be a piecewise linear function, i.e. $\phi_1, \phi_2: N_\R\rightarrow \R$ are integral $\R$-affine functions. Set $V_\phi:=\{x\in N_\R\mid \phi_1(x)=\phi_2(x)\}$. Then
	\[\mathrm{div}(\phi)\cdot [H]=\begin{cases}
		0 & \text{ if $H\subset V_\phi$;}\\
		\dfrac{\partial(\phi_1-\phi_2)}{\partial\omega_{V_1,V_\phi}}[N:N_H+N_\phi]\cdot[V_\phi\cap H]& \text{ if $H\not\subset V_\phi$,}
	\end{cases}\]
	where $V_1=\{x\in N_\R\mid \phi_1(x)\geq\phi_2(x)\}$, $N_\phi=N\cap V_\phi$ and $\omega_{V_1,V_\phi}$ is a normal vector over $V_1/V_\phi$. Moreover, let $\psi=\min\{\phi_1,\phi_2\}$, then $\mathrm{div}(\psi)\cdot[H] = -\mathrm{div}(\phi)\cdot[H]$.
\end{example}
\begin{proof}
	Notice that for a given $H$, the corner locus $\mathrm{div}(\phi)\cdot [H]$ is only dependent on $\phi|_H: H\rightarrow \R$, so we can assume that $H=N_\R$. If $H\subset V_\phi$, i.e. $N_\R=V_\phi$ then $\phi$ is an integral $\R$-affine function. We take $\mathcal{C}=\{N_\R\}$ which is not consisted of lower faces, then $\mathrm{div}(\phi)=0$. If $H\not\subset V_\phi$, we consider half-spaces $V_1, V_2$ divided by $V_\phi$ such that $\phi|_{V_1}=\phi_1$ and $\phi|_{V_2}=\phi_2$. We can take $\omega_{V_2,V_\phi} = -\omega_{V_1,V_\phi}$, then $\mathrm{div}(\phi)\cdot [N_\R] = m[V_\phi]$ with
	\[m = \dfrac{\partial(\phi_1-\phi_2)}{\partial\omega_{V_1,V_\phi}}.\]
	For the second case, in general $H\subset N_\R$, we have  $\mathrm{div}(\phi)\cdot [H] = \dfrac{\partial(\phi_1-\phi_2)}{\partial\omega_{H_1,H_\phi}}[H_\phi]$, where $H_1=V_\cap H_\phi$ and $H_\phi=V_\phi\cap H$. We consider the relation between $\omega_{H_1,H_\phi}$ and $\omega_{V_1,V_\phi}$. Indeed, 
	\[\Z\overline{\omega_{H_1, H_\phi}}\simeq N_H/(N_H\cap N_\phi)\hookrightarrow N/N_\phi\simeq \Z\overline{\omega_{V_1,V_\phi}}\]
	and both sides are of rank $1$, so $\overline{\omega_{H_1, H_\phi}} = [N: N_H+N_\phi]\overline{\omega_{V_1,V_\phi}}$, this implies the first statement.
	
	For the second one, assume that $H\not\subset V_\phi$, then  
	\begin{align*}
		\mathrm{div}(\psi)\cdot[H] &= -\mathrm{div}(-\psi)\cdot[H]\\ &=-\dfrac{\partial(-\phi_1-(-\phi_2))}{\partial(-\omega_{V_1,V_\phi})}[N:N_H+N_\phi]\cdot[V_\phi\cap H]\\&=-\mathrm{div}(\phi)\cdot[H].
	\end{align*}
\end{proof}
\begin{remark}
	\begin{enumerate}
		\item [(1)] Let $\phi, \psi$ be as in the example. Then for any $\alpha\in B^{p,q,l}(\widetilde{\Omega})$, by \cref{basic property of corner locus} which we will prove late, we have $\mathrm{div}(\psi)\cdot\alpha = -\mathrm{div}(\phi)\cdot\alpha$. This is from the fact
		that  $\mathrm{div}(\psi)\cdot\alpha= \mathrm{div}(\psi)\cdot([N_\R]\wedge\alpha) = (\mathrm{div}(\psi)\cdot[N_\R])\wedge\alpha$.
	\end{enumerate}
\end{remark}

With the result above, we give a representation of graph of a integral $\R$-affine map.

\begin{example}
	Let $F: N'_R\rightarrow N_\R$ be a morphism in $\mathcal{AT}op_{\Z,\R}$, and $(x_1',\cdots, x_{r'}')$, $(x_1,\cdots, x_{r})$ coordinate functions of $N_\R'$ and $N_\R$ respectively. Let $\Delta_F\subset N_\R'\times N_\R$ be the graph of $F$. Then we have 
	\[[\Delta_F]=\mathrm{div}(\phi_1)\cdots\mathrm{div}(\phi_r)\cdot [N_\R'\times N_\R],\]
	where $\phi_i=\max\{f_i(x_1',\cdots, x_{r'}'), x_i\}$ and $F=(f_1,\cdots, f_r): N_\R'\rightarrow N_\R$.
\end{example}
\begin{proof}
	Set \[V_i=\{(x',x)\in N_\R\times N_\R\mid f_i(x')\geq x_i\},\]\[V_{\phi_i}=\{(x',x)\in N_\R\times N_\R\mid f_i(x')= x_i\}.\] 
	On $V_{\phi_i}$, the only non-free variable is $x_i$, then $(0,e_i)$ is a normal vector for $V_i/V_{\phi_i}$, where $e_i\in N_\R$ is the vector corresponding $x_i$. Notice $M+(N'\times N)\cap V_{\phi_i}=N'\times N$ for any subgroup $M\subset N'\times N$ containing $(0, e_i)$. Since $(0,e_i)\in V_{\phi_{i-1}}\cap\cdots\cap V_{\phi_r}$ for any $1\leq i\leq r$, we have
	\begin{align*}
		\mathrm{div}(\phi_1)\cdots\mathrm{div}(\phi_r)\cdot [N_\R'\times N_\R]&=\mathrm{div}(\phi_1)\cdots\mathrm{div}(\phi_{r-1})\cdot [V_{\phi_{r}}]\\&\cdots\\
		&=[V_{\phi_1}\cap\cdots\cap V_{\phi_r}]\\
		&=[\Delta_F].
	\end{align*}
	
\end{proof}





\begin{proposition}
	Let $N, N'$ be finitely generated abelian group, and $\widetilde{\Omega}\subset N_\R, \widetilde{\Omega}'\subset N_\R'$ open subsets. For homogeneous supercurrents $T\in D^{n}(\widetilde{\Omega}), T'\in D^{n'}(\widetilde{\Omega}')$, we have a unique supercurrent $T\times T'\in D^{n+n'}(\widetilde{\Omega}\times\widetilde{\Omega}')$ such that
	\[(T\times T')(p_1^*\eta\wedge p_2^*\eta') = (-1)^{nn'}T(\eta)T'(\eta')\]
	for any $\eta\in A(\widetilde{\Omega})$ and $\eta'\in A(\widetilde{\Omega}')$,
	where $p_1: N_\R\times N'_\R\rightarrow N_\R$, $p_2: N_\R\times N'_\R\rightarrow N'_\R$ are projections. Moreover, the following hold.
	\begin{enumerate}
		\item [(1)] $d'(T\times T') = d'T\times T'+(-1)^nT\times d'T'$, and it is similar for $d''$.
		\item[(2)] If  $T = \alpha_\sigma[\sigma]$ and $T'=\alpha'_{\sigma'}[\sigma']$, then 
		\[T\times T'=(p_1^*\alpha_{\sigma}\wedge p_2^*\alpha_{\sigma'})[\sigma\times \sigma'].\]
		\item[(3)] If $T\in B^{p,q,l}(\widetilde{\Omega})$ and $T'\in B^{p',q',l'}(\widetilde{\Omega}')$, then $T\times T'\in B^{p+p',q+q',l+l'}(\widetilde{\Omega}\times \widetilde{\Omega})$, and 
		\[d'_P(T\times T') = d'_PT\times T'+(-1)^nT\times d'_PT',\] it is similar for $d''_P$, $\partial', \partial''$.
	\end{enumerate}
	\label{product of currents}
\end{proposition}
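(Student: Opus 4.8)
The plan is to realize $T\times T'$ as the superform analogue of the external (tensor) product of currents, defining it first on the decomposable test forms $p_1^*\eta\wedge p_2^*\eta'$ and then extending by density. For uniqueness I would use that the splitting $(N\times N')^*=N^*\oplus(N')^*$ factors the exterior algebra and that $C_c^\infty(\widetilde{\Omega})\otimes C_c^\infty(\widetilde{\Omega}')$ is dense in $C_c^\infty(\widetilde{\Omega}\times\widetilde{\Omega}')$; hence finite sums $\sum_i p_1^*\eta_i\wedge p_2^*\eta_i'$ are dense in $A_c(\widetilde{\Omega}\times\widetilde{\Omega}')$. A continuous $\R$-linear functional is therefore determined by the prescribed values, so at most one $T\times T'$ can exist.

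For existence I would use partial evaluation. Given a test form $\gamma\in A_c(\widetilde{\Omega}\times\widetilde{\Omega}')$ of the total bidegree dual to $T\times T'$, only its component of $x$-bidegree $(r-p,r-q)$ and $y$-bidegree $(r'-p',r'-q')$ can contribute (treating each $(p,q)$-component of $T$ separately); writing it as $\sum_{I,J}d'x_I\wedge d''x_J\wedge\theta_{IJ}$ with $\theta_{IJ}$ a compactly supported family of $y$-forms depending on $(x,y)$, I would set $g_{IJ}(x):=T'(\theta_{IJ}(x,\cdot))$ and $(T\times T')(\gamma):=\pm\,T\!\left(\sum_{I,J}g_{IJ}\,d'x_I\wedge d''x_J\right)$, the sign fixed so that decomposable forms return $(-1)^{nn'}T(\eta)T'(\eta')$. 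The main obstacle is the regularity input: one must show $x\mapsto g_{IJ}(x)$ is smooth with compact support, so that the inner expression genuinely lies in $A_c^{r-p,r-q}(\widetilde{\Omega})$ and $T$ can act on it. This is the smoothness-in-parameters property of supercurrents, which I would establish by differentiating under $T'$ through difference quotients (using continuity of $T'$), or invoke as the kernel theorem in this setting.

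Parts (1) and (2) are then bookkeeping, promoted from decomposable forms to all test forms by uniqueness. For (1) I would test on $p_1^*\eta\wedge p_2^*\eta'$: since $p_i^*$ commutes with $d',d''$ and the current differential satisfies $d'(T\times T')(\gamma)=(-1)^{n+n'+1}(T\times T')(d'\gamma)$, expanding $d'(p_1^*\eta\wedge p_2^*\eta')$ by the Leibniz rule for superforms and collecting signs yields $d'(T\times T')=d'T\times T'+(-1)^nT\times d'T'$, and $d''$ is identical. For (2), with $T=\alpha_\sigma[\sigma]$ and $T'=\alpha'_{\sigma'}[\sigma']$, I would evaluate both sides on $p_1^*\eta\wedge p_2^*\eta'$; using the definition of the Dirac wedge and of integration over a polyhedron the pairing reduces to $\big(\int_\sigma\alpha_\sigma\wedge i^*\eta\big)\big(\int_{\sigma'}\alpha'_{\sigma'}\wedge i^*\eta'\big)$ up to the sign $(-1)^{nn'}$ from reordering the $x$- and $y$-differentials, and matching this with the integral of $p_1^*\alpha_\sigma\wedge p_2^*\alpha'_{\sigma'}$ over $\sigma\times\sigma'$ uses Fubini together with \cref{integral via bijection} to reconcile the integration normalizations.

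Finally, (3) reduces to (2) by bilinearity. Choosing a common complex, write $T=\sum_\sigma\alpha_\sigma[\sigma]$ and $T'=\sum_{\sigma'}\alpha'_{\sigma'}[\sigma']$, so that $T\times T'=\sum_{\sigma,\sigma'}(p_1^*\alpha_\sigma\wedge p_2^*\alpha'_{\sigma'})[\sigma\times\sigma']$ is polyhedral for the product complex. By \cref{main theorem of delta forms} it suffices to verify the balancing condition: every facet of a cell $\sigma\times\sigma'$ has the form $\tau\times\sigma'$ or $\sigma\times\tau'$, with normal vector $(\omega_{\sigma,\tau},0)$ or $(0,\omega_{\sigma',\tau'})$, so the balancing sum at such a facet factors as the balancing sum of $T$ at $\tau$ wedged with $\alpha'_{\sigma'}$ (resp.\ $\alpha_\sigma$ wedged with that of $T'$ at $\tau'$) and hence lands in $N_{\tau\times\sigma',\R}=N_{\tau,\R}\times N'_{\sigma',\R}$ precisely because the corresponding factor is balanced. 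Thus $T\times T'$ is a $\delta$-form. The Leibniz rule for $d'_P$ follows since $d'_P$ acts coefficientwise and $p_i^*$ commutes with $d'$; combining it with part (1) and the identity $\partial'=d'-d'_P$ gives the rule for $\partial'$, and $d''_P,\partial''$ are symmetric.
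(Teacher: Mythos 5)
Your proposal is correct and takes essentially the same route as the paper, whose proof consists of exactly your density observation --- that $p_1^*A(\widetilde{\Omega})\wedge p_2^*A(\widetilde{\Omega}')$ is dense in $A(\widetilde{\Omega}\times\widetilde{\Omega}')$ --- together with citations to Demailly for the standard tensor-product-of-currents construction and to Mihatsch for properties (1)--(3). The details you supply (partial evaluation with smoothness in parameters for existence, the sign bookkeeping in (1)--(2), and the balancing-condition check via \cref{main theorem of delta forms} for (3)) are precisely the arguments behind those citations; the one point left tacit in your write-up, continuity of the resulting functional $\gamma\mapsto (T\times T')(\gamma)$, is part of the same standard kernel construction and is routine.
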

\begin{proof}
	The proof is based on the fact set $p_1^*A(\widetilde{\Omega})\wedge p_2^*A(\widetilde{\Omega}')$ is dense in $A(\widetilde{\Omega}\times\widetilde{\Omega}')$.  See \cite[Section~1.2]{demailly2012complex}, and the properties are given in \cite[Section~4]{mihatsch2021on}.
\end{proof}

\begin{theorem}[\cite{mihatsch2021on}~Theorem~4.1]
	There is a unique way to define an associative bilinear product $\wedge: B(\widetilde{\Omega})\times B(\widetilde{\Omega})\rightarrow B(\widetilde{\Omega})$ that satisfies the following properties:
	\begin{enumerate}
		\item[(1)] the Leibniz rules with respect to $d'$ and $d''$: for any $\alpha\in B(\widetilde{\Omega}')$ and $\beta\in B(\widetilde{\Omega})$, we have 
		\[d'(\alpha\wedge \beta) = d'\alpha\wedge \beta + (-1)^{\deg\alpha}\alpha\wedge d'\beta,\]
		and it is similar for $d''$;
		\item[(2)] for any $\alpha\in PS(\widetilde{\Omega})$  and $\beta\in B(\widetilde{\Omega})$, the $\wedge$-product $\alpha\wedge \beta$ is given in \cref{wedge product for ps and delta-form};
		\item[(3)] for any $\alpha,\beta\in B(\widetilde{\Omega})$, we have $\Supp([\Delta]\wedge(\alpha\times \beta))\subset\Delta$ and
		\[\alpha\wedge \beta = p_{1,*}([\Delta]\wedge(\alpha\times \beta)),\]
		where $\Delta\subset N_\R\times N_\R$ is the diagonal and $p_1: N_\R\times N_\R\rightarrow N_\R$ is the first projection.  
	\end{enumerate}
	\label{intersection}
\end{theorem}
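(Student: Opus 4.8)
The plan is to prove existence and uniqueness at once by reducing the product to a single explicit formula built from three operations already available: the exterior product $\times$ of \cref{product of currents}, the corner-locus operators $\mathrm{div}(\phi)\cdot(-)$ of \cref{corner locus}, and the push-forward along the first projection $p_1$. The starting point is the example realizing the diagonal as an iterated corner locus: applied to $F=\id$ it gives $[\Delta]=\mathrm{div}(\phi_1)\cdots\mathrm{div}(\phi_r)\cdot[N_\R\times N_\R]$ with $\phi_i=\max\{x_i,x_i'\}$. Accordingly I would \emph{define}
\[\alpha\wedge\beta := p_{1,*}\bigl(\mathrm{div}(\phi_1)\cdots\mathrm{div}(\phi_r)\cdot(\alpha\times\beta)\bigr).\]
Every ingredient is legitimate: $\alpha\times\beta\in B(N_\R\times N_\R)$ by \cref{product of currents}~(3); each application of $\mathrm{div}(\phi_i)\cdot(-)$ stays inside $B$ and raises the codimension index by one, by the tropical Poincar\'e--Lelong formula \cref{tropical poincare-lelong formula}; and since the corner locus of $\phi_i$ concentrates its support on $V_{\phi_i}$ (the example computing $\mathrm{div}(\max\{\phi_1,\phi_2\})\cdot[H]$), the total support lies in $\bigcap_i V_{\phi_i}=\Delta$, on which $p_1$ restricts to an isomorphism onto $N_\R$, so that $p_{1,*}$ is defined and the result lies in $B(\widetilde{\Omega})$.

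Next I would verify properties (2) and (3) for this formula. The support assertion $\Supp([\Delta]\wedge(\alpha\times\beta))\subset\Delta$ is exactly the concentration of support noted above, and the displayed identity of (3) then holds by construction, once one identifies $[\Delta]\wedge(\alpha\times\beta)$ with the iterated corner locus; the enabling lemma is $(\mathrm{div}(\phi)\cdot[N_\R^2])\wedge\gamma=\mathrm{div}(\phi)\cdot\gamma$, which is the compatibility of corner loci with the $PS$-wedge (\cref{basic property of corner locus}) together with the fact that $[N_\R^2]$ is the unit for the $PS$-wedge of \cref{wedge product for ps and delta-form}. For (2), with $\alpha\in PS(\widetilde{\Omega})$ I would compute $p_{1,*}([\Delta]\wedge(\alpha\times\beta))$ directly, using $\alpha\times\beta=p_1^*\alpha\wedge p_2^*\beta$ and the projection formula \cref{projection formula for delta forms}~(3) to push the corner loci and the restriction to $\Delta$ through $p_{1,*}$, and match the outcome with the explicit $PS$-wedge of \cref{wedge product for ps and delta-form}.

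Property (1), the Leibniz rule, follows because each of the three building operations is compatible with $d'$ and $d''$: the exterior product by \cref{product of currents}~(1); the corner locus because \cref{tropical poincare-lelong formula} writes $\mathrm{div}(\phi)\cdot\alpha=d'd''\phi\wedge\alpha-d_P'd_P''\phi\wedge\alpha$, on which $d',d''$ satisfy Leibniz together with $d'd'=d''d''=0$; and the push-forward because $d'p_{1,*}=p_{1,*}d'$ (\cref{direct image of current}). Uniqueness is then immediate: any associative bilinear product satisfying (1)--(3) must, by (3), equal $p_{1,*}([\Delta]\wedge(\alpha\times\beta))$, while (1), (2) and \cref{basic property of corner locus} force $[\Delta]\wedge(\alpha\times\beta)$ to coincide with the iterated corner locus appearing in the definition; hence the product is pinned down.

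The hard part will be associativity. My plan is to pass to the triple exterior product $\alpha\times\beta\times\gamma\in B(N_\R^3)$ and the small diagonal $\Delta_3\subset N_\R^3$, written as an iterated corner locus of the $2r$ coordinate-maxima defining $\Delta_3$, and to show that both $(\alpha\wedge\beta)\wedge\gamma$ and $\alpha\wedge(\beta\wedge\gamma)$ unfold to $p_{1,*}\bigl([\Delta_3]\wedge(\alpha\times\beta\times\gamma)\bigr)$. This requires: associativity of $\times$ (from its defining property in \cref{product of currents}); functoriality of push-forward under composition of projections, with the projection formula \cref{projection formula for delta forms} used to move the corner-locus operators through the partial push-forwards; and, crucially, the commutativity of corner-locus operators $\mathrm{div}(\phi)\cdot\mathrm{div}(\psi)\cdot(-)=\mathrm{div}(\psi)\cdot\mathrm{div}(\phi)\cdot(-)$. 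This commutativity is the step I expect to demand the most care: I would establish it either from the symmetry of $d'd''\phi$ in the Poincar\'e--Lelong expression, reducing the matter to the commutativity of ordinary mixed derivatives, or from the explicit formula for $\partial'$ in \cref{expression of boundary derivatives}, checked facet by facet on a common polyhedral complex adapted to $\phi$, $\psi$ and the $\delta$-form.
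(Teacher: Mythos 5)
Your proposal is correct and is essentially the paper's own argument (which itself sketches Mihatsch's): both realize $[\Delta]$ as the iterated product of the coordinate maxima $\phi_i=\max\{x_i,y_i\}$ applied to $\alpha\times\beta$ — your corner-locus formulation $\mathrm{div}(\phi_1)\cdots\mathrm{div}(\phi_r)\cdot(\alpha\times\beta)$ agrees with the paper's $d'd''\phi_1\wedge\cdots\wedge d'd''\phi_r\wedge(\alpha\times\beta)$ by the tropical Poincar\'e--Lelong formula since $d_P'd_P''\phi_i=0$ for piecewise linear $\phi_i$ — then set $\alpha\wedge\beta=p_{1,*}([\Delta]\wedge(\alpha\times\beta))$, with uniqueness following because (1)--(2) force $(d'd''\phi)\wedge\beta=\partial'(d_P''\phi\wedge\beta)+d_P''\phi\wedge\partial'\beta+d_P'd_P''\phi\wedge\beta$ and (3) then pins the product down. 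The verifications you single out as delicate (commutativity of the corner-locus operators and associativity via the small diagonal in $N_\R^3$) are precisely the details the paper defers to \cite{mihatsch2021on}, so your plan tracks the intended proof.
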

\begin{proof}
	We give a sketch of the proof. 
	
	Uniqueness. If such map exists, 
	then for any $\phi\in PS^{0,0}(\widetilde{\Omega})$ and $\beta\in B(\widetilde{\Omega})$, we have $d'\phi = d'_P\phi$, $d''\phi = d_P''\phi$ which are in $PS^1(\widetilde{\Omega})$, and the following important equality
	\begin{align}
		(d'd''\phi)\wedge \beta= \partial'd''_P\phi\wedge\beta+ d'_Pd''_P\phi\wedge\beta
		=\partial'(d_P''\phi\wedge\beta)+d_P''\phi\wedge\partial'\beta+d_P'd_P''\phi\wedge\beta.
		\label{wedge product1}
	\end{align}
	The final one holds since the Leibniz rule holds with respect to $d'$ and $d''_P$ for $PS^{p,q}(\widetilde{\Omega})\wedge B^{p,q}(\widetilde{\Omega})$, so holds with respect to $\partial'$. 
	This implies that $(d'd''\phi)\wedge \beta$ is uniquely determined by properties (1) (2).
	
	We fix integral coordinate functions $x_1,\cdots, x_{r}$ and $y_1,\cdots, y_r$ on $N_\R\times N_\R$. Then we have 
	\begin{align*}
		[\Delta]=d'd''\phi_1\wedge\cdots \wedge d'd''\phi_r,
		\label{differential representation of diagonal}
	\end{align*}
	where $\phi_i=\max\{x_i, y_i\}$. This is from the tropical Poincar\'e-Lelong formula, \cref{tropical poincare-lelong formula}. We will prove it below, and notice that the proof only use equality~(\ref{wedge product1}). 
	
	Since $\Supp([\Delta]\wedge(\alpha\times \beta))\subset\Delta$, we know that $p_{1,*}([\Delta]\wedge(\alpha\times \beta))$ is well-defined. This implies that $\alpha\wedge\beta$ is uniquely determined by (3).
	
	For existence, from the discussion above, we can define $[\Delta]\wedge(\alpha\times\beta)=d'd''\phi_1\wedge\cdots \wedge d'd''\phi_r\wedge(\alpha\times\beta)$ by equality~(\ref{wedge product1}) and property~(2), then the general case will be given from (3).
\end{proof}
\begin{remark}
	\begin{enumerate}
		\item [(1)] Obviously, $\wedge$-product generalizes the stable intersection in $F(N_\R)$ and $\wedge$-product in $A(\widetilde{\Omega})$ to $B(\widetilde{\Omega})$. For any $\alpha, \beta\in B(\widetilde{\Omega})$, we have 
		\[\Supp(\alpha\wedge\beta)\subset \Supp(\alpha)\cap\Supp(\beta).\]
		
	\end{enumerate}
\end{remark}

The following result is called the tropical Poincar\'e-Lelong formula, which generalizes \cite[Corollary~3.19]{gubler2017a}.

\begin{theorem}[The tropical Poincar\'e-Lelong formula]
	For $\alpha\in B^{p,q,l}(\widetilde{\Omega})$ and $\phi\in PS^{0,0}(\widetilde{\Omega})$. Then we have 
	\[\mathrm{div}(\phi)\cdot \alpha =  d'd''\phi\wedge\alpha-d_P'd_P''\phi\wedge\alpha= \partial'(d_P''\phi\wedge\alpha)+d_P''\phi\wedge\partial'\alpha\]
	in $P(\widetilde{\Omega})$. In particular, $\mathrm{div}(\phi)\cdot \alpha\in B^{p,q,l+1}(\widetilde{\Omega})$.
	\label{tropical poincare-lelong formula}
\end{theorem}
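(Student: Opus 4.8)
The plan is to prove the two equalities in the displayed chain separately. The equality of the two differential expressions $d'd''\phi\wedge\alpha-d_P'd_P''\phi\wedge\alpha$ and $\partial'(d_P''\phi\wedge\alpha)+d_P''\phi\wedge\partial'\alpha$ is a formal consequence of the Leibniz rules, whereas the identification of either of them with the corner locus $\mathrm{div}(\phi)\cdot\alpha$ is the substantial point, which I would verify facet by facet. Throughout I would work only with the $PS\wedge B$ product of \cref{wedge product for ps and delta-form} (never the general $\wedge$-product of \cref{intersection}), so as to keep the argument independent of \cref{intersection}. The final assertion $\mathrm{div}(\phi)\cdot\alpha\in B^{p,q,l+1}(\widetilde{\Omega})$ then follows for free: the right-hand sides are built from $\delta$-forms by applying $d',d''$ and $\partial'$, all of which preserve $B$ by \cref{main theorem of delta forms} and its corollary, while a priori the corner locus only lies in $P^{p,q,l+1}(\widetilde{\Omega})$.

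For the easy equality I would start from $\partial'\phi=\partial''\phi=0$ (\cref{boundary derivatives are null on delta preforms}), giving $d''\phi=d_P''\phi$ and hence $d'd''\phi=d'(d_P''\phi)=d_P'd_P''\phi+\partial'(d_P''\phi)$ since $d'=d_P'+\partial'$. Wedging with $\alpha$ turns the left bracket into $\partial'(d_P''\phi)\wedge\alpha$, and the Leibniz rule for $\partial'$ on a product of the degree-one piecewise smooth form $d_P''\phi$ with a $\delta$-form (which follows from the Leibniz rules for $d'$ and $d_P'$ on such products) rewrites this as $\partial'(d_P''\phi\wedge\alpha)+d_P''\phi\wedge\partial'\alpha$, yielding the right bracket.

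The crux is to show $\mathrm{div}(\phi)\cdot\alpha=\partial'(d_P''\phi\wedge\alpha)+d_P''\phi\wedge\partial'\alpha$ by matching the coefficient of $[\tau]$ on both sides for each $\tau\in\mathcal{C}^{l+1}$. Writing $\gamma:=d_P''\phi\wedge\alpha=\sum_{\sigma\in\mathcal{C}^l}(d''\phi_\sigma\wedge\alpha_\sigma)[\sigma]$, I would apply the explicit formula for $\partial'$ of \cref{expression of boundary derivatives} to $\gamma$. The key local input is that $d''\phi_\sigma|_\tau=d''\phi_\tau$ is independent of the adjacent facet $\sigma$, since the normal direction $\omega_{\sigma,\tau}$ is killed upon restriction to $\tau$; consequently the decomposition $\sum_{\sigma}\gamma_\sigma|_\tau\otimes\omega_{\sigma,\tau}$ equals $\sum_i(d''\phi_\tau\wedge\beta_i)\otimes v_i$, governed by the same $\beta_i,v_i$ that occur in \cref{corner locus} and in $\partial'\alpha$. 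The contraction Leibniz rule \cref{basic properties of contraction} applied to $\langle d''\phi_\sigma\wedge\alpha_\sigma;(\emptyset,(\omega_{\sigma,\tau})_{\{1\}})\rangle$ then splits each summand as $\tfrac{\partial\phi_\sigma}{\partial\omega_{\sigma,\tau}}\alpha_\sigma-d''\phi_\tau\wedge\langle\alpha_\sigma;(\emptyset,(\omega_{\sigma,\tau})_{\{1\}})\rangle$, and similarly the $\beta_i$-terms produce $\tfrac{\partial\phi_\tau}{\partial v_i}\beta_i$ plus a $d''\phi_\tau$-correction. The normal-derivative pieces reassemble exactly into the corner-locus coefficient $\alpha_\tau$, while the correction pieces reassemble into $-d''\phi_\tau$ wedged with the coefficient of $\partial'\alpha$ at $\tau$, i.e. into the contribution $d_P''\phi\wedge\partial'\alpha$ which is subtracted off.

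The main obstacle will be the sign and weight bookkeeping: one must track the prefactor $(-1)^{r+p+q+1}$ attached to $\partial'$ for a form of type $(p,q,l)$ against the corresponding prefactor for the type-$(p,q+1,l)$ form $\gamma$, the sign $(-1)^{p+q}$ in the contraction Leibniz rule, and the normalizations $\omega_{\sigma,\tau}$, so that the two families of correction terms $d''\phi_\tau\wedge\langle\alpha_\sigma;\cdots\rangle$ arising from $\partial'(d_P''\phi\wedge\alpha)$ cancel precisely against those in $d_P''\phi\wedge\partial'\alpha$, leaving only the normal-derivative terms that make up $\mathrm{div}(\phi)\cdot\alpha$. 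Once this cancellation is confirmed on each facet, summing over $\tau\in\mathcal{C}^{l+1}$ delivers the identity in $P(\widetilde{\Omega})$, and the membership $\mathrm{div}(\phi)\cdot\alpha\in B^{p,q,l+1}(\widetilde{\Omega})$ follows as explained above.
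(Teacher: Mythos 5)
Your proposal is correct and follows essentially the same route as the paper: the paper likewise derives the first equality from $\partial'\phi=\partial''\phi=0$ and the Leibniz rule for $\partial'$ on $PS\wedge B$ products (its equality~(\ref{wedge product1})), and then establishes $\mathrm{div}(\phi)\cdot\alpha=\partial'(d_P''\phi\wedge\alpha)+d_P''\phi\wedge\partial'\alpha$ facet by facet using the explicit formula of \cref{expression of boundary derivatives}, the observation $d''\phi_\sigma|_\tau=d''\phi_\tau$ together with the induced decomposition $\sum_i(d''\phi_\tau\wedge\beta_i)\otimes v_i$, and the contraction Leibniz rule of \cref{basic properties of contraction}, with exactly the cancellation of correction terms you describe. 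Your remarks on keeping the argument independent of \cref{intersection} and on the sign bookkeeping match the paper's own comments, so there is nothing substantive to add.
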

\begin{proof}
	From equality~(\ref{wedge product1}), it is sufficient to show that 
	\[\mathrm{div}(\phi)\cdot \alpha = \partial'(d_P''\phi\wedge\alpha)+d_P''\phi\wedge\partial'\alpha.\]
	Assume that $\phi = \sum\limits_{\sigma\in\mathcal{C}^0}\phi_\sigma[\sigma]$ and $\alpha=\sum\limits_{\sigma\in\mathcal{C}^l}\alpha_\sigma[\sigma]\in B^{p,q,l}(\widetilde{\Omega})$. Then 
	$d_P''\phi\wedge\alpha= \sum\limits_{\sigma\in\mathcal{C}^l}(d''\phi_\sigma\wedge\alpha_\sigma)[\sigma]$. Write 
	\[\sum\limits_{\substack{\sigma\in \mathcal{C}^l\\ \tau< \sigma \text{ a facet}}}\alpha_{\sigma}|_\tau\otimes\omega_{\sigma,\tau} = \sum\limits_{i\in I}\beta_i\otimes v_i\in A_\tau^{p,q}(\widetilde{\Omega}\cap\tau)\otimes_\R N_{\tau,\R},\]
	then by \cref{expression of boundary derivatives} we have
	\begin{align*}
		\partial'(d_P''\phi\wedge\alpha) = \sum\limits_{\tau\in\mathcal{C}^{l+1}} (\sum\limits_{\substack{\sigma\in\mathcal{C}^l\\\tau< \sigma \text{ a facet}}}\langle d''\phi_\sigma\wedge\alpha_\sigma;(\emptyset,(\omega_{\sigma,\tau})_{\{1\}})\rangle|_\tau-\sum\limits_{i\in I}\langle d''\phi_\tau\wedge\beta_i; (\emptyset, (v_i)_{\{1\}})\rangle)[\tau],
	\end{align*}
	\begin{align*}
		d_P''\phi\wedge\partial'\alpha = \sum\limits_{\tau\in\mathcal{C}^{l+1}} (d''\phi_\tau\wedge\sum\limits_{\substack{\sigma\in\mathcal{C}^l\\\tau< \sigma \text{ a facet}}}\langle\alpha_\sigma;(\emptyset,(\omega_{\sigma,\tau})_{\{1\}})\rangle|_\tau-\sum\limits_{i\in I}\langle \beta_i; (\emptyset, (v_i)_{\{1\}})\rangle)[\tau].
	\end{align*}
	By \cref{basic properties of contraction}, we have
	\begin{align*}
		\partial'(d_P''\phi\wedge\alpha)+d_P''\phi\wedge\partial'\alpha= \sum\limits_{\tau\in\mathcal{C}^{l+1}} (\sum\limits_{\substack{\sigma\in\mathcal{C}^l\\\tau< \sigma \text{ a facet}}}\dfrac{\partial\phi_\sigma}{\partial \omega_{\sigma,\tau}}\alpha_\sigma|_\tau-\sum\limits_{i\in I}\dfrac{\partial\phi_\tau}{\partial v_i}\beta_i)[\tau] = \mathrm{div}(\phi)\cdot\alpha.
	\end{align*}
\end{proof}
\begin{remark} 
	\begin{enumerate}
		\item [(1)] In particular, if $\phi\in PL^{0,0}(\widetilde{\Omega})$, or more general, $\phi$ is piecewise linear on $\Supp(\alpha)$, then 
		\[\mathrm{div}(\phi)\cdot \alpha = d'd''\phi\wedge \alpha.\]
	\end{enumerate}
\end{remark}

From the tropical Poincare-Lelong formula, we have the following corollary.

\begin{corollary}
	For $\alpha\in B^{p,q,l}(\widetilde{\Omega})$ and $\phi\in PS^{0,0}(\widetilde{\Omega})$.
	\begin{enumerate}
		\item [(1)] (Associativity law) For any $\beta\in B^{p',q',l'}(\widetilde{\Omega})$, we have
		\[\mathrm{div}(\phi)\cdot(\alpha\wedge\beta) = (\mathrm{div}(\phi)\cdot\alpha)\wedge\beta.\]
		\item[(2)] (Commutativity law) For any$ \psi\in PS^{0,0}(\widetilde{\Omega})$, we have
		\[\mathrm{div}(\psi)\cdot(\mathrm{div}(\phi)\cdot\alpha) = \mathrm{div}(\phi)\cdot(\mathrm{div}(\psi)\cdot\alpha).\]
	\end{enumerate}
	\label{basic property of corner locus}
\end{corollary}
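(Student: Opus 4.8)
The plan is to convert both corner-locus operations into honest $\wedge$-products and then lean on the associativity and graded-commutativity of the $\wedge$-product from \cref{intersection}. For $\phi\in PS^{0,0}(\widetilde{\Omega})$ set $\omega_\phi := d'd''\phi - d_P'd_P''\phi$. Both summands lie in $B^{1,1}(\widetilde{\Omega})$: the term $d'd''\phi$ is a $\delta$-form because $d'$ and $d''$ preserve $\delta$-forms (the \cref{main theorem of delta forms} corollary on well-definedness of $d',d''$ on $B^{p,q}$), while $d_P'd_P''\phi$ is piecewise smooth, i.e. in $B^{1,1,0}(\widetilde{\Omega})\simeq PS^{1,1}(\widetilde{\Omega})$. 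Hence $\omega_\phi$ is a $\delta$-form of total degree $2$. The tropical Poincar\'e--Lelong formula \cref{tropical poincare-lelong formula}, combined with bilinearity of $\wedge$, then yields the clean identity
\[\mathrm{div}(\phi)\cdot\gamma = d'd''\phi\wedge\gamma - d_P'd_P''\phi\wedge\gamma = \omega_\phi\wedge\gamma\]
for every $\gamma\in B(\widetilde{\Omega})$, where the products are the $\wedge$-products of \cref{intersection} (the second matching \cref{wedge product for ps and delta-form} via property~(2) of \cref{intersection}).

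For part~(1) I would apply this identity to $\gamma=\alpha\wedge\beta$ and invoke associativity of $\wedge$:
\[\mathrm{div}(\phi)\cdot(\alpha\wedge\beta) = \omega_\phi\wedge(\alpha\wedge\beta) = (\omega_\phi\wedge\alpha)\wedge\beta = (\mathrm{div}(\phi)\cdot\alpha)\wedge\beta,\]
the last step being the identity again with $\gamma=\alpha$. This is essentially immediate once the reformulation is in place.

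For part~(2) I would apply the identity twice. Writing $\omega_\psi := d'd''\psi - d_P'd_P''\psi$ and using that $\mathrm{div}(\phi)\cdot\alpha=\omega_\phi\wedge\alpha$ is itself a $\delta$-form (so the identity applies again), associativity gives
\[\mathrm{div}(\psi)\cdot(\mathrm{div}(\phi)\cdot\alpha) = \omega_\psi\wedge(\omega_\phi\wedge\alpha) = (\omega_\psi\wedge\omega_\phi)\wedge\alpha,\]
and symmetrically $\mathrm{div}(\phi)\cdot(\mathrm{div}(\psi)\cdot\alpha) = (\omega_\phi\wedge\omega_\psi)\wedge\alpha$. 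So the two sides coincide as soon as $\omega_\psi\wedge\omega_\phi = \omega_\phi\wedge\omega_\psi$.

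The main obstacle is precisely this commutation. I expect it to follow from the graded-commutativity of the $\wedge$-product of \cref{intersection}, inherited from the graded-commutative wedge of superforms that it extends: since $\omega_\phi$ and $\omega_\psi$ both have even total degree $2$, the Koszul sign is $(-1)^{2\cdot 2}=1$ and the two forms commute. If one wishes to avoid invoking graded-commutativity as a black box, I would instead establish the commutation directly from the diagonal construction of property~(3) of \cref{intersection}: both $\omega_\phi\wedge\omega_\psi$ and $\omega_\psi\wedge\omega_\phi$ are computed as $p_{1,*}$ of $[\Delta]\wedge(\omega_\phi\times\omega_\psi)$ up to the symmetry swapping the two even-degree factors of the external product, which is sign-free. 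Either way, the substance of the whole corollary is concentrated in this single, degree-even commutation, the rest being formal manipulation of the Poincar\'e--Lelong identity and associativity.
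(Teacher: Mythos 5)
Your proposal is correct and follows the route the paper itself intends: the corollary is stated as an immediate consequence of \cref{tropical poincare-lelong formula}, which rewrites $\mathrm{div}(\phi)\cdot\gamma$ as $(d'd''\phi-d_P'd_P''\phi)\wedge\gamma$, after which (1) and (2) are exactly the associativity and (even-degree) commutativity of the $\wedge$-product from \cref{intersection}. Your one point of care is well placed: graded-commutativity is not spelled out in the statement of \cref{intersection} as reproduced here but is part of Mihatsch's cited theorem, and your fallback argument via the diagonal construction in property~(3) (using that the swap fixes $[\Delta]$, exchanges the projections, and introduces no sign between the even-degree factors $\omega_\phi$, $\omega_\psi$) closes that gap with tools already available in the paper.
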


As showed in \cite{mihatsch2021on}, we have another way to define $\wedge$-product. Recall, two polyhedra $\sigma_1,\sigma_2$ intersect transversally if 
\[\mathrm{relint}(\sigma_1)\cap \mathrm{relint}(\sigma_2)\not=\emptyset\ \ \text{ and } \ \ N_{\sigma_1,\R}+N_{\sigma_2,\R} = N_\R.\]
We say two polyhedral complex $\mathcal{C}_1, \mathcal{C}_2$ of pure dimension $p, q$ respectively  intersect transversally if $|\mathcal{C}_1|\cap|\mathcal{C}_2|\not=\emptyset$, and for any $\sigma_1\in(\mathcal{C}_1)_p$, $\sigma_2\in(\mathcal{C}_2)_q$, we have $\sigma_1\cap \sigma_2=\emptyset$ or $\sigma_1, \sigma_2$ intersect transversally.

\begin{lemma}
	Let $\alpha=\sum\limits_{\sigma\in\mathcal{C}_1^{l_1}} \alpha_\sigma[\sigma], \beta=\sum\limits_{\sigma\in\mathcal{C}_2^{l_2}} \beta_\sigma[\sigma]\in B(\widetilde{\Omega})$ such that $\mathcal{C}_1^{\geq l_1}, \mathcal{C}_2^{\geq l_2}$ intersect transversally. Then
	\[\alpha\wedge\beta = \sum\limits_{\substack{(\sigma_1,\sigma_2)\in \mathcal{C}_1^{l_1}\times \mathcal{C}_2^{l_2}\\ \sigma_1\cap\sigma_2\not=\emptyset}}(\alpha_{\sigma_1}\wedge\beta_{\sigma_2})[\sigma_1\cap \sigma_2].\]
	\label{wedge product of transersally intersection}
\end{lemma}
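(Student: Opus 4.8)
The plan is to read both sides off from the defining property \cref{intersection}(3) of the $\wedge$-product, namely $\alpha\wedge\beta = p_{1,*}([\Delta]\wedge(\alpha\times\beta))$, and to use transversality to evaluate the diagonal intersection term by term. First I would expand the exterior product: by \cref{product of currents}(2) together with bilinearity of $\times$, one has
\[\alpha\times\beta = \sum_{\sigma_1\in\mathcal{C}_1^{l_1},\ \sigma_2\in\mathcal{C}_2^{l_2}} (p_1^*\alpha_{\sigma_1}\wedge p_2^*\beta_{\sigma_2})[\sigma_1\times\sigma_2]\in B(\widetilde{\Omega}\times\widetilde{\Omega}),\]
where $p_1,p_2$ are now the two projections of $N_\R\times N_\R$ and $\mathcal{C}_1\times\mathcal{C}_2$ is the product complex. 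The geometric heart of the transversality hypothesis is that $\Delta$ meets each cell $\sigma_1\times\sigma_2$ transversally in $N_\R\times N_\R$: given $(a,b)$ one can write $(a,b)=(v,v)+(w_1,w_2)$ with $w_i\in N_{\sigma_i,\R}$ precisely because $a-b\in N_{\sigma_1,\R}+N_{\sigma_2,\R}=N_\R$. Moreover $p_1$ restricts to a lattice isomorphism of $N_{\Delta}$ onto $N$ that carries $\Delta\cap(\sigma_1\times\sigma_2)$ onto $\sigma_1\cap\sigma_2$; in particular the summands with $\sigma_1\cap\sigma_2=\emptyset$ do not contribute.

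Next I would compute $[\Delta]\wedge(\alpha\times\beta)$. Using the representation of the diagonal from the proof of \cref{intersection}, $[\Delta]=d'd''\phi_1\wedge\cdots\wedge d'd''\phi_r$ with $\phi_i=\max\{x_i,y_i\}$, and the fact that each $\phi_i$ is piecewise linear, the remark following \cref{tropical poincare-lelong formula} gives $d'd''\phi_i\wedge(-)=\mathrm{div}(\phi_i)\cdot(-)$, so that
\[[\Delta]\wedge(\alpha\times\beta)=\mathrm{div}(\phi_1)\cdots\mathrm{div}(\phi_r)\cdot(\alpha\times\beta).\]
I would then evaluate this iterated corner locus by induction on $r$ via the explicit formula of \cref{corner locus}. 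Each factor $\mathrm{div}(\phi_i)$ cuts against the hyperplane $\{x_i=y_i\}$; the stronger transversality of the skeletons $\mathcal{C}_1^{\geq l_1},\mathcal{C}_2^{\geq l_2}$ guarantees that at every stage all the intermediate faces meet properly, so the boundary-derivative contractions appearing in \cref{corner locus} reduce to restricting the superform $p_1^*\alpha_{\sigma_1}\wedge p_2^*\beta_{\sigma_2}$ to the current face, while the directional coefficients $\partial\phi_i/\partial\omega$ contribute exactly the lattice indices that will be absorbed by the final push-forward.

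Finally I would apply $p_{1,*}$: since $p_1|_\Delta$ is a lattice isomorphism onto $N_\R$ identifying $\Delta\cap(\sigma_1\times\sigma_2)$ with $\sigma_1\cap\sigma_2$, it sends $(p_1^*\alpha_{\sigma_1}\wedge p_2^*\beta_{\sigma_2})|_{\Delta}[\Delta\cap(\sigma_1\times\sigma_2)]$ to $(\alpha_{\sigma_1}\wedge\beta_{\sigma_2})[\sigma_1\cap\sigma_2]$, which yields the claimed formula. The step I expect to be the main obstacle is the middle one: tracking, through the iterated corner locus, the lattice-index multiplicities $[N:N_{\sigma_1}+N_{\sigma_2}]$ produced by the successive cuts against those produced by $p_{1,*}$, and checking that, together with the signs from \cref{corner locus} and the ordering of the $d'd''\phi_i$, they cancel to leave no spurious factor in front of $\alpha_{\sigma_1}\wedge\beta_{\sigma_2}$. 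A convenient way to organize this is to note that both sides are polyhedral supercurrents, hence determined by their restrictions to the relative interiors $\mathrm{relint}(\sigma_1\cap\sigma_2)$; near a generic point of such a cell only the single pair $(\sigma_1,\sigma_2)$ survives, which localizes the entire multiplicity computation to one transversal intersection of affine subspaces.
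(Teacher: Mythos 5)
Your proposal cannot be compared line-by-line with the paper's argument, because the paper disposes of this lemma with a one-line citation to \cite[Lemma~4.15]{mihatsch2021on}; measured on its own terms, your skeleton is the natural reconstruction and matches how the surrounding material is organized: the cellwise expansion of $\alpha\times\beta$ via \cref{product of currents}, the identification $[\Delta]\wedge(-)=\mathrm{div}(\phi_1)\cdots\mathrm{div}(\phi_r)\cdot(-)$ coming from \cref{tropical poincare-lelong formula} and equality~(\ref{wedge product1}), the localization at generic points of $\mathrm{relint}(\sigma_1\cap\sigma_2)$ (legitimate, since polyhedral supercurrents form a separated presheaf), and the final $p_{1,*}$, which is indeed harmless since $dp_1$ carries the lattice $(N\times N)\cap N_{\Delta\cap(\sigma_1\times\sigma_2),\R}$, i.e.\ $\{(v,v):v\in N\cap N_{\sigma_1,\R}\cap N_{\sigma_2,\R}\}$, isomorphically onto $N_{\sigma_1\cap\sigma_2}$, so \cref{push-forward of polyhedral supercurrents} contributes no factor there.

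The genuine gap is exactly the step you defer and then assert: the claim that the lattice indices produced by the iterated corner locus ``cancel to leave no spurious factor.'' If you carry out your own localization for a single pair of transversal affine subspaces, the computation in the example following \cref{corner locus} shows that each cut $\mathrm{div}(\phi_i)$ contributes the index of (current lattice)$+N_{\phi_i}$ in $N\times N$, and by multiplicativity these combine to $[N\times N:N_{\sigma_1\times\sigma_2}+N_{\Delta}]=[N:N_{\sigma_1}+N_{\sigma_2}]$ (use $(a,b)\mapsto a-b$, which identifies the two quotients). Since, as noted above, $p_{1,*}$ contributes a factor $1$, the total coefficient on the cell $\sigma_1\cap\sigma_2$ is $[N:N_{\sigma_1}+N_{\sigma_2}]$, which equals $1$ only when transversality holds at the level of lattices, not merely of the spanned $\R$-subspaces as in the definition used here. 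Concretely, take $N=\Z^2$, $\alpha=[\R(1,0)]$, $\beta=[\R(1,2)]$: these meet transversally in the stated sense, and the diagonal computation gives $\alpha\wedge\beta=2\,[\{0\}]$ — consistent with stable tropical intersection, which the remark after \cref{intersection} says the $\wedge$-product extends — whereas a clean cancellation would give $[\{0\}]$. So either the transversality hypothesis must be read as $N_{\sigma_1}+N_{\sigma_2}=N$ (in which case all the intermediate indices are $1$ and your outline closes), or the right-hand side must carry the multiplicities $[N:N_{\sigma_1}+N_{\sigma_2}]$; under either reading, your argument as written does not establish the displayed formula, because its pivotal assertion is precisely the point that fails. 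The sign bookkeeping you also worry about is comparatively benign: once the coefficient question is settled on constant-weight currents at generic points, the superform weights only restrict, as you say.
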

\begin{proof}
	This is \cite[Lemma~4.15]{mihatsch2021on}.
\end{proof}

For $T\in D(N_\R)$ and $v\in N_\R$, we write $\lambda_v: N_\R\rightarrow N_\R,  \ \ x\mapsto x+v$  and denote by $v+T=\lambda_{v,*}T$ the $v$-translated supercurrent.

\begin{proposition}
	Let $\alpha, \beta\in B(\widetilde{\Omega})$ and $v\in N_\R$. Assume that there is $\widetilde{\beta}\in B(N_\R)$ such that $\widetilde{\beta}|_{\widetilde{\Omega}}=\beta$. Then we have a weak convergence
	\[\alpha\wedge \beta = \lim\limits_{\varepsilon\to 0}\alpha\wedge(\varepsilon v+\widetilde{\beta})|_{\widetilde{\Omega}}.\]
	\label{limit construction of wedge product}
\end{proposition}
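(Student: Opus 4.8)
The plan is to reduce the assertion to a continuity statement for the diagonal construction of \cref{intersection}(3) under translation, and then to settle that continuity by means of the explicit corner-locus formula. Write $T_\varepsilon:=\id\times\lambda_{\varepsilon v}$ for the translation $(x,y)\mapsto(x,y+\varepsilon v)$ on $N_\R\times N_\R$; it is an isomorphism in $\mathcal{AT}op_{\Z,\R}$ since $dT_\varepsilon=\id$. Using the global extension $\widetilde\beta$ — this is exactly where the hypothesis is needed — I would form $\mu:=\alpha\times\widetilde\beta\in B(\widetilde\Omega\times N_\R)$, so that $\alpha\times(\varepsilon v+\widetilde\beta)=T_{\varepsilon,*}\mu$; without a global $\widetilde\beta$ the translated second factor could leave $\widetilde\Omega$ and $\mu$ would be undefined on the relevant locus.

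First I would combine \cref{intersection}(3), the projection formula for the isomorphism $T_\varepsilon$ (the special case of \cref{projection formula for delta forms} for isomorphisms, where it is immediate), and the identity $p_1\circ T_\varepsilon=p_1$, to obtain
\[
\alpha\wedge(\varepsilon v+\widetilde\beta)|_{\widetilde\Omega}
= p_{1,*}\bigl([\Delta]\wedge T_{\varepsilon,*}\mu\bigr)
= p_{1,*}T_{\varepsilon,*}\bigl(T_\varepsilon^{*}[\Delta]\wedge\mu\bigr)
= p_{1,*}\bigl([\Delta_{-\varepsilon}]\wedge\mu\bigr),
\]
where $\Delta_{-\varepsilon}:=T_\varepsilon^{-1}(\Delta)=\{(x,x-\varepsilon v)\}$ is the translated diagonal (the lattice index is $1$ as $dT_\varepsilon=\id$). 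For $\varepsilon=0$ this recovers $\alpha\wedge\beta=p_{1,*}([\Delta]\wedge\mu)$. Because $\Supp([\Delta_{-\varepsilon}]\wedge\mu)\subset\Delta_{-\varepsilon}$ is proper over $\widetilde\Omega$ uniformly for small $\varepsilon$, and $p_{1,*}$ is weakly continuous on such currents, it remains to prove the weak convergence $[\Delta_{-\varepsilon}]\wedge\mu\to[\Delta]\wedge\mu$ as $\varepsilon\to0$.

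Next I would make the family $[\Delta_{-\varepsilon}]$ explicit: viewing it as the graph of $x\mapsto x-\varepsilon v$, the Example following \cref{corner locus} together with the associativity of the corner locus (\cref{basic property of corner locus}) gives $[\Delta_{-\varepsilon}]\wedge\mu=\mathrm{div}(\psi_1^\varepsilon)\cdots\mathrm{div}(\psi_r^\varepsilon)\cdot\mu$ with $\psi_i^\varepsilon=\max\{x_i-\varepsilon v_i,\,y_i\}$, and \cref{tropical poincare-lelong formula} ensures each partial product is again a $\delta$-form. I would then prove the weak convergence by induction on the number of corner-locus factors, the inductive hypothesis being that the partial product $\nu_\varepsilon$ converges weakly to $\nu_0$ as $\varepsilon\to0$. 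For the inductive step I would use the explicit description of $\mathrm{div}(\psi_i^\varepsilon)\cdot\nu_\varepsilon$ from \cref{corner locus}: it is a finite sum $\sum_{\tau}(\nu_\varepsilon)_\tau[\tau_\varepsilon]$ whose facets $\tau_\varepsilon=\sigma\cap\{x_i-\varepsilon v_i=y_i\}$ translate continuously and whose weights are built from the directional derivatives of $\psi_i^\varepsilon$ — slopes that are independent of $\varepsilon$ — together with the data of $\nu_\varepsilon$. Testing against any $\gamma\in A_c$ turns the pairing into a finite sum of integrals over the $\tau_\varepsilon$, computed cell by cell via \cref{wedge product for ps and delta-form}, to which dominated convergence applies: the integrands converge by the inductive hypothesis and the $\varepsilon$-independence of the slopes, while $\gamma$ supplies the compact support.

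The main obstacle will be the cells $\sigma$ of a complex adapted to $\mu$ on which the moving corner hyperplanes degenerate, i.e. those parallel to or contained in some $\{x_i=y_i\}$; these correspond precisely to the pairs of cells of $\alpha$ and $\widetilde\beta$ that fail to intersect transversally. For a generic direction $v$ such degeneracies are absent, \cref{wedge product of transersally intersection} applies, and the intersection cells merely translate, so the convergence is transparent; but for an arbitrary $v$ a degenerate cell may meet the moving corner in a positive-dimensional set over a whole interval of $\varepsilon$, and no transversal reduction is available. I would handle this by observing that the corner-locus weights on such a cell are still governed by the $\varepsilon$-independent slope differences, while the domains of integration $\sigma\cap\{x_i-\varepsilon v_i=y_i\}$ vary continuously and the measure of their symmetric differences tends to $0$; the compact support furnished by $\gamma$ then guarantees the dominated-convergence hypotheses, so each cell's contribution converges to its value at $\varepsilon=0$. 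Assembling these cell-wise limits, propagating them through the induction, and applying the continuous pushforward $p_{1,*}$ gives $\lim_{\varepsilon\to0}\alpha\wedge(\varepsilon v+\widetilde\beta)|_{\widetilde\Omega}=\alpha\wedge\beta$.
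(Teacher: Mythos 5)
Your reduction has sound ingredients: moving the translation onto the diagonal via $T_\varepsilon$, so that $\alpha\wedge(\varepsilon v+\widetilde\beta)|_{\widetilde\Omega}=p_{1,*}([\Delta_{-\varepsilon}]\wedge\mu)$, is legitimate, and the uniform properness of the supports over compacta does make $p_{1,*}$ harmless. One caution before the main point: you invoke \cref{projection formula for delta forms}, but in the paper that theorem is proved later, via \cref{basic properties of pull-back1}, whose proof rests on \cref{another definition of wedge product} --- which is itself deduced from the very proposition you are proving. For the translation $T_\varepsilon$ the needed equivariance $[\Delta]\wedge T_{\varepsilon,*}\mu=T_{\varepsilon,*}([\Delta_{-\varepsilon}]\wedge\mu)$ can be checked directly from \cref{corner locus} (affine isomorphisms carry adapted complexes, normal vectors and slopes to one another), so this circularity is repairable, but as written it should not be cited as a black box.

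The genuine gap is in your inductive step. First, "$\nu_\varepsilon\to\nu_0$ weakly" is not a hypothesis you can feed into the next factor: $\mathrm{div}(\psi_i^\varepsilon)\cdot(-)$ depends on the polyhedral structure, not just on the underlying current, and is not weakly continuous; what you actually need is the stronger structured statement that the $\varepsilon$-family of complexes and coefficients converges, and that is exactly where the difficulty sits. Second, your mechanism for the degenerate cells --- $\varepsilon$-independent slopes plus domains $\sigma\cap\{x_i-\varepsilon v_i=y_i\}$ whose symmetric differences shrink --- is false in precisely those cases. Take $\nu$ the standard tropical line in $\R^2$ (rays $-e_1$, $-e_2$, $e_1+e_2$, weights $1$) and $\psi^\varepsilon=\max\{x-\varepsilon,y\}$. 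For $\varepsilon>0$ all of $\mathrm{div}(\psi^\varepsilon)\cdot\nu$ is the point $(0,-\varepsilon)$ on the ray $-e_2$, along which the slope of $\psi^0$ is $0$; at $\varepsilon=0$ the weight $1$ at the origin comes entirely from the ray $e_1+e_2$, which is \emph{disjoint} from the wall for every $\varepsilon\neq 0$ (it lies inside the wall at $\varepsilon=0$). So the cell-wise contributions do not converge to their $\varepsilon=0$ values; mass migrates discontinuously between cells, and only the total converges --- and it converges because of the balancing condition of \cref{balancing condition for differential forms}, which your argument never uses. Proving that the redistributed totals match the corner locus at $\varepsilon=0$ is the fan-displacement/stable-intersection limit theorem, i.e., the analytic core of the proposition, so your proposal assumes what it must prove. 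The paper avoids moving walls altogether: it keeps $[\Delta]$ fixed, uses the expression of $[\Delta]\wedge(-)$ through equality~(\ref{wedge product1}) and the continuity of those operations in the $C^\infty$-topology, and thereby reduces everything to the elementary convergence $\lambda_{\varepsilon v}^*\eta'\to\eta'$ of test forms, i.e., to $\widetilde\beta(\lambda_{\varepsilon v}^*\eta')\to\widetilde\beta(\eta')$, where no degeneration analysis is needed.
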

\begin{proof}
	Since $\Delta = d'd''\phi_1\wedge\cdots \wedge d'd''\phi_r$ with $\phi_i=\max\{x_i, y_i\}$, and supercurrents are continuous with respect to the so-call $C^\infty$-topology, by equality~(\ref{wedge product1}), it is sufficient to show that $\alpha\times \beta = \lim\limits_{\varepsilon\to 0}\alpha\times(\varepsilon v+\widetilde{\beta})|_{\widetilde{\Omega}}$, i.e. 
	\[\alpha(\eta)\beta(\eta') = \alpha(\eta)\lim\limits_{\varepsilon\to 0}\widetilde{\beta}(\lambda_{\varepsilon v}^*\eta')\]
	for any $\eta,\eta'\in A_c(\widetilde{\Omega})$, where $\lambda_{\varepsilon v}: N_\R\rightarrow N_\R, \ \ x\mapsto x+\varepsilon v$. This is obviously, since $\Supp(\lambda_{\varepsilon v}^*\eta')\subset \widetilde{\Omega}$ for $\varepsilon$ small enough and $\eta'=\lim\limits_{\varepsilon\to 0}\lambda_{\varepsilon v}^*\eta'$ with respect to the $C^\infty$-topology.
\end{proof}

\begin{corollary}
	Let $\alpha=\sum\limits_{\sigma\in \mathcal{C}_1^{l_1}}\alpha_\sigma[\sigma], \beta=\sum\limits_{\sigma\in\mathcal{C}_2^{l_2}} \beta_\sigma[\sigma]\in B(\widetilde{\Omega})$ and $v\in N_\R$ a generic vector for $\mathcal{C}_1,$ $\mathcal{C}_2$. Then for any sufficiently small $\varepsilon>0$, we have
	\[\alpha\wedge\beta = \sum\limits_{\substack{(\sigma_1,\sigma_2)\in \mathcal{C}_1^{l_1}\times \mathcal{C}_2^{l_2}\\ \sigma_1\cap(\varepsilon v+\sigma_2)\not=\emptyset}}(\alpha_{\sigma_1}\wedge\beta_{\sigma_2})[\sigma_1\cap \sigma_2].\]
	\label{another definition of wedge product}
\end{corollary}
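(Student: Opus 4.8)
The plan is to combine the two preceding results: the limit formula \cref{limit construction of wedge product} and the transversal intersection formula \cref{wedge product of transersally intersection}. First I would recall that a \emph{generic vector $v$ for $\mathcal{C}_1,\mathcal{C}_2$} is one for which there is $\varepsilon_0>0$ such that for all $0<\varepsilon<\varepsilon_0$ the complexes $\mathcal{C}_1^{\geq l_1}$ and $(\varepsilon v+\mathcal{C}_2)^{\geq l_2}$ intersect transversally; such vectors exist because the set of $v$ failing this is contained in a finite union of proper linear subspaces of $N_\R$ (the directions parallel to some $N_{\sigma_1,\R}+N_{\sigma_2,\R}\neq N_\R$). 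Fix such a $v$ and restrict to $0<\varepsilon<\varepsilon_0$.

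Next I would translate $\beta$. Writing $\widetilde{\beta}\in B(N_\R)$ for an extension of $\beta$ and $\lambda_{\varepsilon v}(x)=x+\varepsilon v$, the supercurrent $\varepsilon v+\widetilde{\beta}=\lambda_{\varepsilon v,*}\widetilde{\beta}$ is adapted to $\varepsilon v+\mathcal{C}_2$, with cells $\varepsilon v+\sigma_2$ carrying the translated superforms $\lambda_{\varepsilon v,*}\beta_{\sigma_2}$. Since $v$ is generic, \cref{wedge product of transersally intersection} applies to $\alpha$ and $(\varepsilon v+\widetilde{\beta})|_{\widetilde{\Omega}}$ and yields
\begin{align*}
\alpha\wedge(\varepsilon v+\widetilde{\beta})|_{\widetilde{\Omega}}=\sum_{\substack{(\sigma_1,\sigma_2)\in\mathcal{C}_1^{l_1}\times\mathcal{C}_2^{l_2}\\ \sigma_1\cap(\varepsilon v+\sigma_2)\neq\emptyset}}\bigl(\alpha_{\sigma_1}\wedge\lambda_{\varepsilon v,*}\beta_{\sigma_2}\bigr)[\sigma_1\cap(\varepsilon v+\sigma_2)].
\end{align*}
Here the superform wedge makes sense because transversality gives $N_{\sigma_1,\R}+N_{\varepsilon v+\sigma_2,\R}=N_{\sigma_1,\R}+N_{\sigma_2,\R}=N_\R$, a condition insensitive to the translation.

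Then I would let $\varepsilon\to 0$. The key combinatorial point, which I would extract from the standard fan-displacement argument, is that for $0<\varepsilon<\varepsilon_0$ the index set $S=\{(\sigma_1,\sigma_2)\mid\sigma_1\cap(\varepsilon v+\sigma_2)\neq\emptyset\}$ is independent of $\varepsilon$ and consists exactly of the transversal pairs, for which $\sigma_1\cap\sigma_2$ has codimension $l_1+l_2$ and $\mathrm{relint}(\sigma_1)\cap\mathrm{relint}(\sigma_2)\neq\emptyset$. For each such pair the polyhedron $\sigma_1\cap(\varepsilon v+\sigma_2)$ converges, in Hausdorff distance and staying of the same dimension, to $\sigma_1\cap\sigma_2$, while $\lambda_{\varepsilon v,*}\beta_{\sigma_2}\to\beta_{\sigma_2}$ in $C^\infty$ on compacta since $\beta_{\sigma_2}$ is the restriction of a global superform; hence each summand converges weakly to $(\alpha_{\sigma_1}\wedge\beta_{\sigma_2})[\sigma_1\cap\sigma_2]$. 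Combining this with \cref{limit construction of wedge product}, which gives $\alpha\wedge(\varepsilon v+\widetilde{\beta})|_{\widetilde{\Omega}}\to\alpha\wedge\beta$, the limit of the right-hand side equals $\sum_{(\sigma_1,\sigma_2)\in S}(\alpha_{\sigma_1}\wedge\beta_{\sigma_2})[\sigma_1\cap\sigma_2]$; since $S$ is precisely the pairs with $\sigma_1\cap(\varepsilon v+\sigma_2)\neq\emptyset$ for any fixed small $\varepsilon$, this is the asserted formula.

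The main obstacle is the combinatorial stabilization together with the matching of the attached data: I must verify that translation by $\varepsilon v$ does not alter the lattice $N_{\sigma_1,\R}\cap N_{\sigma_2,\R}$ and the calibrage attached to $\sigma_1\cap\sigma_2$, so that the limiting superform-weight on $\sigma_1\cap\sigma_2$ is exactly $\alpha_{\sigma_1}\wedge\beta_{\sigma_2}$ and not the datum of a neighbouring cell, and that non-transversal or boundary pairs (whose contribution is unstable under the sign of $v$) are correctly excluded for small $\varepsilon$. This is exactly where the genericity of $v$ and the transversality hypothesis of \cref{wedge product of transersally intersection} are used.
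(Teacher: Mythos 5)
Your proposal follows the paper's route exactly in outline: extend $\beta$, translate by $\varepsilon v$, apply \cref{wedge product of transersally intersection} to the displaced (transversal) pair, and pass to the limit $\varepsilon\to 0$ via \cref{limit construction of wedge product}. There is, however, one genuine gap at the very first step: you write ``$\widetilde{\beta}\in B(N_\R)$ for an extension of $\beta$'' as if a global extension always exists, but it does not --- a $\delta$-form on $\widetilde{\Omega}$ may have coefficients that blow up at the boundary of $\widetilde{\Omega}$, and \cref{limit construction of wedge product} is stated precisely under the \emph{hypothesis} that such an extension exists. The paper closes this by a preliminary reduction that your argument is missing: since the presheaf of polyhedral supercurrents is separated, the asserted identity of polyhedral supercurrents may be verified locally, and on a small neighborhood $U_x$ of each $x\in\widetilde{\Omega}$ one manufactures $\widetilde{\beta}\in B(N_\R)$ with $\widetilde{\beta}|_{U_x}=\beta|_{U_x}$ using the fact that $B$ is a $C^\infty$-module (multiply by a cutoff function equal to $1$ near $x$ and supported in $\widetilde{\Omega}$; this preserves the balancing condition). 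With that localization paragraph added, your argument matches the paper's proof.

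A secondary inaccuracy, harmless for the conclusion but worth correcting: it is not true that the surviving index set $S=\{(\sigma_1,\sigma_2)\mid \sigma_1\cap(\varepsilon v+\sigma_2)\neq\emptyset\}$ consists only of pairs with $\mathrm{relint}(\sigma_1)\cap\mathrm{relint}(\sigma_2)\neq\emptyset$ and $\mathrm{codim}(\sigma_1\cap\sigma_2)=l_1+l_2$, nor that $\sigma_1\cap(\varepsilon v+\sigma_2)$ always converges to $\sigma_1\cap\sigma_2$ ``staying of the same dimension''. For example, two opposite rays in a line displaced towards each other meet transversally for every small $\varepsilon>0$, yet their untranslated intersection is a single point, of codimension strictly larger than $l_1+l_2$. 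Such summands still converge weakly, but to the zero class: a degenerate cell contributes $0$ in $F^{p,q,l_1+l_2}(\widetilde{\Omega})$ because $\gamma[\tau]=0$ whenever $\dim\tau<r-l_1-l_2$, and the shrinking slabs $\sigma_1\cap(\varepsilon v+\sigma_2)$ carry vanishing mass against any compactly supported test form. So the correct formulation --- which is how the paper phrases it --- is termwise weak convergence $(\alpha_{\sigma_1}\wedge(\varepsilon v+\widetilde{\beta}_{\sigma_2}))[\sigma_1\cap(\varepsilon v+\sigma_2)]\to(\alpha_{\sigma_1}\wedge\beta_{\sigma_2})[\sigma_1\cap\sigma_2]$, with no claim that $S$ consists exactly of the nondegenerate pairs.
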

\begin{proof}
	Since the presheaf of polyhedral supercurrents is separated, it is sufficient to show the result locally. For any $x\in \widetilde{\Omega}$ and a small neighborhood $U_x$, we can find $\widetilde{\beta}\in B(N_\R)$ such that $\widetilde{\beta}|_{U_x} = \beta|_{U_x}$. This is from the fact $B$ is a $C^\infty$-module. So we can assume that there is $\widetilde{\beta}=\sum\limits_{\sigma\in\mathcal{C}_2^{l_2}} \widetilde{\beta}_\sigma[\sigma]\in B(N_\R)$ such that $\widetilde{\beta}|_{\widetilde{\Omega}}=\beta$. Then \cref{wedge product of transersally intersection} and \cref{limit construction of wedge product} imply that
	\[\alpha\wedge(\varepsilon v+ \widetilde{\beta})|_{\widetilde{\Omega}}=\sum\limits_{\substack{(\sigma_1,\sigma_2)\in \mathcal{C}_1^{l_1}\times \mathcal{C}_2^{l_2}\\ \sigma_1\cap(\varepsilon v+\sigma_2)\not=\emptyset}}(\alpha_{\sigma_1}\wedge(\varepsilon v+\widetilde{\beta}_{\sigma_2})|_{\widetilde{\Omega}})[\sigma_1\cap (\varepsilon v+\sigma_2)]\] 
	is weakly convergent to $\alpha\wedge\beta$. Moreover, we have the weak convergence
	\[(\alpha_{\sigma_1}\wedge(\varepsilon v+\widetilde{\beta}_{\sigma_2})|_{\widetilde{\Omega}})[\sigma_1\cap (\varepsilon v+\sigma_2)]\to (\alpha_{\sigma_1}\wedge{\beta}_{\sigma_2})[\sigma_1\cap (\sigma_2)], \ \ \varepsilon \to 0.\]
	Hence the corollary holds.
\end{proof}

\subsection{Integration of $\delta$-forms over tropical cycles}

\label{integration of delta-forms on open subsets}

We have defined integration for polyhedral currents in \cref{definition of integration for polyhedral supercurrents}. Here, we will study some basic properties for integration of $\delta$-forms.

\begin{definition}
	Let ${\alpha}\in B(\widetilde{\Omega})$, and $C\in P_n(\widetilde{\Omega})$ a polyhedral complex with constant weight. We say $C$ is {\bf balanced for ${\alpha}$} if there is a polyhedral complex $\mathcal{C}$ adapted to $C$  satisfying the following property: if $\tau$ is a facet of some maximal polyhedron in $\mathcal{C}$ with $\Supp_{\widetilde{\Omega}}({\alpha})\cap\tau\not=\emptyset$, then the balance condition holds at $\tau$. Here $\Supp_{\widetilde{\Omega}}({\alpha})$ is the support of ${\alpha}$ in $\widetilde{\Omega}$.
	\label{def:balancedfordeltaform}
\end{definition}
\begin{remark}
	\begin{enumerate}
		\item [(1)] Although $C$ may not be balanced on $\widetilde{\Omega}$, if it is balanced for ${\alpha}$, we can take an open subset $U \subset \widetilde{\Omega}$ such that $\Supp_{\widetilde{\Omega}}(\alpha)\subset U$ and $C$ is balanced on $U$. In this case, we can take  ${\alpha}|_U\wedge C\in B(U)$. We will omit $U$ since the property that it is zero is independent of the choice of $U$. 
		
		If, moreover, $\alpha\in B^{n,n}(\widetilde{\Omega})$ and  $\Supp_{\widetilde{\Omega}}({\alpha})\cap |C|$ is compact, then we set
		\[\int_{\widetilde{\Omega}}\alpha\wedge C = \int_U\alpha|_U\wedge C,\]
		this is independent of the choice of $U$.
		\label{rk:wedgezero}
	\end{enumerate}
\end{remark}

\

We have seen that the Stokes' formula, i.e. \cref{stokes' formula for polyhedral supercurrents}, hold for polyhedral currents when we take polyhedral derivatives. However, the formula doesn't hold when we consider derivative on $\delta$-forms, since
\[\int_Pd'\alpha = \int_Pd_P'\alpha+\int_P\partial'\alpha= \int_{\partial P}\alpha+\int_{P}\partial'\alpha.\] 
By definition of derivative, we have the following observation.

\begin{proposition}
	Let $C\in P_n(N_\R)$ be a polyhedral complex with constant weight, and $\alpha\in B_c^{n-1,n}(\widetilde{\Omega})$ (resp. $\alpha\in B_c^{n,n-1}(\widetilde{\Omega})$). Assume that there is a polyhedral complex $\mathcal{C}$ adapted to $C$ satisfying the following property: if $\tau\in \mathcal{C}_{n-1}$ with $\Supp_{\widetilde{\Omega}}(\alpha)\cap\tau\not=\emptyset$, then the balance condition holds at $\tau$. Then we have 
	\[\int_{\widetilde{\Omega}}(d'\alpha)\wedge C = \int_{\widetilde{\Omega}}d_P'\alpha\wedge C= \int_{\widetilde{\Omega}}\partial'\alpha\wedge C=0,\]
	\[\text{(resp.$\int_{\widetilde{\Omega}}d''\alpha\wedge C = \int_{\widetilde{\Omega}}d_P''\alpha\wedge C = \int_{\widetilde{\Omega}}\partial''\alpha\wedge C=0$)}.\]
	\label{minor stokes' formula for d'd''}
\end{proposition}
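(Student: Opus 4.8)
The plan is to write $d'=d_P'+\partial'$ and to deduce the three vanishings from two of them: I will aim to show $\int_{\widetilde\Omega}d'\alpha\wedge C=0$ and $\int_{\widetilde\Omega}d_P'\alpha\wedge C=0$, after which $\int_{\widetilde\Omega}\partial'\alpha\wedge C=\int_{\widetilde\Omega}d'\alpha\wedge C-\int_{\widetilde\Omega}d_P'\alpha\wedge C=0$. First I would fix a polyhedral complex $\mathcal{C}$ adapted simultaneously to $\alpha$ and to $C$ and, using the hypothesis together with \cref{rk:wedgezero}, pass to an open $U\subset\widetilde\Omega$ containing $\Supp_{\widetilde\Omega}(\alpha)$ on which $C$ is balanced, so that $\gamma:=\alpha|_U\wedge C\in B_c(U)$ is a genuine $\delta$-form. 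Since $C=\sum_\sigma m_\sigma[\sigma]$ has constant weights we have $d_P'C=\sum_\sigma d'(m_\sigma)[\sigma]=0$, and the formula of \cref{expression of boundary derivatives} for $\partial'C$ has facet coefficients built from contractions of the \emph{constant} functions $m_\sigma$, which vanish; hence $\partial'C=0$ on $U$ and so $d'C=0$ on $U$. The Leibniz rules for the $\wedge$-product (\cref{intersection}, and the Leibniz rules recorded for $B$) then give $d'\alpha\wedge C=d'\gamma$ and $d_P'\alpha\wedge C=d_P'\gamma$ on $U$, the complementary terms vanishing because their support lies in $\Supp(\alpha)\cap\Supp(d'C)=\emptyset$.

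For $\int_U d'\gamma=0$ I would argue entirely at the level of supercurrents. By the observation following \cref{definition of integration for polyhedral supercurrents} the integral equals $(d'\gamma)(f)$ for any $f\in A_c^{0,0}(U)$ with $f\equiv1$ on a neighbourhood of $\Supp(\gamma)$; by the definition of $d'$ on supercurrents this equals $\pm\,\gamma(d'f)$, and since $d'f$ vanishes near $\Supp(\gamma)$, the support property of supercurrents (\cref{remark4.4}) forces $\gamma(d'f)=0$. This step is clean and uses nothing about the balancing of $C$.

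The substantive part is $\int_U d_P'\gamma=0$, which I would prove by a cellwise Stokes computation, generalizing \cref{stokes' formula for polyhedral supercurrents} to constant weights. Choosing $\mathcal{C}$ so that the maximal cells of $C$ are cells of $\mathcal{C}$ and restricting $\alpha$ to them, \cref{stokes' formula for polyhedra} applied on each cell gives a weighted boundary integral $\sum_{\sigma}m_\sigma\int_{\partial\sigma}\alpha_\sigma=\int_{\partial C}\alpha$. I would then regroup these facet integrals over each facet $\tau$: the contribution at $\tau$ is the integral over $\tau$ of the contraction $(\,\cdot\,)\mapsto\langle\,\cdot\,;(\emptyset,(\cdot)_{\{1\}})\rangle$ (which is bilinear, \cref{basic properties of contraction}) evaluated against the combination $\sum_{\sigma\gtrdot\tau}m_\sigma\omega_{\sigma,\tau}$, with the continuity of $\alpha$ across $\tau$ supplying the common restriction $\alpha|_\tau$. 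The balancing of $C$ at $\tau$ (which holds by hypothesis precisely at the facets $\tau\in\mathcal{C}_{n-1}$ meeting $\Supp_{\widetilde\Omega}(\alpha)$, and is what \cref{balancing condition for differential forms} and \cref{main theorem of delta forms} encode) places $\sum_\sigma m_\sigma\omega_{\sigma,\tau}\in N_{\tau,\R}$, and contracting against a vector tangent to $\tau$ and then restricting to $\tau$ yields $0$ (the computation underlying the Remark after \cref{definition of integration on boundary for superforms}). Hence every facet contribution cancels.

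The point requiring the most care, and the one I expect to be the main obstacle, is exactly this facet regrouping and its reliance on the assumed balancing: one must verify that the balancing guaranteed only at the facets $\tau\in\mathcal{C}_{n-1}$ meeting $\Supp_{\widetilde\Omega}(\alpha)$ is sufficient, i.e. that at every other facet the boundary data entering the sum either vanishes upon restriction or cancels by the opposite-normal identity $\omega_{\sigma_1,\tau}=-\omega_{\sigma_2,\tau}$, so that no balancing is needed there; and one must keep track of the orientation signs $(-1)^{\dim\sigma}$ produced by \cref{definition of integration on boundary for superforms} when reassembling the facet sums. The resp.\ statement for $d'',d_P'',\partial''$ and $\alpha\in B_c^{n,n-1}(\widetilde\Omega)$ follows by the identical argument with the two gradings interchanged (equivalently, by applying the operator $J$ to the primary case).
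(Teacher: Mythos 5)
Your overall architecture coincides with the paper's proof: you shrink to an open $U\subset\widetilde{\Omega}$ containing $\Supp_{\widetilde{\Omega}}(\alpha)$, chosen so that every cell of $\mathcal{C}$ meeting $U$ also meets $\Supp(\alpha)$, whence $C\in B^{0,0}(U)$ and $d'C=d'_PC=\partial'C=0$ there; your bump-function computation $(d'(C\wedge\alpha))(f)=\pm(C\wedge\alpha)(d'f)=0$ is verbatim the paper's treatment of the first integral; and you obtain the $\partial'$-vanishing as the difference of the other two, exactly as the paper does. Incidentally, the ``main obstacle'' you flag at the end --- that balancing is hypothesized only at facets meeting $\Supp_{\widetilde{\Omega}}(\alpha)$ --- is already dispatched by your own first step: with $U$ chosen as above, every facet of $\mathcal{C}_{n-1}$ meeting $U$ meets the support, so $C$ is genuinely balanced on all of $U$ and no case analysis at the remaining facets is needed.

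The one place you deviate is the $d_P'$-integral, and there your argument has a genuine gap as written. The paper simply applies \cref{stokes' formula for polyhedral supercurrents} to $d_P'(C\wedge\alpha)=C\wedge d_P'\alpha$ over a polyhedral set $P\subset U$ with $\Supp(C\wedge\alpha)\subset \mathrm{Int}(P)$, and the boundary term $\int_{\partial P}C\wedge\alpha$ vanishes. You instead propose a cellwise Stokes computation in which the maximal cells of $C$ are cells of $\mathcal{C}$ and $\alpha$ is ``restricted to them'', with ``the continuity of $\alpha$ across $\tau$ supplying the common restriction $\alpha|_\tau$''. This only makes sense for the codimension-zero (piecewise smooth) component of $\alpha$: a general $\alpha\in B_c^{n-1,n}(\widetilde{\Omega})=\bigoplus_l B_c^{n-1-l,n-l,l}(\widetilde{\Omega})$ carries components on its own polyhedra of positive codimension, which cannot be restricted to the $n$-cells of $C$; the coefficients of $\gamma:=\alpha|_U\wedge C$ (computed via \cref{another definition of wedge product}) on cells adjacent along a facet $\tau$ are genuinely different, so there is no single $\alpha|_\tau$ to factor out of the facet sum, and the balancing of $C$ alone does not produce the cancellation. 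The repair is to run your facet regrouping on a complex adapted to $\gamma$ itself and invoke the balancedness of $\gamma$ --- guaranteed by \cref{intersection} and \cref{main theorem of delta forms} --- in each $(p,q,l)$-degree: writing the balancing relation at $\tau$ as $\sum_\sigma\gamma_\sigma|_\tau\otimes\omega_{\sigma,\tau}=\sum_i\beta_i\otimes v_i$ with $v_i\in N_{\tau,\R}$, the facet contribution becomes $\pm\sum_i\int_\tau\langle\beta_i;(\emptyset,(v_i)_{\{1\}})\rangle|_\tau$, which vanishes because contracting a $(d-1,d)$-form with a vector tangent to the $(d-1)$-dimensional $\tau$ and restricting to $\tau$ kills it by antisymmetry (the mechanism of \cref{expression of boundary derivatives}), while the outer boundary terms vanish by support. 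With that substitution your proof is correct, and it is in effect an explicit unwinding of the boundary-term vanishing that the paper's one-line appeal to \cref{stokes' formula for polyhedral supercurrents} leaves implicit.
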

\begin{proof}
	We take an open neighborhood $U\subset\widetilde{\Omega}$ of $\Supp(\alpha)$ such that $\Supp(\alpha)\cap\tau=\emptyset$ implies that $\tau\cap U=\emptyset$. So $C\in B^{0,0}(U)$ by our assumption. 
	Let $f\in A^{0,0}(U)$ such that $f|_{\Supp(\alpha)} \equiv 1$. Notice that $d'C=0$ on $U$, then
	\[\int_{C}d'\alpha=\int_{U}C\wedge d'\alpha = \int_{U} d'(C\wedge\alpha)= (d'(C\wedge\alpha))(f) = (C\wedge\alpha)(d'f) = (C\wedge\alpha)(0)=0.\]
	For $\int_{C}d_P'\alpha$, we take a polyhedral set $P\subset U$ such that $\Supp(\alpha)\subset \mathrm{Int}(P)$. Then by Stokes' formula, i.e. \cref{stokes' formula for polyhedral supercurrents}, we have 
	\[\int_{C}d_P'\alpha = \int_{U}C\wedge d_P'\alpha=\int_Pd_P'(C\wedge\alpha)= \int_{\partial P}C\wedge\alpha = 0.\]
	These two imply that $\int_{C}\partial'\alpha=0$.
\end{proof}



\

We have Green's formula for $\delta$-forms with respect to $d_P, d_P'$ by \cref{stokes' formula for polyhedral supercurrents}, the proof is similar with \cref{Green's formula for superforms}. We need the following lemma.

\begin{lemma}
	For any $\alpha\in B^{p,q}(\widetilde{\Omega}), \beta\in B^{p',q'}(\widetilde{\Omega})$, we have
	\[J(\alpha\wedge\beta) = J\alpha\wedge J\beta.\]
\end{lemma}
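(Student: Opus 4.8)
The plan is to deduce the identity from the uniqueness statement in \cref{intersection}. Introduce the bilinear product $\alpha\ast\beta := J(J\alpha\wedge J\beta)$ on $B(\widetilde{\Omega})$; since $J$ is an involution exchanging $d'$ and $d''$ and preserving total degree (only transposing the bidegree), $\ast$ lands in the same bidegrees as $\wedge$ and is well-defined on $\delta$-forms (if $\alpha\in B$ then $J\alpha\in B$ because $d'J\alpha=Jd''\alpha$ is polyhedral iff $d''\alpha$ is). My aim is to show that $\ast$ satisfies the three characterizing properties of \cref{intersection}; uniqueness then forces $\ast=\wedge$, i.e. $J(J\alpha\wedge J\beta)=\alpha\wedge\beta$, and applying $J$ once more (using $JJ=\id$) yields $J(\alpha\wedge\beta)=J\alpha\wedge J\beta$.

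Associativity of $\ast$ is immediate from that of $\wedge$ together with $JJ=\id$. For the Leibniz property (1) I would use $d'J=Jd''$ and $d''J=Jd'$ (which follow from $d'=Jd''J$, $d''=Jd'J$) to write $d'(\alpha\ast\beta)=Jd''(J\alpha\wedge J\beta)$, expand by the Leibniz rule for $\wedge$ with respect to $d''$, and re-contract using $d''J\alpha=Jd'\alpha$ and $\deg J\alpha=\deg\alpha$; this returns exactly $d'\alpha\ast\beta+(-1)^{\deg\alpha}\alpha\ast d'\beta$, and symmetrically for $d''$. For property (2), note that $J$ preserves $PS(\widetilde{\Omega})$, so for $\alpha\in PS$ the product $J\alpha\wedge J\beta$ is the polyhedron-wise product of \cref{wedge product for ps and delta-form}; combining the form-level identity $J(\gamma\wedge\delta)=J\gamma\wedge J\delta$ for superforms (a direct check from the definition of $J$) with the rule $J(\gamma_\sigma[\sigma])=(-1)^{\codim\sigma}(J\gamma_\sigma)[\sigma]$ (which follows from $J(T\wedge\gamma)=JT\wedge J\gamma$ and the symmetry $J\delta_\sigma=(-1)^{\codim\sigma}\delta_\sigma$), the two sign factors $(-1)^{\codim\sigma}$ cancel and one recovers $\alpha\ast\beta=\sum_\sigma(\alpha_\sigma\wedge\beta_\sigma)[\sigma]=\alpha\wedge\beta$.

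The \emph{crux} is property (3), the diagonal formula, where I must control how $J$ interacts with the external product, the diagonal, and $p_{1,*}$. Writing $\mathbf{J}$ for the operator $J$ on $N_\R\times N_\R$, I would establish three compatibilities: first, $\mathbf{J}[\Delta]=(-1)^r[\Delta]$, since $\Delta$ has codimension $r$ and $\delta_\Delta$ is symmetric; second, $\mathbf{J}(\alpha\times\beta)=J\alpha\times J\beta$, obtained from \cref{product of currents}(2), the form-level $J$-identity (pullbacks commute with $J$), and the rule $J(\gamma_\rho[\rho])=(-1)^{\codim\rho}(J\gamma_\rho)[\rho]$ applied to $\rho=\sigma\times\sigma'$, whose codimension splits as $\codim\sigma+\codim\sigma'$ so that the signs match on both sides; third, the intertwining $J\circ p_{1,*}=(-1)^r\,p_{1,*}\circ\mathbf{J}$, which comes from $F^*\circ J=J\circ F^*$ and the definitions of $J$ and $F_*$ on currents. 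Feeding these into $p_{1,*}([\Delta]\ast(\alpha\times\beta))=p_{1,*}\mathbf{J}(\mathbf{J}[\Delta]\wedge\mathbf{J}(\alpha\times\beta))$, all the $(-1)^r$ factors cancel and property (3) of the genuine product (applied to $J\alpha,J\beta$) collapses the right-hand side to $J(J\alpha\wedge J\beta)=\alpha\ast\beta$. The main obstacle is thus purely the bookkeeping of these signs; the computation sketched above indicates that they conspire to cancel exactly, so once the three compatibilities are pinned down no genuine difficulty remains.
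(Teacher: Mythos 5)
Your argument is correct, but it is a genuinely different proof from the paper's. The paper disposes of the lemma in two lines: it records the sign rule $J\bigl(\sum_{\sigma\in\mathcal{C}^l}\alpha_\sigma[\sigma]\bigr)=(-1)^l\sum_{\sigma\in\mathcal{C}^l}(J\alpha_\sigma)[\sigma]$ and then applies the explicit generic-translation description of the $\wedge$-product (\cref{another definition of wedge product}): writing $\alpha\wedge\beta=\sum(\alpha_{\sigma_1}\wedge\beta_{\sigma_2})[\sigma_1\cap\sigma_2]$ over the transversally meeting pairs, each intersection $\sigma_1\cap\sigma_2$ has codimension $l_1+l_2$, so applying $J$ termwise produces exactly the sign $(-1)^{l_1+l_2}$ that also appears in $J\alpha\wedge J\beta$, and multiplicativity of $J$ on superforms finishes the computation. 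You instead invoke the uniqueness clause of \cref{intersection} for the conjugated product $\alpha\ast\beta:=J(J\alpha\wedge J\beta)$, which requires verifying all three axioms; your verifications are sound, including the three key compatibilities in property~(3): $\mathbf{J}[\Delta]=(-1)^r[\Delta]$, $\mathbf{J}(\alpha\times\beta)=J\alpha\times J\beta$ (via \cref{product of currents}), and $J\circ p_{1,*}=(-1)^r\,p_{1,*}\circ\mathbf{J}$ --- the last sign being correct because on the product space $\mathbf{J}(S)(\gamma)=(-1)^{2r}S(\mathbf{J}\gamma)=S(\mathbf{J}\gamma)$ while on $N_\R$ the definition carries $(-1)^r$, and indeed all the $(-1)^r$'s cancel as you claim, as do the $(-1)^{\codim\sigma}$'s in your check of property~(2). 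What each approach buys: the paper's computation is shorter and stays entirely local, but it presupposes the perturbation formula \cref{another definition of wedge product}; yours avoids that moving argument, isolates reusable functorial facts about $J$ (its interaction with external products, the diagonal, and push-forward), and in effect shows that $J$ is an automorphism of the entire axiomatic package, so the identity holds for \emph{any} product satisfying (1)--(3). One point worth making explicit if you write this up: the uniqueness in \cref{intersection} is about a product defined compatibly on all $(N,\widetilde{\Omega})$, since property~(3) couples the product on $\widetilde{\Omega}$ with the one on $N_\R\times N_\R$; your $\ast$ qualifies because it is defined by the same formula uniformly, but this should be said rather than left implicit.
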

\begin{proof}
	Notice that $J(\sum\limits_{\sigma\in \mathcal{C}^l}\alpha_\sigma[\sigma]) = (-1)^l\sum\limits_{\sigma\in \mathcal{C}^l}J(\alpha_\sigma)[\sigma]$, then the result from Lemma\ref{basic relation of d' d'' and J} and Corollary~\ref{another definition of wedge product}.
\end{proof}

\begin{proposition}[Green's formula for $\delta$-forms]
	Let $C\in F_r(N_\R)$ with $|C|\subset \widetilde{\Omega}$ an integral $\R$-affine polyhedral set in $N_\R$. Then for any symmetric $(p,p)$-$\delta$-form $\alpha$, symmetric $(q,q)$-$\delta$-form $\beta$ around $|C|$ with $p+q=r-1$ and $\Supp(\alpha)\cap\Supp(\beta)$ compact,
	\[\int_{C}\alpha\wedge d_P'd_P''\beta-d_P'd_P''\alpha\wedge \beta = \int_{\partial C}\alpha\wedge d_P''\beta-d_P''\alpha\wedge\beta,\]
	\label{Green' formula for delta-forms}
\end{proposition}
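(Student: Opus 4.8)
The plan is to imitate the proof of \cref{Green's formula for superforms}, replacing the superform differentials by the polyhedral derivatives $d_P', d_P''$ and replacing classical Stokes by \cref{stokes' formula for polyhedral supercurrents}. Concretely, I would set
\[\eta := \alpha\wedge d_P''\beta - d_P''\alpha\wedge\beta,\]
which is an $(r-1,r)$-$\delta$-form (since $\alpha\in B^{p,p}$, $\beta\in B^{q,q}$ and $p+q=r-1$) with $\Supp(\eta)\subset\Supp(\alpha)\cap\Supp(\beta)$ compact, so that \cref{stokes' formula for polyhedral supercurrents} applies to it over $C$.

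First I would expand $d_P'\eta$ using the Leibniz rule for $d_P'$ on the $\wedge$-product of $\delta$-forms. Keeping track of total degrees ($\deg\alpha = 2p$, $\deg d_P''\alpha = 2p+1$) gives
\[d_P'\eta = \bigl(\alpha\wedge d_P'd_P''\beta - d_P'd_P''\alpha\wedge\beta\bigr) + \bigl(d_P'\alpha\wedge d_P''\beta + d_P''\alpha\wedge d_P'\beta\bigr).\]
Thus it suffices to show that the second bracket, which I write as $\gamma_1+\gamma_2$ with $\gamma_1 = d_P'\alpha\wedge d_P''\beta$ and $\gamma_2 = d_P''\alpha\wedge d_P'\beta$ (both in $B^{r,r}(\widetilde\Omega)$), integrates to zero over $C$; the proposition then follows from $\int_C d_P'\eta = \int_{\partial C}\eta$.

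The vanishing is where symmetry enters, and it is the step requiring the most care with signs. Two auxiliary facts are needed. From the identity $J\bigl(\sum_{\sigma\in\mathcal{C}^l}\mu_\sigma[\sigma]\bigr)=(-1)^l\sum_\sigma J(\mu_\sigma)[\sigma]$ used in the preceding lemma, together with the superform relations $Jd'=d''J$ and $Jd''=d'J$, one checks cell by cell that $J d_P' = d_P'' J$ and $J d_P'' = d_P' J$ on $\delta$-forms. Second, combining the same $(-1)^l$ with the superform identity $\int_\sigma J\omega = (-1)^{\dim\sigma}\int_\sigma\omega$ on each codimension-$l$ cell (where $\dim\sigma=r-l$), the signs cancel to $(-1)^l(-1)^{r-l}=(-1)^r$, giving $\int_C J\gamma = (-1)^r\int_C\gamma$ for every $\gamma\in B^{r,r}(\widetilde\Omega)$ with $\Supp(\gamma)\cap|C|$ compact. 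Using $J(\alpha\wedge\beta)=J\alpha\wedge J\beta$ (preceding lemma), the commutation relations above, and the symmetry hypotheses $J\alpha=(-1)^p\alpha$, $J\beta=(-1)^q\beta$, I would then compute
\[J\gamma_1 = d_P''(J\alpha)\wedge d_P'(J\beta) = (-1)^{p+q}\,d_P''\alpha\wedge d_P'\beta = (-1)^{r-1}\gamma_2.\]
Applying $\int_C$ and the integration relation yields $(-1)^r\int_C\gamma_1 = \int_C J\gamma_1 = (-1)^{r-1}\int_C\gamma_2$, hence $\int_C\gamma_1 = -\int_C\gamma_2$, i.e. $\int_C(\gamma_1+\gamma_2)=0$.

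Finally I would invoke \cref{stokes' formula for polyhedral supercurrents} to obtain
\[\int_C\bigl(\alpha\wedge d_P'd_P''\beta - d_P'd_P''\alpha\wedge\beta\bigr) = \int_C d_P'\eta = \int_{\partial C}\eta = \int_{\partial C}\bigl(\alpha\wedge d_P''\beta - d_P''\alpha\wedge\beta\bigr),\]
which is the claimed formula. The only genuine obstacle is the bookkeeping of the extra factor $(-1)^l$ that $J$ carries on $\delta$-forms: it must be tracked both in the relations $Jd_P'=d_P''J$, $Jd_P''=d_P'J$ and in the integration identity $\int_C J\gamma=(-1)^r\int_C\gamma$, where it conspires with the superform sign $(-1)^{r-l}$ to become independent of $l$; everything else is the verbatim superform argument of \cref{Green's formula for superforms}.
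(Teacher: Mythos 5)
Your proposal is correct and is exactly the argument the paper intends: the paper only remarks that the proof is ``similar'' to \cref{Green's formula for superforms}, resting on \cref{stokes' formula for polyhedral supercurrents} and the multiplicativity lemma $J(\alpha\wedge\beta)=J\alpha\wedge J\beta$, and your expansion $d_P'\eta$ with $\eta=\alpha\wedge d_P''\beta-d_P''\alpha\wedge\beta$ plus the symmetry argument killing $d_P'\alpha\wedge d_P''\beta+d_P''\alpha\wedge d_P'\beta$ is the standard Chambert-Loir--Ducros scheme being invoked. Your sign bookkeeping is also right: the factor $(-1)^l$ in $J(\mu_\sigma[\sigma])=(-1)^lJ(\mu_\sigma)[\sigma]$ does give $Jd_P'=d_P''J$, $Jd_P''=d_P'J$ cellwise and combines with $(-1)^{\dim\sigma}=(-1)^{r-l}$ to yield $\int_C J\gamma=(-1)^r\int_C\gamma$, so the cross terms cancel as you claim.
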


\section{Push-forwards and pull-backs of $\delta$-forms}

\label{pushforwardandpullback}

In this section, we will study the functorial properties of $\delta$-forms. We have defined push-forwards of $\delta$-forms as supercurrent in \cref{direct image of current}, in fact, we have another definition of push-forwards of $\delta$-forms as tropical cycles in tropical geometry. These two definitions are slightly different. We will also define pull-backs of $\delta$-forms and prove the projection formulas.

\subsection{Push-forwards as supercurrents}


\begin{lemma}
	Let $F: N'_\R\rightarrow N_\R$ an integral $\R$-affine map, and $\sigma'\subset N'_\R$ an integral $\R$-affine polyhedron.  We set $k=\dim{\sigma'}-\dim{F(\sigma')}$. For any $\alpha'\in A^{p,q}_{\sigma',c}(\sigma')$.
	Then we can construct $\widetilde{F}_*({\alpha}')\in A^{p-k,q-k}_{F(\sigma'),c}(F(\sigma'))$ as follows: for any $x\in N_{F(\sigma'),\R}$, and $v_{1}',\cdots, v_{p-k}', v_{1}'',\cdots, v''_{q-k}\in N_{F(\sigma'),\R}$,
	\[\widetilde{F}_*({\alpha}')(x)(v_1',\cdots,v_{p-k}', v_{1}'',\cdots v_{q-k}''):= \int_{\sigma'\cap F^{-1}(x)}\beta,\]
	where 
	\[\beta := (-1)^{k^2+kp}\langle{\alpha}';((u'_1,\cdots, u'_{p-k})_{\{k+1,\cdots, p\}}, (u''_1,\cdots, u''_{q-k})_{\{k+1,\cdots, q\}})\rangle|_{\sigma'\cap F^{-1}(x)}\]
	and $u_{1}',\cdots, u_{p-k}', u_{1}'',\cdots, u''_{q-k}$ are preimages of $v_{1}',\cdots, v_{p-k}', v_{1}'',\cdots, v''_{q-k}$ in $N_{\sigma',\R'}$ via $dF$.
	
	Furthermore, $F_*$ is $\R$-linear and for any $\eta\in A^{\dim\sigma'-p,\dim\sigma'-q}_{F(\sigma')}(F(\sigma'))$, we have 
	\[[N_{F(\sigma')}:dF(N_{\sigma'}')]\int_{F(\sigma')}\widetilde{F}_*\alpha'\wedge\eta= \int_{\sigma'} \alpha'\wedge {F}^*\eta.\]
	\label{fiberwise integration between polyhedra}
\end{lemma}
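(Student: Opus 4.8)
The $\R$-linearity of $\widetilde{F}_*$ is immediate, since the contraction $\langle\,\cdot\,;(\ldots)\rangle$ and the fiberwise integral $\int_{\sigma'\cap F^{-1}(x)}$ in its definition are both $\R$-linear in $\alpha'$ (smoothness and compact support of $\widetilde{F}_*\alpha'$, needed for the two sides to make sense, follow from differentiation under the integral sign together with the compactness of $\Supp(\alpha')$). For the displayed identity, both sides are bilinear in $(\alpha',\eta)$, so by multilinearity I would reduce to the case where $\alpha'$ and $\eta$ are monomials, letting the smooth coefficient functions ride along inside the integrals.

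The geometric input is a single adapted $\Z$-basis. I would choose $e_1,\dots,e_k$ a basis of the fiber lattice $N_{\sigma'}'\cap\Ker(dF)$ and extend it to a basis $e_1,\dots,e_m$ of $N_{\sigma'}'$ (here $m=\dim\sigma'$, $n=m-k=\dim F(\sigma')$). Then $f_i:=dF(e_{k+i})$, $i=1,\dots,n$, form an $\R$-basis of $N_{F(\sigma'),\R}$ generating the finite-index sublattice $dF(N_{\sigma'}')\subset N_{F(\sigma')}$. Writing $f_i=\sum_j M_{ji}g_j$ in a $\Z$-basis $g_1,\dots,g_n$ of $N_{F(\sigma')}$, one has $|\det M|=[N_{F(\sigma')}:dF(N_{\sigma'}')]$ and, on coordinates, $F^*y_j=\sum_l M_{jl}x_{k+l}$. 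In these coordinates the fibers of $F$ over $\sigma'$ are the $(x_1,\dots,x_k)$-slices, with fiber lattice spanned by $e_1,\dots,e_k$.

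The core of the argument is Fubini. Because the basis $e_1,\dots,e_m$ splits $N'_{\sigma'}$ into the fiber part and a lift of the base lattice, the lattice-normalized integral over $\sigma'$ factors as an iterated integral, first over the $(x_1,\dots,x_k)$-fibers and then over the base. After moving $d'x_1,\dots,d'x_k,d''x_1,\dots,d''x_k$ into standard position, the inner integral of $\alpha'\wedge F^*\eta$ is, up to the reordering sign, exactly the fiber integral $\int_{\sigma'\cap F^{-1}(x)}\beta$ computing the corresponding component of $\widetilde{F}_*\alpha'$; this is precisely what the normalizing sign $(-1)^{k^2+kp}$ in the definition is designed to produce. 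The outer integral then becomes $\int_{F(\sigma')}\widetilde{F}_*\alpha'\wedge\eta$, but expressed in the coordinates dual to $f_1,\dots,f_n$ rather than in the $\Z$-basis $g_1,\dots,g_n$.

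Finally comes the index factor. Pulling back $\eta$ replaces each $d'y_j$ (resp. $d''y_j$) by $\sum_l M_{jl}\,d'x_{k+l}$, so $F^*$ contributes minors of $M$ to the surviving top-degree term; these determinantal factors, combined with the Jacobian of the base change of variables from $(x_{k+1},\dots,x_m)$ to $(y_1,\dots,y_n)$, collapse to the single scalar $|\det M|=[N_{F(\sigma')}:dF(N_{\sigma'}')]$ of the statement. (A one-variable check already shows the mechanism: $F^*$ produces a factor of $\det M$ in each of the $d'$- and $d''$-parts, while the measure change contributes $1/\det M$, leaving the net factor $|\det M|$.) I expect the main obstacle to be the sign bookkeeping, namely verifying that the reordering signs incurred in extracting the $d'x_{[m]}\wedge d''x_{[m]}$-component of $\alpha'\wedge F^*\eta$ agree with $(-1)^{k^2+kp}$ together with the orientation signs $(-1)^{m(m-1)/2}$, $(-1)^{n(n-1)/2}$ and $(-1)^{k(k-1)/2}$ built into the integration conventions of \cref{definition of integration}.
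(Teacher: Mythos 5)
Your proposal is correct and takes essentially the same route as the paper's proof: both reduce, via a splitting of $N'_{\sigma'}$ into the fiber lattice $N'_{\sigma'}\cap\Ker(dF)$ and a lift of the base (the paper writes $N'=M\times dF(N')$ with $F$ the projection), to a product situation where Fubini applies, and both extract the index $[N_{F(\sigma')}:dF(N'_{\sigma'})]$ by the change-of-lattice formula of \cref{integral via bijection} --- your ``minors of $M$ plus Jacobian collapse to $|\det M|$'' is exactly that single invocation. The sign verification you defer is also the only part the paper handles by explicit computation (it writes both sides with signs $a$, $b$ and checks $a=b$), so your sketch matches the printed argument in both structure and level of detail.
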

\begin{proof}
	We can assume that $N'=N'_{\sigma'}$ and $N=N_{F(\sigma')}$.
	By definition, we know that $\tau':=\sigma'\cap F^{-1}(x)$ is an integral $\R$-affine polyhedron. We first show that $\beta \in A^{k,k}_{\tau',c}(\tau')$ is well-defined. This is from the fact that $N'_{F^{-1}(x),\R}$ as a vector space is independent of the choice of $x$, and it is exactly $\Ker(dF)$. Hence the integral is well-defined. The value is smooth since $\alpha'$ is smooth. Moreover $\Supp(F_*(\alpha')) \subset F(\Supp(\alpha'))$, so $F_*(\alpha')$ has compact support.
	
	The last statement is the Fubini's theorem for smooth manifolds, see \cite[2.15]{demailly2012complex}. We go through the detail to verity the sign.  
	We have that $N'=M\times dF(N')$ and $F: N_\R'\rightarrow N_\R$ is the projection. Notice that the integration for $dF(N')$ is the integration for $N$ multiplying $[N:dF(N')]$, see \cref{integral via bijection}, so we can assume that $N'=M\times N$. Moreover, assume that $\sigma'=N'_\R$ since $A^{p,q}_{\sigma',c}(\sigma') \subset A^{p,q}_{c}(\mathrm{relint}(\sigma'))\subset A^{p,q}_{c}(N_\R')$. Let $x_1,\cdots x_k$ (resp. $x_{k+1}, \cdots, x_{k+r}$) be the coordinate functions of $M_\R$ (resp. $N_\R$). Let $\alpha' = \sum\limits_{I,J}\alpha'_{IJ}d'x_Id''x_J$. Set $T=\{1,\cdots, k\}$. By definition 
	\begin{align*}
		\widetilde{F}_*\alpha'&=(-1)^{k^2+kp}\sum\limits_{\widetilde{I}, \widetilde{J}}(\int_{M_\R}\alpha_{IJ}(x_1,\cdots,x_{k+r})d'x_T\wedge d''x_T)dx_{\widetilde{I}}\wedge d'x_{\widetilde{J}}\\
		&=(-1)^{k^2+kp+k(k-1)/2}\sum\limits_{\widetilde{I}, \widetilde{J}}(\int_{M_\R}\alpha_{IJ}'(x_1,\cdots,x_{k+r})dx_T)dx_{\widetilde{I}}\wedge d'x_{\widetilde{J}}
	\end{align*}
	where $\widetilde{I}, \widetilde{J}$ run over subsets of $\{r+1,\cdots, r+k\}$ with $|\widetilde{I}|=p-k, |\widetilde{J}|=q-k$, and $I=\widetilde{I}\cup T$, $J=\widetilde{J}\cup T$. Assume $\eta=\sum\limits_{\overline{I},\overline{J}}\eta_{\overline{I}\overline{J}}d'x_{\overline{I}}d''x_{\overline{J}}\in A^{k+r-p,k+r-q}_{F(\sigma')}(F(\sigma'))$, then
	\begin{align*}
		\int_{N_\R'} \alpha'\wedge F^*\eta &= \sum\limits_{\widetilde{I},\widetilde{J}}\int_{N_\R'} \alpha'_{IJ}\eta_{\widetilde{I}^c\widetilde{J}^c}d'x_I\wedge d''x_{J}\wedge d'x_{\widetilde{I}^c}\wedge d''x_{\widetilde{J}^c}\\
		&= a\cdot\sum\limits_{\widetilde{I},\widetilde{J}}c_{\widetilde{I}}c_{\widetilde{J}}\int_{N_\R'} \alpha'_{IJ}\eta_{\widetilde{I}^c\widetilde{J}^c}dx_1\wedge\cdots \wedge dx_{r+k}\\
	\end{align*}
	\begin{align*}
		\int_{N_\R}\widetilde{F}_*\alpha'\wedge\eta & = (-1)^{k^2+kp+k(k-1)/2}\int_{N_\R}\sum\limits_{\widetilde{I}, \widetilde{J}}(\int\alpha_{IJ}'dx_T)\eta_{\widetilde{I}^c\widetilde{J}^c}d'x_{\widetilde{I}}\wedge d''x_{\widetilde{J}}\wedge d'x_{\widetilde{I}^c}\wedge d''x_{\widetilde{J}^c}\\ 
		& =b\cdot\sum\limits_{\widetilde{I}, \widetilde{J}}c_{\widetilde{I}}c_{\widetilde{J}}\int_{N_\R}(\int_{M_\R}\alpha_{IJ}'\eta_{\widetilde{I}^c\widetilde{J}^c}dx_T)dx_{r+1}\wedge\cdots \wedge dx_{r+k}
	\end{align*}
	where $\widetilde{I}^c=\{r+1,\cdots, r+k\}\setminus\widetilde{I}$, $c_{\widetilde{I}}$ is the sign of the permutation \[(r+1,\cdots, r+k)\rightarrow (\widetilde{I}, \widetilde{I}^c),\] it is similar for $\widetilde{J}^c, c_{\widetilde{J}}$, and \[a=(-1)^{q(k+r-p)+(k+r)(k+r-1)/2},\ \  b=(-1)^{k^2+kp+k(k-1)/2}\cdot(-1)^{(q-k)(k+r-p)+r(r-1)/2}.\] By Fubini theorem, it remains to show that $a=b$, this is an easy calculation.
\end{proof}
\begin{remark}
	\begin{enumerate}
		\item[(1)] If $\dim F(\sigma') = \dim \sigma'$, we have $\widetilde{F}_* = ((F|_{\sigma})^{-1})^*$, see also \cref{integral via bijection}.
	\end{enumerate}
\end{remark}

\begin{corollary}
	Let $F: N'_\R\rightarrow N_\R$ be an integral $\R$-affine map. Let $\widetilde{\Omega}\subset N_\R$, $\widetilde{\Omega}'\subset N'_\R$ be open subsets with $F(\widetilde{\Omega}')\subset \widetilde{\Omega}$. Then for any polyhedral supercurrent $\alpha'[\sigma']\in P_c^{p,q,l}(\widetilde{\Omega}')$ we have
	\[F_*(\alpha'[\sigma']) = [N_{F(\sigma')}:dF(N_{\sigma'}')]\widetilde{F}_*\alpha'[F(\sigma')] \in P_c^{p-k,q-k,l+\rk N-\rk N'+k}(\widetilde{\Omega}),\]
	where $F_*$ on the left-hand side (resp. right-hand side) is defined in \cref{direct image of current} (resp. \cref{fiberwise integration between polyhedra}) and $k=\dim\sigma'-\dim F(\sigma')$. In particular, $F_*: P_c^{p,q,l}(\widetilde{\Omega}') \rightarrow P_c^{p-k,q-k,l+r-r'-k}(\widetilde{\Omega})$ induces
	\[F_*: B_c^{p,q,l}(\widetilde{\Omega}') \rightarrow B_c^{p-k,q-k,l+r-r'+k}(\widetilde{\Omega})\]
	and commutes with $d', d'', d'_P, d''_P, \partial'$ and $\partial''$.
	\label{push-forward of polyhedral supercurrents} 
\end{corollary}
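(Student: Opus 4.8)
The plan is to prove the three assertions in turn: the explicit push-forward formula for an elementary piece, the preservation of the $\delta$-form property, and the commutation with all the derivatives.

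First I would establish the displayed formula by unwinding the definition of the direct image in \cref{direct image of current}. Writing $i_{\sigma'}\colon N_{\sigma',\R}\hookrightarrow N'_\R$ for the inclusion and pairing against a test form $\eta\in A_c^{r'-p-l,\,r'-q-l}(\widetilde{\Omega})$, one computes
\[F_*(\alpha'[\sigma'])(\eta) = (\alpha'[\sigma'])(F^*\eta) = \int_{\sigma'}\alpha'\wedge i_{\sigma'}^*F^*\eta = \int_{\sigma'}\alpha'\wedge (F\circ i_{\sigma'})^*\eta.\]
Since $F\circ i_{\sigma'}$ factors through $F(\sigma')$, this depends only on the restriction of $\eta$ to $F(\sigma')$, and the right-hand integral is exactly the left-hand side of the identity in \cref{fiberwise integration between polyhedra}. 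Applying that identity rewrites it as $[N_{F(\sigma')}:dF(N_{\sigma'}')]\int_{F(\sigma')}\widetilde{F}_*\alpha'\wedge\eta = [N_{F(\sigma')}:dF(N_{\sigma'}')]\,(\widetilde{F}_*\alpha'[F(\sigma')])(\eta)$, which proves the formula; matching bidegrees and codimensions identifies the target space. In particular $F_*$ carries polyhedral supercurrents to polyhedral supercurrents, generalizing the case $k=0$ recorded in the example after \cref{direct image of current}.

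Next I would deduce that $F_*$ preserves $\delta$-forms. Let $\alpha'\in B_c^{p,q,l}(\widetilde{\Omega}')$. As $\Supp(\alpha')$ is compact, $F_*\alpha'$, $F_*(d'\alpha')$ and $F_*(d''\alpha')$ are all defined. Because $\alpha'$ is a $\delta$-form, $d'\alpha'$ and $d''\alpha'$ are polyhedral, hence by the previous paragraph so are their images under $F_*$. Using $d'F_*=F_*d'$ and $d''F_*=F_*d''$ (the remark following \cref{direct image of current}), we see that $d'(F_*\alpha')=F_*(d'\alpha')$ and $d''(F_*\alpha')=F_*(d''\alpha')$ are polyhedral. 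By \cref{main theorem of delta forms}, $F_*\alpha'$ is a $\delta$-form, so $F_*\alpha'\in B_c(\widetilde{\Omega})$.

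Finally, for the polyhedral and boundary derivatives I would work one elementary piece at a time. Both $d'_P F_*(\alpha'[\sigma'])$ and $F_* d'_P(\alpha'[\sigma'])$ are computed through the formula of the first paragraph, and since the lattice index $[N_{F(\sigma')}:dF(N_{\sigma'}')]$ and the value of $k$ depend only on $\sigma'$, the comparison collapses to the single identity $d'\widetilde{F}_*\alpha' = \widetilde{F}_*d'\alpha'$ (and its $d''$-analogue) for the fibre integration of superforms. I expect this to be the main obstacle, and I would prove it by the local Fubini computation appearing in the proof of \cref{fiberwise integration between polyhedra}: in fibre/base coordinates $\widetilde{F}_*$ is integration over the fibre variables, base-direction derivatives pass through the integral by differentiation under the integral sign, while fibre-direction derivatives contribute integrals that vanish by Stokes' theorem because $\alpha'$ has compact support, and the normalization $(-1)^{k^2+kp}$ of \cref{fiberwise integration between polyhedra} is arranged exactly so that no residual sign survives. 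Granting this, $F_*d'_P=d'_PF_*$ and $F_*d''_P=d''_PF_*$ follow; combining these with the already-established $F_*d'=d'F_*$, $F_*d''=d''F_*$ and the definitions $\partial'=d'-d'_P$, $\partial''=d''-d''_P$ yields commutation with $\partial'$ and $\partial''$ as well, completing the proof.
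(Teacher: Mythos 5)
Your proposal is correct and takes essentially the same route as the paper: both establish the displayed formula by pairing $F_*(\alpha'[\sigma'])$ with a test form and invoking the integral identity of \cref{fiberwise integration between polyhedra}, and both deduce preservation of $\delta$-forms from the commutations $d'F_*=F_*d'$, $d''F_*=F_*d''$ together with the characterization of $\delta$-forms. The only difference is that where the paper dismisses $F_*d'_P=d'_PF_*$ with ``it is not hard to see,'' you sketch the actual verification --- $\widetilde{F}_*$ commutes with $d'$ and $d''$ by differentiation under the integral in the base directions and Stokes (compact support in the relative interior) in the fibre directions, with the normalization $(-1)^{k^2+kp}$ absorbing the residual signs --- which is exactly the computation the paper leaves implicit.
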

\begin{proof}
	Notice that $\alpha'\in A_{\sigma',c}^{p,q}(\widetilde{\Omega}'\cap \sigma')\subset A^{p,q}_{\sigma',c}(\sigma')$,  by \cref{fiberwise integration between polyhedra}, $\widetilde{F}_*\alpha'\in A_{F(\sigma'),c}^{p-k,q-k}(F(\sigma'))$ with  \[\Supp(\widetilde{F}_*\alpha') \subset F(\Supp(\alpha'))\subset F(\widetilde{\Omega}'\cap \mathrm{relint}(\sigma))\subset \widetilde{\Omega}\cap\mathrm{relint}(F(\sigma')),\] so $\widetilde{F}_*\alpha'\in A^{p,q}_{F(\sigma'),c}(\widetilde{\Omega}\cap F(\sigma'))$. Moreover, for any $\eta\in A^{r'-l-p, r'-l-q}_c(\widetilde{\Omega})\subset A^{r'-l-p,r'-l-q}_c(N_\R)$, we have 
	\[[N_{F(\sigma')}:dF(N_{\sigma'}')]\int_{F(\sigma')}\widetilde{F}_*\alpha'\wedge\eta= \int_{\sigma'} \alpha'\wedge F^*\eta,\]
	i.e.\[[N_{F(\sigma')}:dF(N_{\sigma'}')]\widetilde{F}_*\alpha'[F(\sigma')]=F_*(\alpha'[\sigma']).\]
	
	Since $d'F_*=F_*d', d''F_*=F_*d''$, we can see that $F_*$ maps $\delta$-forms to $\delta$-forms. Moreover, It is not hard to see that $F_*d'_P = d'_PF_*$, $F_*d''_P = d''_PF_*$.
\end{proof}

\subsection{Push-forward from tropical geometry}

From corollary above, we see that, the push-forward $F_*$ is slightly different from the classical definition in tropical geometry: in tropical geometry, $F_*([\sigma]) =0$ if $\dim F(\sigma)< \dim\sigma$. So we have another definition of push-forward.

\begin{proposition}
	Let $F: N_\R'\rightarrow N_\R$ be an integral $\R$-affine map. Then the following hold.
	\begin{enumerate}
		\item [(1)] $\widehat{F}_*: P_n^{p,q}(N_\R')\rightarrow P_n^{p,q}(N_\R)$ is a well-defined linear map (independent of the choice of representative $(\mathcal{C},\alpha_\sigma)$) satisfying the following properties: for any $\alpha_\sigma[\sigma]$ with $\alpha_\sigma\in A^{p,q}_\sigma(\sigma)$,
		\begin{itemize}
			\item if $\dim F(\sigma)<n$, then $\widehat{F}_*(\alpha_\sigma[\sigma])=0$; 
			\item if $\dim F(\sigma)=n$, then $\widehat{F}_*(\alpha_\sigma[\sigma])= [N_{F(\sigma)}: (dF)(N_\sigma')]((F|_{\sigma})^{-1})^*(\alpha_\sigma)[F(\sigma)]$ (notice that $(dF)(N_\sigma)\subset N_{F(\sigma)}'$ is of rank $n$), where $F|_{\sigma}: N_{\sigma,\R}'\rightarrow N_{\sigma,\R}$ is bijective.
		\end{itemize}
		Obviously, $\widehat{F}_*$ is the unique linear map with these properties. In particular, for any $\alpha\in P_n^{p,q}(N_\R)$, one has $\Supp(\widehat{F}_*(\alpha))\subset F(\Supp(\alpha))$.
		
		\item[(2)] If $\alpha\in B(N_\R')$, then $\widehat{F}_*(\alpha)\in B(N_\R)$. Hence, we have a natural homomorphism
		\[\widehat{F}_*: B^{p,q}_n(N_\R')\rightarrow B^{p,q}_n(N_\R),\]
		which commutes with $d', d'', d_P', d_P'', \partial'$ and $\partial''$. 
	\end{enumerate}
\end{proposition}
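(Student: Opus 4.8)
The plan is to treat the two parts separately, using throughout the identification $P_n^{p,q}\simeq F_n^{p,q}$ of \cref{equivalence of polyhedral supercurrent and polyhedral complex with smooth superforms} together with the presentation of $F_n^{p,q}$ by generators $\alpha_\sigma[\sigma]$ and relations recorded in \cref{rk:polyhedral complexes with smooth forms}. For (1), I would first declare a rule on generators by the two displayed formulas and then invoke \cref{rk:polyhedral complexes with smooth forms}(2): a well-defined $\R$-linear map out of $F_n^{p,q}(N'_\R)$ is precisely a rule on generators satisfying the three listed relations. Linearity in the coefficient is immediate, since $((F|_\sigma)^{-1})^*$ is $\R$-linear and the lattice index is a scalar; the relation killing $\alpha_\sigma[\sigma]$ for $\dim\sigma<n$ holds by the first bullet, because then $\dim F(\sigma)\le\dim\sigma<n$.

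The only substantive point in (1) is the cutting relation for an $n$-polyhedron $\sigma$ and a hyperplane $H$ with half-spaces $V_\pm$. If $\dim F(\sigma)<n$, every term has image of dimension $<n$ and all four contributions vanish. If $\dim F(\sigma)=n$, then $g:=F|_{\mathbb{A}_\sigma}\colon\mathbb{A}_\sigma\to\mathbb{A}_{F(\sigma)}$ is an affine isomorphism (its linear part is injective on $N_{\sigma,\R}$ of rank $n$, with image $N_{F(\sigma),\R}$ of rank $n$), so it carries the cut of $\sigma$ by $H$ to the cut of $F(\sigma)$ by a hyperplane $H'$. Extending $H'$ to a hyperplane of $N_\R$ and applying the cutting relation in the target to $\beta:=(g^{-1})^*\alpha_\sigma$ (the $(n-1)$-dimensional slice term vanishes in $F_n^{p,q}$) yields the desired equality after multiplying by the common index $[N_{F(\sigma)}:dF(N_\sigma)]$, which is the same for $\sigma$ and for the full-dimensional pieces $\sigma\cap V_\pm$ since all three span $\mathbb{A}_\sigma$. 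Uniqueness is then automatic, and $\Supp(\widehat F_*(\alpha_\sigma[\sigma]))=g(\Supp\alpha_\sigma)\subset F(\Supp\alpha)$ gives the support bound.

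For (2), the reduction I have in mind is to the single pair of identities
\[
d'\,\widehat F_*\alpha=\widehat F_*\,d'\alpha,\qquad d''\,\widehat F_*\alpha=\widehat F_*\,d''\alpha,\qquad \alpha\in B(N'_\R).
\]
Granting these, part (1) shows $\widehat F_*(d'\alpha)$ is polyhedral for a $\delta$-form $\alpha$ (as $d'\alpha$ is polyhedral), hence $d'\widehat F_*\alpha$ is polyhedral, and the final clause of \cref{main theorem of delta forms} (polyhedrality of one of $d',d''$ suffices) gives $\widehat F_*\alpha\in B(N_\R)$. Commutation with $d'_P,d''_P$ is a direct check: on a full-dimensional $\sigma$ one has $(g^{-1})^*d'\alpha_\sigma=d'((g^{-1})^*\alpha_\sigma)$ because pull-back commutes with $d'$ (Remark following \cref{definition of superform}) and the index is constant, while dimension-dropping $\sigma$ contribute $0$ on both sides; then $\partial'=d'-d'_P$ and $\partial''=d''-d''_P$ commute with $\widehat F_*$ as well, so all six operators are covered.

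The main obstacle is thus the displayed identity, which is the incarnation here of the fact that push-forward preserves the balancing condition while transporting the superform coefficients. I would prove it by factoring $F=h\circ g$ into a surjection $g$ (which, after an integral change of coordinates, is a composition of single-coordinate projections) followed by an injective integral $\R$-affine map $h$, and using the functoriality $\widehat{(h\circ g)}_*=\widehat h_*\circ\widehat g_*$. For $h$ injective no dimension drops, so $k=0$ in \cref{push-forward of polyhedral supercurrents}, whence $\widehat h_*=h_*$ commutes with $d',d''$ by that corollary; it remains to treat a projection $p\colon M_\R\times\R\to M_\R$. Here the computation is genuine: using the explicit formula for $\partial'$ in \cref{expression of boundary derivatives}, one matches the facet contributions to $\partial'\widehat p_*\alpha$ at an $(n-1)$-face $\upsilon$ downstairs with the images of the facet contributions to $\partial'\alpha$ upstairs, via the normal-vector relation $dF(\omega_{\sigma,\tau})\equiv(\text{index})\,\omega_{\rho,\upsilon}\ (\mathrm{mod}\ N_{\upsilon,\R})$. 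The crucial point is that the vertical facets killed by $\widehat p_*$ contribute to the upstairs balancing sum only terms $\alpha_\sigma|_\tau\otimes\omega_{\sigma,\tau}$ with $dF(\omega_{\sigma,\tau})\in N_{\upsilon,\R}$, which therefore disappear after push-forward; keeping the lattice indices and signs consistent through this bookkeeping is the delicate part of the argument.
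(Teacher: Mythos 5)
Your part (1) is essentially the paper's: the paper dismisses it as immediate from \cref{remark:polyhedralcomplexwithsmoothforms}, and your verification of the cutting relation (transporting the cut of $\sigma$ by $H$ through the affine isomorphism $F|_{\mathbb{A}_\sigma}$, with the common index $[N_{F(\sigma)}:dF(N'_\sigma)]$ for $\sigma$ and $\sigma\cap V_\pm$) is exactly what that remark leaves implicit; note only that you should observe $g(H\cap\mathbb{A}_\sigma)$ is again integral $\R$-affine before extending it to a hyperplane of $N_\R$. Part (2) is where you genuinely diverge. The paper's proof is a two-line reduction: locally (after multiplying by a bump function so that the support hypothesis of \cref{direct image of current} holds) one has $\widehat F_* = \pr\circ F_*$, where $\pr$ is the projection onto the graded piece $B^{p,q}_n$; since $F_*$ commutes with all six operators by \cref{push-forward of polyhedral supercurrents}, and the components of $F_*\alpha$ with different dimension drop $k$ land in pairwise distinct $(p,q,l)$-degrees, $\pr$ commutes with each operator for pure degree reasons, so membership in $B(N_\R)$ and all six commutation statements follow simultaneously. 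Your route --- reduce everything to $d'\widehat F_*=\widehat F_*d'$ and $d''\widehat F_*=\widehat F_*d''$ via the last clause of \cref{main theorem of delta forms}, factor $F=h\circ g$ with $h$ injective and $g$ a split surjection (choosing the middle lattice to be $dg(N')$, as your ``integral change of coordinates'' implicitly requires), and treat a single projection by hand --- is viable and more self-contained at the tropical level, but it re-proves by explicit facet bookkeeping what the grading argument gives for free. Two small repairs in the injective case: \cref{push-forward of polyhedral supercurrents} is stated for compactly supported currents, so you should remark that an injective integral $\R$-affine $h$ is proper, whence $h_*$ is defined on all polyhedral currents and commutes with $d',d''$ directly from \cref{direct image of current}.

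The one substantive imprecision is your cancellation mechanism in the projection case. For $p:M_\R\times\R\to M_\R$ with fiber direction $e$, a \emph{vertical maximal} $\sigma$ adjacent to a \emph{non-vertical} facet $\tau$ contributes to $\partial'\alpha$ at $\tau$ the term $\langle\alpha_\sigma;(\emptyset,(\omega_{\sigma,\tau})_{\{1\}})\rangle|_\tau$ of \cref{expression of boundary derivatives}, and $\widehat p_*$ does \emph{not} kill this: $p(\tau)$ has the same dimension as $\tau$, so the term survives push-forward even though $dp(\omega_{\sigma,\tau})\in N_{\upsilon,\R}$. Writing $\omega_{\sigma,\tau}=(e-w)/c$ with $w\in N'_{\tau,\R}$ and $c\in\Z_{>0}$, comparison with the downstairs formula at $\upsilon=p(\tau)$ leaves a residue of the form $\frac{1}{c}\langle\alpha_\sigma;(\emptyset,(e)_{\{1\}})\rangle|_\tau$, which does not vanish termwise; it cancels only against the $e$-components of the normal vectors $\omega_{\sigma',\tau}$ of the \emph{non-vertical} $\sigma'$ adjacent to $\tau$, using the full balancing condition at $\tau$. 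So ``disappear after push-forward'' is not correct as a per-term statement, and the deferred bookkeeping is precisely this global cancellation. The identity you need is true (it follows from the paper's grading argument), so your plan can be completed, but since the local identity $\widehat F_*=\pr\circ F_*$ eliminates the computation entirely, that is the shorter and safer route.
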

\begin{proof}
	
	(1) This is obvious from \cref{remark:polyhedralcomplexwithsmoothforms}.
	
	(2) On $B_n^{p,q}(N_\R')$, locally we have $\widehat{F}_* = \pr\circ F_*$, where $\pr: B(N_\R)\rightarrow B_n^{p,q}(N_\R)$. Then result is from \cref{push-forward of polyhedral supercurrents}.
\end{proof}
\begin{remark}
	\begin{enumerate}
		\item [(1)] The definition of $\widehat{F}_*$ extends the classical push-forward on tropical cycles to $\delta$-forms. If $\alpha$ is a polyhedral supercurrent such that  there is a polyhedral decomposition with $F|_{\sigma}$ injective, then $F_*\alpha = \widehat{F}_*\alpha$.
	\end{enumerate}
\end{remark}

\subsection{Pull-backs of $\delta$-forms}

\begin{definition}
	Let $F: (N',\widetilde{\Omega}')\rightarrow (N,\widetilde{\Omega})$ be a morphism in $\mathcal{AT}op_{\Z,\R}$.  Then we define a bilinear map \begin{align*}
		(\cdot)\wedge F^*(\cdot): B(\widetilde{\Omega}')\times B(\widetilde{\Omega})&\rightarrow B(\widetilde{\Omega}'),\\
		(\alpha',\alpha) &\mapsto p_{1,*}([\Delta_F]\wedge(\alpha'\times\alpha)),
	\end{align*} 
	where $\Delta_F\subset N_\R'\times N_\R$ is the graph of $F$ and $p_1: N_\R'\times N_\R\rightarrow N_\R'$ is the projection.
	In particular, we have a {\bf pull-back} of $\delta$-forms
	\begin{align*}
		F^*: B(\widetilde{\Omega})&\rightarrow B(\widetilde{\Omega}'),\\
		\alpha&\mapsto [N_\R']\wedge F^*\alpha.
	\end{align*}
	\label{pull-back of delta-forms}
\end{definition}
\begin{remark}
	\begin{enumerate}
		\item [(1)]  We fix integral coordinates  $(x_1',\cdots, x_{r'}')$, $(x_1,\cdots, x_{r})$ of $N_\R'$ and $N_\R$ respectively. By \cref{tropical poincare-lelong formula}, we have 
		\[[\Delta_F]=d'd''\varphi_1\wedge\cdots \wedge d'd''\varphi_r,\]
		where $\varphi_i=\max\{f_i(x_1',\cdots, x_{r'}'), x_i\}$ and $F=(f_1,\cdots, f_r): N_\R'\rightarrow N_\R$.
		\item[(2)] From \cref{product of currents}, it is not hard to see that $F^*$ commutes with $d', d'', d'_P, d_P'', \partial'$ and $\partial''$.
	\end{enumerate}
\end{remark}

\begin{lemma}
	Keep the notion in \cref{pull-back of delta-forms}. 
	\begin{enumerate}
		\item[(1)] If $\alpha=\{\alpha_{\sigma}\}_{\sigma\in\mathcal{C}}\in PS^{p,q}(\widetilde{\Omega})$, then $F^*\alpha = \{F^*\alpha_\sigma\}_{\sigma\in\mathcal{C}}$; if $\alpha\in B^{0,0,l}(\widetilde{\Omega})$, then $F^*\alpha$ is the pull-back of tropical cycle $\alpha$.
		\item [(2)] If $\alpha = \mathrm{div}(\phi_1)\cdots \mathrm{div}(\phi_n)\cdot [N_{\R}]$ for some $\phi_1,\cdots, \phi_n\in PS^{0,0}(\widetilde{\Omega})$, then $F^*\alpha = \mathrm{div}(F^*\phi_1)\cdots \mathrm{div}(F^*\phi_n)\cdot [N_{\R}']$.
		\item[(3)]  For any $\phi\in PS^{0,0}(\widetilde{\Omega})$ and $\alpha'\in B_c(\widetilde{\Omega}')$, we have
		\[F_*(\mathrm{div}(F^*\phi)\cdot \alpha') = \mathrm{div}(\phi)\cdot F_*\alpha'.\]
	\end{enumerate}
	\label{basic properties of pull-back1}
\end{lemma}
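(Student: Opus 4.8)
The plan is to handle the three statements in order, in each case reducing to the tropical Poincar\'e--Lelong formula \cref{tropical poincare-lelong formula} together with the graph description of the pull-back in \cref{pull-back of delta-forms}, namely $F^*\alpha = p_{1,*}([\Delta_F]\wedge([N_\R']\times\alpha))$. For (1), since the presheaf of polyhedral supercurrents is separated I may localize and assume $\alpha$ is a single smooth superform, so that $[N_\R']\times\alpha$ is $p_2^*\alpha$ against the fundamental cycle and $[\Delta_F]$ is the Dirac current of the graph with weight $1$ (because $dF(N')\subset N$ forces $N_{\Delta_F}\cong N'$). Testing against $\eta\in A_c$ and using the lattice-preserving isomorphism $\gamma\colon N_\R'\xrightarrow{\sim}\Delta_F,\ x'\mapsto(x',F(x'))$, with $p_1\gamma=\id$ and $p_2\gamma=F$, I get $F^*\alpha(\eta)=\int_{\Delta_F}p_2^*\alpha\wedge p_1^*\eta=\int_{N_\R'}F^*\alpha\wedge\eta$, which identifies the $\delta$-form pull-back with the naive superform pull-back $\{F^*\alpha_\sigma\}_\sigma$ after gluing the pieces. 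For the tropical-cycle case $\alpha\in B^{0,0,l}=TZ^l$, I would invoke that $\wedge$ restricts to the stable intersection of tropical cycles (the remark after \cref{intersection}) and that $p_1|_{\Delta_F}$ is injective, so that $p_{1,*}=\widehat{p}_{1,*}$ there; this is precisely the classical description of the pull-back of tropical cycles through the graph.

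For (2), I first record $F^*[N_\R]=[N_\R']$, which is immediate from $[\Delta_F]\wedge[N_\R'\times N_\R]=[\Delta_F]$ and $p_{1,*}[\Delta_F]=[N_\R']$. The key compatibility is that producting with $[N_\R']$ converts a corner locus on the second factor into the corner locus of the pulled-back function, i.e. $[N_\R']\times(\mathrm{div}(\phi)\cdot\beta)=\mathrm{div}(p_2^*\phi)\cdot([N_\R']\times\beta)$, which I verify by expanding both sides with \cref{tropical poincare-lelong formula} and the product rules of \cref{product of currents} for $d',d'',d_P',d_P''$ under $\times$. Iterating gives $[N_\R']\times\alpha=\mathrm{div}(p_2^*\phi_1)\cdots\mathrm{div}(p_2^*\phi_n)\cdot[N_\R'\times N_\R]$; wedging with $[\Delta_F]$ and pulling the corner loci outside by the associativity law \cref{basic property of corner locus}~(1) yields $\mathrm{div}(p_2^*\phi_1)\cdots\mathrm{div}(p_2^*\phi_n)\cdot[\Delta_F]$. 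Finally I apply $p_{1,*}$: since $p_1|_{\Delta_F}$ is a lattice isomorphism it transports corner loci, and $p_2^*\phi_i$ restricted to $\Delta_F$ corresponds under $\gamma$ to $F^*\phi_i$ by (1), which gives the claim.

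For (3) I would again expand via \cref{tropical poincare-lelong formula}, writing $\mathrm{div}(F^*\phi)\cdot\alpha'=\partial'(d_P''(F^*\phi)\wedge\alpha')+d_P''(F^*\phi)\wedge\partial'\alpha'$ and the analogous expansion of $\mathrm{div}(\phi)\cdot F_*\alpha'$. Since $F_*$ commutes with $\partial'$ and $d_P''$ (and the remaining derivatives) by \cref{push-forward of polyhedral supercurrents}, and since $d_P''(F^*\phi)=F^*(d_P''\phi)$ with $F^*\phi=\phi\circ F$ by (1), the statement reduces to the projection formula $F_*(F^*\omega\wedge\alpha')=\omega\wedge F_*\alpha'$ for the piecewise smooth form $\omega=d_P''\phi$ and the $\wedge$-product of \cref{wedge product for ps and delta-form}. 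I would prove this piecewise: on a single polyhedron the $\wedge$-product is the honest smooth wedge and $F_*$ is the fiberwise integration $\widetilde{F}_*$ of \cref{fiberwise integration between polyhedra}, whose projection-formula statement already absorbs the index $[N_{F(\sigma')}:dF(N'_{\sigma'})]$; the underlying identity is then the one-line computation $F_*(T\wedge F^*\beta)(\eta)=T(F^*\beta\wedge F^*\eta)=T(F^*(\beta\wedge\eta))=(F_*T\wedge\beta)(\eta)$, valid because the pull-back of superforms (recalled after \cref{definition of superform}) is multiplicative.

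The main obstacle I anticipate is the bookkeeping in (3): reconciling the fiberwise-integration description of $F_*$, with its lattice index $[N_{F(\sigma')}:dF(N'_{\sigma'})]$, with the restrict-and-pull-back on each piece so that the piecewise projection formula for the $PS$-wedge holds exactly and not merely up to this index; the precise matching is the content of the last displayed identity in \cref{fiberwise integration between polyhedra}, so the difficulty is to organize the reduction so that this cancellation is invoked cleanly. A secondary subtlety is that in (2) the identity $[N_\R']\times(\mathrm{div}(\phi)\cdot\beta)=\mathrm{div}(p_2^*\phi)\cdot([N_\R']\times\beta)$ and the transport of corner loci under $p_{1,*}$ must be shown independent of the chosen polyhedral decomposition and of the normal-vector choices, which I would control using the representative-independence already established in \cref{corner locus} and \cref{tropical poincare-lelong formula}.
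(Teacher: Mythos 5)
Your parts (2) and (3) follow the paper's proof essentially step for step. For (2) the paper performs exactly your computation: it moves the corner loci across $\times$ (your intermediate identity $[N_\R']\times(\mathrm{div}(\phi)\cdot\beta)=\mathrm{div}(p_2^*\phi)\cdot([N_\R']\times\beta)$ is compressed into one line there) and across the $\wedge$ with $[\Delta_F]$ via \cref{basic property of corner locus}, and then transports the result along the lattice isomorphism $N_\R'\simeq\Delta_F$. For (3) the paper does your tropical Poincar\'e--Lelong expansion, commutes $F_*$ past $\partial'$ and $F^*$ past $d_P''$, and reduces to the projection formula $F_*(F^*\alpha\wedge\alpha')=\alpha\wedge F_*\alpha'$ for piecewise smooth $\alpha$, proved as you suggest by refining so that $F(\mathcal{C}')\subset\mathcal{C}$ and testing against $\gamma\in A_c(\widetilde{\Omega}')$. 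Your anticipated difficulty with the index $[N_{F(\sigma')}:dF(N'_{\sigma'})]$ does not arise in the paper's treatment: since $F_*$ is defined dually by $F_*T(\eta)=T(F^*\eta)$, both sides of the piecewise smooth projection formula evaluate directly to $\sum_{\sigma'}\int_{\sigma'}F^*\alpha_\sigma\wedge\alpha'_{\sigma'}\wedge F^*\gamma$, and the explicit fiberwise formula of \cref{fiberwise integration between polyhedra} is never needed for this step.

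The one genuine soft spot is the opening reduction in (1). Separatedness lets you verify the identity on an open cover of $\widetilde{\Omega}'$, but no cover exists on whose members $\alpha$ is a single smooth superform: a point $x'\in\widetilde{\Omega}'$ with $F(x')$ on a wall of $\mathcal{C}$ has no neighborhood mapping into $\bigcup_{\sigma\in\mathcal{C}^0}\mathrm{relint}(\sigma)$, and in the degenerate case where $F(\widetilde{\Omega}')$ lies entirely inside the wall locus (e.g.\ $F$ maps into a wall hyperplane) the smooth locus of $\alpha$ pulls back to the empty set, so ``localize and assume $\alpha$ smooth'' produces nothing at all. The paper instead keeps $\alpha=\sum_{\sigma\in\mathcal{C}^0}\alpha_\sigma[\sigma]$ whole and computes $[\Delta_F]\wedge([N_\R']\times\alpha)$ cell by cell using the generic-translation formula of \cref{another definition of wedge product} ($\Delta_F$ meets each full-dimensional $N_\R'\times\sigma$ after a small generic shift), obtaining $\sum_\sigma p_2^*\alpha_\sigma[\Delta_F\cap(N_\R'\times\sigma)]$, and then pushes forward along $p_1|_{\Delta_F}$ exactly as in your lattice-isomorphism step, which is fine; the compatibility $\alpha_\sigma|_\tau=\alpha_\tau$ of a piecewise smooth form makes the outcome well defined across wall preimages. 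You can salvage your version in the non-degenerate case by observing that both sides lie in $P^{p,q,0}(\widetilde{\Omega}')$ and are determined by their values as currents, which ignore the null set $F^{-1}$ of the walls --- but this fails precisely when that preimage has positive measure, so the translation argument (or the paper's cell-by-cell computation) is needed in any case. The tropical-cycle half of (1) is, as you say, immediate from \cref{pull-back of delta-forms}.
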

\begin{proof}
	(1) If $\alpha$ is a tropical cycle, then $F^*\alpha$ is the pull-back of a tropical cycle by definition. If $\alpha=\{\alpha_{\sigma}\}_{\sigma\in\mathcal{C}}\in PS^{p,q}(\widetilde{\Omega})$, then for any $\gamma\in A^{r-p,r-q}_c(\widetilde{\Omega})$, we have
	\begin{align*}
		(F^*\alpha)(\gamma)&= (\Delta_F\wedge([N_\R]\times \alpha))(p_1^*\gamma)\\
		&= \sum\limits_{\sigma\in \mathcal{C}^0}(p_2^*\alpha_\sigma[\Delta_F\cap(N_\R'\times\sigma)])(p_1^*\gamma)\\
		&=\sum\limits_{\sigma\in \mathcal{C}^0}\int_{\Delta_F\cap(N_\R'\times\sigma)}(p_2i)^*\alpha_\sigma\wedge (p_1i)^*\gamma\\
		&=\sum\limits_{\sigma\in \mathcal{C}^0}\int_{F^{-1}(\sigma)}(\widetilde{p_{1}i})_*(p_2i)^*\alpha_\sigma\wedge \gamma\\
		&=\sum\limits_{\sigma\in \mathcal{C}^0}\int_{F^{-1}(\sigma)}F^*\alpha_\sigma\wedge \gamma,
	\end{align*}
	where $i: \Delta_F\rightarrow N_\R'\times N_\R$ is the natural embedding.
	The second equality is given by the fact that $\Delta_F$ and $N_\R'\times\sigma$ intersect transversally and \cref{another definition of wedge product}. The fourth equality follows from \cref{fiberwise integration between polyhedra}, and fifth follows the commutative diagram
	\[\xymatrix{N'_\R \ar@<.5ex>[r]^{(\id,F)}  \ar[dr]_F & \Delta_F\ar[d]^{p_2i} \ar@<.5ex>[l]^{p_1i}\\
		&N_\R}\]
	
	(2) We follow the proof of \cite[Theorem~3.3~(c)]{allermann2012tropical}
	\begin{align*}
		F^*\alpha &= p_{1,*}([\Delta_F]\wedge([N_\R]\times(\mathrm{div}(\phi_1)\cdots \mathrm{div}(\phi_n)\cdot [N_{\R}]))\\
		&= p_{1,*}(\mathrm{div}(p_{2}^*\phi_1)\cdots \mathrm{div}(p_2^*\phi_n)\cdot ([\Delta_F]\wedge [N_\R'\times N_{\R}]))\\
		&= p_{1,*}(\mathrm{div}(p_{2}^*\phi_1)\cdots \mathrm{div}(p_2^*\phi_n)\cdot [\Delta_F])\\
		&= \mathrm{div}(F^*\phi_1)\cdots \mathrm{div}(F^*\phi_n)\cdot [N_{\R}'].
	\end{align*}
	The final equality comes from the commutative diagram above.
	
	(3) For any $\alpha\in PS^{p,q}(\widetilde{\Omega})$ and $\alpha'\in B_c(\widetilde{\Omega}')$, we have
	\begin{align}
		F_*(F^*\alpha\wedge\alpha') = \alpha\wedge F_*\alpha'.
		\label{projection formula for piecewise smooth}
	\end{align}
	Indeed, after refinement, we can write $\alpha=\{\alpha_\sigma\}_{\sigma\in \mathcal{C}}, \alpha'=\sum\limits_{\sigma'\in \mathcal{C}'}\alpha'_{\sigma'}[\sigma']$ such that $F(\mathcal{C}')\subset\mathcal{C}$. For any $\gamma\in A_c(\widetilde{\Omega}')$, by \cref{wedge product for ps and delta-form} and (1), we have
	\begin{align*} 
		F_*(F^*\alpha\wedge\alpha')(\gamma)&= (F^*\alpha\wedge\alpha')(F^*\gamma)
		= \sum\limits_{\sigma'\in \mathcal{C}'}\int_{\sigma'}F^*\alpha_{\sigma}\wedge\alpha'_{\sigma'}\wedge F^*\gamma\\
		(\alpha\wedge F_*\alpha')(\gamma)&=\sum\limits_{\sigma\in\mathcal{C}}\sum\limits_{\substack{\sigma'\in \mathcal{C}'\\ F(\sigma')=\sigma}}\alpha_{\sigma}\wedge F_*(\alpha'_{\sigma'}[\sigma'])(\gamma)= \sum\limits_{\sigma\in\mathcal{C}}\sum\limits_{\substack{\sigma'\in \mathcal{C}'\\ F(\sigma')=\sigma}}\int_{\sigma'}F^*\alpha_\sigma\wedge\alpha'_{\sigma'}\wedge F^*\gamma. 
	\end{align*}
	Let $\phi\in PS^{0,0}(\widetilde{\Omega})$. Recall that $\mathrm{div}(\phi)\cdot \alpha =  \partial'(d_P''\phi\wedge\alpha)+d_P''\phi\wedge\partial'\alpha$. Since $F_*$ commutes with $\partial'$, and $F^*$ commutes with $d_P', d_P''$, we have
	\begin{align*}
		F_*(\mathrm{div}(F^*\phi)\cdot \alpha') &= F_*(\partial'(d_P''F^*\phi\wedge\alpha')+ d_P''(F^*\phi)\wedge \partial'\alpha')\\
		&= \partial'(F_*(F^*d_P''\phi\wedge\alpha'))+ F_*(F^*(d_P''\phi)\wedge \partial'\alpha')\\
		&= \partial'(d_P''\phi\wedge F_*\alpha')+ d_P''\phi\wedge \partial'F_*\alpha'\\
		&= \mathrm{div}(\phi)\cdot F_*\alpha'.
	\end{align*} 
	The third equality is from equality~(\ref{projection formula for piecewise smooth}).
\end{proof}
\begin{remark}
	\begin{enumerate}
		\item [(1)] By partition of unity, we know that the property (3) holds as soon as both sides make sense.
	\end{enumerate}
\end{remark}

\begin{lemma}
	Let $F: (N',\widetilde{\Omega}')\rightarrow (N, \widetilde{\Omega})$, $G: (N'',\widetilde{\Omega}'')\rightarrow (N',\widetilde{\Omega}')$ be morphisms in $\mathcal{AT}op_{\Z,\R}$. Then for any $\alpha\in B(\widetilde{\Omega})$, $\alpha'\in B(\widetilde{\Omega}')$, $\alpha''\in B(\widetilde{\Omega}'')$, we have $\alpha''\wedge G^*(\alpha'\wedge F^*\alpha) = (\alpha''\wedge G^*\alpha')\wedge (FG)^*\alpha$.
	\label{associativity of pull-back intersection}
\end{lemma}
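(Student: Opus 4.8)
The plan is to lift both sides of the asserted identity to the triple product $W := N''_\R\times N'_\R\times N_\R$ (with the evident open subset $\widetilde{\Omega}''\times\widetilde{\Omega}'\times\widetilde{\Omega}$) and to show that each side equals the push-forward along $p_1\colon W\to N''_\R$ of a single exterior product $\alpha''\times\alpha'\times\alpha$ wedged with a product of graph cycles. Writing $\pi_{12},\pi_{23},\pi_{13}$ for the projections of $W$ onto the three coordinate pairs, I expect to obtain
\[\alpha''\wedge G^*(\alpha'\wedge F^*\alpha)=\pm\, p_{1,*}\big(\pi_{12}^*[\Delta_G]\wedge\pi_{23}^*[\Delta_F]\wedge(\alpha''\times\alpha'\times\alpha)\big)\]
and
\[(\alpha''\wedge G^*\alpha')\wedge (FG)^*\alpha=\pm\, p_{1,*}\big(\pi_{12}^*[\Delta_G]\wedge\pi_{13}^*[\Delta_{FG}]\wedge(\alpha''\times\alpha'\times\alpha)\big),\]
so that the whole statement reduces to the geometric identity $\pi_{12}^*[\Delta_G]\wedge\pi_{23}^*[\Delta_F]=\pi_{12}^*[\Delta_G]\wedge\pi_{13}^*[\Delta_{FG}]$ in $B(W)$.

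To carry out each reduction I will repeatedly use three ingredients: the definition of $\wedge F^*$ as $p_{1,*}([\Delta_F]\wedge(\cdot\times\cdot))$ from \cref{pull-back of delta-forms}; the compatibility of the exterior product with push-forward (the exterior product of $\alpha''$ with a push-forward $p_{1,*}\mu$ equals the push-forward of $\alpha''\times\mu$ along $\mathrm{id}\times p_1$, which is exactly one of the projections of $W$), a consequence of \cref{product of currents}; and the projection formula (parts (3) and (4) of \cref{projection formula for delta forms}), which lets me pull a graph cycle $\pi_{ij}^*[\Delta]$ through a push-forward. Starting from the inner operation and peeling off one push-forward at a time, together with functoriality of push-forward ($p_{1,*}\circ\Pi_*=(p_1\circ\Pi)_*=p_{1,*}$) and associativity of $\wedge$ (\cref{intersection}), both sides collapse to the two displayed expressions; the signs match because both derivations use the same pattern of the identity $a\times b=\pm\,\rho_1^*a\wedge\rho_2^*b$.

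For the geometric identity, set $\delta_G:=\pi_{12}^{-1}(\Delta_G)$, $\delta_F:=\pi_{23}^{-1}(\Delta_F)$, $\delta_{FG}:=\pi_{13}^{-1}(\Delta_{FG})$, and let $\Gamma:=\{(x'',G(x''),FG(x''))\}\subset W$ be the graph of $(\mathrm{id},G,FG)$. Since $\delta_G$ imposes $x'=G(x'')$ and both $\delta_F$ and $\delta_{FG}$ then impose $x=F(x')=FG(x'')$, one checks set-theoretically that $\delta_G\cap\delta_F=\delta_G\cap\delta_{FG}=\Gamma$. A direct tangent-space computation shows that in each case the two polyhedral cycles meet transversally and that the lattice index $[N_W:N_{\delta_G}+N_{\delta_F}]$ (resp. with $\delta_{FG}$) equals $1$; hence by the transversal intersection formula (\cref{wedge product of transersally intersection}) both wedge products equal $[\Gamma]$ with weight one. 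This proves the geometric identity and with it the lemma.

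The main obstacle is not conceptual but bookkeeping: verifying the exterior-product/push-forward compatibility and the projection-formula manipulations with the correct signs, and confirming the multiplicity-one claim in the transversality step (the conceptually delicate point, since a nontrivial lattice index would spoil the match). I also note a dependency subtlety: rather than invoking multiplicativity of a general pull-back $F^*$ (which one may wish to establish only after this lemma), the argument uses pull-backs along the coordinate projections alone, which are controlled directly by \cref{product of currents}.
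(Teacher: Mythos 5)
Your proposal takes essentially the same route as the paper's proof: both lift the two sides to the triple product $N''_\R\times N'_\R\times N_\R$, push forward along the projection to $N''_\R$, and reduce the lemma to the identity $\Delta_{12}\wedge\Delta_{23}=\Delta_{13}\wedge\Delta_{12}=[\Delta']$ for the three partial graph cycles, using compatibility of exterior products with push-forwards and the projection formula for graph-type cycles (\cref{basic properties of pull-back1}). Your explicit transversality-plus-lattice-index verification of that graph-cycle identity is a detail the paper leaves implicit, and it is correct: the relevant index $[N''\times N'\times N : N_{\Delta_{12}}+N_{\Delta_{23}}]$ (and its analogue with $\Delta_{13}$) equals $1$.
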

\begin{proof}
	We give some properties we will use during the proof.
	\begin{itemize}
		\item For any $(\alpha',\alpha)\in B(\widetilde{\Omega}')\times B(\widetilde{\Omega})$, and any reasonable maps $F_1, F_2$, we have \[(F_1\times F_2)_*(\alpha'\times \alpha) =F_{1,*}\alpha'\times F_{2,*}\alpha.\]
		\item For any $\alpha'\in B(\widetilde{\Omega}')$ and $\alpha_1,\alpha_2\in B(\widetilde{\Omega})$, we have \[\alpha'\times (\alpha_1\wedge \alpha_2)= ([N_\R']\times \alpha_1)\wedge (\alpha'\times \alpha_2).\]
		\item Let $\widetilde{\Delta} = \mathrm{div}(\phi_1)\cdots\mathrm{div}(\phi_n)\cdot [N_\R]$ for some $\phi_1,\cdots, \phi_n\in PS^{0,0}(\widetilde{\Omega})$ and $\alpha'\in B_c(\widetilde{\Omega}')$, we have 
		\[F_*(F^*\widetilde{\Delta}\wedge \alpha') = \widetilde{\Delta}\wedge F_*\alpha'\]
	\end{itemize}
	The first one is trivial, the second one is deduced from \cref{another definition of wedge product}. For the third, by \cref{basic properties of pull-back1}~(2)~(3), we have
	\begin{align*}
		F_*(F^*\widetilde{\Delta}\wedge \alpha')& =F_*(\mathrm{div}(F^*\phi_1)\cdots\mathrm{div}(F^*\phi_n)\cdot ([N_\R]\wedge \alpha'))\\&=\mathrm{div}(\phi_1)\cdots\mathrm{div}(\phi_n)\cdot F_*([N_\R]\wedge \alpha')\\&= \widetilde{\Delta}\wedge F_*\alpha'
	\end{align*}

	Let $\Delta_F\subset N_\R'\times N_\R$ and $\Delta_G\subset N_\R''\times N_\R'$ be the graphs of $F$ and $G$, respectively. We consider the following subsets of $N''_\R\times N_\R'\times N_\R$:
	\[\Delta':=\{(w'', G(w''), F(G(w'')))\mid w''\in N_\R''\},\]
	\[\Delta_{12} := \{(w'',G(w''), w)\mid w\in N_\R, w''\in N_\R''\},\]
	\[\Delta_{23}: = \{(w'', w', F(w'))\mid w'\in N_\R, w''\in N_\R''\},\]
	\[\Delta_{13}:=\{(w'', w', F(G(w'')))\mid w'\in N_\R, w''\in N_\R''\}.\]
	We also denote \[p_1: N_\R'\times N_\R\rightarrow N_\R',\] \[p_1': N_\R''\times N_\R'\rightarrow N_\R'',\] \[p_1'': N_\R''\times N_\R\rightarrow N_\R'',\]
	\[q_{1}: N''_\R\times N'_\R\times N_\R\rightarrow N_\R'',\] 
	\[q_{12}: N''_\R\times N'_\R\times N_\R\rightarrow N_\R''\times N_\R',\]
	\[q_{13}: N''_\R\times N'_\R\times N_\R\rightarrow N_\R''\times N_\R\]  
	the canonical projections. Then we have
	\begin{align*}
		\alpha''\wedge G^*(\alpha'\wedge F^*\alpha)& = p_{1,*}'(\Delta_G\wedge(\alpha''\times (\alpha'\wedge F^*\alpha)))\\
		& = p_{1,*}'(\Delta_G\wedge(\alpha''\times p_{1,*}(\Delta_F\wedge(\alpha'\times \alpha))))\\
		& = p_{1,*}'(\Delta_G\wedge q_{12,*}((N''_\R\times\Delta_F)\wedge(\alpha''\times\alpha'\times \alpha)))\\
		& = p_{1,*}'q_{12,*}(q_{12}^*\Delta_G\wedge (\Delta_{23}\wedge(\alpha''\times\alpha'\times \alpha)))\\
		& = q_{1,*}(\Delta_{12}\wedge \Delta_{23}\wedge (\alpha''\times\alpha'\times \alpha))\\
		& = q_{1,*}(\Delta'\wedge (\alpha''\times\alpha'\times \alpha)),
	\end{align*}
	\begin{align*}
		(\alpha''\wedge G^*\alpha')\wedge (FG)^*\alpha''& = p_{1,*}''(\Delta_{FG}\wedge ((\alpha''\wedge G^*\alpha')\times\alpha))\\
		& = p_{1,*}''(\Delta_{FG}\wedge (p_{1,*}'(\Delta_G\wedge (\alpha''\times\alpha'))\times\alpha))\\
		& = p_{1,*}''(\Delta_{FG}\wedge q_{13,*}((\Delta_G\times N_\R)\wedge (\alpha''\times\alpha'\times\alpha))\\
		& = p_{1,*}''q_{13,*}(q_{13}^*\Delta_{FG}\wedge (\Delta_{12}\wedge (\alpha''\times\alpha'\times\alpha))\\
		& = q_{1,*}(\Delta_{13}\wedge \Delta_{12}\wedge (\alpha''\times\alpha'\times\alpha))\\
		& = q_{1,*}(\Delta'\wedge (\alpha''\times\alpha'\times \alpha)),
	\end{align*}
	so we have the equality.
\end{proof}

\

Indeed, we have another way to define pull-backs based on the following proposition.

\begin{proposition}
	Let $F: (N',\widetilde{\Omega}')\rightarrow (N, \widetilde{\Omega})$ be a morphism in $\mathcal{AT}op_{\Z,\R}$, and $\alpha=\sum\limits_{\sigma\in\mathcal{C}^l} \alpha_\sigma[\sigma]\in B^{p,q,l}(\widetilde{\Omega})$.
	\begin{enumerate}
		\item [(1)] If $F=i: N_\R'\hookrightarrow N_\R$ is an embedding such that $i(N_\R')$ is an affine subspace of $N_\R$ and $N'=N\cap i(N_\R')$, then for any $\gamma\in A_c^{p,q}(\widetilde{\Omega})$, we have
		\[i^*\alpha(i^*\gamma) = ([i(N_\R')]\wedge\alpha)(\gamma),\]
		i.e.
		\[i^*\alpha= \sum\limits_{\substack{\sigma\in\mathcal{C}^l\\ N_\R'\cap (\varepsilon v+\sigma)\not=\emptyset}} \alpha_\sigma|_{\widetilde{\Omega}'}[i(N_\R')\cap \sigma],\]
		where $v\in N_\R$ is a generic vector, $\varepsilon>0$ is sufficiently small and $\alpha_\sigma|_{\widetilde{\Omega}'}\in A_{\sigma\cap i(N_\R')}(\widetilde{\Omega}'\cap \sigma\cap i(N_\R'))$ is the restriction of $\alpha_\sigma$.
		
		\item [(2)] If $F: N_\R'\rightarrow N_\R$ is surjective, then 
		\[F^*\alpha = \sum\limits_{\sigma\in \mathcal{C}^l}F^*\alpha_\sigma[F^{-1}(\sigma)].\]
		Moreover, for any $\gamma\in A_c^{\rk N'-p-l,\rk N'-q-l}(\widetilde{\Omega}')$, we have
		\[(F^*\alpha)(\gamma) = \sum\limits_{\sigma\in \mathcal{C}^l}([N_\sigma:dF(N_{F^{-1}(\sigma)})]\alpha_\sigma[\sigma])(F_*\gamma).\]  
	\end{enumerate}
\end{proposition}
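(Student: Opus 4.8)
The plan is to establish the surjective case (2) first and then deduce the embedding case (1) from it, since the identity $[N_\R']\times\alpha=p_2^*\alpha$ for the projection $p_2\colon N_\R'\times N_\R\to N_\R$ is exactly an instance of (2). Throughout I unwind the pull-back from \cref{pull-back of delta-forms}, namely $F^*\alpha=p_{1,*}([\Delta_F]\wedge([N_\R']\times\alpha))$, and I use \cref{product of currents}~(2) to write $[N_\R']\times\alpha=\sum_{\sigma\in\mathcal{C}^l}p_2^*\alpha_\sigma[N_\R'\times\sigma]$. This is also conceptually natural: a general $F$ factors as the graph embedding $N_\R'\hookrightarrow N_\R'\times N_\R$ followed by $p_2$, so (1) and (2) are the two atomic pieces.

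For (2) the key geometric input is that the graph $\Delta_F$ and each product $N_\R'\times\sigma$ intersect transversally: since $F$ is surjective, $dF(N_\R')=N_\R$, so $N_{\Delta_F,\R}+(N_\R'\times N_{\sigma,\R})=N_\R'\times N_\R$ whenever the two polyhedra meet, and they meet in relative interiors because $F^{-1}(\mathrm{relint}(\sigma))\neq\emptyset$. Hence \cref{wedge product of transersally intersection} applies and gives $[\Delta_F]\wedge([N_\R']\times\alpha)=\sum_\sigma p_2^*\alpha_\sigma|_{\Delta_F\cap(N_\R'\times\sigma)}[\Delta_F\cap(N_\R'\times\sigma)]$. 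Now $\Delta_F\cap(N_\R'\times\sigma)$ is the graph of $F|_{F^{-1}(\sigma)}$, and $p_1$ restricts to a lattice isomorphism onto $F^{-1}(\sigma)$ (because $dF(N')\subset N$ forces $N_{\Delta_F}\cong N'$ under $p_1$), so the push-forward carries weight $1$ and $p_2\circ(p_1|_{\Delta_F})^{-1}=F$; applying \cref{push-forward of polyhedral supercurrents} yields $F^*\alpha=\sum_\sigma F^*\alpha_\sigma[F^{-1}(\sigma)]$, the first formula. The second formula follows by pairing with $\gamma$ and invoking the fiberwise Fubini identity of \cref{fiberwise integration between polyhedra} with $\sigma'=F^{-1}(\sigma)$, $F(\sigma')=\sigma$, which converts $\int_{F^{-1}(\sigma)}F^*\alpha_\sigma\wedge\gamma$ into $[N_\sigma:dF(N_{F^{-1}(\sigma)})]\int_\sigma\alpha_\sigma\wedge\widetilde{F}_*\gamma$ up to a sign I would verify to be trivial by the same permutation-sign computation already carried out in the proof of \cref{fiberwise integration between polyhedra}.

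For (1) the obstruction is that $\Delta_i$ and $N_\R'\times\sigma$ are no longer transversal, so I avoid a direct intersection computation and prove the pairing identity first. Unwinding the definition, $i^*\alpha(i^*\gamma)=([\Delta_i]\wedge([N_\R']\times\alpha))(p_1^*i^*\gamma)$. Since the $\delta$-form $[\Delta_i]\wedge([N_\R']\times\alpha)$ is supported on $\Delta_i$, writing it as $\sum_\rho\theta_\rho[\rho]$ with $\rho\subset\Delta_i$, its pairing only sees the pullbacks $j_\rho^*(\cdot)$; on $\Delta_i$ one has $i\circ p_1=p_2$, so $i\circ p_1\circ j_\rho=p_2\circ j_\rho$ for each $\rho$ and therefore $p_1^*i^*\gamma$ may be replaced by $p_2^*\gamma$. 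This turns the expression into $p_{2,*}([\Delta_i]\wedge([N_\R']\times\alpha))(\gamma)$. Now I use $[N_\R']\times\alpha=p_2^*\alpha$ (case (2) for $p_2$), the projection formula \cref{projection formula for delta forms}~(3), and $p_{2,*}[\Delta_i]=[i(N_\R')]$—with weight $1$ because the hypothesis $N'=N\cap i(N_\R')$ makes $dp_2\colon N_{\Delta_i}\to N_{i(N_\R')}$ an isomorphism—to conclude $p_{2,*}([\Delta_i]\wedge p_2^*\alpha)=[i(N_\R')]\wedge\alpha$. This gives $i^*\alpha(i^*\gamma)=([i(N_\R')]\wedge\alpha)(\gamma)$, and the explicit ``i.e.'' formula is read off by applying the generic-translation description \cref{another definition of wedge product} to $[i(N_\R')]\wedge\alpha$ and transporting along $i\colon N_\R'\xrightarrow{\sim}i(N_\R')$.

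The main obstacle is bookkeeping rather than conceptual: checking that both the push-forward $p_{1,*}$ in (2) and the class $p_{2,*}[\Delta_i]$ in (1) carry weight $1$, which is precisely where the hypotheses $dF(N')\subset N$ and $N'=N\cap i(N_\R')$ enter, and confirming that the Fubini sign in the second formula of (2) matches the normalisation of $\int$ from \cref{definition of integration}. The one genuinely delicate point is the interchange of $p_1^*i^*\gamma$ and $p_2^*\gamma$ in (1); I make this rigorous exactly through the decomposition $[\Delta_i]\wedge([N_\R']\times\alpha)=\sum_\rho\theta_\rho[\rho]$ with $\rho\subset\Delta_i$, where $i\circ p_1$ and $p_2$ agree as maps $\rho\to N_\R$, so their pullbacks of $\gamma$ agree after restriction to each $\rho$.
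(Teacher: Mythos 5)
Your part (2) is essentially the paper's proof: the same transversality of $\Delta_F$ with $N_\R'\times\sigma$ (surjectivity gives $F^{-1}(\mathrm{relint}(\sigma))\neq\emptyset$ plus the dimension count), the same weight-one identification $N_{\Delta_F}\simeq N'$ under $p_1$ coming from $dF(N')\subset N$, and the same appeal to \cref{fiberwise integration between polyhedra} for the pairing formula, so there is nothing to add there. Part (1) is correct but takes a genuinely different route. The paper compares the graph with the full diagonal: it writes $(i^*\alpha)(i^*\gamma)=(\Delta'\wedge([N_\R']\times\alpha))(j^*p_1^*\gamma)$ and $([i(N_\R')]\wedge\alpha)(\gamma)=(\Delta\wedge([N_\R']\times\alpha))(p_1^*\gamma)$, where $\Delta\subset N_\R\times N_\R$ is the diagonal and $\Delta'=\Delta\cap(N_\R'\times N_\R)$, and then observes via \cref{another definition of wedge product} that the generic translation vector may be chosen in $\{0\}\times N_\R$, so that $\Delta$ can be replaced by $\Delta'$ and the two pairings coincide. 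You instead stay inside $N_\R'\times N_\R$: the support argument (on $\Delta_i$ one has $i\circ p_1=p_2$, and any polyhedral decomposition of a current supported in $\Delta_i$ may be taken with all polyhedra contained in $\Delta_i$, since a smooth form on $\mathrm{relint}(\rho)$ vanishing outside a lower-dimensional subset vanishes identically) lets you trade $p_1^*i^*\gamma$ for $p_2^*\gamma$; then $[N_\R']\times\alpha=p_2^*\alpha$ (your case (2) applied to the surjection $p_2$ --- no circularity, since you prove (2) first), the projection formula \cref{projection formula for delta forms}~(3), and the weight-one computation $p_{2,*}[\Delta_i]=[i(N_\R')]$ finish the argument. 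Your route buys a cleaner explanation of where the hypothesis $N'=N\cap i(N_\R')$ enters --- it is exactly the statement that the index $[N_{i(N_\R')}:dp_2(N_{\Delta_i})]$ equals $1$ --- whereas in the paper this hypothesis is absorbed into the identification of $N_\R'$ with $i(N_\R')$. The cost is that you apply the projection formula beyond its stated hypotheses: $[\Delta_i]$ is not compactly supported, so you need the extension to the case where the map is merely proper on the relevant supports; this is the routine localization by bump functions (legitimate because $B$ is a $C^\infty$-module and $p_2|_{\Delta_i}$ is a closed embedding, so all pairings in sight are defined), and the paper invokes this kind of extension elsewhere, but in a polished write-up you should state it explicitly rather than file it under bookkeeping. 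The paper's diagonal trick avoids this by never pushing a non-compact $\delta$-form forward along $p_2$.
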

\begin{proof}
	(1) For simplicity, we identify $N_\R'$ with $i(N_\R')$. Assume that $\widetilde{\Omega}'= N_\R'\cap \widetilde{\Omega}$ and $\alpha=\sum\limits_{\sigma\in \mathcal{C}^l}\alpha_\sigma[\sigma]$. Let $\Delta\subset N_\R\times N_\R$ be the diagonal and $\Delta' = \Delta\cap (N_\R'\times N_\R)$. For any $\gamma\in A_c^{p,q}(\widetilde{\Omega})$, we have
	\[(i^*\alpha)(i^*\gamma)=(\Delta'\wedge ([N_\R']\times\alpha))(j^*p_1^*\gamma),\]
	\[(N_\R'\wedge\alpha)(\gamma) = (\Delta\wedge ([N_\R']\times\alpha))(p_1^*\gamma),\]
	where 
	\[\xymatrix{N_\R'\times N_\R\ar@{^{(}->}[d]_j\ar[r]& N_\R'\ar@{^{(}->}[d]^i\\
		N_\R\times N_\R\ar[r]_-{p_1}& N_\R}.\]
	By \cref{another definition of wedge product}, for a generic vector $v\in N_\R\times N_\R$ and $\varepsilon>0$ small enough, we have
	\[\Delta\wedge ([N_\R']\times\alpha) = \sum\limits_{\substack{\sigma\in \mathcal{C}^l\\ (N_\R'\times\sigma)\cap(\Delta+\varepsilon v)\not=\emptyset}}\alpha_\sigma[(N_\R'\times\sigma)\cap \Delta].\]
	Notice that we can assume $v\in \{0\}\times N_\R$ since $\Delta$ is the diagonal, so we can replace $\Delta$ by $\Delta'$ in the equality above. Hence, (1) holds.
	
	(2) By \cref{product of currents}, \cref{another definition of wedge product}, we have
	\begin{align*}
		F^*\alpha&=p_{1,*}([\Delta_F]\wedge([N_\R']\times\alpha))\\
		&= p_{1,*}([\Delta_F]\wedge(\sum\limits_{\sigma\in \mathcal{C}^l}p_{2}^*\alpha_\sigma[N_\R'\times \sigma]))\\
		&= \sum\limits_{\sigma\in \mathcal{C}^l}p_{1,*}(p_{2}^*\alpha_\sigma[\Delta_F\cap (N_\R'\times \sigma)]).
	\end{align*}
Here we use the fact that $\Delta_F$ and $N_\R'\times\sigma$ intersect transversally: $\mathrm{relint}(\Delta_F)\cap \mathrm{relint}(N_\R'\times\sigma) \not= \empty$
since $F$ is surjective, $F^{-1}(\mathrm{relint}(\sigma))\not=\emptyset$, and
\[\dim\Delta_F+\dim(N_\R'\times\sigma)-\dim(\Delta_F\cap (N_\R'\times\sigma)) = \dim N_\R'+ \dim N_\R.\]
	It is not hard to check that $p_{1,*}(p_{2}^*\alpha_\sigma[\Delta_F\cap (N_\R'\times \sigma)]) = F^*\alpha_\sigma[F^{-1}(\sigma)]$ as currents: for any $\gamma \in A_c(\widetilde{\Omega}')$, we have
	\begin{align*}
		p_{1,*}(p_{2}^*\alpha_\sigma[\Delta_F\cap (N_\R'\times \sigma)])(\gamma)&= \int_{\Delta_F\cap (N_\R'\times \sigma)}p_2^*\alpha_\sigma\wedge p_1^*\gamma=\int_{F^{-1}(\sigma)}F^*\alpha_\sigma\wedge \gamma\\&= (F^*\alpha_\sigma[F^{-1}(\sigma)])(\gamma)\\
		&= [N_\sigma:dF(N_{F^{-1}(\sigma)})]\int_{\sigma}\alpha_\sigma\wedge \widetilde{F}_*\gamma\\
		&=(\alpha_\sigma[\sigma])(F_*\gamma).
	\end{align*}
The second equality from the observation that $N'\simeq \Delta_F\cap (N_\R'\times N_\R)$.
\end{proof}
\begin{remark}
	\begin{enumerate}
		\item [(1)] From (2), we know that our pull-back is exactly the one defined in \cite[Section~2.5]{mihatsch2021on} for surjective map.
	\end{enumerate}
\end{remark}

\begin{theorem}
	The presheaf $B: \mathcal{AT}op_{\Z,\R}\rightarrow \mathcal{R}ing, \ \ (N,\widetilde{\Omega})\mapsto B(\widetilde{\Omega})$ is a $C^\infty$-module. Moreover, for a morphism $F: (N',\widetilde{\Omega}')\rightarrow (N,\widetilde{\Omega})$, we have the following hold.  
	\begin{enumerate}
		\item[(1)] The $\wedge$-product on $B(\widetilde{\Omega})$ satisfies Leibniz rules with respect to $d',d'', d'_P, d''_P, \partial'$ and $\partial''$.
		\item [(2)] $F^*$ commutes with $d', d'', d_P, d''_P, \partial',\partial''$, and it is graded, i.e. $F^*(B^{p,q,l}(\widetilde{\Omega}))\subset B^{p,q,l}(\widetilde{\Omega}')$,.
		\item[(3)] (Projection formula for $F_*$) for any $\alpha\in B_c(\widetilde{\Omega}'), \beta\in B(\widetilde{\Omega})$, we have
		\[F_*(\alpha\wedge F^*\beta) = F_*\alpha\wedge\beta \in B(\widetilde{\Omega}).\]
		\item[(4)] (Projection formula for $\widehat{F}_*$) For any $\alpha\in B(N_\R'), \beta\in B(N_\R)$, we have
		\[\widehat{F}_*(\alpha\wedge F^*\beta) = \widehat{F}_*\alpha\wedge\beta \in B(N_\R).\]
	\end{enumerate}
\label{projection formula for delta forms}
\end{theorem}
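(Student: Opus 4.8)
The plan is to record the algebraic structure first, then establish the Leibniz rules (1), the compatibility of $F^*$ with all derivatives and with the grading (2), and finally bootstrap the two projection formulas (3), (4) from the special cases already treated in \cref{basic properties of pull-back1}. For the ring and $C^\infty$-module structure, the associative bilinear $\wedge$-product is supplied by \cref{intersection}, its unit is $[N_\R]$, and $F^*$ is a ring homomorphism compatible with restriction and with the $C^\infty(\widetilde{\Omega})$-action $f\cdot\beta$; this packages $B$ as a presheaf of rings that is a module over the $C^\infty$-structure. For (1), the Leibniz rules for $d'$ and $d''$ are part of \cref{intersection}. For the polyhedral derivatives $d_P', d_P''$ I would argue locally: by \cref{another definition of wedge product} a product $\alpha\wedge\beta$ is represented on a small open set by $\sum(\alpha_{\sigma_1}\wedge\beta_{\sigma_2})[\sigma_1\cap\sigma_2]$, so $d_P'$ acts by applying the ordinary $d'$ to each superform coefficient, where the classical Leibniz rule applies; since the presheaf is separated, this globalizes. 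The rules for $\partial'=d'-d_P'$ and $\partial''=d''-d_P''$ then follow by subtraction.

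For (2), gradedness is a bookkeeping computation through the definition $F^*\alpha=p_{1,*}([\Delta_F]\wedge([N_\R']\times\alpha))$ of \cref{pull-back of delta-forms}: one has $[\Delta_F]\in B^{0,0,r}(N_\R'\times N_\R)$, the exterior product $[N_\R']\times\alpha$ preserves the $(p,q,l)$-type by \cref{product of currents}(3), the $\wedge$-product adds the indices, and $p_{1,*}$ along the graph has fibre dimension $0$, so by \cref{push-forward of polyhedral supercurrents} the total shift returns $F^*\alpha$ to $B^{p,q,l}(\widetilde{\Omega}')$. That $F^*$ commutes with each derivative follows by the same scheme: $[\Delta_F]$ is a balanced tropical cycle with constant weights, whence $d_P'[\Delta_F]=\partial'[\Delta_F]=0$ and $d'[\Delta_F]=0$, and likewise for the double-primed operators; then, using the Leibniz rules of (1), the compatibility of $\times$ with derivatives in \cref{product of currents}, and the fact that $p_{1,*}$ commutes with $d',d'',d_P',d_P'',\partial',\partial''$ by \cref{push-forward of polyhedral supercurrents}, one slides the derivative past $[\Delta_F]\wedge$ and onto $\alpha$.

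The heart is (3). Fixing $\alpha\in B_c(\widetilde{\Omega}')$, I would prove $F_*(\alpha\wedge F^*\beta)=F_*\alpha\wedge\beta$ for all $\beta$ by induction on the codimension index $l$ of $\beta$, both sides being $\R$-linear in $\beta$. The base case $\beta\in PS^{p,q}(\widetilde{\Omega})=B^{p,q,0}(\widetilde{\Omega})$ is the piecewise-smooth projection formula established inside the proof of \cref{basic properties of pull-back1}, after matching signs via graded-commutativity of $\wedge$; here the degree shift of $F_*$ is by $2(\rk N-\rk N')$, hence even, so $\alpha$ and $F_*\alpha$ have the same parity and the signs agree. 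For the inductive step I use that every $\delta$-form in $B^{p,q,l+1}$ is locally a finite sum of corner loci $\mathrm{div}(\phi)\cdot\beta'$ with $\phi\in PS^{0,0}(\widetilde{\Omega})$ and $\beta'\in B^{p,q,l}(\widetilde{\Omega})$. Granting the formula for such $\beta'$, I first note that $F^*$ intertwines corner loci, $F^*(\mathrm{div}(\phi)\cdot\beta')=\mathrm{div}(F^*\phi)\cdot F^*\beta'$, which follows from \cref{tropical poincare-lelong formula}, part (2), and $F^*(d_P''\phi\wedge\beta')=d_P''F^*\phi\wedge F^*\beta'$. Then, using that corner locus commutes with $\wedge$ by \cref{basic property of corner locus} and the corner-locus projection formula \cref{basic properties of pull-back1}(3), the chain $F_*(\alpha\wedge F^*(\mathrm{div}(\phi)\cdot\beta'))=\mathrm{div}(\phi)\cdot F_*(\alpha\wedge F^*\beta')=\mathrm{div}(\phi)\cdot(F_*\alpha\wedge\beta')=F_*\alpha\wedge(\mathrm{div}(\phi)\cdot\beta')$ closes the induction. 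A partition-of-unity argument with the separatedness of the presheaf promotes the local identity to all of $\widetilde{\Omega}$.

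Finally (4) follows from (3) via the local identity $\widehat{F}_*=\pr\circ F_*$ (projection onto the top-dimensional summand) proved just before the theorem: one gets $\widehat{F}_*(\alpha\wedge F^*\beta)=\pr(F_*\alpha\wedge\beta)$, and a short dimension count — wedging a below-top-dimensional component of $F_*\alpha$ with $\beta$ strictly lowers dimension — shows $\pr(F_*\alpha\wedge\beta)=\pr(F_*\alpha)\wedge\beta=\widehat{F}_*\alpha\wedge\beta$. I expect the main obstacle to be the structural input used in the inductive step of (3), namely that $\delta$-forms are locally generated from piecewise smooth forms by iterated corner-locus operators; this is where the balancing characterization of \cref{main theorem of delta forms} and the explicit boundary formula \cref{expression of boundary derivatives} must be leveraged, and where the bookkeeping of signs and supports is most delicate. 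Should that generation not be available directly, the fallback is the self-contained three-space computation in the style of \cref{associativity of pull-back intersection}, expressing both sides as a single pushforward of an intersection $[\Delta']\wedge(\alpha\times[N_\R']\times\beta)$ from $N_\R'\times N_\R'\times N_\R$ and identifying the two resulting diagonal cycles.
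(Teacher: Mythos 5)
Parts (1), (2) and (4) of your proposal are sound and essentially track the paper: the Leibniz rules for $d_P',d_P''$ by local transversal representatives via \cref{another definition of wedge product} (hence $\partial',\partial''$ by subtraction), the gradedness bookkeeping through $[\Delta_F]\in B^{0,0,\rk N}(N_\R'\times N_\R)$ with $p_1|_{\Delta_F}$ injective, and the derivation of (4) from (3) by applying $\pr$ and killing the lower-dimensional components of $F_*\alpha$ are all correct and match what the paper does (the paper's (2) is terser, but your computation is what ``by definition'' amounts to).

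The genuine gap is in your main route for (3). The induction on $l$ rests on the claim that every element of $B^{p,q,l+1}(\widetilde{\Omega})$ is \emph{locally a finite sum of corner loci} $\mathrm{div}(\phi)\cdot\beta'$ with $\phi\in PS^{0,0}(\widetilde{\Omega})$ and $\beta'\in B^{p,q,l}(\widetilde{\Omega})$. This is proved nowhere in the paper (nor in Mihatsch), and it is exactly the difficulty the whole diagonal formalism is designed to circumvent: beyond codimension one, the only cycles the theory presents as iterated corner loci are special explicit ones, namely $[\Delta]$ and $[\Delta_F]$ cut out by $\phi_i=\max\{x_i,y_i\}$ resp.\ $\max\{f_i,x_i\}$. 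What is known (the tropical diagonal trick) expresses an arbitrary cycle $C$ as a \emph{pushforward from the product space} of corner loci applied to $C\times[N_\R]$, which is not the same-ambient-space generation your inductive step needs; so the base-plus-corner-locus induction cannot get off the ground as stated. Your fallback is the right instinct, but it is the entire proof, not a contingency: the paper proves (3) by exactly such a push--pull computation, on two spaces rather than three. With $\Delta\subset N_\R\times N_\R$ the diagonal and $q_1,p_1$ the first projections, one checks $(F\times\id)^*\Delta=\Delta_F$ and, crucially, the transfer identity $G_*(G^*\Delta\wedge\gamma)=\Delta\wedge G_*\gamma$ for any integral $\R$-affine $G$ and $\gamma\in B_c$, which follows from \cref{basic properties of pull-back1}~(2),(3) precisely because $\Delta$ is a product of explicit corner loci; then
\begin{align*}
F_*\alpha\wedge\beta = q_{1,*}\bigl(\Delta\wedge(F\times\id)_*(\alpha\times\beta)\bigr)
= q_{1,*}(F\times\id)_*\bigl(\Delta_F\wedge(\alpha\times\beta)\bigr)
= F_*p_{1,*}\bigl(\Delta_F\wedge(\alpha\times\beta)\bigr)
= F_*(\alpha\wedge F^*\beta),
\end{align*}
after reducing to $\beta\in B_c(\widetilde{\Omega})$ by partition of unity. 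So your corner-locus bookkeeping (intertwining of $\mathrm{div}$ with $F^*$ and $F_*$, \cref{basic property of corner locus}) is correct and is indeed the engine of the proof, but it must be applied only to the explicit diagonal cycles, not leveraged through an unavailable local generation statement for arbitrary $\delta$-forms.
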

\begin{proof}
	(1) It is sufficient to show the Leibniz rules with respect to $d'_P, d''_P$, and this is from the direct calculation and \cref{another definition of wedge product}.
	
	(2) This is by definition.
	
	(3) We follow the idea of the proof of \cite[Theorem~3.3~(d)]{allermann2012tropical}.  Let $p_1: N_\R'\times N_\R\rightarrow N_\R'$(resp. $q_1: N_\R\times N_\R\rightarrow N_\R$) be the projections onto the first factor, and $\Delta_F\subset N_\R\times N_\R$ (resp. $\Delta\subset N_\R\times N_\R$) the graph of $F$ (resp. the diagonal).
	Notice that, for any integral $\R$-affine morphism $G: M_\R\rightarrow N_\R\times N_\R$ and $\gamma\in B_c(M_\R)$, we have 
	\[G_*(G^*\Delta\wedge \gamma) = \Delta\wedge G_*\gamma\]
	by \cref{basic properties of pull-back1}.
	Moreover, it is easy to check that $(F\times \id)^*\Delta = \Delta_F$. We can assume that $\beta\in B_c(\widetilde{\Omega})$ by partition of unity, then $\alpha\times\beta\in B_c(\widetilde{\Omega}'\times\widetilde{\Omega})\subset B_c(N'_\R\times N_\R)$. We conclude that
	\begin{align*}
		F_{*}\alpha\wedge\beta &= q_{1,*}(\Delta\wedge (F_{*}\alpha\times\beta)) =  q_{1,*}(\Delta\wedge (F\times \id)_*(\alpha\times\beta))\\
		& = q_{1,*}(F\times \id)_*((F\times \id)^*\Delta\wedge (\alpha\times\beta))\\
		& = F_*p_{1,*}(\Delta_F\wedge(\alpha\times \beta))\\
		&= F_*(\alpha\wedge F^*\beta).
	\end{align*}

(4) This can be proved similarly as \cite[Theorem~3.3]{allermann2012tropical}, or use partition of unity and apply the projection $\pr: B(N_\R)\rightarrow B_{n+n'-\rk N'}^{p+p',q+q'}(N_\R)$ on both sides in (3) if $\alpha\in B_{n'}^{p',q'}(N_\R')$, $\beta\in B_n^{p,q}(N_\R)$. 
\end{proof}
\section{Charts and $\delta$-forms on $X^\an$}

	\label{charts on an algebraical variety}	
	In \cite{gubler2016forms} and \cite{gubler2017a}, Gubler and Gubler-K\"unnemann consider tropical charts induced by algebraic moment maps instead of general smooth charts, and they show that these two kinds of charts are equivalent, see \cite[Proposition~7.2]{gubler2016forms}. In this section, we will recall their definition and use our result to define more general $\delta$-form. Most of results in this section can be proved similarly as in \cite{gubler2017a}, we will omit the proofs or give sketches in this case.
	
	Throughout this section, $X$ denotes an algebraic variety over a complete non-archimedean field $K$, i.e. a separated scheme of finite type over $K$.
	
	\subsection{Algebraically tropical charts}
	
	Recall, an algebraic (splitting) torus $T$ over $K$ is an algebraic variety over $K$ such that $T\simeq \mathbb{G}_m^r$ for some $r\in \N^+$. Without confusion, the analytification of a torus $T$ is also denoted by $T$. The {character group of $T$} is defined by
	\[N:=\Hom_{\mathrm{An}\mathcal{G}p_{/K}}(T, \mathbb{G}_m)\simeq \Hom_{\mathcal{G}p\mathcal{S}ch_{/K}}(\mathbb{G}_m^r,\mathbb{G}_{m}),\]
	the {cocharacter group of $T$} is defined by
	\[M:=\Hom_{\mathrm{An}\mathcal{G}p_{/K}}(\mathbb{G}_m, T)\simeq \Hom_{\mathcal{G}p\mathcal{S}ch_{/K}}(\mathbb{G}_{m},\mathbb{G}_m^r).\]
	where $\mathcal{G}p\mathcal{S}ch_{/K}$ (resp. $\mathcal{G}p\mathcal{S}ch_{/K}$) is the category of analytic group spaces (resp. group schemes) over $K$. We have $N=\Hom_\Z(M,\Z)$, the dual of $M$ as $\Z$-modules. The {tropicalization map} of $T$ is
	\[\mathrm{trop}: T\rightarrow N_\R = \Hom_\Z(M,\R), \ \ x\mapsto (\varphi\mapsto -\log|\varphi^\#(z)|_x)\]
	where $\mathbb{G}_m=\Spec(K[z^\pm])$ and $\varphi^\#: K[z^\pm]\rightarrow \OO_T(T)$ is given by $\varphi$. 
	
	An affine homomorphism $\psi: T\rightarrow T'$ between tori is a group homomorphism composed with a translation on $T$. For such a group homomorphism, we have a map $M'\rightarrow M$ on character groups, which induces an affine map $N_\R\rightarrow N_\R'$.
	
	\begin{definition}
		A {\bf moment map} is an analytic morphism $\varphi: X^\an\rightarrow T\simeq\mathbb{G}^r_m$ over $K$. The {\bf tropicalization of $\varphi$} is
		\[\varphi_{\mathrm{trop}}: X\overset{\varphi}{\rightarrow} T^{\mathrm{an}} \overset{\mathrm{trop}}{\rightarrow} N_\R\simeq \R^r.\]
		
		We say that a moment map $\varphi': X\rightarrow T'$ {\bf refines} $\varphi: X\rightarrow T$ if  there is an affine homomorphism $\psi: T'\rightarrow T$ such that $\varphi=\psi\circ\varphi'$. 
		\label{definition of moment map}
	\end{definition}
	\begin{remark}
		\begin{enumerate}
			\item [(1)] After fixing coordinates $z_1,\cdots, z_r$ of $T$, we have
			\begin{align*}
				N_\R &\simeq \R^r\\
				f&\mapsto (f(\varphi_1),\cdots, f(\varphi_r)),
			\end{align*}
			\begin{align*}
				\mathrm{trop}: T^{\mathrm{an}}&\rightarrow \R^r,\\
				x&\mapsto (-\log|z_1|_x, \cdots, -\log|z_r|_x),
			\end{align*}
			where $\varphi_i^\#: K[z^{\pm 1}]\mapsto \OO_T(T)\simeq K[z_1^{\pm1},\cdots,z_r^{\pm1}], \ \ z\mapsto z_i$. 
			
			\item[(2)] Obviously $\mathrm{Trop}(\cdot): T\mapsto \mathrm{Trop}(T):=N_\R$ is a functor on the category of splitting tori. The tropicalization is functorial, i.e. for any morphism of splitting torus $f: T\rightarrow T'$, we have a commutative diagram
			\[\xymatrix{T \ar[r]^-{\trop} \ar[d]_{f}& N_\R \ar[d]^{\mathrm{Trop}(f)}\\
				T' \ar[r]_-{\trop} &N'_\R},\]
			where \begin{align*}
				\mathrm{Trop}(f): N_\R&\rightarrow N'_\R\\
				\varphi&\mapsto f^*\circ\varphi.
			\end{align*} 
			$f^*: M'\rightarrow M$ is induced by $f$.
		\end{enumerate}
	\end{remark}
	
	\begin{definition}
		A moment map $\varphi: X^\an \rightarrow T$ is called {\bf algebraic} if it is induced by a morphism $X\rightarrow T$ of algebraic varieties over $K$.
		\label{definition of algebraic moment maps} 
	\end{definition}
	\begin{remark}
		\begin{enumerate}
			\item[(1)] For finite non-empty open subsets $U_i\subset X$ with $U:=\bigcap\limits_iU_i\not=\emptyset$, if $\varphi_i: U_i^\an\rightarrow T_i$ are algebraic moment maps, then 
			\[\varphi=(\varphi_i)_i: U^\an\rightarrow \prod\limits_iT_i\]
			is algebraic and refines $U_i^\an\rightarrow T_i$ for any $i$. We have the universal property for this moment map, i.e. it is the maximal moment map which is a common refinement of all $\varphi_i$. 
			\item[(2)] If $X$ is integral, then for any open subset $U'\subset U$, we have $\varphi_{\mathrm{trop}}((U')^{\an}) = \varphi_\trop(U^\an).$ See \cite[Lemma~4.9]{gubler2016forms}.
		\end{enumerate}
	\end{remark}
	
	
	\
	
The following lemma can be easily proved.
	
	\begin{lemma}[\cite{gubler2016forms}~4.12]
		For any open affine subset $U\subset X$, we have an algebraic moment morphism
		\[\varphi_U: U\rightarrow T_U=\Spec(K[M_U]),\]
		induced by representatives $\varphi_1, \cdots, \varphi_r \in \OO_X(U)^\times$ of a basis of $M_U$,
		where $M_U: = \OO_X(U)^\times/K^\times$ is free of rank $r$. Then $\varphi_U$ is canonical up to translation by an element of $T_U(K)$, and $\varphi_U$ refines every other algebraic moment map on $U$. 
		
		In particular, the tropicalization
		\[\varphi_{U,\trop}: U^\an \overset{\varphi_U}{\rightarrow} T_U \overset{\trop}{\rightarrow} (N_U)_\R\]
		is canonical up to  integral $v(K^\times)$-affine isomorphisms, i.e. independent of the choice of $\varphi_1,\cdots, \varphi_r$, where $N_U=\Hom_\Z(M_U,\Z)$.
		\label{canonical tropical charts}
	\end{lemma}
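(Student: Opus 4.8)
The plan is to translate the geometry of algebraic moment maps into the algebra of the character group and to read off all three assertions from the short exact sequence
\[1 \to K^\times \to \OO_X(U)^\times \xrightarrow{q} M_U \to 1.\]
The key dictionary is that, for a split torus $T'=\Spec(K[M'])$, an algebraic moment map $U\to T'$ is the same datum as a group homomorphism $M'\to \OO_X(U)^\times$: a $K$-algebra homomorphism $K[M']\to \OO_X(U)$ must send the invertible group-like elements $M'$ to units, and conversely any such homomorphism of groups extends uniquely. Under this dictionary, choosing representatives $\varphi_1,\dots,\varphi_r$ of a basis of $M_U$ is exactly choosing a section $s\colon M_U\to \OO_X(U)^\times$ of $q$, and $\varphi_U$ is the moment map corresponding to $s$. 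I would begin by recalling that $M_U=\OO_X(U)^\times/K^\times$ is finitely generated and free (the standard Rosenlicht–Samuel fact used here), of rank $r$, so that the sequence above splits.

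For canonicity, since $M_U$ is free any two sections $s,s'$ of $q$ differ by a homomorphism $M_U\to K^\times$, i.e.\ by an element of $\Hom(M_U,K^\times)=T_U(K)$. On the level of moment maps this difference is precisely translation by the corresponding $K$-point of $T_U$, which gives the assertion that $\varphi_U$ is canonical up to translation by $T_U(K)$.

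For the refinement statement, let $\varphi'\colon U\to T'$ be an arbitrary algebraic moment map, corresponding to $u\colon M'\to \OO_X(U)^\times$. I would set $h:=q\circ u\colon M'\to M_U$ and $c(m'):=u(m')\,s(h(m'))^{-1}$. Because $q(s(h(m')))=h(m')=q(u(m'))$, the element $c(m')$ lies in $\Ker q=K^\times$, and $c$ is a homomorphism $M'\to K^\times$ since $u,s,q$ all are; thus $c\in\Hom(M',K^\times)=T'(K)$. The pair $(h,c)$ defines an affine homomorphism $\psi\colon T_U\to T'$ (linear part dual to $h$, translation part the point $c$), and by construction the homomorphism of units attached to $\psi\circ\varphi_U$ is $m'\mapsto c(m')\,s(h(m'))=u(m')$, i.e.\ $\varphi'=\psi\circ\varphi_U$. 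Hence $\varphi_U$ refines every algebraic moment map on $U$.

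Finally, for the tropicalization: $\trop$ on $T_U$ takes values in $(N_U)_\R=\Hom(M_U,\R)$ and is additive for the group law, sending $t\in T_U(K)=\Hom(M_U,K^\times)$ to the functional $m\mapsto v(t(m))$, which lies in $\Hom(M_U,v(K^\times))$. Therefore replacing $s$ by another section translates $\varphi_{U,\trop}$ by a vector of $\Hom(M_U,v(K^\times))$, an integral $v(K^\times)$-affine translation, while a change of basis of $M_U$ acts by $\GL_r(\Z)$; together these are exactly the integral $v(K^\times)$-affine isomorphisms, giving the last claim. The only genuinely non-formal input is the freeness and finite generation of $M_U$, which guarantees the splitting; everything else is a diagram chase through the dictionary, and the one point deserving care is verifying that the twist $c$ is a group homomorphism into $K^\times$, so that $\psi$ is an affine homomorphism of tori and not merely a scheme morphism.
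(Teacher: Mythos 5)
Your proposal is correct, and it is essentially the same argument as the one the paper relies on: the paper itself offers no proof (it remarks the lemma ``can be easily proved'' and defers to \cite[4.12]{gubler2016forms}), and the cited argument proceeds exactly through your dictionary between algebraic moment maps $U\to\Spec(K[M'])$ and group homomorphisms $M'\to\OO_X(U)^\times$, with the splitting of $1\to K^\times\to\OO_X(U)^\times\to M_U\to 1$ (freeness of $M_U$) giving canonicity up to $T_U(K)=\Hom(M_U,K^\times)$ and the factorization $(h,c)$ giving the universal refinement property. Your verification that the twist $c$ is a homomorphism into $K^\times$, and the passage to tropicalizations via $t\mapsto(m\mapsto v(t(m)))\in\Hom(M_U,v(K^\times))$, fill in precisely the routine checks the paper leaves implicit.
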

		
		
	\begin{remark}
		\begin{enumerate}
			\item [(1)] We call such a morphism $\varphi_U$ a {\bf canonical moment map}.
			\item[(2)] The map $\trop$ is functorial: Let $f: U'\rightarrow U$ be a morphism of schemes over $K$, and $\varphi_U, \varphi_{U'}$ given as in the lemma, then there is a unique affine morphism  
		\end{enumerate}
	\end{remark}
	
	Recall that an open subset $U$ of an algebraic variety $X$ is called very affine if $U$ has a closed embedding into a multiplicative torus. 
	

	\begin{definition}
		Let $U\subset X$ be a very affine open subset, and $V\subset U^\an$ an open subset. A {\bf canonical chart} $(V,U, h)$ is a triple such that there are $\varphi_1,\cdots, \varphi_r\in \OO_X(U)^\times$ having the following properties:
		\begin{itemize}
			\item the images of $\varphi_1,\cdots, \varphi_r$ form a basis of $M_U$;
			\item $h=(-\log|\varphi_1|,\cdots, -\log|\varphi_r|)$ on $V$ (i.e. $h=\varphi_{\trop}|_V$) and there is an open subset $\Omega$ of $h(U^\an)$ with  $V=\varphi_\trop^{-1}(\Omega)$, where $\varphi=(\varphi_1,\cdots,\varphi_r): U\rightarrow T_U$.
		\end{itemize}
		In this case, we will say that $h$ is induced by the canonical moment map $\varphi$. 
		
		For two canonical charts $(V, U, h)$, $(V', U', h')$ with $V'\subset V$, $U'\subset U$, we say that $(V', U', h')$ is a {\bf refinement} of $(V, U, h)$ if there is an integral $\R$-linear map $F: \R^{r'}\rightarrow \R^{r}$ such that $h=F\circ h'$. 
		
		
		\label{definition of tropical charts} 
	\end{definition}
	\begin{remark}
		\begin{enumerate}
			\item[(1)] By \cref{canonical tropical charts}, we know that, if $(V,U,h)$ and $(V,U,h')$ are canonical charts, then there is an integral $|K^\times|$-affine bijection $F: \R^r\rightarrow \R^r$ such that $h'=F\circ h$. The converse also holds, i.e. $(V, U, F\circ h)$ is a canonical chart for any integral $|K^\times|$-affine bijection $F$. In particular, $h(U^\an)\overset{F}{\simeq} h'(U^\an)$, $h(V)\overset{F}{\simeq} h'(V)$. 
			\item[(2)] Let $(V,U, h), (V', U',h')$ be canonical charts, and $f: U'\rightarrow U$ a morphism of schemes over $K$ with $f(V')\subset V$, then there is a unique affine map $F$ such the following diagram commutes:
			\[\xymatrix{V'\ar[r]^-{h'} \ar[d]_f& \R^{r'}\ar[d]^{F}\\
				V\ar[r]_-{h}& \R^r}.\]
		\end{enumerate}
	\end{remark}

	\begin{proposition}[\cite{gubler2016forms}~Proposition~4.16]
		The following properties of canonical charts hold.
		\begin{enumerate}
			\item [(1)] The canonical charts form a topological basis of $X^\an$, i.e. for any open subset $W$ of $X^\an$ and $x\in W$, there is a canonical chart $(V,U,h)$ with $x\in V\subset W$. In particular, $X$ is covered by canonical charts. 
			\item[(2)]  For any two canonical charts $(V, U, h)$, $(V', U', h')$, there is a map $h'': V'\cap V\rightarrow \R^r$ such that $(V\cap V', U'\cap U'', h'')$ is a canonical chart. Moreover, any such canonical chart is a common refinement of both charts on $V\cap V'$.
			\item[(3)] Let  $(V, U, h)$, $(V', U', h')$ be two canonical charts with $V'\subset V$, $U'\subset U$, then $(V', U', h')$ is a refinement of $(V, U, h)$. 

		\end{enumerate}
		\label{basic properties of tropical charts}
	\end{proposition}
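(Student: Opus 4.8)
The plan is to treat the three parts separately, deriving (2) and (3) formally from the universal property and functoriality of the canonical moment maps recorded in \cref{canonical tropical charts} and the two functoriality remarks (following \cref{definition of moment map} and \cref{definition of tropical charts}), and to reserve the real work for (1). For (3), given $(V,U,h)$ and $(V',U',h')$ with $V'\subset V$ and $U'\subset U$, I would apply functoriality of $\trop$ to the open immersion $\iota\colon U'\hookrightarrow U$. Restriction of units $\OO_X(U)^\times\to\OO_X(U')^\times$ descends to $M_U\to M_{U'}$, whose dual is an integral $\R$-affine map $F\colon (N_{U'})_\R\to (N_U)_\R$, and the commuting square for $\iota$ gives $\varphi_{U,\trop}\circ\iota^{\an}=F\circ\varphi_{U',\trop}$, hence $h|_{V'}=F\circ h'$ on $V'$. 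This is exactly the transition required for $(V',U',h')$ to refine $(V,U,h)$ in the sense of \cref{definition of tropical charts}; integrality of $F$ holds because it is induced by the lattice homomorphism $M_U\to M_{U'}$.

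For (2), I would first note that $U\cap U'$ is affine (as $X$ is separated) and very affine, its units containing the restrictions of the chosen bases of $M_U$ and $M_{U'}$, which already generate the coordinate ring. By the universal property in \cref{canonical tropical charts}, the canonical moment map $\varphi_{U\cap U'}$ refines both $\varphi_U|_{U\cap U'}$ and $\varphi_{U'}|_{U\cap U'}$, so there are integral $\R$-affine maps $\psi,\psi'$ with $\varphi_{U,\trop}=\psi\circ\varphi_{U\cap U',\trop}$ and $\varphi_{U',\trop}=\psi'\circ\varphi_{U\cap U',\trop}$ on $U\cap U'$. Writing $V=\varphi_{U,\trop}^{-1}(\Omega)$, $V'=\varphi_{U',\trop}^{-1}(\Omega')$, I would set $h'':=\varphi_{U\cap U',\trop}|_{V\cap V'}$ and $\Omega'':=\psi^{-1}(\Omega)\cap(\psi')^{-1}(\Omega')\cap h''((U\cap U')^{\an})$; continuity of $\psi,\psi'$ makes $\Omega''$ open and yields $V\cap V'=(h'')^{-1}(\Omega'')$, so $(V\cap V',U\cap U',h'')$ is a canonical chart. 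The maps $\psi,\psi'$ exhibit it as a common refinement of the two charts on $V\cap V'$, and the uniqueness of the affine transition maps gives the ``moreover'' clause.

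The substance is in (1): I must show that the sets $\varphi_{U,\trop}^{-1}(\Omega)$, as $U$ ranges over very affine opens and $\Omega$ over opens of $h(U^{\an})$, form a basis of the Berkovich topology of $X^{\an}$. First I would check that very affine opens form a basis of the Zariski topology: given $x$ in an affine open $\Spec A$, pick $K$-algebra generators $f_1,\dots,f_n$ of $A$ and, using that $K$ is infinite, constants $c_i\in K$ with $(f_i+c_i)(x)\neq 0$; then $D\big(\prod_i(f_i+c_i)\big)$ is a very affine open neighbourhood of $x$, its ring being generated by the units $f_i+c_i$ and their inverses. The harder half is that, for such a $U$, the tropical preimages generate the subspace topology on $U^{\an}$: the Berkovich topology is generated by all $|f|$ with $f\in\OO_X(U)$, whereas $\varphi_{U,\trop}$ records only absolute values of units, and for a fixed $U$ this is strictly coarser.

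The resolution, which I expect to be the one genuinely delicate point, is to shrink $U$. Given a basic neighbourhood $\{y : |f_i(y)|<s_i,\ |g_j(y)|>r_j\}\ni x$ contained in an open $W$, each $g_j$ and each $f_i$ with $f_i(x)\neq 0$ becomes a unit on $D(g_j)$, resp.\ $D(f_i)$, turning its inequality into a condition on $\varphi_{\trop}$. For an $f_i$ vanishing at $x$ one instead uses the ultrametric inequality after inverting a suitable element that does not vanish at $x$, a ``recentring'' illustrated by the model case $X=\mathbb{A}^1$, $x$ the origin, $f=z$: on $\mathbb{A}^1\setminus\{a\}$ with $0<|a|<s_i$ one has $\{|z|<s_i\}=\{|z-a|<s_i\}$, a condition on the unit $z-a$. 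Combining these finitely many localizations yields a very affine $U_1\ni x$ and, via the common refinement of the resulting moment maps from part (2), a single canonical moment map whose tropicalization cuts out a neighbourhood of $x$ inside $W$. Everything outside this rewriting step is formal from \cref{canonical tropical charts} and functoriality.
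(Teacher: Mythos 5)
Your proposal is correct and takes essentially the same route as the paper's own proof, which is simply the citation to \cite{gubler2016forms}, Proposition~4.16: there, as in your argument, parts (2) and (3) follow formally from the universal property and functoriality of the canonical moment maps of \cref{canonical tropical charts}, and part (1) is proved by first shrinking to very affine opens (which form a basis of the Zariski topology, via translating ring generators by constants to make them units) and then converting the subbasic conditions $|f|<s$, $|g|>r$ into tropical conditions on a smaller very affine open by inverting, handling $f(x)=0$ by exactly your recentring $f\mapsto f-a$ with $0<|a|<s$, which exists because the valuation of $K$ is nontrivial. I see no gaps; the only friction is the paper's own imprecision in \cref{definition of tropical charts}, where the transition map of a refinement should be integral $\R$-affine (as your part (3) produces) rather than linear, consistent with the remark following that definition.
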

	
Given a tropical chart $(V, U,h)$, we have a canonical structure of tropical cycle on $h(U^\an)$, this cycle is denoted by $h_*(\mathrm{cyc}(U))$. 
	
\subsection{$\widetilde{B}(\widetilde{\Omega})$}

To apply $\delta$-forms on non-archimedean Arakelov theory, and use the idea in \cite{gubler2017a}, we don't take all $\delta$-forms on $\widetilde{\Omega}$ for technical reason, i.e. \cref{prop:separated}. Instead, for any open subset $\widetilde{\Omega}$ in an affine space $N_\R$, we consider the following subspaces
\[\widetilde{B}^{p,q,l}(\widetilde{\Omega}):=\{\sum\limits_{i}\alpha_i\wedge\beta_i\in B^{p,q}(\widetilde{\Omega})\mid \alpha_i\in A^{p_i,q_i}(\widetilde{\Omega}), \beta_i\in B^{p-p_i,q-q_i,l}(N_\R)\},\]
$\widetilde{B}(\widetilde{\Omega}) = \bigoplus\limits_{p,q}\widetilde{B}(\widetilde{\Omega})$. We notice the following properties hold:
\begin{enumerate}
	\item[(1)]  The differentials $d', d'', d_P', d_P'', \partial', \partial''$ are defined on $\widetilde{B}(\widetilde{\Omega})$ by the Leibniz rule.
	\item[(2)] For any morphism $F: (N',\widetilde{\Omega}')\rightarrow (N,\widetilde{\Omega})$ in $\mathcal{AT}op_{\Z,\R}$, we have $F^*(\widetilde{B}^{p,q,l}(\widetilde{\Omega}'))\subset B^{p,q,l}(\widetilde{\Omega})$.
\end{enumerate}


The following result can be proved similarly as \cite[Proposition~2.14]{gubler2017a}.

\begin{proposition}
	Let $F: N'_\R\rightarrow N_\R$ be an integral $\R$-affine map, and $C'=\sum\limits_{\sigma'\in \mathcal{C}'_n}m_{\sigma'}[\sigma']\in B^{0,0}(N_\R)$. Let $\alpha\in \widetilde{B}(\widetilde{\Omega})$ and $P\subset\widetilde{\Omega}$ a polyhedral set. Assume that $\Supp(F^*(\alpha)\wedge C')\cap F^{-1}(P)$ is compact, then $\Supp(\alpha\wedge \widehat{F}_*C')\cap P$ is compact, and the following hold.
	\begin{enumerate}
		\item [(1)] If $\alpha\in \widetilde{B}^{n,n}(\widetilde{\Omega})$, then
		\[\int_P \alpha\wedge \widehat{F}_*C' = \int_{F^{-1}(P)} F^*(\alpha)\wedge C'.\]
		
		\item [(2)] If $\alpha\in \widetilde{B}^{n-1,n}(\widetilde{\Omega})$ (resp. $\widetilde{B}^{n,n-1}(\widetilde{\Omega})$), then
		\[\int_{\partial P} \alpha\wedge \widehat{F}_*C' = \int_{\partial F^{-1}(P)} F^*(\alpha)\wedge C'.\]
	\end{enumerate}
	\label{projection formula for delta-preforms}
\end{proposition}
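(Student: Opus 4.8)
The plan is to prove (1) first and then deduce (2) from it by Stokes' formula. For (1) the strategy is to exploit the defining feature of $\widetilde{B}$: every $\alpha\in\widetilde{B}^{n,n}(\widetilde{\Omega})$ is a finite sum $\sum_i a_i\wedge\beta_i$ with $a_i\in A(\widetilde{\Omega})$ smooth and $\beta_i\in B(N_\R)$ a \emph{global} $\delta$-form. By $\R$-bilinearity of both sides in $\alpha$ and $C'$, together with a partition of unity subordinate to the (assumed compact) set $\Supp(F^*\alpha\wedge C')\cap F^{-1}(P)$, I may assume $\alpha=a\wedge\beta$ with $a\in A(\widetilde{\Omega})$ and $\beta\in B(N_\R)$ global, and $C'=m[\sigma']$ a single weighted polyhedron. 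The auxiliary compactness claim is a support chase: $\Supp(\widehat F_*C')\subset F(\Supp C')$, so $\Supp(\alpha\wedge\widehat F_*C')\cap P$ is contained in the continuous image under $F$ of $\Supp(F^*\alpha\wedge C')\cap F^{-1}(P)$, hence compact.

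With $\alpha=a\wedge\beta$, set $\gamma:=F^*\beta\wedge C'\in B(N_\R')$. Using $F^*(a\wedge\beta)=F^*a\wedge F^*\beta$ on the left and the projection formula for $\widehat F_*$ (\cref{projection formula for delta forms}(4)), which gives $\widehat F_*\gamma=\widehat F_*(F^*\beta\wedge C')=\beta\wedge\widehat F_*C'$, both sides of (1) are rewritten so that the identity reduces to
\[\int_{F^{-1}(P)}F^*a\wedge\gamma \;=\; \int_P a\wedge\widehat F_*\gamma.\]
This I prove by fixing a polyhedral complex adapted to both $F^*\beta$ and $C'$, writing $\gamma=\sum_\rho\gamma_\rho[\rho]$ (the concrete shape of $\gamma_\rho$ as a restriction of the $F^*\beta$-coefficient times the scalar weight of $C'$ being supplied, after a generic perturbation, by \cref{another definition of wedge product}), and treating each $\rho$ separately. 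For $\rho$ with $\dim F(\rho)=\dim\rho$, the map $F|_\rho$ is a linear isomorphism onto $F(\rho)$, and the change-of-variables formula \cref{integral via bijection} turns $\int_{\rho\cap F^{-1}(P)}F^*a\wedge\gamma_\rho$ into $[N_{F(\rho)}:dF(N_\rho)]\int_{F(\rho)\cap P}a\wedge((F|_\rho)^{-1})^*\gamma_\rho$; summing over these $\rho$ reproduces exactly $\int_P a\wedge\widehat F_*\gamma$ by the very definition of $\widehat F_*$.

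The heart of the argument, and the main obstacle, is the vanishing of the remaining pieces, namely those $\rho$ with $\dim F(\rho)<\dim\rho$, which are killed by $\widehat F_*$ and so must contribute nothing on the left as well. Here I use that $\gamma$ is built from the pullback $F^*\beta$: both $F^*a$ and $\gamma_\rho$ are annihilated by contraction against any vector in $\Ker(dF)$. Since $\dim F(\rho)<\dim\rho$ forces $N_{\rho,\R}\cap\Ker(dF)\neq 0$, pick $0\neq v$ there; by the Leibniz rule for contractions (\cref{basic properties of contraction}) the top-degree form $F^*a\wedge\gamma_\rho$ on $\rho$ satisfies $\langle F^*a\wedge\gamma_\rho;((v)_{\{1\}},\emptyset)\rangle=0$, and a top $(\dim\rho,\dim\rho)$-form on $N_{\rho,\R}$ annihilated by a nonzero vector is identically zero. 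Thus each such integral vanishes, the displayed identity holds, and (1) is proved. Getting the degree bookkeeping right (so that the relevant coefficients really are top-degree on the $\rho$'s) and invoking the descent-from-a-pullback property at the right moment is the only genuinely delicate point; everything else is change of variables and the projection formula.

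For (2) I reduce to (1) via \cref{stokes' formula for polyhedral supercurrents}. Taking $\alpha\in\widetilde B^{n-1,n}(\widetilde{\Omega})$, the product $\alpha\wedge\widehat F_*C'$ lies in $P^{r-1,r}$, so $\int_{\partial P}\alpha\wedge\widehat F_*C'=\int_P d'_P(\alpha\wedge\widehat F_*C')$. Because $C'$ has constant weights, $d'_P\widehat F_*C'=\widehat F_*d'_PC'=0$, so the Leibniz rule leaves $\int_P d'_P\alpha\wedge\widehat F_*C'$ with $d'_P\alpha\in\widetilde B^{n,n}(\widetilde{\Omega})$ (stability of $\widetilde B$ under $d'_P$). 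Applying (1) to $d'_P\alpha$, then the commutation $F^*d'_P=d'_PF^*$ (\cref{projection formula for delta forms}(2)) together with $d'_PC'=0$ once more, and finally Stokes' formula on $F^{-1}(P)$, yields $\int_{\partial F^{-1}(P)}F^*\alpha\wedge C'$; the compactness hypotheses transfer since $\Supp(d'_P\alpha)\subset\Supp(\alpha)$. The $(n,n-1)$ case is identical with $d''_P$ in place of $d'_P$.
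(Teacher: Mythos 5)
The paper offers no written proof of this proposition---it only remarks that it ``can be proved similarly as \cite[Proposition~2.14]{gubler2017a}''---and your argument is, in substance, exactly that standard argument: split $\alpha$ according to the definition of $\widetilde{B}(\widetilde{\Omega})$, use the projection formula \cref{projection formula for delta forms}~(4) to move $\widehat{F}_*$ past the global factor $\beta$, handle the polyhedra $\rho$ on which $F$ is injective by the change of variables \cref{integral via bijection} (whose $|\det(dF)|$, computed in lattice bases, is the index $[N_{F(\rho)}:dF(N'_{\rho})]$ appearing in the definition of $\widehat{F}_*$), and kill the dimension-dropping polyhedra by contracting against a nonzero $v\in N_{\rho,\R}\cap\Ker(dF)$, using that the coefficients of $F^*a\wedge(F^*\beta\wedge C')$ are restrictions of pullback forms (here it matters that $C'$ carries constant weights), so that a top-degree coefficient annihilated by a nonzero tangent vector vanishes identically by \cref{basic properties of contraction}. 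This core, together with your Stokes reduction of (2) to (1) via $d'_P\widehat{F}_*C'=\widehat{F}_*d'_PC'=0$ and \cref{stokes' formula for polyhedral supercurrents}, is correct and complete.

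However, the preliminary reduction as you state it does not work and needs reorganizing. First, a ``partition of unity subordinate to $\Supp(F^*\alpha\wedge C')\cap F^{-1}(P)$'' is a device on the source $N'_\R$ and cannot be applied to $\alpha$, which lives on $\widetilde{\Omega}\subset N_\R$; cutting off on the target does not help either, since $F$ is not proper, so preimages of compact sets need not be compact. Second, the compactness hypothesis concerns the \emph{total} current: cancellation between the summands $a_i\wedge\beta_i$ can make $\Supp(F^*\alpha\wedge C')\cap F^{-1}(P)$ compact while the individual $\Supp(F^*(a_i\wedge\beta_i)\wedge C')\cap F^{-1}(P)$ are not, in which case the per-summand integrals you manipulate are undefined. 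Relatedly, your support chase is a non sequitur as written: the inclusion $\Supp(\widehat{F}_*C')\subset F(\Supp C')$ alone does not yield $\Supp(\alpha\wedge\widehat{F}_*C')\cap P\subset F(\Supp(F^*\alpha\wedge C')\cap F^{-1}(P))$. All of this is repaired without any new idea by running your two key computations at the level of coefficients, where no compactness is needed: summand by summand (hence, by linearity, for $\alpha$ itself) one proves (i) the identical vanishing of the top-degree coefficients of $F^*\alpha\wedge C'$ on every $\rho$ with $\dim F(\rho)<\dim\rho$, and (ii) the identity of weighted complexes $\widehat{F}_*(F^*\alpha\wedge C')=\alpha\wedge\widehat{F}_*C'$ over $\widetilde{\Omega}$, the noninjective pieces being killed by $\widehat{F}_*$ by definition. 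Item (ii) gives the compactness assertion, since the support of a tropical push-forward lies in the image of the support; one then integrates only the total current---each coefficient's support meets $F^{-1}(P)$ in a closed subset of the assumed compact set---and (i) together with \cref{integral via bijection} and \cref{push-forward of polyhedral supercurrents} gives (1).
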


	

	\subsection{$\delta$-Forms on $X^\an$}
	

	\begin{definition}
		Let $f: X'\rightarrow X$ be a morphism of algebraic varieties over $K$. We say that charts $(V,U, h)$ and $(V',{U'}, h')$ of $X$ and $X'$ respectively are {\bf compatible with respect to $f$}, if we have $f(U')\subset U$ and $f(V')\subset V$. In this case, assume that $h=\varphi_{U,\trop}, h'=\varphi_{U',\trop}$, then there is a unique morphism $g: T_{U'}\rightarrow T_U$ such that the diagram
		\[\xymatrix{(U')^\an \ar[r]^{\varphi_{U'}} \ar[d]_{f}& (T_{U'})^\an \ar[d]^{g} \ar[r]^{\trop}& (N_{U'})_\R\ar[d]^F\\
			U^\an \ar[r]_{\varphi_U} & (T_U)^\an \ar[r]_{\trop}& (N_U)_\R}.\]
		commutes, where $F$ is the integral $\R$-affine map given by $g$.
	\end{definition}

\begin{definition}
 Let  $(V,U,h)$ a canonical chart on $X$. Set \[B^{p,q}(V,U,h): = \widetilde{B}^{p,q}(\widetilde{\Omega})/N^{p,q}(V,U,h),\]
where  \[N^{p,q}(V,U,h):=\left\{\widetilde{\alpha}\in \widetilde{B}^{p,q}(\widetilde{\Omega})\,\middle\vert\,\parbox[c]{.6\linewidth}{ $F^*\widetilde{\alpha}\wedge h_*(\mathrm{cyc}(U')) = 0 \in D^{r-n+p,r-n+q}(\widetilde{\Omega}')$ for any compatible tropical chart $f: (V', {U'}, h')\rightarrow (V, U, h)$}\right\},\]
$F$ is given by $f$ such that $h\circ f=F\circ h'$, and $\widetilde{\Omega}\subset N_{U,\R}$ (resp. $\widetilde{\Omega}'\subset N'_{U,\R}$) is an open subset such that $h(V)=\widetilde{\Omega}\cap h(U^\an)$ (resp. $h'(V')=\widetilde{\Omega}'\cap h((U')^\an)$) with $F(\widetilde{\Omega}')\subset\widetilde{\Omega}$. Similarly, we have the notion $B^{p,q,l}(V,U,h)$ and $B(V,U,h)=\prod\limits_{p,q}B^{p,q}(V,U,h)$.

By \cref{projection formula for delta forms} and \cref{boundary derivatives are null on delta preforms}, we know that $d', d'', d_P', d_P'', \partial', \partial''$ are well defined on $B(V,U,h)$.
\end{definition}
\begin{remark}
	\begin{enumerate}
		\item [(1)] By partition of unity, the definition is independent of the choice of $\widetilde{\Omega}$ and $\widetilde{\Omega}'$.
	\end{enumerate}
\end{remark}

 Unlike \cite[4.6]{gubler2017a}, we have differentials on $B^{p,q}(V,U,h)$, so we don't need to define  $Z(V,U,h)$ and $AZ(V,U,h)$.
 
 Given canonical charts $(V, U, h)$, $(V', U', h')$, and $s\in B(V,U,h), s'\in B(V',U',h')$, we write $s|_{V\cap V'}=s'|_{V\cap V'}$ if there is a canonical chart $(V\cap V', U\cap U', h'')$ compatible with both such that the images of $s, s'$ coincide in $B(V\cap V', U\cap U', h'')$. Obviously, this is independent of the choice of $h''$. 

\begin{definition}
Let $W\subset X^\an$ be an open subset. We set
\[B^{p,q}(W)=\left\{(s_i)_{i}\in \prod\limits_iB^{p,q}(V_i, U_i, h_i)\,\middle\vert\,\parbox[c]{.5\linewidth}{ $W=\bigcup\limits_iV_i$ is an open covering of $W$, $(V_i, U_i, h_i)$ are canonical charts, and $s_i|_{V_{i}\cap V_j}= s_j|_{V_{i}\cap V_j}$}\right\}/\sim,\]
where $(s_i)_i \sim (s_j')_{j}$ if for any $i,j$, we have $s_i|_{V_{ijk}} = s'_j|_{V_{ijk}}$ for a covering $V_i\cap V_j'=\bigcup\limits_{k}V_{ijk}$ and charts $(V_{ijk}, U_{ijk}, h_{ijk})$.
\end{definition}
\begin{remark}
	\begin{enumerate}
		\item [(1)] Let $f: X'\rightarrow X$ a morphism of algebraic varieties over $K$, and $V\subset X^\an$, $V'\subset (X')^\an$ open subsets with $f(V')\subset V$. Let $s=(s_i)_i\in \mathcal{F}(V)$ with $s_i\in B(V_i)$ for some canonical charts $(V_i, U_i, h_i)$. Then $f^*s\in B(V')$ is given as follows:  choose canonical charts $\{(V_{ij}', U_{ij}', h_{ij}')\}_{i\in I, j\in J_i}$ such that $f^{-1}(U_i)=\bigcup\limits_{j\in J_i}U_{ij}'$, $f^{-1}(V_i)= \bigcup\limits_{j\in J_i}V_{ij}'$, then there is a unique integral $\R$-affine map such that
		\[\xymatrix{(U'_{ij})^\an \ar[r]^{h_{ij}'} \ar[d]_{f}& (N_{U_{ij}}')_\R\ar[d]^{F_{ij}}\\
			U_i^\an \ar[r]_{h_i} & (N_{U_i})_\R},\]
		and $f^*s$ is given by $(F_{ij}^*s_{i})_{i,j}\in\prod\limits_{i,j}B(V_{ij}', U_{ij}', h_{ij})$.
		\item[(2)] It can be proved that $B^{p,q}(W)=0$ if $\dim_KW< \max\{p,q\}$, see \cite[Corollary~5.6]{gubler2017a}
	\end{enumerate}
\end{remark}



The following proposition is the main reason we take $\widetilde{B}(\widetilde{\Omega})$ to define $\delta$-forms rather than $B(\widetilde{\Omega})$, see \cite[Proposition~4.21]{gubler2017a} for the proof.

\begin{proposition}
	Assume that $X$ is integral. Given a canonical chart $(V,U,h)$ on $X$, the natural algebra homomorphism
	\[B^{p,q}(V,U,h)\rightarrow B^{p,q}(V)\]
	is injective. 
	\label{prop:separated}
\end{proposition}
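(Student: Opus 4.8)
The plan is to unwind both sides down to the level of supercurrents and to exploit that $D^{p,q}$ is a fine sheaf (\cref{remark4.4}). Let $\widetilde\alpha\in\widetilde{B}^{p,q}(\widetilde\Omega)$ represent a class in $B^{p,q}(V,U,h)$ whose image in $B^{p,q}(V)$ vanishes. By the construction of $B^{p,q}(V)$ together with \cref{basic properties of tropical charts}, this vanishing means there is an open covering $V=\bigcup_k V_k$ by canonical charts $(V_k,U_k,h_k)$, each a refinement of $(V,U,h)$ via an integral $\R$-affine map $F_k$ (so $h=F_k\circ h_k$ on $V_k$), such that the image of $\widetilde\alpha$ in each $B^{p,q}(V_k,U_k,h_k)$ is zero, i.e. $F_k^*\widetilde\alpha\in N^{p,q}(V_k,U_k,h_k)$. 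The goal is to deduce $\widetilde\alpha\in N^{p,q}(V,U,h)$: for every compatible chart $f\colon(V',U',h')\to(V,U,h)$ with associated integral $\R$-affine map $F$ (so $h\circ f=F\circ h'$), the supercurrent $F^*\widetilde\alpha\wedge h'_*(\mathrm{cyc}(U'))$ on $\widetilde\Omega'$ must vanish.

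First I would fix such an $f$ and observe that, since $D$ is a fine sheaf, it suffices to check the vanishing of this current locally, after restricting to the tropicalization of a small neighbourhood of each $x'\in V'$. Pulling the covering $\{V_k\}$ back along $f$ yields a covering $\{f^{-1}(V_k)\cap V'\}$ of $V'$, so each $x'$ lies in some $f^{-1}(V_k)$. On this locus I would pass to a common refinement $(V'',U'',h'')$ of $(V',U',h')$ and $(V_k,U_k,h_k)$ compatible with both $f$ and the refinement $V_k\hookrightarrow V$, which exists by \cref{basic properties of tropical charts}. Using functoriality of the pull-back (\cref{associativity of pull-back intersection}) together with the graded compatibility of $F^*$ in \cref{projection formula for delta forms}(2), the local restriction of $F^*\widetilde\alpha\wedge h'_*(\mathrm{cyc}(U'))$ is expressed through the pull-back of $F_k^*\widetilde\alpha$ on $(V'',U'',h'')$, since the composite map $(V'',U'',h'')\to(V,U,h)$ factors through $(V_k,U_k,h_k)$.

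The crux is then to transport the hypothesis $F_k^*\widetilde\alpha\in N^{p,q}(V_k,U_k,h_k)$ — which by definition says exactly that the wedge against every compatible tropical cycle vanishes — into the vanishing of the current on $(V'',U'',h'')$. Here the projection formulas enter: \cref{projection formula for delta-preforms} and the projection formula for $\widehat{F}_*$ in \cref{projection formula for delta forms}(4) identify the integral of the pull-back of $F_k^*\widetilde\alpha$ against a cycle $C$ with the integral of $F_k^*\widetilde\alpha$ against the push-forward $\widehat{F}_*C$, so the hypothesis forces the current to annihilate all test data supported near $x'$. Taking $(V'',U'',h'')\to(V_k,U_k,h_k)$ itself as the compatible chart in the definition of $N^{p,q}(V_k,U_k,h_k)$ gives the vanishing of $(\text{pull-back of }F_k^*\widetilde\alpha)\wedge\mathrm{cyc}(U'')$ directly, and the map $(V'',U'',h'')\to(V',U',h')$ relates $\mathrm{cyc}(U'')$ to $\mathrm{cyc}(U')$ via $\widehat{F}_*$.

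I expect the main obstacle to be getting the support matching right, and this is where integrality of $X$ is essential. It guarantees, through the equality $\varphi_\trop((U')^\an)=\varphi_\trop(U^\an)$ for $U'\subset U$, that the supports of $\mathrm{cyc}(U')$ and of the cycles on the refinements coincide after the relevant tropicalization maps, so that the local vanishings coming from the covering genuinely exhaust the whole support of $h'_*(\mathrm{cyc}(U'))$ rather than only part of it. Assembling the local vanishings through the fine-sheaf property then yields $F^*\widetilde\alpha\wedge h'_*(\mathrm{cyc}(U'))=0$ for the arbitrary compatible chart $f$, hence $\widetilde\alpha\in N^{p,q}(V,U,h)$, which proves injectivity.
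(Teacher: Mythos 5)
Your outline reproduces the skeleton of the argument the paper relies on (it gives no proof of its own, deferring to \cite[Proposition~4.21]{gubler2017a}), but as written it has a genuine gap in the localization step. Having fixed a compatible chart $f\colon(V',U',h')\to(V,U,h)$ and a point $\omega'\in\Supp\bigl(h'_*(\mathrm{cyc}(U'))\bigr)\cap\widetilde{\Omega}'$, you choose a \emph{single} point $x'\in V'$ with $h'(x')=\omega'$, a single $V_k\ni f(x')$, and a refinement $(V'',U'',h'')$ inside $V'\cap f^{-1}(V_k)$. But the polyhedral supercurrent $F^*\widetilde{\alpha}\wedge h'_*(\mathrm{cyc}(U'))$ near $\omega'$ aggregates contributions from the \emph{entire} fiber $(h')^{-1}(\omega')$, which is compact but in general meets several of the sets $f^{-1}(V_j)$; your $V''$ sees only the part of the fiber inside one of them. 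Concretely, the vanishing you can extract from $F_k^*\widetilde{\alpha}\in N^{p,q}(V_k,U_k,h_k)$ holds only on $\widetilde{\Omega}''$, whereas for a small neighbourhood $W\ni\omega'$ the set $\pi^{-1}(W)\cap h''\bigl((U'')^{\an}\bigr)$ (with $\pi$ the affine map with $h'=\pi\circ h''$) contains pieces lying over the rest of the fiber that escape $\widetilde{\Omega}''$; since $\pi$ is not proper, pushing forward the vanishing on $\widetilde{\Omega}''$ does not give $F^*\widetilde{\alpha}\wedge h'_*(\mathrm{cyc}(U'))=0$ on any $W$. The actual proof must use that tropicalization fibers are compact and that $h'$ is proper onto its image: cover $(h')^{-1}(\omega')$ by finitely many $f^{-1}(V_{k_1}),\dots,f^{-1}(V_{k_m})$, shrink $W$ so that $(h')^{-1}(W)\subseteq\bigcup_j f^{-1}(V_{k_j})$, take one refinement chart capturing the whole fiber, and patch the finitely many local vanishings using the separated-presheaf property of polyhedral supercurrents. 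Your sketch never invokes compactness of the fiber, and without it the step ``the local restriction of $F^*\widetilde{\alpha}\wedge h'_*(\mathrm{cyc}(U'))$ is expressed through the pull-back on $(V'',U'',h'')$'' fails.

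A second, smaller issue: the assertion that $(V'',U'',h'')\to(V',U',h')$ ``relates $\mathrm{cyc}(U'')$ to $\mathrm{cyc}(U')$ via $\widehat{F}_*$'' is exactly the Sturmfels--Tevelev formula for the dense open immersion $U''\subseteq U'$, an equality of tropical cycles \emph{with multiplicities}. The support equality $\varphi_\trop((U')^{\an})=\varphi_\trop(U^{\an})$ that you cite is strictly weaker: push-forward of weights can cancel on individual facets, and then vanishing against $\widehat{\pi}_*\bigl(h''_*(\mathrm{cyc}(U''))\bigr)$ would not imply vanishing against $h'_*(\mathrm{cyc}(U'))$, so the transfer via \cref{projection formula for delta-preforms} and \cref{projection formula for delta forms}(4) is not licensed by supports alone. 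This multiplicity statement (together with choosing $U''$ to meet all relevant components when the source variety of $f$ is not irreducible) is where integrality genuinely enters, beyond the exhaustion-of-support role you assign to it. With these two points supplied, the remaining steps of your outline (functoriality of pull-backs, \cref{associativity of pull-back intersection}, the projection formulas) assemble correctly.
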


\subsection{Integration of $\delta$-forms}

	In the rest of this section, we assume that $X$ is integral algebraic variety of pure dimension $n$.
	
	\begin{definition}
		Let $W$ be an open subset of $X^\an$. Let $\alpha\in B_{c}^{n,n}(W)\subset B_{c}^{n,n}(X^\an)$. A non-empty very affine open subset $U$ of $X$ is called a {\bf very affine chart of integration for $\alpha$} if there is some $\alpha_U\in B^{n,n}(U^\an, U, h)$ such that $\alpha|_{U^\an}$ is given by $\alpha_U$, where $(U^\an, U, h)$ is a canonical chart. Such a chart exists and we have $\Supp(\alpha)\subset U^\an$ by a similar proposition as \cite[Proposition~5.7]{gubler2017a}.
		
		We know that chart of integration always exists, and $\alpha_U$ is unique. We define the {\bf integral of $\alpha$ over $W$} by
		\[\int_{W}\alpha: = \int_{N_{U,\R}}\widetilde{\alpha}_U\wedge h_*(\mathrm{cyc}(U)),\]
		where $\alpha_U\in B^{n,n}(N_{U,\R})$ and  the right-hand side is given in \cref{definition of integration for polyhedral supercurrents}. $\widetilde{\alpha}_U\wedge h_*(\mathrm{cyc}(U))$ has compact support by \cite[Proposition~4.21]{gubler2017a}.
	\end{definition}

From the definition, we can show the Stokes' formula and Green's formula for $d'_P, d''_P$ by  \cref{stokes' formula for polyhedral supercurrents} and \cref{Green' formula for delta-forms}. \cref{minor stokes' formula for d'd''} implies the following theorem.

\begin{theorem}
	For any $\omega\in B_c^{2n-1}(X)$, we have
	\[\int_Xd'\omega=\int_Xd''\omega=0.\]
	It is similar for $d'_P, d''_P, \partial'$ and $\partial''$.
	\label{stokes formula for delta form on analytic space}
\end{theorem}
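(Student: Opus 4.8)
The plan is to reduce this global identity to the purely tropical \cref{minor stokes' formula for d'd''}, which already carries all the real content; the remaining work is bookkeeping about degrees, charts, supports and a partition of unity.

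First I would dispose of the degrees. Since $X$ is integral of pure dimension $n$, one has $B^{p,q}(X)=0$ as soon as $\max\{p,q\}>n$ (the analogue of \cite[Corollary~5.6]{gubler2017a}), so a form $\omega\in B_c^{2n-1}(X)$ has at most two nonzero homogeneous components, one in $B^{n-1,n}(X)$ and one in $B^{n,n-1}(X)$. Each of the six operators raises the total degree by one, hence carries $\omega$ into $B_c^{2n}(X)=B_c^{n,n}(X)$, so all the integrals in the statement are defined. Moreover, on the total bigrading the three operators $d',d_P',\partial'$ all act as $B^{p,q}\to B^{p+1,q}$, so they send the $B^{n,n-1}$-component into $B^{n+1,n-1}(X)=0$; thus only the $B^{n-1,n}$-component of $\omega$ contributes to $\int_Xd'\omega$, $\int_Xd_P'\omega$, $\int_X\partial'\omega$, and dually only the $B^{n,n-1}$-component matters for the three doubly-primed operators. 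It therefore suffices to prove $\int_Xd'\omega=\int_Xd_P'\omega=\int_X\partial'\omega=0$ for $\omega\in B_c^{n-1,n}(X)$, the remaining equalities being symmetric.

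Next I would localize to a single chart. Covering the compact set $\Supp(\omega)$ by finitely many canonical charts $(V_j,U_j,h_j)$ and choosing a smooth partition of unity $\{\rho_j\}\subset A^{0,0}$ subordinate to $\{V_j\}$ with $\sum_j\rho_j\equiv 1$ on $\Supp(\omega)$, the $C^\infty$-module structure on $B$ together with the $\R$-linearity of $d'$ and of $\int_X$ give $\int_Xd'\omega=\sum_j\int_Xd'(\rho_j\omega)$, where each $\rho_j\omega\in B_c^{n-1,n}(X)$ is supported in $V_j$. Thus I may assume $\Supp(\omega)\subset U^\an$ for one very affine chart $(U^\an,U,h)$, which is then a chart of integration for $d'\omega$. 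On it $\omega|_{U^\an}$ is represented by some $\widetilde{\omega}_U\in\widetilde{B}^{n-1,n}(\widetilde{\Omega})$ of compact support, with $\widetilde{\Omega}\subset N_{U,\R}$. Since $d'$ is well defined on $B(U^\an,U,h)$ and commutes with passage to representatives (\cref{projection formula for delta forms}, \cref{boundary derivatives are null on delta preforms}), the class $d'\omega$ is represented by $d'\widetilde{\omega}_U$, so by the definition of the integral
\[\int_Xd'\omega=\int_{N_{U,\R}}d'\widetilde{\omega}_U\wedge h_*(\mathrm{cyc}(U)).\]

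To finish, I would invoke the balancing of the tropical cycle. The cycle $C:=h_*(\mathrm{cyc}(U))$ is balanced at every facet, so a fortiori the hypothesis of \cref{minor stokes' formula for d'd''} holds at all faces meeting $\Supp(\widetilde{\omega}_U)$. Applying that proposition with $\alpha=\widetilde{\omega}_U$ yields at once
\[\int_{N_{U,\R}}d'\widetilde{\omega}_U\wedge C=\int_{N_{U,\R}}d_P'\widetilde{\omega}_U\wedge C=\int_{N_{U,\R}}\partial'\widetilde{\omega}_U\wedge C=0,\]
hence $\int_Xd'\omega=\int_Xd_P'\omega=\int_X\partial'\omega=0$, and the ``resp.'' half of the same proposition gives the doubly-primed statements. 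The only substantive ingredient is \cref{minor stokes' formula for d'd''}, already proved, so the expected difficulty is not analytic but lies entirely in the localization bookkeeping: checking that the representatives can be taken with compact support (so that their products with the tropical cycle have compact support, as required by the definition of $\int_X$ and \cite[Proposition~4.21]{gubler2017a}), and that $d'$ applied to a representative indeed computes $d'$ of the class.
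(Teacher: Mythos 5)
Your proof follows the paper's route exactly: the paper establishes this theorem by a one-line appeal to \cref{minor stokes' formula for d'd''}, applied to a representative of $\omega$ on a very affine chart of integration and using that the tropical cycle $h_*(\mathrm{cyc}(U))$ is balanced everywhere, which is precisely your argument (including the degree bookkeeping reducing to the $(n-1,n)$ and $(n,n-1)$ components). The only cosmetic difference is your partition-of-unity localization, which is redundant because the paper's definition of $\int_X$ already furnishes a single chart of integration for any compactly supported class; if you do keep that step, take $\sum_j\rho_j\equiv 1$ on a neighborhood of $\Supp(\omega)$ rather than just on the support, so that $\sum_j d'(\rho_j\omega)=d'\omega$.
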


\subsection{$\delta$-currents}
We have the Schwartz topology on $B^{p,q,l}_c(W)$, see \cite[6.1]{gubler2017a}.
\begin{definition}
	Let $W\subset X^\an$ be an open subset. A {\bf $\delta$-current} on $W$ is a real continuous linear functional $T: {B}_{c}(W)\rightarrow \R$. We denote the space of $\delta$-currents on $W$ by $E(X)$.
	\label{currents}
\end{definition}
\begin{remark}
	\begin{enumerate}
		\item [(1)] We have a natural bigrading $E(W) = \bigoplus\limits_{p,q}(E)^{p,q}(X)$, where $E^{p,q}(W)$ is the set of continuous linear functional on $B_c^{n-p, n-q}(W)$. 
\item[(2)] Let $D(W)$ the set of currents on $W$ defined in \cite[4.2]{chambert2012formes} or \cite[6.2]{gubler2016forms}. We have a linear map $E(W)\rightarrow D(W)$. We can define differentials $d', d'', d_P', d_P'', \partial',\partial'': E(W) \rightarrow E(W)$ similarly as in \cite[6.9]{gubler2017a}. For $T\in E^{p,q}(W)$ and $\alpha\in B^{p',q'}(W)$, we have wedge product $T\wedge\alpha\in E^{p+p',q+q'}(W)$ defined as
\[(T\wedge \alpha)(\gamma): = T(\alpha\wedge\gamma)\]
for any $\gamma\in B^{n-p-p',n-q-q'}_c(W)$. 

\item[(3)] We have an $\R$-linear map $[\cdot]: B^{p,q}(W)\rightarrow E^{p,q}(W)$ defined as follows: for any $\omega\in B^{p,q}(W)$, we have 
\begin{align*}
	[\omega]: {B}_{c}^{n-p,n-q}(W)& \rightarrow \R,\\
	\gamma&\mapsto \int_W\omega\wedge\gamma.
\end{align*}
By \cref{stokes formula for delta form on analytic space}, we know that $[\cdot]$ commutes with $d'$, $d''$, $d_P', d_P'', \partial'$ and $\partial''$. 
	\end{enumerate}
\end{remark}

Every $\delta$-form 

\begin{proposition}
	Let $W\subset X^\an$ be an open subset. For each $\alpha\in B^{n,n}(W)$ there is a unique signed Radon measure $\mu_\alpha$ on $W$ such that
	\[\int_{W}f\cdot\alpha=\int_Wf\mu_\alpha\]
	for all $f\in C^\infty_{c}(W)$.
	\label{cor:radonmeasureassociatedtodeltaform}
\end{proposition}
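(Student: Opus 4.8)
The plan is to realize the linear functional $\Lambda\colon f\mapsto \int_W f\cdot\alpha$ on $C^\infty_c(W)$ as a functional of order zero and then to invoke the Riesz--Markov representation theorem for signed Radon measures. First I would note that $f\cdot\alpha$ is nothing but the $\wedge$-product $f\wedge\alpha\in B^{n,n}_c(W)$ coming from the $C^\infty$-module structure of \cref{projection formula for delta forms}, so that $\Lambda$ is a well-defined $\R$-linear functional and the integral on the left is meaningful. Uniqueness of $\mu_\alpha$ will then be automatic once existence is established, since two signed Radon measures inducing the same functional on the sup-norm-dense subspace $C^\infty_c(W)\subset C_c(W)$ must coincide.

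The heart of the argument is a local order-zero estimate. Fix a compact $K\subset W$. Since $\alpha\in B^{n,n}_c(W)$, I would choose a very affine chart of integration $(U^\an,U,h)$ for $\alpha$ with $\Supp(\alpha)\subset U^\an$, and after a partition of unity together with a common refinement of charts (\cref{basic properties of tropical charts}) reduce to the case where $f=h^*\phi$ for some $\phi\in C^\infty(\widetilde{\Omega})$ with $\widetilde{\Omega}\subset N_{U,\R}$. Then $f\wedge\alpha$ is represented in the chart by $\phi\cdot\widetilde{\alpha}_U$, and by the definition of the integral of a top-degree $\delta$-form over $X^\an$ together with \cref{definition of integration for polyhedral supercurrents} one obtains
\[
\int_W f\cdot\alpha=\int_{N_{U,\R}}(\phi\cdot\widetilde{\alpha}_U)\wedge h_*(\mathrm{cyc}(U))=\sum_{\sigma}\int_{\sigma}\phi|_\sigma\,\gamma_\sigma ,
\]
where $\widetilde{\alpha}_U\wedge h_*(\mathrm{cyc}(U))=\sum_\sigma\gamma_\sigma[\sigma]$ is a compactly supported polyhedral supercurrent of type $(r,r)$ and each $\gamma_\sigma$ is a smooth top-dimensional density on the polyhedron $\sigma$ in the sense of \cref{definition of integration}. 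Because this supercurrent has compact support, only finitely many $\sigma$ occur and each $\int_\sigma|\gamma_\sigma|$ is finite; moreover every contributing $\sigma$ lies in $h(U^\an)$, so $\sup_\sigma|\phi|\le\|f\|_{\sup}$. Hence $|\Lambda(f)|\le C_K\,\|f\|_{\sup}$ with $C_K:=\sum_\sigma\int_\sigma|\gamma_\sigma|<\infty$, uniformly over all $f$ with $\Supp(f)\subset K$, which is exactly the order-zero bound.

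With this estimate, $\Lambda$ extends by continuity to a linear functional on $C_c(W)$ that is bounded in the sup-norm on the functions supported in any fixed compact set, and the Riesz--Markov theorem produces a unique signed Radon measure $\mu_\alpha$ with $\int_W f\,\mu_\alpha=\int_W f\cdot\alpha$ for all $f\in C_c(W)$, in particular for all $f\in C^\infty_c(W)$. I expect the main obstacle to be the local reduction rather than the functional-analytic step: one must check that after refining the chart of integration the smooth function $f$ genuinely becomes a pullback $h^*\phi$ with $\sup_{h(U^\an)}|\phi|\le\|f\|_{\sup}$, and that the densities $\gamma_\sigma$ extracted from $\widetilde{\alpha}_U\wedge h_*(\mathrm{cyc}(U))$ are honestly integrable over the compact parts of the polyhedra. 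The compact support of this supercurrent, guaranteed by the definition of $\int_W$, is precisely what keeps $C_K$ finite and uniform over $\Supp(f)\subset K$, and hence what makes the whole functional of order zero.
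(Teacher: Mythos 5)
Your overall strategy---realize $f\mapsto\int_W f\cdot\alpha$ as a functional of order zero via a chart of integration and then apply the Riesz--Markov representation theorem---is exactly the argument this paper delegates to its reference: the proposition is stated without proof here and is the analogue of the corresponding result in \cite{gubler2017a}, whose proof runs along precisely these lines (local sup-norm estimate in a tropical chart, then Riesz representation, with uniqueness from density of smooth functions in $C_c(W)$). Your identification of the delicate points is also accurate: the bound $\sup_\sigma|\phi|\le\|f\|_{\sup}$ does hold for the canonical charts of this paper, because for a chart of integration $(U^\an,U,h)$ one has $h(V)=\Omega\cap h(U^\an)$, so every point of a polyhedron $\sigma$ carrying the supercurrent $\widetilde{\alpha}_U\wedge h_*(\mathrm{cyc}(U))$ is the image under $h$ of a point where $f=h^*\phi$ is evaluated; and the finiteness of $C_K$ does come from the compact support of that supercurrent, guaranteed by \cite[Proposition~4.21]{gubler2017a}.

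There is, however, one concrete slip you must repair. The proposition concerns an arbitrary $\alpha\in B^{n,n}(W)$, \emph{not} $\alpha\in B^{n,n}_c(W)$, and your sentence ``since $\alpha\in B^{n,n}_c(W)$, I would choose a very affine chart of integration for $\alpha$ with $\Supp(\alpha)\subset U^\an$'' is meaningless for general $\alpha$: charts of integration are defined in this paper only for compactly supported $(n,n)$-$\delta$-forms, and no very affine $U$ need contain the support of a general $\alpha$. This also threatens the uniformity of $C_K$: if you instead take a chart of integration for each $f\cdot\alpha$ separately, the chart (hence the constant) could depend on $f$, and the order-zero estimate would not be uniform over $\{f:\Supp(f)\subset K\}$. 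The fix is routine: for a fixed compact $K\subset W$ choose a smooth cutoff $\chi\in C^\infty_c(W)$ with $\chi\equiv 1$ on a neighborhood of $K$, set $\alpha_K:=\chi\cdot\alpha\in B^{n,n}_c(W)$, pick a chart of integration for $\alpha_K$ once and for all, and note $f\cdot\alpha=f\cdot\alpha_K$ for every $f$ supported in $K$; your estimate then goes through with a constant depending only on $K$ (after the finite partition of unity $f=\sum_i\rho_i f$ with bump functions $\rho_i$ fixed in advance, so that each $\rho_i f$ becomes a pullback on a common refinement and $\|\rho_i f\|_{\sup}\le\|f\|_{\sup}$). With this modification your proof is complete and agrees in substance with the intended one.
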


\begin{proposition}
	Let $W\subset X^\an$ be an open subset. Let $f\in C(W)$. Then the map
	\[[f]: B_{c}^{n,n}(W)\rightarrow \R, \ \ \eta\mapsto\int_Wf\mu_\eta\]
	is a $\delta$-current.
\end{proposition}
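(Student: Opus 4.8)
The statement asserts that $[f]$, a priori merely a function on $B_c^{n,n}(W)=B_c^{n-0,n-0}(W)$, is a $\delta$-current in the sense of \cref{currents}, i.e. a continuous $\R$-linear functional; it thereby produces an element of $E^{0,0}(W)$ extending the assignment $[\cdot]$ from smooth to arbitrary continuous functions. The plan is to verify the three requirements in turn: that $[f]$ is well defined, that it is $\R$-linear, and that it is continuous for the Schwartz topology, the last being the only substantial point.

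Well-definedness is immediate: by \cref{cor:radonmeasureassociatedtodeltaform} the signed Radon measure $\mu_\eta$ attached to $\eta\in B_c^{n,n}(W)$ is supported in $\Supp(\eta)$, hence has compact support and finite total variation $|\mu_\eta|(W)<\infty$; since $f$ is continuous it is bounded on $\Supp(\eta)$, so $\int_W f\,\mu_\eta$ converges. For linearity I use the uniqueness clause of \cref{cor:radonmeasureassociatedtodeltaform}: for smooth $g$ and $\eta_1,\eta_2$ one has $\int_W g\cdot(\eta_1+\eta_2)=\int_W g\,\mu_{\eta_1}+\int_W g\,\mu_{\eta_2}$, whence $\mu_{\eta_1+\eta_2}=\mu_{\eta_1}+\mu_{\eta_2}$, and similarly for scalars; thus $\eta\mapsto\mu_\eta$ is $\R$-linear and so is $[f]$.

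For continuity it suffices, since $B_c^{n,n}(W)$ is the inductive limit of the Fr\'echet spaces $B_K^{n,n}(W)$ of forms supported in a fixed compact $K\subset W$, to bound $[f]$ on each such step by a continuous seminorm. The key estimate is
\[
\bigl|[f](\eta)\bigr|=\Bigl|\int_W f\,\mu_\eta\Bigr|\le \sup_{K}|f|\cdot |\mu_\eta|(W),\qquad \eta\in B_K^{n,n}(W),
\]
so everything reduces to controlling the total variation $|\mu_\eta|(W)$ by a Schwartz seminorm of $\eta$ that is \emph{independent of $f$}. To do this I cover $K$ by finitely many very affine charts of integration $(U_i^\an,U_i,h_i)$ and fix a smooth partition of unity, writing $\eta=\sum_i\rho_i\eta$ with $\rho_i\eta$ supported in $U_i^\an$; since $B$ is a $C^\infty$-module (\cref{projection formula for delta forms}) each $\rho_i\eta$ is again a $\delta$-form and $\mu_\eta=\sum_i\mu_{\rho_i\eta}$. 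On a single chart $\mu_{\rho_i\eta}$ is, by the construction in \cref{cor:radonmeasureassociatedtodeltaform}, the measure transported to $N_{U_i,\R}$ by the top-degree coefficients of $\widetilde{(\rho_i\eta)}_{U_i}$ against the tropical cycle $h_{i,*}(\mathrm{cyc}(U_i))$; writing $\lambda_\sigma$ for the top coefficient on the maximal polyhedron $\sigma$ of multiplicity $m_\sigma$ one gets
\[
|\mu_{\rho_i\eta}|(W)\le \sum_{\sigma}|m_\sigma|\int_{\sigma\cap h_i(K)}\bigl|\lambda_\sigma\bigr|,
\]
and because $K$ is compact only finitely many $\sigma$ meet $h_i(K)$ and each has finite volume there, the right-hand side is dominated by a finite sum of sup-norms of the coefficients of $\eta$ over $K$. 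Summing over $i$ yields a continuous seminorm $p$ on $B_K^{n,n}(W)$ with $|\mu_\eta|(W)\le p(\eta)$, and then $|[f](\eta)|\le \sup_K|f|\cdot p(\eta)$ gives continuity on $B_K^{n,n}(W)$, hence on $B_c^{n,n}(W)$.

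The main obstacle is precisely this total-variation estimate: one must read off from the Riesz construction of $\mu_\eta$ that its mass is governed by the coefficients of the local tropical representatives, and that, after fixing the compact $K$, the accompanying volume factors are uniformly bounded so that the resulting bound is a genuine continuous seminorm for the Schwartz topology. An alternative that avoids the total-variation bookkeeping is to approximate $f$ locally uniformly by smooth functions $f_j$ (smooth functions being dense in $C(W)$ for uniform convergence on compacta); then $[f_j](\eta)=\int_W f_j\cdot\eta$ is manifestly a $\delta$-current because $f_j\cdot\eta\in B_c^{n,n}(W)$ and $\int_W(\cdot)$ is continuous, and $[f_j]\to[f]$ pointwise on $B_c^{n,n}(W)$, so the continuity of $[f]$ follows from the Banach--Steinhaus theorem, valid because the $LF$-space $B_c^{n,n}(W)$ is barrelled.
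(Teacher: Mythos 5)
Your proposal is correct, and it supplies exactly the argument that the paper omits: the proposition is stated without proof, in keeping with the paper's policy of deferring routine verifications to \cite{gubler2017a}, and your main line --- bound $\bigl|\int_W f\,\mu_\eta\bigr|$ by $\sup_K|f|\cdot|\mu_\eta|(W)$ and then control the total variation $|\mu_\eta|(W)$ chart by chart, using a partition of unity, the $C^\infty$-module structure of $B$, and the fact that on a very affine chart $\mu_\eta$ is carried by the skeleton and given by the coefficients of $\widetilde{\eta}_U\wedge h_*(\mathrm{cyc}(U))$ against the (finitely many, finite-volume) polyhedra meeting $h(K)$ --- is the standard proof of the corresponding statement there. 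Two small remarks. First, the detour through an $LF$-space structure is unnecessary: continuity in the Schwartz topology of \cite[6.1]{gubler2017a} is a sequential condition (supports in a common compact $K$, coefficients converging uniformly on finitely many charts), so your seminorm estimate $|[f](\eta)|\le\sup_K|f|\cdot p(\eta)$ already yields continuity directly, without needing each $B_K^{n,n}(W)$ to be Fr\'echet. Second, the Banach--Steinhaus alternative should be treated as heuristic rather than a proof in this setting: it requires barrelledness of $B_c^{n,n}(W)$, which is not established in the paper (the topology is only specified by its convergent sequences), and it also invokes density of smooth functions in $C(W)$, which comes from \cite{chambert2012formes} rather than anything proved here; by contrast your first argument uses only \cref{cor:radonmeasureassociatedtodeltaform} and the local description of $\mu_\eta$, so it is the one to keep. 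Note also that, as you implicitly use, the uniqueness clause of \cref{cor:radonmeasureassociatedtodeltaform} gives both linearity of $\eta\mapsto\mu_\eta$ and the localization $\Supp(\mu_\eta)\subset\Supp(\eta)$ (test against smooth functions supported off $\Supp(\eta)$), which is what makes $\int_W f\,\mu_\eta$ well defined for merely continuous $f$.
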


We can give the form of $\mu_\alpha$ for $\alpha\in B^{0,0,n}(X)$. The following result extends \cite[Proposition~6.9.2]{chambert2012formes} for case of the proper algebraic varieties.

\begin{proposition}
	Let $W\subset X^\an$ be an open subset, and $\alpha\in B_c^{0,0,n}(W)$. Then there are $c_x\in \R$ for each $x\in \Supp(\alpha)$ such that $c_x=0$ for all but finitely many $x$ and for any $f\in {B}^{0,0}_c(X)$, we have
	\[\int_Xf\alpha = \sum\limits_{x\in \Supp(\alpha)}c_xf(x),\]
	i.e. $\mu_\alpha = \sum\limits_{x\in \Supp(\alpha)}c_x\delta_x$, where $\mu_\alpha$ is the signed Radon measure associated to $\omega$ given in \cref{cor:radonmeasureassociatedtodeltaform}. 
	\label{prop:radonmeasuregivenbytropicalcycleofmaximaldimension}
\end{proposition}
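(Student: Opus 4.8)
The plan is to compute $\int_X f\alpha$ inside a single very affine chart of integration and to recognise the answer as a finite $0$-dimensional stable intersection. First I would fix a very affine chart of integration $(U^\an,U,h)$ for $\alpha$, which exists by the analogue of \cite[Proposition~5.7]{gubler2017a} recalled above; then $\Supp(\alpha)\subset U^\an$ and $\alpha|_{U^\an}$ is given by a representative $\widetilde\alpha_U\in\widetilde B^{0,0,n}(N_{U,\R})$. For $f\in B^{0,0}_c(X)$ one has $f\alpha\in B^{0,0,n}_c(W)\subset B^{n,n}_c(W)$, so the integral is defined, and after refining the chart so that both $f$ and $\alpha$ live on it we may write $f|_{U^\an}=h^*f_U$ for a piecewise smooth function $f_U$ on $N_{U,\R}$. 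By the definition of $\int_W$,
\[\int_X f\alpha=\int_{N_{U,\R}}(f_U\cdot\widetilde\alpha_U)\wedge h_*(\mathrm{cyc}(U)).\]

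The structural heart of the argument is that $Z:=\widetilde\alpha_U\wedge h_*(\mathrm{cyc}(U))$ is a tropical cycle of dimension $0$. Indeed $\widetilde\alpha_U$ has codimension $n=\dim X$ while $h_*(\mathrm{cyc}(U))$ has dimension $n$, i.e. codimension $\rk(N_U)-n$, so by the degree bookkeeping of the $\wedge$-product (\cref{intersection}, \cref{wedge product of transersally intersection}) the product sits in codimension $\rk(N_U)$, and it has compact support by \cite[Proposition~4.21]{gubler2017a}; hence $Z=\sum_k a_k[\{p_k\}]$ with finitely many $p_k\in N_{U,\R}$ and $a_k\in\R$. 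Pulling the piecewise smooth factor $f_U$ out of the product via the associativity of $\wedge$ with multiplication by a piecewise smooth function (\cref{basic property of corner locus}) and using that integrating a dimension-$0$ polyhedral supercurrent returns the sum of its weights (\cref{definition of integration for polyhedral supercurrents}), I obtain
\[\int_X f\alpha=\int_{N_{U,\R}}f_U\cdot Z=\sum_k a_k\,f_U(p_k).\]

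It remains to replace each $f_U(p_k)$ by a value $f(x_k)$ at a genuine point of $X^\an$. Since every $f\in B^{0,0}_c(X)$ restricts on $U^\an$ to a pullback under $h=\varphi_{U,\trop}$, it is constant along the fibres of $h$; thus for any point $x_k\in\Supp(\alpha)\cap h^{-1}(p_k)$ we have $f(x_k)=f_U(p_k)$. Declaring $c_{x_k}:=a_k$ and $c_x:=0$ otherwise then gives $\int_X f\alpha=\sum_k c_{x_k}f(x_k)$ for all $f\in B^{0,0}_c(X)$, which is the asserted identity, and there are only finitely many nonzero $c_x$. Comparing with the defining property of $\mu_\alpha$ in \cref{cor:radonmeasureassociatedtodeltaform}, whose test functions are smooth and hence also locally tropical, yields $\mu_\alpha=\sum_k a_k\delta_{x_k}$.

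The hard part is the final identification of the atoms. The support $\Supp(\alpha)$ is in general an entire union of tropicalisation fibres $h^{-1}(p_k)$ rather than a finite set, so a priori the mass over a vertex $p_k$ could be smeared along the fibre instead of sitting at one point; the chart computation above only controls the pairing against functions that are constant along fibres. What resolves this is precisely that the relevant test functions are locally tropical, together with the non-archimedean local description of \cite[Proposition~6.9.2]{chambert2012formes}, which isolates the distinguished point $x_k\in\Supp(\alpha)$ over each vertex $p_k$ and shows the associated measure is the single Dirac $a_k\delta_{x_k}$. I would therefore devote the bulk of the write-up to checking that the chart computation is compatible with that local description and that the finitely many points $x_k$ exhaust $\Supp(\mu_\alpha)$.
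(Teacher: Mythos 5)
Your opening computation agrees with the paper: take a very affine chart of integration $(U^\an,U,h)$, observe that $Z:=\widetilde\alpha_U\wedge h_*(\mathrm{cyc}(U))$ is a compactly supported $0$-dimensional cycle $\sum_k a_k[p_k]$, and conclude $\int_X f\alpha=\sum_k a_k f_U(p_k)$ \emph{for test functions pulled back along $h$}. But the two steps you use to pass from this to general $f\in B^{0,0}_c(X)$ both fail. First, ``after refining the chart \dots\ we may write $f|_{U^\an}=h^*f_U$'' is not available: an element of $B^{0,0}_c(X)$ is only \emph{locally} a pullback along \emph{varying} tropicalizations $(V_i,U_i,h_i)$, and refining replaces $U$ and $h$ rather than producing a single global presentation; in particular smooth functions are emphatically not constant along the fibres of the fixed $h$ --- they separate points of $X^\an$, which is exactly why the atoms of $\mu_\alpha$ are well defined, and also why your prescription ``choose \emph{any} point $x_k\in\Supp(\alpha)\cap h^{-1}(p_k)$ and set $c_{x_k}:=a_k$'' cannot work: different choices of $x_k$ within a fibre give genuinely different Radon measures, while $\mu_\alpha$ is one specific measure. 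Second, the repair you propose, invoking \cite[Proposition~6.9.2]{chambert2012formes}, is essentially circular here: that proposition lives in the Chambert-Loir--Ducros framework of smooth forms on affinoids, whereas the statement under discussion is presented in the paper precisely as the \emph{extension} of \cite[Proposition~6.9.2]{chambert2012formes} to $\delta$-forms of type $(0,0,n)$ on algebraic varieties; moreover it does not single out one distinguished point per vertex $p_k$ --- the fibre over a vertex may carry several atoms.

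The missing idea is the skeleton-finiteness input that the paper uses in place of both of your steps: by \cite[Proposition~4.21]{gubler2017a} one has $\Supp(\alpha)\subset S_h(U)$ (the skeleton of $h$) together with $h^{-1}(S)\subset\Supp(\alpha)$ for the finite vertex set $S$, and then \cite[Corollary~3.16]{gubler2021forms} gives that $h^{-1}(S)$ is a \emph{finite} set of points of $U^\an$. Once the candidate support is literally finite, one separates its points by smooth bump functions (which exist because smooth functions, though not $h$-pullbacks, are locally tropical and separate the finitely many points) to define the coefficients $c_x$ and verify $\int_X f\alpha=\sum_x c_x f(x)$ against all $f$, hence $\mu_\alpha=\sum_x c_x\delta_x$ by \cref{cor:radonmeasureassociatedtodeltaform}. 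Your write-up correctly flags the fibre problem as ``the hard part,'' but the tool you reach for does not resolve it, and without the finiteness of $h^{-1}(S)$ inside the skeleton the argument does not close.
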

\begin{proof}
	We can take a very affine chart $U$ of integration for $\alpha$. Assume that $\alpha_U\in B^{n,n}(U^\an, U, h)$ such that $\alpha|_{U^\an}$ is given by $\alpha_U$, where $(U^\an, U, h)$ is a canonical chart. Then $\alpha_U\wedge h_*(\mathrm{cyc}(U))=\sum\limits_{s\in S}c_s[s]\in N_\R$ for a finite set $S\subset N_{U,\R}$. Let $S_h(U)$ be the skeleton of $h$. Notice that $\Supp(\alpha)\subset S_h(U)$ and $h^{-1}(S)\subset\Supp(\alpha)$ by \cite[Proposition~4.21]{gubler2017a}. By \cite[Corollary~3.16]{gubler2021forms}, we know that $h^{-1}(S)$ is finite. After considering bump functions around each point in $h^{-1}(S)$, we have the result.
\end{proof}

	\section{The Poincar\'e-Lelong formula}
\label{The Poincare-Lelong formula and the first Chern delta-forms}

In this section, we will reprove the Poincar\'e-Lelong formula, then our definition of $\delta$-forms allows us to define the first Chern $\delta$-forms of a metrized line bundle with a piecewise smooth metric.
We fix a complete non-archimedean field $K$.
	
 For readers' convenience, we recall \cite[Proposition~4.6.6]{chambert2012formes}.  
	
	\begin{proposition}[\cite{chambert2012formes}~Proposition~4.6.6]
		Assume that $X$ is a $K$-affinoid space of pure dimension $n$. Let $g: X\rightarrow \R^r$ be a smooth tropicalization and $f\in \OO_X(X)$. Then there is a real number $r_0>0$ and  $C=\sum\limits_{\sigma\in \mathcal{C}_{n-1}}m_\sigma[\sigma]\in F_{n-1}(\R^r)$ such that all polyhedra in $\mathcal{C}$ are polytopes with the following properties:
		\begin{enumerate}
			\item [(a)] For every $t$ in the closed ball in $(\mathbb{A}^1)^\an$ with center $0$ and radius $r_0$, the $(n-1)$-skeleton of $g_*(\mathrm{cyc}(X_t))$ endowed with the canonical tropical weights is equal to $C=\sum\limits_{\sigma\in \mathcal{C}_{n-1}}m_\sigma[\sigma]$;
			\item[(b)] For every closed interval $I\subset(0,r_0]$, the $n$-skeleton of $h_*(\mathrm{cyc}(X_I))$ endowed with the canonical tropical weight is equal to $C\times [-\log(I)]$ as a product of polyhedral complexes with weights,
		\end{enumerate}
		where $X_t$ is the fiber of $X$ over $t\in (\mathbb{A}^1)^\an$ with respect to $f: X\rightarrow (\mathbb{A}^1)^\an$, and $h:=(g,-\log|f|): X_I\rightarrow \R^{r+1}$ with $X_I:=|f|^{-1}(I)$.
		\label{canonical cycle for subspace}
	\end{proposition}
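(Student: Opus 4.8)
The plan is to repackage both assertions as statements about a single $n$-dimensional tropical variety in $\R^{r+1}$ and then read off (a) as a slicing statement and (b) as a cylindrical-structure statement. First I would pass to the annulus: on $X_I=|f|^{-1}(I)$ the function $f$ is invertible, so $h=(g,-\log|f|)=\trop\circ(\varphi,f)$ for the moment map $(\varphi,f)$ into a torus of rank $r+1$, and $h_*(\mathrm{cyc}(X_I))$ is the canonical balanced weighted polyhedral complex of pure dimension $n$ attached to the affinoid $X_I$ (its weight on a maximal cell being the generic mapping degree of $(\varphi,f)$ there). Since tropical cycles and their canonical weights are compatible with refinement, I may work directly with this complex; denote the last coordinate by $s=-\log|f|$, so that the projection of $h_*(\mathrm{cyc}(X_I))$ to the $s$-axis is $-\log(I)$.

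The core of the argument is the cylindrical structure of $h_*(\mathrm{cyc}(X_I))$ in the region $\{s\ge s_0\}$, i.e. where $|f|$ is small. I would argue that as $I$ shrinks toward $0$ the complex stabilizes: there is a threshold $s_0$ such that every cell of $h_*(\mathrm{cyc}(X_I))$ meeting $\{s\ge s_0\}$ is a vertical prism $\sigma\times[s_0,\infty)$ with weight independent of $s$. Geometrically this reflects that the tube $X_I$ is an annular neighborhood of $\mathrm{div}(f)$ whose tropicalization in the radial coordinate $s$ is the identity; I would realize it via a formal model of $X$ over $K^{\circ}$ on which $f$ cuts out a divisor, or Gröbner-theoretically via finiteness of the tropical complex of the defining ideal and constancy of the initial degeneration in the $+s$-direction for $|f|$ small. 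Setting $r_0=e^{-s_0}$ and letting $C=\sum_\sigma m_\sigma[\sigma]$ be the projection of these prisms to $\R^r$ (an $(n-1)$-dimensional balanced complex of polytopes), statement (b) is immediate: on any closed $I\subset(0,r_0]$ the $n$-skeleton of $h_*(\mathrm{cyc}(X_I))$ equals $C\times[-\log(I)]$ as weighted complexes.

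For (a) I would invoke the compatibility of tropicalization with the formation of fibers: for $|t|\le r_0$ the $(n-1)$-skeleton of $g_*(\mathrm{cyc}(X_t))$ is the slice of $h_*(\mathrm{cyc}(X_I))$ at height $s=-\log|t|$, taken with the weights induced by stable intersection with the hyperplane $\{s=-\log|t|\}$. This is a purely combinatorial slicing (not the analytic Poincar\'e--Lelong formula, which is downstream of this proposition), so no circularity arises. By the cylindrical structure this slice is exactly $C$, with unchanged weights, for every $t\neq0$ with $|t|\le r_0$; the degenerate value $t=0$, where the slice becomes the tropicalization of $\mathrm{div}(f)$, is handled by passing to the limit $s\to\infty$ along the prisms, whose weights are already constant. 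Matching the canonical weights of $g_*(\mathrm{cyc}(X_t))$ with the $m_\sigma$ then finishes the identification.

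The main obstacle is exactly the stabilization/cylinder step in the second paragraph: proving that for $|f|$ small no new cells appear and no weight jumps, uniformly in the height $s$. I expect to control this either through a good formal model on which the tube around $\mathrm{div}(f)$ is combinatorially a product, or Gröbner-theoretically through finiteness of the tropical complex together with a semicontinuity argument for the reduction multiplicities feeding the canonical weights. Quasi-compactness of the affinoid $X$ is what makes the radius $r_0$ uniform, and verifying that these multiplicities are locally constant near $\mathrm{div}(f)$ is the delicate point on which the whole statement rests.
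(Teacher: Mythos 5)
You should know at the outset that the paper does not prove this statement at all: it is imported verbatim as \cite[Proposition~4.6.6]{chambert2012formes} (``for readers' convenience, we recall\dots''), so the only meaningful comparison is with Chambert-Loir--Ducros's own proof. Your outline has the right shape --- stabilize the tropicalization of the tube around $\mathrm{div}(f)$ into a vertical cylinder, then read (b) off directly and obtain (a) by slicing --- but both load-bearing steps are asserted rather than proved, and the second paragraph's deferral (``I expect to control this either through a good formal model \dots or Gr\"obner-theoretically \dots'') is precisely the content of the proposition. Note that the \emph{support} half of your cylinder claim is in fact the easy half: since $X$ is affinoid, $g(X)\subset\R^r$ is compact, so $h(X\setminus V(f))$ sits in a cylinder with bounded base; combined with polyhedrality and finiteness of the image complex (itself a theorem of Ducros), every cell unbounded in the $s$-direction must have vertical recession, giving the prism structure above some height $s_0$. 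What does not follow from such soft arguments is the \emph{constancy of the canonical tropical weights} along those prisms and the uniformity of $r_0$: in \cite{chambert2012formes} this is where the real work happens, via analytic flattening (Raynaud--Gruson, in Ducros's analytic form) to reduce to the case where $f$ is flat, followed by constancy of local degrees in a flat family to control the multiplicities, including in the degenerate limit $t=0$. Your semicontinuity remark names the difficulty without resolving it.

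The slicing step in your third paragraph has a second, more structural problem. The identification of the canonical weights of $g_*(\mathrm{cyc}(X_t))$ --- defined through local degrees of the moment map restricted to the fiber $X_t$ --- with the stable-intersection weights of $h_*(\mathrm{cyc}(X_I))\wedge H_{-\log|t|}$ is not ``purely combinatorial slicing''; for fixed $t$ it is essentially a statement of the same strength as the proposition itself, and in the present paper the $t=0$ instance is \cref{canonical cycle for subspace2}, which is \emph{deduced from} this proposition rather than available as an input. Invoking it therefore risks circularity within this paper's logical architecture, and the passage ``$t=0$ by passing to the limit $s\to\infty$'' is exactly the continuity-of-tropical-cycles-under-specialization assertion that the flattening argument is designed to justify. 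In short: your plan traces the correct skeleton, but the two steps on which the whole statement rests --- weight constancy along the prisms and the matching of slice weights with canonical fiber weights --- are missing, and filling them requires the flatness/degree machinery of Chambert-Loir--Ducros rather than the finiteness and semicontinuity heuristics you sketch.
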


\
	
	
In general, given two polyhedral complex with smooth forms $\alpha=\sum\limits_{\sigma\in\mathcal{C}_1^{l_1}} \alpha_\sigma[\sigma], \beta=\sum\limits_{\sigma\in\mathcal{C}_2^{l_2}} \beta_\sigma[\sigma]\in P(\widetilde{\Omega})$, the wedge product $\alpha\wedge \beta$ is not defined.  But if $\mathcal{C}_1^{l_1}, \mathcal{C}_2^{l_2}$ intersect transversally, then we can set 
	\[\alpha\wedge\beta = \sum\limits_{\substack{(\sigma_1,\sigma_2)\in \mathcal{C}_1^{l_1}\times \mathcal{C}_2^{l_2}\\ \sigma_1\cap\sigma_2\not=\emptyset}}(\alpha_{\sigma_1}\wedge\beta_{\sigma_2})[\sigma_1\cap \sigma_2].\]
This definition is independent of the choice of $\mathcal{C}_1^{l_1}, \mathcal{C}_2^{l_2}$ that intersect transversally.

With this notation, \cref{canonical cycle for subspace} implies the following corollary.
	\begin{corollary}
		Keep the assumption in \cref{canonical cycle for subspace}. Let $V(f)\subset X$ be the Zariski-closed subspace given by $f$. Then for any $s>0$ small enough and any closed interval $I\subset(0,\infty)$ with $s\in \mathrm{Int}(I)$, we have $H_{-\log(s)}$, $h_*(\mathrm{cyc}(X_{I}))$ intersect transversally, and
			\[g_*(\mathrm{cyc}(V(f))) = p_*(H_{-\log(s)}\wedge h_*(\mathrm{cyc}(X_{I}))),\]
		on $\R^r$, where $H_{-\log(s)}$ is the hyperplane in $\R^{r+1}$ given by $x_{r+1}=-\log(s)$ which and $p:\R^{r+1}\rightarrow \R^r, \ \ (x_1,\cdots, x_{r+1})\mapsto (x_1,\cdots,x_r)$. 
		\label{canonical cycle for subspace2}
	\end{corollary}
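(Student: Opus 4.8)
The plan is to show that both sides of the claimed identity are equal to the cycle $C = \sum_{\sigma \in \mathcal{C}_{n-1}} m_\sigma[\sigma]$ produced by \cref{canonical cycle for subspace}, and then to read off transversality along the way. For the left-hand side I would first observe that $V(f)$ is exactly the fiber $X_0$ of $f$ over the classical point $0$, which lies in the closed ball of radius $r_0$ about $0$ in $(\mathbb{A}^1)^\an$. Since $V(f)$ has pure dimension $n-1$ and tropicalization preserves dimension in the Gubler framework, the cycle $g_*(\mathrm{cyc}(V(f)))$ is of pure dimension $n-1$ and hence coincides with its own $(n-1)$-skeleton. Applying property (a) of \cref{canonical cycle for subspace} at $t=0$ identifies this $(n-1)$-skeleton with $C$, so $g_*(\mathrm{cyc}(V(f))) = C$.

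For the right-hand side I would invoke property (b). Choosing $s>0$ small and a closed interval $I \subset (0, r_0]$ with $s \in \mathrm{Int}(I)$, the space $X_I = |f|^{-1}(I)$ has pure dimension $n$, so $h_*(\mathrm{cyc}(X_I))$ is of pure dimension $n$ and equals its own $n$-skeleton, which by (b) is the product cycle $C \times [-\log(I)]$; here $-\log(I)$ is a closed interval on the $x_{r+1}$-axis since $-\log$ is a decreasing homeomorphism. The next step is to check that $H_{-\log(s)}$ and $C \times [-\log(I)]$ intersect transversally: for each maximal cell $\sigma \times [-\log(I)]$, the span $N_{\sigma,\R} \times \R$ together with the span $\R^r \times \{0\}$ of $H_{-\log(s)}$ fills all of $\R^{r+1}$, and the relative interiors meet precisely because $-\log(s) \in \mathrm{Int}(-\log(I))$, which is exactly the hypothesis $s \in \mathrm{Int}(I)$. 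This already proves the transversality assertion of the corollary.

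Having established transversality, I would compute the wedge product via \cref{wedge product of transersally intersection}. Since $H_{-\log(s)}$ and $C \times [-\log(I)]$ are both tropical cycles (of type $(0,0,\ast)$), the $\wedge$-product is their stable intersection, and for each $\sigma$ the contributing weight is $m_\sigma$ times the lattice index $[\Z^{r+1} : (\Z^r \times \{0\}) + (N_\sigma \times \Z)]$, which equals $1$; hence $H_{-\log(s)} \wedge h_*(\mathrm{cyc}(X_I)) = C \times \{-\log(s)\}$. Finally, $p$ maps each cell $\sigma \times \{-\log(s)\}$ isomorphically onto $\sigma$ with lattice index $1$ and preserves dimension, so the supercurrent and tropical push-forwards agree and $p_*(C \times \{-\log(s)\}) = C$. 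Combining the two computations yields the claimed equality. I expect the main obstacle to be the careful bookkeeping of tropical weights and lattice indices in the transversal intersection and in the push-forward — in particular verifying that every index is $1$ — rather than any conceptual difficulty; a secondary point requiring care is the identification of $g_*(\mathrm{cyc}(V(f)))$ with its top-dimensional skeleton $C$.
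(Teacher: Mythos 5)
The paper states \cref{canonical cycle for subspace2} without any proof, presenting it as an immediate consequence of \cref{canonical cycle for subspace}, and your argument is precisely the deduction the paper leaves implicit: identify $g_*(\mathrm{cyc}(V(f)))$ with $C$ via property (a) at $t=0$ (noting $X_0=V(f)$, and that the tropical push-forward of an $(n-1)$-cycle is by definition concentrated in dimension $n-1$, which is the correct justification for ``equals its own $(n-1)$-skeleton''); identify $h_*(\mathrm{cyc}(X_I))$ with $C\times[-\log(I)]$ via property (b); check cellwise transversality, where your two observations --- that $N_{\sigma,\R}\times\R$ plus $\R^r\times\{0\}$ fills $\R^{r+1}$ for every maximal cell, and that the relative interiors meet exactly because $-\log(s)\in\mathrm{Int}(-\log(I))$ --- are the whole content; compute the product by \cref{wedge product of transersally intersection}, with the lattice index $[\Z^{r+1}:(\Z^r\times\{0\})+(N_\sigma\times\Z)]=1$ as you verify; and push forward along $p$, which is injective on the support so that the supercurrent and tropical push-forwards agree. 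All of this is correct.

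Two points need patching against the literal statement. First, the corollary asserts the identity for \emph{any} closed interval $I\subset(0,\infty)$ with $s\in\mathrm{Int}(I)$, whereas property (b) only applies to $I\subset(0,r_0]$, and you silently restrict to that case. The repair is a locality remark: the product $H_{-\log(s)}\wedge h_*(\mathrm{cyc}(X_I))$ is supported on $H_{-\log(s)}\cap |h_*(\mathrm{cyc}(X_I))|$ and is determined by the cycle in a neighborhood of the hyperplane, and for $s<r_0$ the cycles $h_*(\mathrm{cyc}(X_I))$ and $h_*(\mathrm{cyc}(X_{I\cap(0,r_0]}))$ agree on a slab $\R^r\times(-\log(s)-\epsilon,-\log(s)+\epsilon)$, so the general case reduces to the one you treat. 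Second, $C\times[-\log(I)]$ violates the balancing condition along its two boundary faces $C\times\partial(-\log(I))$, so it is a $\delta$-form only on the open region $\R^r\times\mathrm{Int}(-\log(I))$; the hypothesis $s\in\mathrm{Int}(I)$, which you invoke for the relative-interior condition, is also what keeps $H_{-\log(s)}$ away from these faces, so the wedge should be taken on that open region (in the spirit of \cref{def:balancedfordeltaform} and of how the paper itself restricts to $\widetilde{\Omega}'$ in the proof of \cref{poincare-lelong equation}) --- your computation is then valid verbatim. Finally, your appeal to ``$V(f)$ has pure dimension $n-1$'' presupposes $f$ neither a unit nor a zero-divisor; in the degenerate cases both sides vanish, so nothing is lost, but this should be said.
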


	Obviously, the corollary above holds if $X$ is an affine variety.
	
	\begin{theorem}[The Poincar\'e-Lelong formula]
		Let $X$ be an integral algebraic variety over $K$. For any rational function $f\in K^*(X)$, the Poincar\'e-Lelong equation\[\delta_{[\mathrm{div}(f)]} = d'd''[\log|f|]\]
		holds in $E^{1,1}(X^\an)$.
		\label{poincare-lelong equation}
	\end{theorem}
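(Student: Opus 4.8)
The plan is to check the identity of $(1,1)$-$\delta$-currents locally and against test forms, transporting both sides to the tropical side through a canonical chart, where \cref{canonical cycle for subspace2} and the tropical Poincar\'e--Lelong formula \cref{tropical poincare-lelong formula} do the work. Since $E^{1,1}$ is determined by its pairings with $\gamma\in B_c^{n-1,n-1}(X^\an)$ and the statement is local, I would first reduce to a regular function: writing $f=f_0/f_1$ on a small open with $f_0,f_1\in\OO_X(U)$, both $f\mapsto\delta_{[\mathrm{div}(f)]}$ and $f\mapsto d'd''[\log|f|]$ are additive for the splitting $\log|f|=\log|f_0|-\log|f_1|$, so it suffices to treat $f\in\OO_X(U)$ with $\mathrm{div}(f)=\mathrm{cyc}(V(f))$. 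Shrinking $U$ to a very affine open with canonical moment map $g=\varphi_{U,\trop}\colon U^\an\to\R^r$, I would pass to the enhanced tropicalization $h=(g,-\log|f|)\colon X_I\to\R^{r+1}$ of \cref{canonical cycle for subspace}, so that by construction $-\log|f|=x_{r+1}\circ h$; thus $\log|f|$ is, on $X_I$, the pullback under $h$ of the linear coordinate $-x_{r+1}$.

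Next I would compute the left-hand pairing tropically. By the definition of integration of $\delta$-forms over cycles and the projection formula \cref{projection formula for delta-preforms}, $\langle\delta_{[\mathrm{div}(f)]},\gamma\rangle=\int_{g_*\mathrm{cyc}(V(f))}\gamma_0$, where $\gamma_0$ is the tropical representative of $\gamma$ on $\R^r$ and $g=p\circ h$ with $p\colon\R^{r+1}\to\R^r$ the coordinate projection. Now \cref{canonical cycle for subspace2} rewrites $g_*\mathrm{cyc}(V(f))=p_*\bigl(H_c\wedge h_*\mathrm{cyc}(X_I)\bigr)$ with $c=-\log s$, and the slice by the hyperplane $\{x_{r+1}=c\}$ is exactly the corner locus $\mathrm{div}(\psi)\cdot h_*\mathrm{cyc}(X_I)$ of $\psi=\max\{x_{r+1},c\}$ (the derivative of $\psi$ jumps by $1$ across $H_c$, by the example on $\max$-functions). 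Since $\psi$ is piecewise linear, $d_P'd_P''\psi=0$, so \cref{tropical poincare-lelong formula} gives $\mathrm{div}(\psi)\cdot h_*\mathrm{cyc}(X_I)=d'd''\psi\wedge h_*\mathrm{cyc}(X_I)$; combined with the projection formula this expresses $\langle\delta_{[\mathrm{div}(f)]},\gamma\rangle$ as $\int_{h_*\mathrm{cyc}(X_I)}(d'd''\psi)\wedge p^*\gamma_0$.

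It remains to identify this with $\langle d'd''[\log|f|],\gamma\rangle=\int_{X^\an}\log|f|\cdot d'd''\gamma$. Here I would use that $\log|f|=-x_{r+1}$ on $X_I$ together with the product structure $h_*\mathrm{cyc}(X_I)=C\times[-\log I]$ of \cref{canonical cycle for subspace}(b): the $x_{r+1}$-direction is a genuine interval factor, so a Green's/Stokes integration by parts in that direction (\cref{Green' formula for delta-forms}, \cref{minor stokes' formula for d'd''}) transfers the $d'd''$ between $\psi$ and $\gamma$ and pins the curvature contribution to the interior corner $C\times\{c\}$, whose projection is $C$ independently of the slicing level by \cref{canonical cycle for subspace}(a). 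Matching the two computations yields the desired equality in $E^{1,1}(X^\an)$.

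I expect the main obstacle to be precisely this last identification, because $\log|f|$ is unbounded along $V(f)$ (where $x_{r+1}=-\log|f|\to+\infty$): the honest current $[\log|f|]$ is only defined through the local integrability of $\log|f|$, and the naive substitution $\log|f|=-x_{r+1}$ fails near the zero locus, which is exactly why the bounded slicing interval $I$ and the threshold $c$ are forced upon us. Controlling this requires using the sharp cylinder structure of \cref{canonical cycle for subspace} to localize the boundary contributions to the two end-slices of $C\times[-\log I]$, discarding them via \cref{minor stokes' formula for d'd''}, and passing to the limit as $I$ exhausts $(0,\infty)$ so that the moving end-slice converges to $\delta_{[\mathrm{div}(f)]}$; verifying these boundary and convergence estimates is the technical heart of the argument.
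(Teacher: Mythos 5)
Your overall route is the paper's route: pair against $\gamma\in B_c^{n-1,n-1}(X^\an)$, pass to the enhanced tropicalization $h=(g,-\log|f|)$ of \cref{canonical cycle for subspace}, identify the hyperplane slice in \cref{canonical cycle for subspace2} with the corner locus of a piecewise linear truncation of $x_{r+1}$, apply \cref{tropical poincare-lelong formula} (with $d_P'd_P''$ of the truncation vanishing) together with the projection formula, and then integrate by parts to move $d'd''$ onto the test form. Your preliminary reduction to regular $f$ via $\log|f|=\log|f_0|-\log|f_1|$ is legitimate and in fact tidies up a point the paper treats loosely. But the step you yourself flag as ``the technical heart'' --- the final identification with $\int\log|f|\,d'd''\gamma$ --- is exactly where your plan has a genuine gap, and the exhaustion-and-limit scheme you propose for it is both unnecessary and misdirected: by \cref{canonical cycle for subspace}(a) the slice at level $-\log s$ is \emph{constant} for all sufficiently small $s$, so there is no ``moving end-slice converging to $\delta_{[\mathrm{div}(f)]}$'' and no boundary estimates to verify.

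The missing idea is a support observation that dissolves the unboundedness of $\log|f|$ entirely. Since $d''\gamma$ has bidegree $(n-1,n)$ and $\dim V(f)<n$, the restriction of $d''\gamma$ to $V(f)$ vanishes, i.e.\ $\Supp(d''\gamma)\cap V(f)=\emptyset$; hence one may fix $s>0$ with $|f|>s$, equivalently $x_{r+1}<-\log s$, on all of $\Supp(d''\gamma)$. The correct bounded truncation is then $\psi=\min\{x_{r+1},-\log s\}$, not your $\max\{x_{r+1},c\}$: the max is unbounded near $V(f)$ and constant precisely on the region where the test form lives, so it can never be substituted for $-\log|f|$; one passes from the max (whose corner locus is the slice) to the min via $\mathrm{div}(\min)\cdot\alpha=-\mathrm{div}(\max)\cdot\alpha$. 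After moving $d'd''$ from $\psi$ onto $p^*\gamma_U$ --- justified by the Leibniz rule and \cref{minor stokes' formula for d'd''}, with no Green's-formula analysis in the interval direction needed --- the function $-\psi$ coincides with $\log|f|$ \emph{identically} on $\Supp(d'd''\gamma)$, so the equality
\[\int_{V(f)}\gamma=\int_{X\setminus V(f)}\log|f|\wedge d'd''\gamma\]
holds exactly for this fixed $s$, with no limit over $I$ and no convergence argument. Without this observation your sketch leaves the crucial substitution $\log|f|=-x_{r+1}$ unjustified precisely where it fails.
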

	\begin{proof}
		For any $\gamma\in {B}^{n-1,n-1}_{c}(X)$, by \cite[Proposition~5.7]{gubler2017a}, we can take a canonical chart $(U^\an, U, g)$ such that $\gamma$ is given by $\gamma_U\in B^{n-1,n-1}(N_{U,\R})$ with $\Supp(d''\gamma)\subset U^\an$, and for any irreducible component $Z$ of $V(f)$ we have $\Supp(\gamma|_Z)\subset (Z\cap U)^\an$. Moreover, since $\gamma$ has compact support, we can take an open subset $\widetilde{\Omega}\subset N_{U,\R}$ such that $\Supp(d''\gamma), \Supp(\gamma|_Z)\subset g^{-1}(\widetilde{\Omega})$ and $W:=g^{-1}(\overline{\widetilde{\Omega}})$ is affinoid. We know that $g_*(\mathrm{cyc}(V(f)\cap W))\in B(\widetilde{\Omega})$, i.e. $C\in B(\widetilde{\Omega})$ where $C$ is given in \cref{canonical cycle for subspace} when we replace $X$ by $W$.
		
		Since $\dim V(f) <n$, we have $\Supp(d''\gamma)\cap V(f)=0$. Then there is $s>0$ small enough such that $|f(x)|>s$ for any $x\in \Supp(d''\gamma)$. Set $H_{-\log(s)}$ is the hyperplane in $\R^{r+1}$ given by $x_{r+1}=-\log(s)$, $\phi= \max\{x_{r+1},-\log(s)\}, \psi=\min\{x_{r+1}, -\log(s)\}: \R^{r+1}\rightarrow \R$ and $I=(s',\infty)$ for some $s'<s$. We use the notation in \cref{canonical cycle for subspace}. We take $\widetilde{\Omega}'=\widetilde{\Omega}\times(s'', \infty)$ where $s'<s''<s$, then $h_*(\mathrm{cyc}(W_{I}))=C\times I\in B(\widetilde{\Omega})$. By \cref{tropical poincare-lelong formula}, we have 
		\begin{align*}
			\int_{V(f)}\gamma &= \int_{\widetilde{\Omega}}\gamma_U\wedge g_*(\mathrm{cyc}(V(f)\cap W))\\
			&=\int_{\widetilde{\Omega}}\gamma_U\wedge p_*(\mathrm{div}(\phi)\cdot h_*(\mathrm{cyc}(W_{I})))\\
			& = \int_{\widetilde{\Omega}'}p^*\gamma_U\wedge (\mathrm{div}(\phi)\cdot h_*(\mathrm{cyc}(W_{I})))\\
			&= \int_{\widetilde{\Omega}'}(\mathrm{div}(\phi)\cdot p^*\gamma_U)\wedge  h_*(\mathrm{cyc}(W_{I}))\\
			&= \int_{\widetilde{\Omega}'}(-\mathrm{div}(\psi)\cdot p^*\gamma_U)\wedge  h_*(\mathrm{cyc}(W_{I}))\\
			&= \int_{\widetilde{\Omega}'}(-d'd''\psi\wedge p^*\gamma_U)\wedge h_*(\mathrm{cyc}(W_{I}))\\
			&= \int_{\widetilde{\Omega}'}(-\psi\wedge d'd''p^*\gamma_U)\wedge h_*(\mathrm{cyc}(W_{I}))\\
			&= \int_{\widetilde{\Omega}'}-p_*\psi\wedge d'd''\gamma_U\wedge p_*h_*(\mathrm{cyc}(W_{I}))\\
			&= \int_{X\setminus V(f)}-\min\{-\log|f|, -\log(s)\}\wedge d'd''\gamma\\
			&= \int_{X\setminus V(f)}\log|f|\wedge d'd''\gamma,
		\end{align*}
The third equality hold since $p: H_{-\log(s)} \rightarrow \R^r$ is injective, $\Supp(p^*\gamma_U)\cap H_{-\log(s)} \subset \widetilde{\Omega}'$ is compact.
	\end{proof}

With this theorem, the first Chern $\delta$-current is defined for any continuous metric on a line bundle, see \cite[7.7]{gubler2017a}. In particular, it is a $\delta$-form if the metric is piecewise smooth, see \cite[Definition~8.5]{gubler2017a} for the definition of piecewise smooth metrics.



\addcontentsline{toc}{section}{Acknowledgements}	
	\section*{Acknowledgements}
	The author would like to thank my host professor, Yigeng Zhao for his encouragement, support and valuable suggestions. He would also like to thank Antoine Ducros, Walter Gubler and Michael Temkin for their patience and answering questions when he worked on this problem. This research is supported by postdoctoral research grant.

\addcontentsline{toc}{section}{References}

	\address
\end{document}